\newtheorem{theorem}{Theorem}
\newtheorem{lemma}[theorem]{Lemma}
\newtheorem*{lemma*}{Lemma}
\newtheorem{proposition}[theorem]{Proposition}
\newtheorem{corollary}[theorem]{Corollary}
\newtheorem*{fact*}{Fact}
\theoremstyle{definition}
\newtheorem{remark}[theorem]{Remark}
\newtheorem{definition}[theorem]{Definition}
\newtheorem*{example*}{Example}
\newtheorem{example}[theorem]{Example}
\newtheorem{question*}[theorem]{Question}
\newtheorem{questions*}[theorem]{Questions}
\def\vint_#1{\mathchoice%
      {\mathop{\kern 0.2em\vrule width 0.6em height 0.69678ex depth -0.58065ex
              \kern -0.8em \intop}\nolimits_{\kern -0.4em#1}}%
      {\mathop{\kern 0.1em\vrule width 0.5em height 0.69678ex depth -0.60387ex
              \kern -0.6em \intop}\nolimits_{#1}}%
      {\mathop{\kern 0.1em\vrule width 0.5em height 0.69678ex depth -0.60387ex
              \kern -0.6em \intop}\nolimits_{#1}}%
      {\mathop{\kern 0.1em\vrule width 0.5em height 0.69678ex depth -0.60387ex
              \kern -0.6em \intop}\nolimits_{#1}}}
\def\vintslides_#1{\mathchoice%
      {\mathop{\kern 0.1em\vrule width 0.5em height 0.697ex depth -0.581ex
              \kern -0.6em \intop}\nolimits_{\kern -0.4em#1}}%
      {\mathop{\kern 0.1em\vrule width 0.3em height 0.697ex depth -0.604ex
              \kern -0.4em \intop}\nolimits_{#1}}%
      {\mathop{\kern 0.1em\vrule width 0.3em height 0.697ex depth -0.604ex
              \kern -0.4em \intop}\nolimits_{#1}}%
      {\mathop{\kern 0.1em\vrule width 0.3em height 0.697ex depth -0.604ex
              \kern -0.4em \intop}\nolimits_{#1}}}
\def\Z{\mathbf{Z}}
\def\reals{\mathbf{R}}
\def\R{\reals}
\def\D{\mathbb{D}}
\def\C{\mathbf{C}}
\def\integers{\mathbf{Z}}
\def\E{\mathbb{E}\,} 
\def\P{\mathbb{P}}
\def\1{\mathbf{1}}
\DeclareMathOperator{\dist}{dist}
\DeclareMathOperator{\supp}{supp}
\renewcommand{\Im}{\operatorname{Im}}
\numberwithin{equation}{section}
\numberwithin{theorem}{section}
\author{Janne Junnila}
\address{University of Helsinki, Department of Mathematics and Statistics, P.O. Box 68, FIN-00014 University of Helsinki, Finland}
\email{janne.junnila@helsinki.fi}
\author{Eero Saksman}
\address{Department of Mathematical Sciences, Norwegian University of Science and Technology (NTNU), NO-7491 Trondheim, Norway}
\address{University of Helsinki, Department of Mathematics and Statistics, P.O. Box 68, FIN-00014 University of Helsinki, Finland}
\email{eero.saksman@helsinki.fi}
\author{Christian Webb}
\address{Department of mathematics and systems analysis, Aalto University, P.O. Box 11000, 00076 Aalto, Finland}
\email{christian.webb@aalto.fi}
\begin{document}

\title[Imaginary multiplicative chaos and the Ising model]{Imaginary multiplicative chaos: moments, regularity and connections to the Ising model}

\begin{abstract}
In this article we study imaginary Gaussian multiplicative chaos -- namely a family of random generalized functions which can formally be written as $e^{i X(x)}$, where $X$ is a log-correlated real-valued Gaussian field on $\R^d$, i.e. it has a logarithmic singularity on the diagonal of its covariance. We study basic analytic properties of these random generalized functions, such as what spaces of distributions do these objects live in, along with their basic stochastic properties, such as moment and tail estimates.

After this, we discuss connections between imaginary multiplicative chaos and the critical planar Ising model, namely that the scaling limit of the spin field of the so called critical planar XOR-Ising model can be expressed in terms of the cosine of the Gaussian free field, i.e.  the real part of an imaginary multiplicative chaos distribution. Moreover,  if one adds a magnetic perturbation to the XOR-Ising model, then the scaling limit of the spin field can be expressed in terms of the cosine of the sine-Gordon field, which can also be viewed as the real part of an imaginary multiplicative chaos distribution.

The first sections of the article have been written in the style of a review, and we hope that the text will also serve as an introduction to imaginary chaos for an uninitiated reader.
\end{abstract}

\maketitle

\section{Introduction}\label{sec:intro}

We begin this introduction with Section \ref{sec:background}, where we informally review what log-correlated fields and multiplicative chaos are as well as their role in modern probability theory and applications. Then in Section \ref{subsec:ichaosresults}, we state our main results concerning the existence and basic properties of imaginary multiplicative chaos. After this, we move to Section \ref{subsec:isingresults}, where we discuss our results concerning the Ising model. Finally in Section \ref{sec:outline}, we give an outline of the remainder of the article.

\subsection{Background on log-correlated fields and multiplicative chaos}\label{sec:background}

Log-correlated fields -- namely real-valued random generalized functions on $\R^d$ with a logarithmic singularity on the diagonal of the covariance kernel\footnote{For precise definitions in the Gaussian setting, see Section \ref{sec:logcor}.} -- have emerged as an important class of objects playing a central role in various probabilistic models. For example, one encounters them when studying the statistical behavior of the Riemann zeta function on the critical line \cite{ABBRS,FK,N,SW}, characteristic polynomials of large random matrices \cite{HKO,FKS,RidVir}, combinatorial models for random partitions of integers \cite{IO}, certain models of mathematical finance \cite[Section 5]{RhVa}, lattice models of statistical mechanics \cite{Kenyon}, construction of conformally invariant random planar curves such as Stochastic Loewner evolution \cite{AJKS,Sheffield}, the random geometry of two-dimensional quantum gravity and scaling limits of random planar maps \cite{DKRV,KRV,DuSh,MS}, growth models \cite{BF}, and statistical properties of disordered systems \cite{CLD}. A typical example of a log-correlated field is the two-dimensional Gaussian free field, namely the centered Gaussian process on a planar domain with covariance given by the Green's function of the domain with some prescribed boundary conditions. In the planar case, a log-correlated field can be seen as a model for a generic random surface.

In many of the above cases, a central goal is to understand geometric properties of the object described in terms of the log-correlated field. One might for example be interested in understanding how the maximum of the field behaves or one might be interested in the Hausdorff dimensions of level sets of the field. As the field is a rough object -- a random generalized function instead of a random function -- it is not obvious that any of these notions make sense. Nevertheless, in some specific situations, precise sense can be made of such questions  -- for such studies, see e.g. \cite{ABB,ABBRS,CLD,CMN,DRZ,FHK,FK,LOS,LP,M,N,PZ}. In some approaches to such geometric questions, an important role is played by a family of random measures which can be formally written as the exponential of the log-correlated field multiplied by a real parameter: $e^{\beta X(x)}dx$, where $X$ is the log-correlated field and $\beta>0$. The rigorous construction of these measures requires a regularization and renormalization procedure since a priori, one can not exponentiate a generalized function. The theory of these random measures goes under the name of multiplicative chaos and its foundations were laid by Kahane \cite{Kahane}; see also \cite{RhVa} for a recent review of the theory and \cite{Berestycki} for an elegant and concise construction of the family of measures. The connection between multiplicative chaos and the geometry of the field can be seen e.g. in \cite[Section 4]{RhVa} or the approach of \cite{Berestycki}. In addition to being of importance in geometric studies, multiplicative chaos measures also a play an important role in a rigorous definition of so-called Liouville field theory -- an example of a quantum field theory with certain symmetries under conformal transformations; for details, see e.g. \cite{DKRV,KRV}.

The importance of these multiplicative chaos measures suggests posing the question of whether one can make sense of similar objects for complex values of the parameter $\beta$ in the definition of $e^{\beta X(x)}$, or more generally can one consider similar objects for complex log-correlated fields\footnote{That is random fields whose real and imaginary parts are real-valued log-correlated fields.}. Moreover, if one can make sense such objects, what properties do they have, where do they arise, and do they perhaps say something about the geometry of the field $X$? Indeed, such objects have been studied -- see e.g. \cite{BJM,LRV} -- and also show up naturally when studying the statistics of the Riemann zeta function on the critical line and characteristic polynomials of random matrices --  see \cite{SW}. We also point out that, at least on a formal level, the situation where the parameter $\beta$ is purely imaginary plays a central role in the study of so-called imaginary geometry -- see \cite{Miller_Sheffield}. 

The purpose of this article is to study in more detail a particular case of such complex multiplicative chaos in that we consider the situation where the relevant parameter is purely imaginary:  we consider objects formally written as $e^{i\beta X(x)}$, where $\beta\in \R$, and $X$ is a real-valued Gaussian log-correlated field -- imaginary Gaussian multiplicative chaos. We have two primary goals for this article. The first one is to study  the basic properties of these objects as random generalized functions. Thus we investigate their analytic properties -- namely show that the relevant objects exist as certain random generalized functions and study their smoothness properties -- and also their basic probabilistic properties -- namely provide moment and tail estimates for relevant quantities built from this imaginary chaos. In this latter part the main novelty of our results is that they deal with general log-correlated fields, contrary to previous studies dealing with the Gaussian free field, as it turns out that the general case requires new tools. Our second goal is to prove that imaginary multiplicative chaos is a class of probabilistic objects arising naturally e.g. in models of statistical mechanics.\footnote{On a possibly related issue, we remark that we suspect that as suggested in the theoretical physics literature, imaginary multiplicative chaos can be used to give a rigorous definition of the Coulomb gas formulation of some conformal field theories, though we do not discuss this further here.} In addition to these primary goals, we use an example from random matrix theory to illustrate that there are some subtleties in constructing multiplicative chaos, both in the real and imaginary case.

As we suspect that imaginary multiplicative chaos will play a prominent role in different types of probabilistic models, we have tried to write this article in a format similar to a survey article. In particular, we review basic properties of imaginary multiplicative chaos and discuss different types of results in a style which is hopefully accessible to readers of various backgrounds and interests.

We now turn to discussing more precisely our main results.

\subsection{Main results on basic properties of imaginary multiplicative chaos}\label{subsec:ichaosresults} $ $

Naturally the starting point in discussing basic properties of imaginary multiplicative chaos is the existence of imaginary multiplicative chaos. That is, given a centered Gaussian process $X$ taking values in some space of generalized functions, and (formally) having a covariance kernel of the form 
\begin{equation}\label{eq:logcov}
C_X(x,y)=\E X(x)X(y)=\log |x-y|^{-1}+g(x,y),
\end{equation}
where $g$ is say locally bounded (see Section \ref{sec:logcor} for details), we want to make sense of $e^{i\beta X(x)}$ in some way. Some results of this flavor actually exist already -- see e.g. \cite{BJM,LRV},\footnote{In the setting of the Gaussian free field, a very similar question though with a different emphasis has been considered already in \cite{frolich}; a study related to the sine-Gordon model -- see Section \ref{subsec:isingresults} for further discussion about the sine-Gordon model and its relationship to imaginary chaos.} but there are many natural questions that remain unanswered about the objects. More precisely, \cite{BJM,LRV} impose some assumptions on the function $g$, that one would expect to be rather unnecessary, and based on their results, very little can be said about the analytic properties of the objects $e^{i\beta X(x)}$ -- are they possibly random smooth functions, are they random complex measures, or are they random generalized functions? Also probabilistic properties such as precise tail estimates are not studied in \cite{BJM,LRV}, though we do refer to \cite[Appendix A]{LSZ}, where such questions are studied in the setting of the Gaussian free field. Hence, one of our main goals is to address these issues, namely to study imaginary chaos for a rather general class of covariances $C_X$, to describe nearly optimal regularity results, as well as probabilistic results such a moment and tail estimates.

We now describe the setting of our first result concerning existence and uniqueness of imaginary chaos for rather general covariances $C_X$. As $X$ is a random generalized function instead of an honest function, $e^{i\beta X(x)}$ can not be constructed in a naive way. Instead, one must construct it through a regularization and limiting procedure. More precisely, we introduce suitable approximations to $X$, which are honest functions and which we call standard approximations $X_n$ -- see Definition \ref{def:standard} for a precise definition. Standard approximations always exist -- a typical example of a standard approximation is convolving $X$ with a smooth bump function; see Lemma \ref{le:standard} for details. One would then expect that the correct way to construct $e^{i\beta X(x)}$ is as a limit of the sequence $e^{i\beta X_n(x)+\frac{\beta^2}{2}\E [X_n(x)^2]}$. This turns out to be partially true -- as proven in \cite{LRV} under some further assumptions on $C_X$ and for a rather particular approximation, this sequence has a non-trivial limit for\footnote{Note that as we are dealing with a centered Gaussian field, $-X\stackrel{d}{=}X$ so results for $-\sqrt{d}<\beta<0$ can be obtained from the $0<\beta<\sqrt{d}$ case.} $0<\beta<\sqrt{d}$.  For larger $\beta$, it was shown in \cite{LRV} that (once again under certain assumptions on $C_X$) that one can multiply $e^{i\beta X_n(x)}$ by a suitable deterministic $n$-dependent factor to obtain convergence to complex white noise. As white noise is a well understood object, we have chosen to focus on the regime $0<\beta<\sqrt{d}$ in this article. In addition to being able to construct $e^{i\beta X(x)}$ as a limit of $e^{i\beta X_n(x)+\frac{\beta^2}{2}\E X_n(x)^2}$, one might hope that the limiting object would not depend very much on how we approximated $X$ -- this is indeed confirmed by one of our results. Finally, as mentioned in the previous section, the limiting object is rather rough; a generalized function instead of an honest function so we formulate convergence in a suitable Sobolev space of generalized functions -- we refer the reader wishing to recall the definition of the Sobolev space $H^s(\R^d)$ to the beginning of Section \ref{sec:spaces}. The precise statement concerning all these issues is the following theorem.

\begin{theorem}\label{th:existuniq}
Let  $(X_n)_{n\geq 1}$ be a standard approximation of a given log-correlated field $X$ on a domain $U\subset \R^d$ satisfying the assumptions \eqref{eq:cov} and \eqref{eq:assumptions} $($see also Definition~\ref{def:standard} for a precise definition of a standard approximation$)$.  When $0<\beta <\sqrt{d}$, the functions 
$$
\mu_n(x)=e^{i\beta X_n(x)+\frac{\beta^2}{2}\E[X_n(x)^2]},
$$
understood as zero outside of $U$, converge in probability in $H^s(\reals^d)$ for $s < -\frac{d}{2}$. The limit $\mu$ is a non-trivial random element of $H^s(\R^d)$, supported on $\overline{U}.$

Moreover, suppose that $X_n$ and $\widetilde{X}_n$ are two sequences of standard approximations of the same log-correlated field $X$ $($satisfying assumptions \eqref{eq:cov} and \eqref{eq:assumptions} below$)$, living on the same probability space,  and satisfying
\begin{equation}\label{eq:compat}
\lim_{n\to\infty} \E X_n(x) \widetilde{X}_n(y) = C_X(x,y),
\end{equation}
  where convergence takes place in measure on $U\times U.$
  Then the corresponding imaginary chaoses $\mu$ and $\widetilde{\mu}$ are equal almost surely.
\end{theorem}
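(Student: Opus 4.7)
The plan is to establish convergence of $(\mu_n)$ in $L^2(\Omega; H^s(\R^d))$, which, since $H^s$ is a Hilbert space, implies convergence in probability. The fundamental identity comes from completing the square in the Gaussian characteristic function: for two standard approximations $X_n$, $X_m$ on the same probability space,
\begin{equation*}
\E[\mu_n(x)\overline{\mu_m(y)}] = \exp\!\bigl(\beta^2\,\E[X_n(x)X_m(y)]\bigr).
\end{equation*}
Combining this with the Fourier--Plancherel expression for the Sobolev norm and applying Tonelli yields
\begin{equation*}
\E\langle \mu_n, \mu_m\rangle_{H^s} = \int_{U\times U} e^{\beta^2 \E[X_n(x)X_m(y)]}\, K_s(x-y)\,dx\,dy,
\end{equation*}
where $K_s(z)=\int_{\R^d}(1+|\xi|^2)^s e^{-i\xi\cdot z}\,d\xi$ is, up to a constant, the Bessel potential of order $-2s$; since $s<-d/2$ we have $-2s>d$, so $K_s$ is a bounded continuous function with exponential decay at infinity.

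The next step is to extract from Definition~\ref{def:standard} a uniform upper bound $\E[X_n(x)X_m(y)]\le -\log|x-y|+C$ on compact subsets of $U\times U$, where $C$ may depend on the compact set but not on $n,m$, yielding the dominator $\E[\mu_n(x)\overline{\mu_m(y)}]\le C'|x-y|^{-\beta^2}$. Since $\beta^2<d$, this dominator is locally integrable on $U\times U$, so combined with the decay of $K_s$ one obtains $\sup_{n,m}|\E\langle \mu_n,\mu_m\rangle_{H^s}|<\infty$. To upgrade to Cauchy-ness, expand
\begin{equation*}
\E\|\mu_n-\mu_m\|_{H^s}^2 = \E\langle \mu_n,\mu_n\rangle_{H^s}+\E\langle \mu_m,\mu_m\rangle_{H^s}-2\Re\,\E\langle \mu_n,\mu_m\rangle_{H^s}.
\end{equation*}
By the standard-approximation property, each of $\E[X_n(x)X_n(y)]$, $\E[X_m(x)X_m(y)]$ and $\E[X_n(x)X_m(y)]$ converges to $C_X(x,y)$ in measure on $U\times U$; combined with the dominator just obtained, dominated convergence drives all three terms to the common limit $\int e^{\beta^2 C_X(x,y)}K_s(x-y)\,dx\,dy$, so $\E\|\mu_n-\mu_m\|_{H^s}^2\to 0$ and a limit $\mu\in L^2(\Omega; H^s)$ exists.

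Uniqueness across two approximations $(X_n)$ and $(\widetilde X_n)$ on a common probability space is the same computation: the compatibility hypothesis \eqref{eq:compat} is precisely what ensures that the cross-covariance $\E[X_n(x)\widetilde X_n(y)]$ converges to $C_X(x,y)$ in measure, so the identical dominated-convergence argument applied to $\E\|\mu_n-\widetilde\mu_n\|_{H^s}^2$ gives $\mu=\widetilde\mu$ almost surely. Non-triviality follows from the lower bound $\E\|\mu\|_{H^s}^2=\int e^{\beta^2 C_X(x,y)}K_s(x-y)\,dx\,dy>0$, which is strictly positive since both factors of the integrand are positive on a neighborhood of the diagonal. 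The support statement is immediate from the fact that the $H^s$-closed subspace of distributions supported in $\overline U$ contains each $\mu_n$ by construction.

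The main obstacle is justifying the uniform dominator in the second paragraph: one needs $\sup_{n,m}\E[X_n(x)X_m(y)]\le -\log|x-y|+C$ on compact subsets of $U\times U$, uniformly in the indices, which has to come out of the precise axioms of a standard approximation (Definition~\ref{def:standard}) together with the regularity assumptions \eqref{eq:cov} and \eqref{eq:assumptions} on $g$. Once this ingredient is in hand, all remaining steps reduce to a routine Fubini--dominated-convergence exercise in the Hilbert space $L^2(\Omega;H^s)$.
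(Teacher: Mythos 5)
Your overall strategy (showing $\mu_n$ is Cauchy in $L^2(\Omega;H^s(\R^d))$ via the covariance identity $\E[\mu_n(x)\overline{\mu_m(y)}]=e^{\beta^2\E[X_n(x)X_m(y)]}$ and then applying dominated convergence) is the same as the paper's, with a minor cosmetic reorganization: you swap the order of integration and bring in the Bessel kernel $K_s$ explicitly, while the paper keeps the $\xi$-integral outside and works with $\E\left|\mu_m(\varphi)-\mu_n(\varphi)\right|^2$ for fixed $\varphi$. Either formulation is fine.

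However, the step you yourself flag as ``the main obstacle'' is a genuine gap, and it cannot be closed the way you propose. You assert that Definition~\ref{def:standard} yields a uniform bound $\E[X_n(x)X_m(y)]\le -\log|x-y|+C$ for the \emph{cross}-covariances, and you then treat $e^{\beta^2\E X_n(x)X_m(y)}$ with the same dominator as the diagonal terms. But conditions~(ii) and~(iii) of Definition~\ref{def:standard} only bound $\E X_n(x)X_n(y)$, i.e.\ the covariance of $X_n$ with \emph{itself}. Condition~(i) gives convergence in measure of $\E X_n(x)X_m(y)$ to $C_X(x,y)$ but no quantitative bound; Cauchy--Schwarz gives only $\E X_n(x)X_m(y)\le \tfrac12(\E X_n(x)^2+\E X_m(y)^2)\approx \tfrac12\log(1/(c_nc_m))+O(1)$, which does not produce an $n,m$-uniform $|x-y|^{-\beta^2}$ dominator. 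So dominated convergence cannot be applied to the cross term as you claim. (For the specific case of convolution approximations, Lemma~\ref{le:standard} does give such a bound on $\E X_\varepsilon(x)X_\delta(y)$, but Theorem~\ref{th:existuniq} is stated for general standard approximations.)

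The paper circumvents this by exploiting the sign structure: in
\begin{align*}
\E\|\mu_n-\mu_m\|_{H^s}^2=\int\bigl(e^{\beta^2 C_{n,n}}+e^{\beta^2 C_{m,m}}-e^{\beta^2 C_{n,m}}-e^{\beta^2 C_{m,n}}\bigr)K_s(x-y)\,dx\,dy,
\end{align*}
the diagonal terms do have the dominator coming from~(iii), and the cross terms are nonnegative and appear with a minus sign. One therefore applies dominated convergence (along subsequences converging a.e., extracted from convergence in measure) to the diagonal terms and Fatou's lemma to the cross terms: $\liminf_{(n,m)}\int e^{\beta^2 C_{n,m}}K_s\ge\int e^{\beta^2 C_X}K_s$. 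Together these give $\limsup\E\|\mu_n-\mu_m\|_{H^s}^2\le 0$, which suffices. The same issue and the same fix (here a reverse Fatou, since the combined integrand is bounded above by the dominated diagonal part) are needed in the uniqueness step: the compatibility hypothesis~\eqref{eq:compat} again provides only convergence in measure of $\E X_n(x)\widetilde X_n(y)$, not a uniform bound. Replacing your dominated-convergence claim for the cross-covariances by a Fatou argument makes your proof correct and essentially identical to the paper's.
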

\noindent Our proof of this theorem, which does not rely on martingale theory as in \cite{BJM,LRV}, is a rather basic probabilistic argument involving calculating second moments of objects such as $\int f(x)\mu_n(x)dx$ for suitable $f$ -- the proof is the main content of Section \ref{subsec:construction}. We wish to point out here that one can show that different convolution approximations satisfy the condition \eqref{eq:compat} so this theorem shows that the limiting random variable $\mu$ is indeed unique at least if one restricts one's attention to convolution approximations.

As discussed earlier, one of our main goals in this article is to understand (essentially optimal) regularity properties of the object $\mu$. While convergence in the space $H^{s}(\R^d)$ with $s<-\frac{d}{2}$ means that $\mu$ can not be terribly rough, it does not say that $\mu$ isn't say a $C^\infty$-function. The following result, which is our main result concerning analytic properties of imaginary multiplicative chaos, rules out this kind of possibility, or even the possibility that $\mu$ would be a complex measure. As this means that $\mu$ is a true generalized function, we also study more extensively to which spaces of generalized functions does $\mu$ belong to and essentially extract its optimal regularity. For a reminder of the relevant spaces of generalized functions: $B_{p,q}^s$, Triebel spaces, etc., along with their uses, we refer the reader again to Section \ref{sec:spaces}.

\medskip

\begin{theorem}\label{th:regularity}
Let  $\mu$ be the imaginary multiplicative chaos given by Theorem \ref{th:existuniq} and let $1\leq p,q\leq \infty$. Then the following are true. 
 \begin{enumerate}[label={\rm (\roman*)}]
\item  $\mu$  is almost surely not a complex measure.

\item We have almost surely $\mu \in B_{p,q,loc}^s(U)$ when $s < -\frac{\beta^2}{2}$ and $\mu\notin B_{p,q,loc}^s(U)$ when $s > -\frac{\beta^2}{2}$. 

\item Assume moreover that the function $g$ from \eqref{eq:logcov} satisfies $g\in L^\infty (U\times U)$ or that $X$ is the GFF with zero boundary conditions -- see Example \ref{ex:fields}. Then almost surely $\mu \in B_{p,q}^s(\R^d)$ when $s < -\frac{\beta^2}{2}$. 

\item Analogous statements hold for  Triebel spaces with $p,q\in [1,\infty)$.
\end{enumerate}
\end{theorem}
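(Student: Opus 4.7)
The overall plan is to reduce Theorem \ref{th:regularity} to moment bounds for $\mu$ paired with bump functions, and then translate these via the Littlewood--Paley characterization of Besov and Triebel spaces. The basic input I would establish first is the Wick-type moment identity
\[
\E\Bigl|\int f\,d\mu\Bigr|^{2k}=\int \prod_{i<j}|x_i-x_j|^{\beta^2}|y_i-y_j|^{\beta^2}\prod_{i,j}|x_i-y_j|^{-\beta^2}E_g\prod_i f(x_i)\overline{f(y_i)}\,dxdy,
\]
where $E_g$ is a bounded factor coming from $g$. Scaling shows $\E|\int\phi_r\,d\mu|^2\asymp r^{2d-\beta^2}$ for a smoothed indicator $\phi_r$ of a ball of radius $r$, and a rearrangement estimate comparing the $2k$-moment integral to the $k$-th power of the second-moment integral gives the hypercontractive bound $\E|\langle\mu,f\rangle|^{2k}\le C_k(\E|\langle\mu,f\rangle|^2)^k$. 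These two facts will drive every part.

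For \emph{part (i)}, I would argue by contradiction. Partition a compact set $K\subset U$ into dyadic cubes $Q_{j,k}$ of side $2^{-j}$ and set $\Sigma_j=\sum_k|\mu(Q_{j,k})|$; if $\mu$ were a complex measure, $\Sigma_j$ would be non-decreasing in $j$ with limit $\|\mu\|_{\mathrm{TV}(K)}$. Paley--Zygmund combined with the hypercontractive fourth-moment bound yields $\E|\mu(Q_{j,k})|\gtrsim 2^{-j(d-\beta^2/2)}$, hence $\E\Sigma_j\gtrsim 2^{j\beta^2/2}\to\infty$, while Cauchy--Schwarz gives $\E\Sigma_j^2\lesssim 2^{j\beta^2}$. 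A second Paley--Zygmund step then produces an event of probability bounded below uniformly in $j$ on which $\Sigma_j\to\infty$; by monotonicity this contradicts $\|\mu\|_{\mathrm{TV}(K)}<\infty$ on a set of positive probability, and a local $0$--$1$ law based on the scale decomposition of $X$ upgrades the conclusion to almost sure.

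For \emph{parts (ii) and (iv)}, I would use the Littlewood--Paley blocks $\Delta_j\mu=\mu*\psi_j$ with $\psi_j$ at frequency $2^j$. The second-moment estimate translates to $\E|\Delta_j\mu(x)|^2\lesssim 2^{j\beta^2}$, and hypercontractivity promotes it to $\E|\Delta_j\mu(x)|^p\lesssim 2^{jp\beta^2/2}$ for all finite $p$. Integrating over compact subsets of $U$ and summing in $j$ with the weight $2^{jsq}$ yields the almost-sure finiteness of the local Besov norm when $s<-\beta^2/2$. For the non-inclusion $\mu\notin B^s_{p,q,loc}$ when $s>-\beta^2/2$, I would combine Paley--Zygmund for $\|\Delta_j\mu\|_{L^p}^p$ with a Borel--Cantelli argument across scales to force infinitely many wavelet-block norms to be too large. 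Triebel spaces are handled identically via Fubini, swapping the order of $L^p$ and $\ell^q$. For \emph{part (iii)}, under bounded $g$ the constants above are uniform on $U$, so the integration extends globally; for the zero-boundary GFF the explicit boundary decay of the Green's function controls the contribution near $\partial U$.

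The step I expect to be the main obstacle is the non-inclusion direction $\mu\notin B^s_{p,q,loc}$ for $s>-\beta^2/2$. A lower bound on a single wavelet-block norm is insufficient, since the Besov norm is a sum or supremum across scales and one must rule out all of them being simultaneously small. Obtaining the required almost-sure statement calls either for a scale decomposition of $X$ into nearly independent components --- available for the GFF but not for a general log-correlated field --- or for a sharp concentration estimate for $\|\Delta_j\mu\|_{L^p}$ uniform in $j$. Making either approach work for a general covariance $g$ is the most delicate part of the proof.
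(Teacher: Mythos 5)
Your proposal is plausible in broad outline but departs from the paper at several key points, and one of those departures leaves a genuine gap.

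For part (i), you argue by summing $|\mu(Q_{j,k})|$ over a dyadic partition, applying Paley--Zygmund to $\Sigma_j$, and then invoking a $0$--$1$ law to upgrade positive probability to probability one. The paper's proof (Theorem~\ref{thm:l2_not_measure}) is different and in a certain sense sharper: it tests $\mu$ against a single \emph{random} test function $f_k(x)=e^{-i\beta X_{1/k}(x)}\psi(x)$, computes $A_\delta=\E\mu(f_k)\gtrsim \delta^{-\beta^2/2}$ and $B_\delta=\E|\mu(f_k)|^2$, shows $A_\delta^2/B_\delta\to 1$, and applies Paley--Zygmund to $|\mu(f_k)|$ so that the probability of blow-up tends to $1$ with no $0$--$1$ law needed. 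Your dyadic-cube route is viable, but the $0$--$1$ law step you gesture at does require justification (that "$\mu$ is a measure on $K$" is a tail event for the Karhunen--Lo\`eve decomposition; this is true because adding a finite KL-slice multiplies $\mu$ by a nonvanishing smooth function, but the paper's Lemma~\ref{le:KLdef} that identifies the KL-based chaos with the convolution-based one requires a mild extra condition on $g$). So your approach buys elementarity at the cost of needing this additional ingredient.

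For the upper-bound direction in parts (ii)--(iv), your Littlewood--Paley argument with pointwise second moments plus fixed-order hypercontractivity is essentially the same circle of ideas as the paper's, which computes $\E\|\psi\mu\|_{B^s_{2n,2n}}^{2n}$ via the $2n$-th moment formula, the weak Onsager bound (Proposition~\ref{prop:electrostatic_inequality}(ii), which holds under only the basic assumptions), and a scaling change of variables. Both routes work; the paper's bookkeeping through $B^s_{2n,2n}$ and the embeddings \eqref{eq:betrie}--\eqref{eq:beem} is a clean way to cover all $p,q\in[1,\infty]$ at once. For part (iii), the same calculation with the global Onsager estimate (Proposition~\ref{prop:electrostatic_GFF} for the GFF, or bounded $g$) is as you describe.

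The real gap is the non-inclusion $\mu\notin B^s_{p,q,loc}(U)$ for $s>-\beta^2/2$. You correctly flag this as the obstacle, and the two routes you propose --- a scale decomposition of $X$ into nearly independent pieces, or sharp concentration for $\|\Delta_j\mu\|_{L^p}$ uniformly in $j$ --- are both problematic: the first is not available for a general log-correlated field (as you acknowledge), and the second would require controlling the full distribution of $\|\Delta_j\mu\|_{L^p}$, well beyond what the moment estimates give. The paper sidesteps this entirely with a duality argument: by the embeddings it suffices to show $\psi\mu\notin B^{-s}_{1,1}$; the blow-up event \eqref{eq:blowup_event} from the not-a-measure proof gives a random sequence $n_k\to\infty$ with $|\mu(f_{n_k})|\geq C n_k^{\beta^2/2-\delta}$ almost surely, and duality between $B^{-s}_{1,1}$ and $B^s_{\infty,\infty}$ together with a pathwise H\"older estimate on $f_n=\psi e^{-i\beta X_{1/n}}$ (obtained via the lift operator $I^\delta$, Fernique's theorem, and Borel--Cantelli) yields $\|f_n\|_{B^s_{\infty,\infty}}\lesssim n^{s+2\delta}$, forcing $\|\psi\mu\|_{B^{-s}_{1,1}}=\infty$. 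The crucial idea you are missing is that the same random test functions $f_k$ that prove part (i) also produce the lower bound in part (ii) through duality --- no scale-wise concentration is needed. Without this, your proposed proof of the non-inclusion does not go through.
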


\noindent For obtaining upper bounds on regularity, our proof of this theorem relies on estimating low order moments of $\mu(f_k)$ for a suitable sequence of (random) test functions, while for lower bounds we combine the Fourier-analytic definition of Besov spaces with moment estimates of $\mu(f)$ for general deterministic $f$ -- the details of the proof are presented in Section~\ref{subsec:regularity}.

\smallskip

Having described our main results concerning analytic properties of imaginary multiplicative chaos, we move onto basic probabilistic properties of it. The main question we wish to answer is what can be said about the law of $\mu(f)=\lim_{n\to \infty}\int e^{i\beta X_n(x)+\frac{\beta^2}{2}\E [X_n(x)^2]}f(x)dx$ for a given $f\in C_c^\infty(U)$. Our study of this question is through analysis of moments of $\mu(f)$. The existence of all positive moments is one of the main things that makes imaginary multiplicative chaos special compared to real or general complex chaos. More precisely, if one considers general complex multiplicative chaos, formally written as $e^{\beta X(x)}$ with $\mathrm{Re}(\beta)\neq 0$, then it is known that generically $\E|\int f(x)e^{\beta X(x)}|^k$ will be finite only for $0\leq k\leq k_0$ for some finite $k_0$. We will show that for purely imaginary chaos, all moments exist. Moreover, as we will see, the moments grow slowly enough for the law of the random variable $\mu(f)$ to be characterized by the moments $\E \mu(f)^k\overline{\mu(f)}^l$, with $k,l$ non-negative integers.  A similar phenomenon has been observed for a particular model of what might be called signed multiplicative chaos -- see \cite{BM}. 

The fact that the moments $\mu(f)$ grow slowly enough to determine the distribution for a particular variant of the Gaussian free field (corresponding to $g=0$ in \eqref{eq:logcov}) follows from the work in \cite{GP,LSZ}. Interesting related estimates in connection with the sine-Gordon model are obtained in \cite{HS}. However, the case of general $g$ in \eqref{eq:logcov} leads to surprising complications.
Our analysis of moments is based on variants and generalizations of a famous inequality originally due to Onsager \cite{O}, that is often called Onsager's lemma (see e.g. \cite{DL}), or the electrostatic inequality (see e.g. \cite{GP}), as it involves the Green's function of the Laplacian in its original form. As we are not focusing on the Green's function, we find it more suitable to simply refer to our inequalities as Onsager (type) inequalities. As these Onsager inequalities are not directly properties of multiplicative chaos, we don't record them in this introduction, but refer the reader to Section \ref{subsec:moments} -- see Proposition \ref{prop:electrostatic_inequality}, Theorem \ref{th:onsager_general}, and Proposition \ref{prop:electrostatic_GFF}. Of these results, we prove in Section \ref{subsec:moments} Proposition \ref{prop:electrostatic_inequality} and Proposition \ref{prop:electrostatic_GFF} which apply in the case $d=2$. We prove Theorem \ref{th:onsager_general}, which applies for $d\neq 2$, in a separate article \cite{JSW}. There the proof of Theorem~\ref{th:onsager_general} is based on a non-trivial decomposition result for log-correlated fields, which has several other applications as well, and we hence find it more suited for a separate publication.  While one might argue that the most interesting log-correlated fields are variants of the Gaussian free field in two dimensions, we chose to present results for general $d$ as there are natural one-dimensional log-correlated fields arising e.g. in random matrix theory \cite{FKS,HKO} and also four-dimensional ones arising in the study of the uniform spanning forest -- see \cite{LSW}. 

Given our Onsager inequalities, we may then deduce that all positive integer moments of imaginary multiplicative chaos are finite, and in fact grow slowly enough to determine the law of imaginary chaos  -- which can be seen as another kind of uniqueness result. More precisely, we have the following result, which requires some further regularity from the covariance of our log-correlated field.

\begin{theorem}\label{th:moments}
Let $X$ be a log-correlated field on $U\subset \R^d$ satisfying the conditions \eqref{eq:cov} and \eqref{eq:assumptions}. Let $0<\beta<\sqrt{d}$ and $\mu$ be the random generalized function provided by Theorem \ref{th:existuniq}. Then for $f\in C_c^\infty(U)$, $\E|\mu(f)|^k<\infty$ for all $k>0$. For $d=2$, assume further that the function $g$ from \eqref{eq:cov} satisfies $g\in C^2(U\times U)$ and for $d\neq 2$, assume that $g\in H_{loc}^{d+\varepsilon}(U\times U)$  for some $\varepsilon>0$.\footnote{For the definition of the Sobolev space $H_{loc}^{d+\varepsilon}(U\times U)$, see Section \ref{sec:spaces}.}  Then there exists a constant $C>0$ independent of $f$ and $N$ such that for $N\in \Z_+$
$$
\E|\mu(f)|^{2N}\leq \|f\|_\infty^{2N} C^N N^{\frac{\beta^2}{d}N}.
$$
In particular, the law of $\mu(f)$ is determined by the moments $\E \mu(f)^k\overline{\mu(f)}^{l}$ with $l,k$ non-negative integers and $\E e^{\lambda |\mu(f)|}<\infty$ for all $\lambda>0$.
\end{theorem}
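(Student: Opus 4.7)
The plan is to reduce everything to a single quantitative estimate on $\E|\mu(f)|^{2N}$, from which finiteness of arbitrary moments, moment‑determinateness and exponential integrability all follow by soft arguments. The starting point is an explicit evaluation at the level of the approximation $\mu_n$. Writing $\mu_n(f)^N\overline{\mu_n(f)}^N$ as a $2N$-fold integral and applying the Gaussian moment formula with signs $\epsilon_j=+1$ for $j\leq N$ and $\epsilon_j=-1$ otherwise, the Wick normalization $\exp\bigl(\tfrac{\beta^2}{2}\E[X_n(x)^2]\bigr)$ exactly cancels the diagonal contribution and leaves
$$\E|\mu_n(f)|^{2N}=\int_{U^{2N}}\exp\Bigl(-\beta^2\sum_{j<k}\epsilon_j\epsilon_k C_n(x_j,x_k)\Bigr)\prod_{j=1}^{2N}f_{\epsilon_j}(x_j)\,dx_j,$$
with $f_{+1}=f$ and $f_{-1}=\overline f$. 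Standard arguments (Fatou together with the convergence in Theorem~\ref{th:existuniq}, and monotone/dominated control on the integrand using $C_n\to -\log|\cdot|+g$ off the diagonal) allow passage to the limit, giving
$$\E|\mu(f)|^{2N}=\int_{U^{2N}}\prod_{j<k}|x_j-x_k|^{\beta^2\epsilon_j\epsilon_k}\exp\Bigl(-\beta^2\sum_{j<k}\epsilon_j\epsilon_k g(x_j,x_k)\Bigr)\prod_{j=1}^{2N}f_{\epsilon_j}(x_j)\,dx_j.$$

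The crucial step is to control this Coulomb gas integrand via the Onsager-type inequalities of Section~\ref{subsec:moments}. Under the stated regularity on $g$ ($g\in C^2$ for $d=2$ or $g\in H^{d+\varepsilon}_{loc}$ for $d\neq 2$), these inequalities bound the full integrand pointwise by $C_1^N\prod_j r_j^{-\beta^2/2}$, where $r_j=\min_{k\neq j}|x_j-x_k|$. A standard nearest-neighbor volume-packing argument then estimates $\int_{U^{2N}}\prod_j r_j^{-\beta^2/2}\,dx_j\leq C_2^N N^{\beta^2 N/d}$, and combining these with $\|f_{\epsilon_j}\|_\infty=\|f\|_\infty$ yields the desired bound $\E|\mu(f)|^{2N}\leq\|f\|_\infty^{2N}C^N N^{\beta^2 N/d}$.

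Everything else follows cheaply. Lyapunov's inequality applied to the smallest even $2N\geq k$ gives finiteness of every positive moment. For moment-determinateness of the complex variable $\mu(f)$, I would apply Carleman's condition jointly to $\Re\mu(f)$ and $\Im\mu(f)$: the bound gives $(\E|\mu(f)|^{2N})^{-1/(2N)}\gtrsim N^{-\beta^2/(2d)}$, and since $\beta^2/(2d)<1/2$ the series $\sum_N N^{-\beta^2/(2d)}$ diverges. For the exponential moment, expanding $\E e^{\lambda|\mu(f)|}=\sum_k \lambda^k\E|\mu(f)|^k/k!$ and using the moment bound together with Stirling makes each term of order $(C\lambda e)^k k^{k(\beta^2/(2d)-1)}$, which is summable for every $\lambda>0$ because $\beta^2/(2d)<1/2$.

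The main obstacle is, naturally, the Onsager inequality itself, namely absorbing the $g$-perturbation into the classical Coulomb gas lower bound with constants of the right order in $N$. In $d=2$ this is handled by smearing each point charge over a ball of radius $\sim r_j/2$, exploiting positive-definiteness of the smeared energy built from $-\log|\cdot|+g$, and estimating the replacement error via the $C^2$-norm of $g$, which contributes only $O(N)$. For $d\neq 2$ the analogue (Theorem~\ref{th:onsager_general}) relies on a non-trivial decomposition of log-correlated fields established in the companion paper~\cite{JSW}, and here one simply invokes it as a black box.
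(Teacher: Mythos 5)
Your approach is essentially the same as the paper's: explicit Coulomb-gas formula for $\E|\mu(f)|^{2N}$ via Gaussian computation, Onsager-type inequalities to bound the integrand by $\prod_j r_j^{-\beta^2/2}$, the nearest-neighbor packing estimate (Lemma~\ref{lemma:combinatorial_argument}) giving $C^N N^{\beta^2 N/d}$, then Carleman/exponential-moment bookkeeping. Two points are elided.

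First, the theorem asserts $\E|\mu(f)|^k<\infty$ for \emph{all} $k>0$ under only the basic assumptions \eqref{eq:cov} and \eqref{eq:assumptions}, before the extra regularity of $g$ enters. Your ``passage to the limit'' step (Fatou/dominated convergence from $\E|\mu_n(f)|^{2N}$ to $\E|\mu(f)|^{2N}$) needs an integrable dominating function for the Coulomb-gas integrand, and you only produce one from the $O(N)$-error Onsager inequality, which requires the extra smoothness of $g$. The paper handles the general case separately (Corollary~\ref{cor:integrability}) using the cruder Proposition~\ref{prop:electrostatic_inequality}(ii), which holds under the bare assumptions at the cost of an $O(N^2)$ error in the exponent; that suffices for finiteness of all moments and for rigorously justifying the limiting identity \eqref{eq:formal_moments}, even though it does not give the sharp $N^{\beta^2N/d}$ growth. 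As stated, your argument proves a weaker finiteness claim than the theorem.

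Second, for $d\neq 2$ the Onsager inequality from Theorem~\ref{th:onsager_general} is only \emph{local}: it holds for points in a small neighbourhood $B_\delta(0)$ whose size depends on $g$. To deduce the bound for arbitrary $f\in C_c^\infty(U)$, the paper covers $\supp f$ by finitely many such balls and uses a smooth partition of unity, paying a factor $m^{2N}\le C^N$ for the number of pieces $m$. You invoke Theorem~\ref{th:onsager_general} as a black box but do not mention the localization; without it, the pointwise bound $C_1^N\prod_j r_j^{-\beta^2/2}$ is not available globally on $(\supp f)^{2N}$. Neither gap is deep, but both need to be filled.
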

\noindent In the special case of $d=2$, $f=1$\footnote{Note that we require test functions to have compact support so in our setting $f=1$ is not strictly speaking a valid test function for $\mu$, but if one were not interested in realizing $\mu$ as a random generalized function, one could simply consider the sequence of random variables $\mu_n(1)$, which are perfectly well defined, and show that these converge to something non-trivial. Such a phenomenon of being able to make sense of a random generalized function acting on a single test function which is not a priori a valid test function is common, and occurs e.g. for white noise.}, and $g=0$, such moments can in fact be interpreted as the canonical partition function of the so-called two-dimensional two-component plasma or neutral Coulomb gas. The connection between this model and imaginary multiplicative chaos was noted in \cite[Appendix A]{LSZ}, where using the main results of \cite{LSZ}, very precise asymptotics for these moments were derived.  Moreover, using these precise asymptotics, precise estimates for the tail of the distribution of the random variable one might formally write as $|\mu(1)|$ were derived.  In this spirit, we combine Theorem \ref{th:moments} and Proposition \ref{prop:moment_lower_bound} to obtain similar but slightly weaker results for general $d,g,f$:

\begin{theorem}\label{th:tails}
Let $X$ be a log-correlated field on $U$ satisfying the conditions \eqref{eq:cov} and \eqref{eq:assumptions}. Assume further that the function $g$ from \eqref{eq:cov} satisfies the following condition$:$ if $d=2$, $g\in C^2(U\times U)$ and if $d\neq 2$, $g\in H_{loc}^{d+\varepsilon}(U\times U)$ for some $\varepsilon>0$. Now let $0<\beta<\sqrt{d}$ and $\mu$ be the random generalized function from Theorem \ref{th:existuniq}. Then for $f\in C_c^\infty(U)$
$$
\limsup_{\lambda\to \infty}\frac{\log \P(|\mu(f)|>\lambda)}{\lambda^{\frac{2d}{\beta^2}}}<0.
$$
Let us now assume further that  $f\geq 0$ and $f$ is not identically zero. Then for any $\varepsilon>0$ we have
$$
\liminf_{\lambda\to \infty}\frac{\log \P(|\mu(f)|>\lambda)}{\lambda^{\frac{2d}{\beta^2}+\varepsilon}}>-\infty.
$$
\end{theorem}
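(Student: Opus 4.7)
The plan is to derive both tail bounds from moment estimates via the standard dualities: Markov's inequality converts the moment upper bound of Theorem~\ref{th:moments} into an upper tail estimate, and the Paley--Zygmund inequality converts a moment lower bound from Proposition~\ref{prop:moment_lower_bound} into a lower tail estimate. In both directions the exponent $2d/\beta^2$ emerges by optimising in the moment order $N$.

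For the upper tail, Markov together with Theorem~\ref{th:moments} gives, for every $N\in\Z_+$,
$$
\P(|\mu(f)|>\lambda)\;\leq\;\lambda^{-2N}\E|\mu(f)|^{2N}\;\leq\;\left(\frac{C^{1/2}\|f\|_\infty\, N^{\beta^2/(2d)}}{\lambda}\right)^{\!2N}.
$$
Choosing $N=\lfloor a\lambda^{2d/\beta^2}\rfloor$ with $a>0$ small enough that $C^{1/2}\|f\|_\infty a^{\beta^2/(2d)}<1$ makes the quantity in parentheses bounded by some $\rho<1$; hence the right-hand side is $\leq \rho^{2N}\leq \exp(-c\lambda^{2d/\beta^2})$ for a suitable $c>0$ and all $\lambda$ large. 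This proves the first assertion.

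For the lower tail, assume $f\geq 0$ with $f\not\equiv 0$, and apply the Paley--Zygmund inequality to the non-negative random variable $Y=|\mu(f)|^{2N}$:
$$
\P\!\left(|\mu(f)|^{2N}>\tfrac12\,\E|\mu(f)|^{2N}\right)\;\geq\;\frac{1}{4}\cdot\frac{\bigl(\E|\mu(f)|^{2N}\bigr)^2}{\E|\mu(f)|^{4N}}.
$$
The numerator is bounded from below using Proposition~\ref{prop:moment_lower_bound}, while the denominator is controlled from above by Theorem~\ref{th:moments} applied with moment order $4N$. Since the $(2N)$-th moment grows essentially like $N^{\beta^2 N/d}$, the event in question is of the form $\{|\mu(f)|>\lambda_N\}$ with $\lambda_N$ of order $N^{\beta^2/(2d)}$, and the Paley--Zygmund lower bound takes the form $\kappa^{N}N^{-\eta N}$, where $\eta\geq 0$ measures the (arbitrarily small) gap between the lower and upper moment exponents. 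Taking logarithms yields $\log\P(|\mu(f)|>\lambda_N)\geq -C_1 N - C_2 \eta N\log N$. Inverting $\lambda\asymp N^{\beta^2/(2d)}$ to $N\asymp \lambda^{2d/\beta^2}$, the leading $-C_1 N$ term gives a tail at the exact rate $\lambda^{2d/\beta^2}$, while the $N\log N$ correction contributes $\lambda^{2d/\beta^2}\log\lambda\leq \lambda^{2d/\beta^2+\varepsilon}$ for any prescribed $\varepsilon>0$ and $\lambda$ large. This produces the claimed lower bound.

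The main obstacle will be the careful reconciliation of the two moment bounds in the Paley--Zygmund step: the lower moment bound of Proposition~\ref{prop:moment_lower_bound} need not match the upper moment bound of Theorem~\ref{th:moments} with the same constants, and the residual $N\log N$ correction is precisely what forces the $\varepsilon$ in the statement. A minor nuisance is the integer rounding in the choice of $N$, which only affects sub-leading terms and is harmless.
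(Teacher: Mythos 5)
Your upper-tail argument is the same as the paper's: Chebyshev/Markov combined with the moment bound $\E|\mu(f)|^{2N}\le \|f\|_\infty^{2N}C^NN^{\beta^2N/d}$ from Theorem~\ref{th:moments}, with $N$ taken of order $\lambda^{2d/\beta^2}$.

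For the lower tail you take a genuinely different route. The paper argues by contradiction: it supposes that $\log\P(|\mu(f)|>\lambda)\le -\lambda^{2d/\beta^2+\varepsilon}$ along arbitrarily large $\lambda$, writes $\E|\mu(f)|^{2N}=2N\int_0^\infty x^{2N-1}\P(|\mu(f)|>x)\,dx$, splits the integral over $[0,\lambda]$, $[\lambda,b]$ and $[b,\infty)$ for a carefully tuned $b$ and $N$, and shows the resulting moment would grow like $e^{\frac{\beta^2}{d(1+\delta')}N\log N}$, contradicting Proposition~\ref{prop:moment_lower_bound}. Your Paley--Zygmund approach applied to $Y=|\mu(f)|^{2N}$ with $\theta=\tfrac12$ is rigorous and considerably cleaner; the one step you leave tacit is that, since $\lambda_N=(\tfrac12\E|\mu(f)|^{2N})^{1/(2N)}$ is not an explicit function of $N$, one must, for an arbitrary large $\lambda$, exhibit some $N$ with $\lambda_N\ge\lambda$ and $N\lesssim\lambda^{2d/\beta^2}$. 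This follows at once from the lower moment bound, which yields $\lambda_N\gtrsim N^{\beta^2/(2d)}$, so $N=\lceil(\lambda/c)^{2d/\beta^2}\rceil$ works.

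Where you are too cautious is in the final paragraph: there is in fact no residual $N\log N$ term in the Paley--Zygmund ratio. Proposition~\ref{prop:moment_lower_bound} gives $\log\E|\mu(f)|^{2N}\ge\frac{\beta^2}{d}N\log N+\mathcal O(N)$ and Theorem~\ref{th:moments} gives $\log\E|\mu(f)|^{2N}\le\frac{\beta^2}{d}N\log N+\mathcal O(N)$ with the \emph{same} leading coefficient $\beta^2/d$. Hence
\[
2\log\E|\mu(f)|^{2N}-\log\E|\mu(f)|^{4N}\ge \frac{2\beta^2}{d}N\log N-\frac{\beta^2}{d}(2N)\log(2N)+\mathcal O(N)=\mathcal O(N),
\]
the $N\log N$ contributions cancelling exactly. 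Thus $\P(|\mu(f)|>\lambda_N)\ge\tfrac14 e^{-cN}$ with no logarithmic correction, and your route actually produces the strictly sharper conclusion $\log\P(|\mu(f)|>\lambda)\ge -C\lambda^{2d/\beta^2}+\mathcal O(1)$, i.e.\ the $\varepsilon$ in the theorem is unnecessary. That $\varepsilon$ is an artifact of the paper's contradiction argument (which needs the assumed tail decay to beat $N^{1+\delta'}$), not a feature of the moment bounds themselves. So your plan is correct, and in fact slightly stronger than the statement once the cancellation above is observed.
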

\noindent We also point out that in \cite[Proposition 17]{NW}, similar tail bounds in the setting of the Gaussian free field (or more precisely, $g=0$) were used to establish a Lee--Yang property for imaginary multiplicative chaos.

Proving the results discussed here is the main content of Section \ref{sec:ichaos}. In addition to these results, we also consider what we call universality properties of imaginary chaos in Section \ref{subsec:universality}, where we show that through a similar regularization/renormalization scheme, one can make sense of $H(X)$ for a large class of periodic functions $H$, and the relevant object can be expressed in general in terms of imaginary multiplicative chaos -- see Theorem \ref{th:kosini}. In Section \ref{subsec:critical}, we study how the objects $\mu$ behave in the vicinity of the critical point $\beta=\sqrt{d}$. More precisely, we prove in Theorem \ref{thm:crit} that once one multiplies $\mu=\mu_\beta$ by a suitable deterministic quantity tending to zero as $\beta\nearrow \sqrt{d}$, one has convergence to a weighted complex white noise. 
 
This concludes the summary of our results concerning the basic analytic and probabilistic properties of imaginary multiplicative chaos. We now turn to the connection between the Ising model and the random generalized functions $\mu$ of Theorem \ref{th:existuniq}.

\subsection{Main results on the Ising model and multiplicative chaos}\label{subsec:isingresults} 

In this section we review our basic results concerning the Ising model and imaginary chaos, beginning with some background to the problem we study. The Ising model is one of the most studied models of statistical mechanics, where the object of interest is a random spin configuration on some graph, or in other words, a random function defined on say the vertices of the graph and taking values $\pm 1$. The model is known to describe  certain aspects of ferromagnets -- for its definition (in two dimensions and $+$ boundary conditions), see Section \ref{sec:Isingdef} and for an extensive introduction to it, see e.g. \cite{Baxter}. A particularly important property of the Ising model on say $\Z^d$ with $d\geq 2$, is that at a certain temperature, known as the critical temperature, the model undergoes a phase transition, and the behavior of the correlation functions of the spin configuration change abruptly.  It has been recently proven in \cite{CHI} that for $d=2$ and precisely at this critical temperature, these correlation functions have a non-trivial scaling limit and this scaling limit possesses certain conformal symmetries -- see Theorem \ref{th:CHI} where we recall this result. Indeed, physicists know that quite generically, models of statistical physics at their critical points\footnote{Namely at a point of a phase transition where e.g. the correlation lengths of quantities of interest diverge.} have scaling limits which can be described by  quantum field theories behaving nicely under conformal transformations. While rigorously proving such statements has turned out to be very challenging for mathematicians, there has been rather spectacular progress in this direction in the case of the two-dimensional Ising model over the past two decades.

A particularly successful method for making precise mathematical sense of quantum field theories has been constructing probability measures on suitable spaces of generalized functions and proving that the relevant quantum field theory can be constructed from these random generalized functions -- we refer the interested reader to \cite{GJ} for further details about this construction. This kind of procedure has in fact more or less been carried out for the critical planar Ising model: in \cite{CGN1}, the authors proved that the random spin configuration of a critical Ising model on $\Z^2$ has as a scaling limit a certain random generalized function (whose correlation functions are closely related to those of \cite{CHI}), and it more or less follows that this gives rise to an operator and Hilbert space representation of the corresponding quantum field theory. This being said, as a probabilistic object, the scaling limit constructed in \cite{CGN1} is perhaps slightly poorly understood. Essentially no other characterisation for it is known besides being the scaling limit of the critical Ising model, or equivalently the unique random generalized function whose correlation functions are the scaling limit of the Ising ones. For example, if one wished to simulate it, to our knowledge, the easiest way is to simply simulate an Ising model on a domain with a fine mesh.

One of our goals is to show that if we change the model slightly, then one ends up with a random generalized function which can be constructed also in other ways -- in particular, simulating it boils down to simulating a sequence of independent standard Gaussian random variables. The model we consider is the so-called XOR-Ising model (see e.g. \cite{BdT,Wilson} and references therein for studies related to it), whose spin configuration is a pointwise product of two independent Ising spin configurations. Our main result concerning the XOR-Ising model is that for $d=2$ and at the critical point, the spin configuration has a scaling limit which is the real part of an imaginary multiplicative chaos. The precise result is the following (for relevant definitions and notation concerning the XOR-Ising model, see Section \ref{sec:XOR} and for the Gaussian free field, see Example \ref{ex:fields}).

\begin{theorem}\label{th:ising}
Let $X$ be the zero boundary condition Gaussian free field on a simply connected bounded planar domain $U\subset \R^2$ and let $\mathcal{S}_\delta$ denote the spin field\footnote{We find it convenient to define spin configurations as functions on faces of the lattice $\delta \Z^2$, or alternatively on the dual graph of $\delta \Z^2$, and by a spin field, we mean a function defined on $U$ which is constant on these lattice faces and in each face, it agrees with the value of the spin configuration on that face.} of the XOR-Ising model on a lattice approximation of $U$ with $\delta$-mesh and $+$ boundary conditions. Then for any $f\in C_c^\infty(U)$, 
$$
\delta^{-1/4}\int_U f(x)\mathcal{S}_\delta(x)dx\stackrel{d}{\to}\mathcal{C}^2\int_U f(x) \left(\frac{2|\varphi'(x)|}{ \mathrm{Im}\,\varphi(x)}\right)^{1/4}\cos\left(2^{-1/2}X(x)\right)dx 
$$

\noindent as $\delta\to 0$, where $\mathcal C=2^{5/48}e^{\frac{3}{2}\zeta'(-1)}$, $\varphi$ is any conformal bijection from $U$ to the upper half plane, and $\cos \frac{1}{\sqrt{2}}X(x)$ denotes the real part of the random generalized function $\mu$ constructed in Theorem \ref{th:existuniq} from convolution approximations of the random generalized function $X$ with $\beta=\frac{1}{\sqrt{2}}$, and the integral on the right hand side is formal notation meaning that we pair this random generalized function with the test function $f(x)(\frac{2|\varphi'(x)|}{ \mathrm{Im}\,\varphi(x)})^{1/4}$.
\end{theorem}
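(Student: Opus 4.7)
The plan is a method-of-moments argument that exploits the independence structure of the XOR-Ising model together with Theorem~\ref{th:CHI} on the scaling limit of critical Ising correlations. Set $h(x):=f(x)(2|\varphi'(x)|/\mathrm{Im}\,\varphi(x))^{1/4}$ and let $Z:=\mathcal{C}^2\,\mathrm{Re}\,\mu(h)$ be the random variable on the right. Theorem~\ref{th:moments} guarantees that all moments of $\mu(h)$ are finite, that $\E e^{\lambda|\mu(h)|}<\infty$ for every $\lambda>0$, and hence that the law of $Z$ is determined by its moments. It therefore suffices to prove that the real moments $\E[Z_\delta^N]$ of $Z_\delta:=\delta^{-1/4}\int_U f\,\mathcal{S}_\delta$ converge to $\E[Z^N]$ for every positive integer $N$, together with a uniform second-moment bound to provide tightness.

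By the XOR decomposition $\mathcal{S}_\delta=\sigma^{(1)}_\delta\sigma^{(2)}_\delta$ with independent critical Ising spin fields with $+$ boundary conditions, Fubini yields
$$\E[Z_\delta^N]=\delta^{-N/4}\int_{U^N}\prod_{k=1}^N f(x_k)\,\Big(\E\prod_{k=1}^N\sigma_\delta(x_k)\Big)^{\!2}\,dx_1\cdots dx_N.$$
Theorem~\ref{th:CHI} supplies $\delta^{-N/8}\E\prod_k\sigma_\delta(x_k)\to\mathcal{C}^N\langle\sigma(x_1)\cdots\sigma(x_N)\rangle^+_U$ for pairwise distinct $x_k\in U$ bounded away from $\partial U$, so after squaring the integrand converges pointwise a.e.\ to $\mathcal{C}^{2N}\prod_k f(x_k)\langle\sigma(x_1)\cdots\sigma(x_N)\rangle^2_U$. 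The crucial identification is the bosonization identity
$$\langle\sigma(x_1)\cdots\sigma(x_N)\rangle^2_U=\prod_{k=1}^N\Big(\frac{2|\varphi'(x_k)|}{\mathrm{Im}\,\varphi(x_k)}\Big)^{\!1/4}\E\prod_{k=1}^N\cos(X(x_k)/\sqrt{2}),$$
which I would verify by reducing to $U=\mathbb{H}$ through the fact that both sides are conformally covariant with the same weight $1/8$ per insertion (the Ising spin has weights $(\tfrac{1}{16},\tfrac{1}{16})$, while the vertex operator $\cos(\beta X)$ with $\beta=1/\sqrt 2$ has weight $\beta^2/4=\tfrac18$), and then comparing on $\mathbb{H}$ the explicit Pfaffian formula for the $+$-BC Ising correlation with the GFF expansion $\E\prod_k\cos(X(z_k)/\sqrt 2)=2^{-N}\sum_{\epsilon\in\{\pm 1\}^N}\prod_{i<j}(|z_i-z_j|/|z_i-\bar z_j|)^{\epsilon_i\epsilon_j/2}$ coming from the Dirichlet Green's function $\log|z-\bar w|-\log|z-w|$. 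A Cauchy-type identity expands the square of the Pfaffian into precisely the same sum over signs.

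To upgrade pointwise moment convergence to convergence of integrals, I would establish a uniform-in-$\delta$ integrable domination of the form $\delta^{-N/4}\bigl(\E\prod_k\sigma_\delta(x_k)\bigr)^{\!2}\leq C^N\prod_{i<j}|x_i-x_j|^{-1/4}$ on $U^N$. The right-hand side is integrable on compact subsets of $U^N$ thanks to the electrostatic/Onsager-type bound of Proposition~\ref{prop:electrostatic_GFF} applied with $\beta=1/\sqrt 2$, so dominated convergence combined with the moment-determinacy of Theorem~\ref{th:moments} closes the argument. The main obstacle is exactly the uniform lattice bound: Theorem~\ref{th:CHI} gives pointwise asymptotics, but obtaining a scale-uniform control matching the conformal dimension $1/8$ requires combining the Pfaffian structure of lattice Ising correlations with RSW/FKG short-distance estimates. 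A secondary subtlety is tracking the constants so that the bosonization identity lands precisely on $\mathcal{C}^2$; this demands careful matching of the normalizations used in Theorem~\ref{th:CHI} and in the GFF covariance.
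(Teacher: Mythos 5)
Your high-level strategy---method of moments, exploit the factorization $\mathcal S_\delta=\sigma_\delta^{(1)}\sigma_\delta^{(2)}$, square the Chelkak--Hongler--Izyurov formula, match to the GFF cosine moments and invoke moment determinacy from Theorem~\ref{th:moments}---is exactly the paper's route. But there are two concrete problems.

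First, your proposed uniform domination
$$\delta^{-N/4}\Bigl(\E\prod_{k=1}^N\sigma_\delta(x_k)\Bigr)^{2}\leq C^N\prod_{i<j}|x_i-x_j|^{-1/4}$$
has the wrong shape: the right-hand side is \emph{not integrable} over $U^N$ once $N$ is large. Near the full diagonal the singularity has exponent $\binom{N}{2}/4$ while the codimension is only $2(N-1)$, so integrability fails for $N\ge 16$. Consequently this bound cannot justify passing to the limit in the $N$th moment. What one actually needs---and what the paper extracts from Furlan--Mourrat's FKG/RSW estimate (see Lemma~\ref{lemma:spinbound})---is the much milder bound
$$\delta^{-N/4}\Bigl(\E\prod_{k=1}^N\sigma_\delta(x_k)\Bigr)^{2}\leq C^N\prod_{i=1}^N\Bigl(\min_{j\neq i}|x_i-x_j|\Bigr)^{-1/4},$$
whose diagonal singularity exponent is only $N/4$; its integrability (with the $N$-dependence controlled) is precisely the content of Lemma~\ref{lemma:combinatorial_argument}. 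Your appeal to Proposition~\ref{prop:electrostatic_GFF} to show integrability of $\prod_{i<j}|x_i-x_j|^{-1/4}$ is also misplaced: that proposition bounds a signed covariance sum and does not yield integrability of that particular deterministic product.

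Second, the "bosonization identity'' you set out to verify (conformal covariance matching, Pfaffian formula on $\mathbb H$, Cauchy identity) is unnecessary work: Theorem~\ref{th:CHI} already states the scaling limit of the $N$-point correlation \emph{in the explicit sum-over-signs form}
$$\Bigl(2^{-N/2}\sum_{\mu\in\{\pm1\}^N}\prod_{k<m}\bigl|(\varphi_k-\varphi_m)/(\varphi_k-\bar\varphi_m)\bigr|^{\mu_k\mu_m/2}\Bigr)^{1/2},$$
so squaring it and comparing to the Wick/Gaussian expansion of $\E\prod_k\cos(2^{-1/2}X(x_k))$ is a direct algebraic check, not a theorem requiring a separate conformal-covariance argument. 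The only subtle input you genuinely need from the lattice Ising side is the Furlan--Mourrat bound above, not the Pfaffian structure.
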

\noindent We prove this theorem in Section \ref{sec:Isingconv}. The proof follows rather easily from the strong results of \cite{CHI}, some rather rough estimates following arguments in \cite{FM}, and the method of moments which is justified by Theorem \ref{th:moments}. Rather interestingly, we note that our proof doesn't rely on anything converging to the GFF.

We emphasize here that the interpretation of Theorem \ref{th:ising} that one should have in mind is that if $\sigma_\delta(x)$ and $\widetilde\sigma_\delta(x)$ are the spin fields of two independent critical Ising realizations, then 
\begin{equation*}
\delta^{-1/4}\sigma_\delta(x)\widetilde\sigma_\delta(x)\stackrel{d}{\approx}\mathcal C^2\left(\frac{2|\varphi'(x)|}{\mathrm{Im}\, \varphi(x)}\right)^{1/4}\cos \big(2^{-1/2}X(x)\big).
\end{equation*}

While studying the XOR-Ising model might seem like an artificial idea at first, it is in fact a model both physicists and mathematicians have studied and used to derive the scaling limit of the correlation functions of the critical Ising model and is referred to as bosonization of the Ising model. More precisely, in the physics literature, a connection between squared full plane Ising correlation functions and correlation functions of the cosine of the GFF were observed in \cite{IZ} -- for a review of later developments and more on the conformal field theory of the Ising model, see e.g. \cite[Chapter 12]{DFMS}. This connection was given a rigorous basis in \cite{Dubedatbos} where the author proved an exact identity between squares of Ising correlation functions and suitable correlation functions of the dimer model and then performed asymptotic analysis of these correlation functions. Intuitively, the connection to the free field comes from the fact that the relevant dimer correlation functions can be expressed in terms of the height function of the dimer model and it is known that this converges to the free field in the fine mesh limit. 

Admittedly, for readers interested purely in the critical Ising model, our Theorem \ref{th:ising} is perhaps not much more than a curiosity showing that this notion of bosonization also makes rigorous probabilistic sense on the level of the scaling limit. This being said, we hope that from the perspective of better understanding scaling limits of critical models of statistical mechanics, Theorem \ref{th:ising} might be of some use, in that the cosine of the free field, interpreted in terms of imaginary multiplicative chaos, is a rather concrete object which might serve as a test case where proving some conjectured properties of scaling limits might be simpler than for other models -- even the Ising model as everything is constructed in terms of Gaussian random variables. Although, we do concede that analytic and probabilistic results similar to those discussed in Section \ref{subsec:ichaosresults} have largely been proven for the critical Ising model; see \cite{CGN1,CGN2,FM}. Simulation on the other hand is certainly simpler for the scaling limit of the XOR-Ising model: see Figure \ref{fig:simu} for a simulation of $\cos (2^{-1/2}X(x))$ in the unit square.

\begin{figure}\label{fig:simu}
    \centering
    \begin{minipage}{0.45\textwidth}
        \centering
        \includegraphics[width=0.9\textwidth]{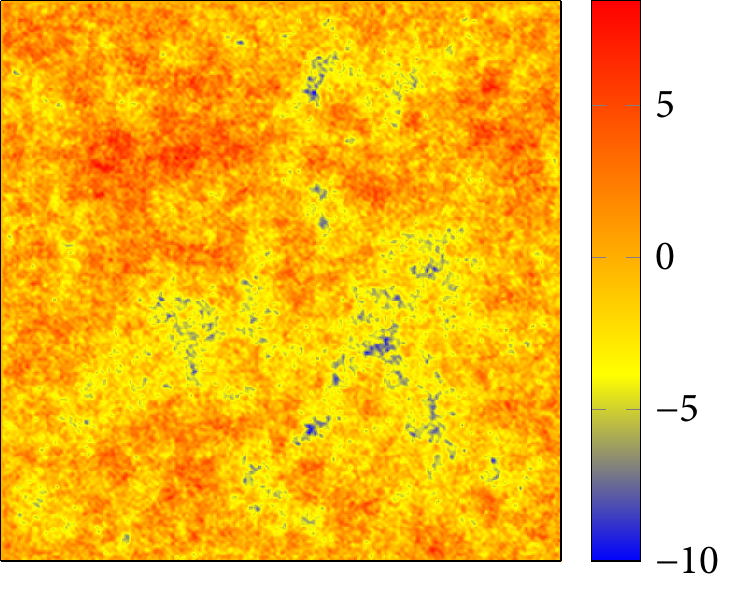} 
    \end{minipage}\hfill
    \begin{minipage}{0.45\textwidth}
        \centering
        \includegraphics[width=0.9\textwidth]{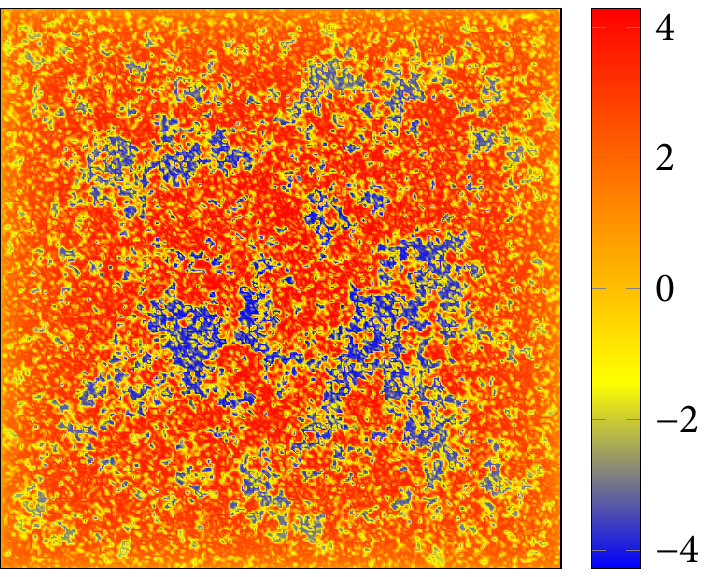} 
    \end{minipage}
        \caption{Left: A simulation of the Gaussian free field in the unit square with zero boundary conditions. The approximation is obtained by truncating the expansion in terms of Laplacian eigenfunctions at level $200^2$ -- see Example \ref{ex:fields}.{\protect\footnotemark} Right: A simulation of the cosine of the GFF obtained from the realization of the GFF in the left figure (with parameter $\beta = 1/\sqrt{2}$) -- see Lemma \ref{le:KLdef}.}
\end{figure}
\footnotetext{More precisely, the eigenfunctions are of the form $\sin(k \pi x) \sin(\ell \pi y)$, $k,\ell \ge 1$, and we have used those for which $1 \le k, \ell \le 200$.}

\smallskip

We now discuss an application of Theorem \ref{th:ising} to a model which is a perturbation of the critical XOR-Ising model. In addition to the connection between scaling limits of critical models of statistical physics and conformal field theory, physicists have argued that suitable perturbations of critical two-dimensional models of statistical mechanics should still have scaling limits described by quantum field theories which have an integrable structure despite loosing a conformal structure. For example, there exist fantastic conjectures concerning the scaling limit of the critical Ising model perturbed by a small magnetic field -- see e.g. \cite{Zamolodchikov}. Another model where this type of structure is believed to exist is the so-called sine-Gordon model, which has been studied extensively in the physics literature (see e.g. \cite{Coleman,LZ,ZZ}) and in the mathematical physics literature (see e.g. \cite{frolich,NRS,DH}). Formally, the probabilistic representation of the sine-Gordon model is a probability distribution on a suitable space of random generalized functions which is absolutely continuous with respect to the law of the full plane Gaussian free field $X$, with Radon--Nikodym derivative $\frac{1}{Z_{\beta,\mu}}e^{\mu\int \cos (\beta X(x))dx}$,\footnote{The precise definition of this is slightly delicate as the whole plane Gaussian free field is well defined only up to a random additive constant. Moreover, it is by no means clear that the ``integral" here is convergent, or more precisely that the constant function one is a valid test function for the distribution, but as we are reviewing non-rigorous results due to physicists, we ignore this issue. A rigorous construction would involve first restricting to a bounded domain and then trying to take an infinite volume limit.} where $\mu,\beta\in \R$ and $Z_{\beta,\mu}$ is a normalizing constant. The conjectural integrable structure of this model is evident e.g. in \cite{LZ}, where it is conjectured that if $X_{\mathrm{sG(\beta,\mu)}}$ is distributed according to this law, then (formally -- a rigorous statement would involve regularizing and taking a limit) for $0<\beta<2$ and $|\mathrm{Re}(\alpha)|<2/\beta$.
\begin{align*}
\E e^{i\alpha X_{\mathrm{sG}(\beta,\mu)}(x)+\frac{\alpha^2}{2}\E [X_{\mathrm{sG}(\beta,\mu)}(x)^2]}&=\left(\frac{\mu\pi\Gamma(1-\beta^2/4)}{2\Gamma(\beta^2/4)}\right)^{\frac{\alpha^2}{4-\beta^2}}e^{\int_0^\infty \left[\frac{\sinh^2\frac{\alpha \beta t}{2}}{2\sinh \frac{\beta^2 t}{4}\sinh t \cosh[1-\frac{\beta^2}{4}]t}-\frac{\alpha^2}{2}e^{-2t}\right]\frac{dt}{t}}.
\end{align*}
Note that here truly one has $0<\beta<2$ instead of $\beta<\sqrt{2}$ as one would expect from e.g. Theorem \ref{th:existuniq}. This is due to the fact that one can make sense of the sine-Gordon model also in this regime; the partition function $Z_{\beta,\mu}$ diverges, but correlation functions should be finite. We note that while slightly related to the convergence of imaginary chaos to white noise outside of the $L^2$-regime, this is a more delicate issue. At $\beta=2$, there is a far more interesting transition for the sine-Gordon model than this $L^2$-boundary at $\beta=\sqrt{2}$ for imaginary chaos. This transition is known  as the Kosterlitz--Thouless transition. We refer to \cite{DH,NRS} and references therein for further information. We also point out that the condition $|\mathrm{Re}(\alpha)|<2/\beta$ is simply the condition that the integral above converges. 

While it currently seems that proving results of this flavor, or perhaps ones involving more complicated correlation functions are out of reach, we point out that this is surprisingly similar to quantities arising in Liouville field theory where significant progress has been made recently -- compare e.g. with quantities appearing in the so-called DOZZ-formula in \cite{KRV}.

Our contribution to questions about near critical models of statistical mechanics and integrable quantum field theories is rather modest. First of all, we point out in Section \ref{sec:sg}, that in a finite domain and for suitable values of $\alpha,\beta$, using results from Section \ref{subsec:ichaosresults}, one can make sense of objects defined in the spirit of $e^{i \alpha X_{\mathrm{sG}(\mu,\beta)}(x)+\frac{\alpha^2}{2}\E [X_{\mathrm{sG}(\mu,\beta)}(x)^2]}$ -- note that as the field $X_{\mathrm{sG}(\mu,\beta)}$ is non-Gaussian, this is an instance of non-Gaussian imaginary multiplicative chaos appearing naturally in a model of mathematical physics. After this, we observe that if one adds a (non-uniform) magnetic perturbation\footnote{As can be seen from the definition in Section \ref{sec:XOR}, adding a magnetic perturbation to the XOR-Ising model is different from taking pointwise products of two independent magnetically perturbed Ising models. Thus in this near critical case, one can't expect e.g. the correlation functions of the magnetically perturbed XOR-Ising model to be related to the original Ising model in any simple way.} to the critical planar XOR-Ising model -- see Section \ref{sec:XOR} for proper definitions -- then the spin field converges to the cosine of the sine-Gordon field in the scaling limit. More precisely, we have the following theorem (for proper definitions, see Section \ref{sec:XOR} and Section \ref{sec:sg}).

\begin{theorem}\label{th:perturb}
Let $U$ be a bounded and simply connected domain and $f,\psi\in C_c^\infty(U)$.  Let $\mathcal{S}_\delta$ be distributed according to the magnetically perturbed critical XOR-Ising model with magnetic field $\psi$ on a lattice approximation of $U$ with mesh $\delta$ and $+$ boundary conditions. Also write 
$$
\widetilde{\psi}(x)=\mathcal C^2\left(\frac{2|\varphi'(x)|}{\mathrm{Im}\, \varphi(x)}\right)^{1/4}\psi(x)
$$
where $\mathcal C$ and $\varphi$ are as in Theorem \ref{th:ising}, and  let $X_{\mathrm{sG}(\widetilde\psi,1/\sqrt{2})}$ be distributed according to the sine-Gordon measure on $U$, written formally as 
$$
\frac{1}{Z_{\widetilde{\psi},\beta}}e^{\int_U \widetilde{\psi}(x)\cos [2^{-1/2}X(x)]dx}\P_{\mathrm{GFF}}(dX),
$$
where $\P_{\mathrm{GFF}}(dX)$ denotes the law of the zero boundary condition Gaussian free field on $U$ interpreted as a probability measure on say $H^{-\varepsilon}(\R^d)$.

Then as $\delta\to 0$, $\delta^{-1/4}\int_U \mathcal{S}_\delta(x)f(x)dx$ converges in law to a random variable written formally as
$$
\mathcal C^2\int_U f(x) \left(\frac{2|\varphi'(x)|}{\mathrm{Im}\, \varphi(x)}\right)^{1/4}\cos \left(2^{-1/2}X_{\mathrm{sG}(\widetilde\psi,1/\sqrt{2})}(x)\right)dx.
$$
\end{theorem}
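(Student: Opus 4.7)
The plan is to express the distribution of $\delta^{-1/4}\int f\mathcal{S}_\delta$ under the perturbed model as a ratio of \emph{critical} XOR-Ising expectations and then to pass to the limit in that ratio by combining a joint strengthening of Theorem~\ref{th:ising} with uniform exponential integrability coming from a discrete analogue of Theorem~\ref{th:moments}. Concretely, by the definition of the magnetic perturbation (Section~\ref{sec:XOR}), the perturbed XOR law has Radon--Nikodym derivative with respect to the critical XOR law proportional to $\exp(\delta^{7/4}\sum_{x}\psi(x)\mathcal{S}_\delta(x))=\exp(\delta^{-1/4}\int_U \psi(x)\mathcal{S}_\delta(x)\,dx)$, the scaling being the unique one consistent with Theorem~\ref{th:ising}; hence for bounded continuous $F$,
\begin{equation*}
\mathbb{E}^{\mathrm{pert}}\Bigl[F\bigl(\delta^{-1/4}\!\int f\mathcal{S}_\delta\bigr)\Bigr]
= \frac{\mathbb{E}^{\mathrm{crit}}\Bigl[F\bigl(\delta^{-1/4}\!\int f\mathcal{S}_\delta\bigr)\,\exp\bigl(\delta^{-1/4}\!\int \psi\mathcal{S}_\delta\bigr)\Bigr]}{\mathbb{E}^{\mathrm{crit}}\Bigl[\exp\bigl(\delta^{-1/4}\!\int \psi\mathcal{S}_\delta\bigr)\Bigr]},
\end{equation*}
so everything reduces to understanding the joint law of the pair $\bigl(\delta^{-1/4}\!\int f\mathcal{S}_\delta,\,\delta^{-1/4}\!\int \psi\mathcal{S}_\delta\bigr)$ under the critical measure.

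For this joint law I would extend Theorem~\ref{th:ising} from one test function to finitely many. Since the proof of Theorem~\ref{th:ising} proceeds by the method of moments, using the CHI asymptotics of Ising spin correlations together with the moment-determinacy supplied by Theorem~\ref{th:moments}, and since mixed $2N$-point moments of the XOR spin field factor into products of squared Ising two-point functions, a direct polarization of that argument yields joint convergence of $\bigl(\delta^{-1/4}\!\int f\mathcal{S}_\delta,\,\delta^{-1/4}\!\int \psi\mathcal{S}_\delta\bigr)$ to $(M_f,M_\psi)$ with
$$
M_h \;:=\; \mathcal{C}^2\int_U h(x)\Bigl(\tfrac{2|\varphi'(x)|}{\mathrm{Im}\,\varphi(x)}\Bigr)^{1/4}\cos\bigl(2^{-1/2}X(x)\bigr)\,dx, \qquad h\in\{f,\psi\}.
$$

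The main obstacle is promoting this joint convergence in law to convergence of the above ratio, which requires uniform integrability of $\exp(\delta^{-1/4}\!\int \psi\mathcal{S}_\delta)$ under the critical measure. I would obtain it by establishing a uniform-in-$\delta$ discrete version of Theorem~\ref{th:moments},
\begin{equation*}
\mathbb{E}^{\mathrm{crit}}\Bigl[\bigl(\delta^{-1/4}\!\int_U \psi(x)\mathcal{S}_\delta(x)\,dx\bigr)^{2N}\Bigr]\;\le\;C(\psi)^{N} N^{N/4},
\end{equation*}
obtained by expanding the $2N$-th moment into a sum of squared Ising $2N$-point correlators and then applying a discrete electrostatic/Onsager-type inequality, modeled on Proposition~\ref{prop:electrostatic_inequality}, in the bulk regime, coupled with the a priori lattice-scale estimates in the style of \cite{FM} in the regime where some points approach distance $\delta$. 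Because $N^{N/4}\ll N!$, this Carleman-type bound simultaneously reconfirms moment-determinacy and supplies the required uniform exponential integrability for every multiple of $\int\psi\mathcal{S}_\delta$. Establishing this uniform moment bound is where essentially all of the work sits; the rest of the argument is assembly.

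With uniform integrability secured, the joint convergence gives
$$
\lim_{\delta\to 0}\mathbb{E}^{\mathrm{pert}}\Bigl[F\bigl(\delta^{-1/4}\!\int f\mathcal{S}_\delta\bigr)\Bigr]
= \frac{\mathbb{E}\bigl[F(M_f)\,e^{M_\psi}\bigr]}{\mathbb{E}\bigl[e^{M_\psi}\bigr]},
$$
and noting that $M_\psi=\int_U \widetilde\psi(x)\cos(2^{-1/2}X(x))\,dx$, the right-hand side is by definition of the sine-Gordon measure (whose density with respect to the zero-boundary GFF on $U$ is proportional to $\exp(\int \widetilde\psi\cos(2^{-1/2}X))$, cf.\ Section~\ref{sec:sg}) equal to $\mathbb{E}\bigl[F\bigl(\mathcal{C}^2\!\int_U f(x)(\tfrac{2|\varphi'(x)|}{\mathrm{Im}\,\varphi(x)})^{1/4}\cos(2^{-1/2}X_{\mathrm{sG}(\widetilde\psi,1/\sqrt{2})}(x))\,dx\bigr)\bigr]$, which is the claimed convergence in law.
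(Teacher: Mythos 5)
Your proposal is correct and follows essentially the same route as the paper: write the perturbed expectation as a ratio of critical-model expectations, use joint convergence of $(\delta^{-1/4}\int f\mathcal S_\delta,\ \delta^{-1/4}\int\psi\mathcal S_\delta)$ together with the continuous mapping theorem applied to $(x,y)\mapsto F(x)e^y$, and upgrade to convergence of the expectations via uniform exponential integrability. The one point to note is that you overestimate the remaining work: the uniform-in-$\delta$ moment and exponential-moment bounds you flag as the main obstacle are precisely \eqref{eq:ising_laplace} in Lemma~\ref{lemma:ising_moments}, already proved (via Lemma~\ref{lemma:spinbound}, the FM-style Onsager inequality for the Ising spin field, and Lemma~\ref{lemma:combinatorial_argument}) in order to establish Theorem~\ref{th:ising}; likewise the joint convergence is obtained in the paper not by re-running the moment computation on pairs, but simply by linearity and Cram\'er--Wold, since Theorem~\ref{th:ising} already holds for arbitrary test functions. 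So in the paper's logical order, Theorem~\ref{th:perturb} really is just assembly from results that precede it.
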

We prove this theorem in Section \ref{sec:pertconv}. The proof follows rather easily from Theorem \ref{th:ising} and standard probabilistic arguments. The result is not very surprising given Theorem \ref{th:ising} and is certainly known in the physics literature\footnote{We also note that perhaps slightly curiously, if one perturbs the temperature of the critical XOR-Ising model suitably, then again physicists expect a connection to the sine-Gordon model -- see \cite{IZ}.}, but we do point out that it seems difficult to prove a result of this flavor only from knowledge of the scaling limit of the critical correlation functions. Again our hope is that this type of result could be interesting as it provides a rather concrete case of a near critical model of statistical mechanics which has a scaling limit, conjectured to have an integrable structure and which is concrete enough that one might hope to be able to prove results that might be out of reach in more general models, or for example for the scaling limit of the magnetically perturbed Ising model.

Finally we conclude this introduction with an outline of the remainder of the article.

\subsection{Outline of the article and acknowledgements}\label{sec:outline} 

In Section \ref{sec:logcor}, we discuss some background material concerning log-correlated fields and their approximations and remind the reader about some basic definitions and properties of spaces of generalized functions. Then in Section \ref{sec:ichaos}, we prove our results from Section \ref{subsec:ichaosresults} concerning basic properties of imaginary multiplicative chaos. In Section \ref{sec:ising}, we prove our results on the Ising model while in Section \ref{sec:rmt}, we describe briefly how imaginary multiplicative chaos arises in a model of random matrix theory and use this example to illustrate some of the subtleties of multiplicative chaos. In Appendix \ref{app:moments}, we record some basic moment bounds for imaginary chaos as well as a combinatorial counting argument we make use of in Section \ref{sec:ichaos}. 

\smallskip

{\bf Acknowledgements: } We wish to thank Antti Kupiainen for interesting discussions and references for the sine-Gordon model. We are also grateful to Julien Barral and Vincent Vargas for their comments on the manuscript. The first author was supported by the Doctoral Programme in Mathematics and Statistics at University of Helsinki. The first and second authors were supported by the Academy of Finland CoE \lq Analysis and Dynamics\rq, as well as the Academy of Finland Project \lq Conformal methods in analysis and random geometry\rq. C.W. was supported by the Academy of Finland grant 308123.

\section{Preliminaries: Introduction to log-correlated fields}\label{sec:logcor}

In this section we give a precise definition of log-correlated Gaussian fields as random generalized functions, and discuss the type of approximations or regularizations of them that we shall use to construct our imaginary multiplicative chaos. More precisely, we realize log-correlated fields as random elements of suitable Sobolev spaces of generalized functions and define a class of approximations, containing e.g. convolution approximations, that are convenient for proving the existence of imaginary multiplicative chaos.  The results of this section will probably look familiar to readers acquainted with basic facts about the Gaussian free field, as discussed e.g. in \cite[Section 4]{DubedatGFF} or \cite{SheffieldGFF}, but unfortunately the definition and study of general log-correlated fields requires slightly heavier analysis than the GFF, especially in view of applications to imaginary chaos. In addition to discussing basic facts about log-correlated fields, we review in Section \ref{sec:spaces} the basic definitions and properties of spaces of generalized functions that we will need in this article. We have intended this section as an introduction to log-correlated fields for readers interested in generalities. Readers interested only in multiplicative chaos constructed from the Gaussian free field can skip  the technical details of this section rather safely. 

\subsection{Log-correlated fields}\label{subse:log correlated}

 Intuitively, we wish to construct a centered Gaussian process $X$ on a domain $U\subset \R^d$ with covariance (kernel)
\begin{equation}\label{eq:cov}
C_X(x,y)=\E X(x)X(y)=\log |x-y|^{-1}+g(x,y),
\end{equation}
where we make the basic assumptions (used throughout the paper unless otherwise stated) that
\begin{align}\label{eq:assumptions}
\begin{cases} \quad g \in L^1(U \times U)\cap C(U\times U), \quad  g\;\;\textrm{is bounded from above in}\;\; U\times U, \quad\textrm{and}\\
 \quad U\subset\R^d\quad \textrm{is a simpy connected and bounded domain}.\end{cases}
\end{align}
These assumptions cover some of the most common examples of log-correlated fields, but we expect that many of our results hold more generally too -- in particular, one might hope to be able to relax the assumption of $g$ being bounded from above to some degree. To avoid discussing in great detail generalized functions on domains with boundaries, we find it convenient to extend $C_X(x,y)$ to $\reals^d \times \reals^d$ by setting $C_X(x,y) = 0$ whenever $(x,y) \notin U \times U$. 
In addition, we also need to of course require that $C_X$ is a covariance kernel, namely that it is symmetric and positive semi-definite: $C_X(x,y)=C_X(y,x)$ and
\[\int C_X(x,y) f(x) \overline{f(y)} \, dx \, dy \ge 0\]
for all $f \in C^\infty_c(\R^d)$. When a  result needs more regularity to be assumed of $g$ or $U$, this will be stated separately. 

We note first that actually our conditions on $C_X$ imply much stronger integrability of the covariance -- we will make use of this to realize our process $X$ as a random element in a suitable Sobolev space.

\begin{lemma}\label{le:integrability}
Assume that $C_X$ is a covariance kernel satisfying \eqref{eq:cov} and \eqref{eq:assumptions}.
Then $C_X\in L^{p}(U \times U)$ for all $p<\infty.$
\end{lemma}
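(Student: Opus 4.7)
The plan is to decompose $C_X(x,y)=\log|x-y|^{-1}+g(x,y)$ and show each summand lies in $L^p(U\times U)$ for every $p<\infty$. The logarithmic term is routine: since $|x-y|$ is bounded above by the diameter of $U$, we have $\log|x-y|^{-1}$ bounded below on $U\times U$, and the logarithmic singularity at the diagonal is $L^p$-integrable to every order, so a direct change-of-variables argument gives
\[
\int_{U\times U}|\log|x-y|^{-1}|^p\,dx\,dy<\infty.
\]
For $g$, split $g=g^+-g^-$. Since $g\le M:=\sup_{U\times U}g<\infty$ by assumption, $g^+\in L^\infty(U\times U)\subset L^p$ immediately, so the heart of the matter is showing $g^-\in L^p$.

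Here the positive semi-definiteness of $C_X$ is essential. Testing the PSD condition with $f=\mathbf{1}_A$ for a measurable $A\subset U$ yields
\[
\int_{A\times A}g\,dx\,dy\ge -\int_{A\times A}\log|x-y|^{-1}\,dx\,dy,
\]
and for a $d$-ball $A$ of radius $r$ the right-hand side is at least $-Cr^{2d}(1+\log(1/r))$. Combined with the upper bound $\int_{A\times A}g^+\le M|A|^2$ this forces the averaged estimate
\[
\frac{1}{|A|^2}\int_{A\times A}g^-\,dx\,dy\le C(1+\log(1/r)).
\]
A parallel computation with $f=\mathbf{1}_{B(x_0,r)}+\mathbf{1}_{B(y_0,r)}$, using the single-ball estimates to control the diagonal pieces, extends the bound uniformly to off-diagonal product regions $B(x_0,r)\times B(y_0,r)$.

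The main obstacle is converting this logarithmic Morrey-type bound into $L^p$ integrability of $g^-$. The heuristic is that an averaged bound of order $\log(1/r)$ over small balls forbids pointwise singularities stronger than logarithmic, and log-type singularities are in every $L^p$. Rigorously I would argue through the decreasing rearrangement $(g^-)^\ast$: by continuity, every point of the open superlevel set $\{g^->\lambda\}$ admits a maximal product ball on which $g^->\lambda/2$, and the averaged bound forces such a ball to have side-length at most $e^{-c\lambda}$. A Vitali-type covering argument, combined with the $L^1$ bound $\int g^-<\infty$ from the assumption $g\in L^1$, is then expected to yield $(g^-)^\ast(t)\le C(1+\log(1/t))$ for small $t$; since $(\log(1/t))^p$ is integrable near $t=0$, it follows that $g^-\in L^p(U\times U)$. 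Combining with the estimates for the logarithmic part and for $g^+$ gives $C_X\in L^p(U\times U)$ for every $p<\infty$.
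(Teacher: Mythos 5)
Your overall plan — isolate the negative part of $g$ and bound it via positive semi-definiteness, handling the logarithm and $g^+$ directly — is a sensible starting point, and the easy reductions (the logarithmic term in every $L^p$, and $g^+\in L^\infty$ from the upper bound on $g$) are fine. The gap lies in the conversion of the averaged Morrey-type bound into the rearrangement estimate $(g^-)^\ast(t)\lesssim 1+\log(1/t)$. You argue via a Vitali covering together with the $L^1$ bound $\int g^-<\infty$, but that combination only yields $|\{g^->\lambda\}|\lesssim\|g^-\|_{L^1}/\lambda$, i.e.\ weak-$L^1$, which is already immediate from $g\in L^1$ and is far weaker than the exponential decay $|\{g^->\lambda\}|\lesssim e^{-c\lambda}$ that your claimed rearrangement bound entails. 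Concretely, if $z\in\{g^->\lambda\}$ and $Q_z$ is a maximal product ball around $z$ on which $g^->\lambda/2$, the Morrey bound does force $\mathrm{diam}(Q_z)\lesssim e^{-c\lambda}$; but from a Vitali subcollection of disjoint such balls $Q_i$ one only gets $|\{g^->\lambda\}|\lesssim\sum_i|Q_i|$ and $\sum_i|Q_i|\le\frac{2}{\lambda}\int g^-$, and nowhere in this chain does the smallness of the $Q_i$ actually enter, because there is no a priori control on the number of balls. You have extracted only a small part of the positive semi-definiteness (testing against indicators of one or two balls), and the resulting logarithmic bound on ball averages is not, on its own, enough to reach $L^p$ for every $p$.

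The paper avoids this issue by using positive definiteness in a stronger way. After mollifying $C_X$ to a smooth covariance $C_{X_\varepsilon}$, it notes that for any integer $p\ge 1$ the pointwise power $(C_{X_\varepsilon})^p$ is again a covariance kernel (it is the covariance of the product of $p$ independent copies of the field). Testing this positivity against a bump function $\equiv 1$ on a neighbourhood of $\overline U$ gives $\int (C_{X_\varepsilon})^p\ge 0$, and for odd $p$, since $(C_{X_\varepsilon})_+\cdot(C_{X_\varepsilon})_-=0$, this reads $\int\big((C_{X_\varepsilon})_-\big)^p\le\int\big((C_{X_\varepsilon})_+\big)^p$. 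The right-hand side is uniformly bounded in $\varepsilon$ because $(C_X)_+\le c_0+\log(1/|x-y|)$ (from $g$ bounded above), and Fatou's lemma as $\varepsilon\to 0$ then gives $(C_X)_-\in L^p$ for every odd integer $p$, hence $C_X\in L^p$ for all $p<\infty$. The key idea you are missing is precisely that the pointwise $p$-th power of a covariance is a covariance, which lets one control the $L^p$-norm of the negative part by that of the positive part directly, with no passage through pointwise or rearrangement estimates.
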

\begin{proof}
Let $\psi_\varepsilon:= \varepsilon^{-d}\psi(\cdot/\varepsilon)$, where $\psi\in C^\infty_c(\R^d)$ is a standard smooth, non-negative bump function with integral 1. We denote the mollified covariance by
$$
  C_{X_\varepsilon}(x,y):= \int_{\R^{2d}}\psi_\varepsilon(x-x')\psi_\varepsilon(y-y')C_X(x',y')dx'dy'.
$$ 
From the definition of $C_X$, it easily follows that $C_{X_\varepsilon}$ is a  smooth honest covariance function (it will actually turn out to be the covariance of the mollified field $\psi_\varepsilon*X,$ but  we do not need this here). 
By smoothness, for any integer $p\geq 1$ also the power $(C_{X_\varepsilon})^p$ is a covariance, as is seen by considering products of independent copies of corresponding Gaussian fields.
We apply the covariance condition on a smooth test function that is 1 on $U+B(0,1)$  and obtain for $\varepsilon \in (0,1)$ and any integer $p\geq 1$ the inequality
$
\int_{\R^{2d}}(C_{X_\varepsilon}(x,y))^p dxdy \geq 0.
$
By decomposing the covariance $C_{X_\varepsilon}$ into its positive and negative part: $C_{X_\varepsilon}=(C_{X_\varepsilon})_+ -(C_{X_\varepsilon})_- $ and noting that
$(C_{X_\varepsilon})_+\leq [(C_{X})_+]_\varepsilon,$ it follows\footnote{Here in the first step we use the fact that since $C_+\cdot C_-=0$ and $p$ is odd, $0\leq \int(C_+-C_-)^p=\int C_+^p-\int C_-^p$, which is the first inequality. Bounding the norm of $(C_+)_\varepsilon$ with the norm of $C_+$ is justified e.g. by Young's convolution inequality.} that for any positive odd integer $p$
\begin{eqnarray*}
&&\int_{\R^{2d}}\left[(C_{X_\varepsilon})_-(x,y)\right]^p dxdy\leq \int_{\R^{2d}}\left[(C_{X_\varepsilon})_+(x,y)\right]^p dxdy\; \leq \; \int_{\R^{2d}}\left[((C_X)_+)_\varepsilon(x,y)\right]^p dxdy\\
&\leq &  \int_{\R^{2d}}\left[(C_X)_+(x,y)\right]^p dxdy
\; =:\;  c_p <\infty,
\end{eqnarray*}
where the last step follows by Minkowski's inequality and the assumption that $g$ is bounded from above.
Since $C_{X_\varepsilon}\to C_X$ almost everywhere as $\varepsilon \to 0^+$, we also see that almost everywhere, $(C_{X_\varepsilon})_-\to (C_X)_-$, and we may use Fatou's lemma to deduce that $\int_{\R^{2d}}((C_X)_-(x,y))^p dxdy\leq c_p<\infty.$ Again, since $g$ is bounded from above, Minkowski's inequality implies now that $C_X\in L^p(U\times U)$ for arbitrary positive odd integers $p$ and hence for all real $p\geq 1$.
\end{proof}

\begin{remark}
  Using our assumption that $(C_X)_+(x,y)\leq c_0+\log(1/|x-y|)$, the moment bound  obtained in the proof may be used to deduce the stronger integrability $e^{(d - \varepsilon)|C_X|} \in L^1(U \times U)$ for every $\varepsilon > 0$.\hfill $\blacksquare$
\end{remark}

The previous lemma verifies in particular that $(x,y)\mapsto C_X(x,y) \in L^2(\reals^d \times \reals^d)$, whence the operator $C_X \colon L^2(\reals^d) \to L^2(\reals^d)$ with the integral kernel $C_X(x,y)$ is Hilbert--Schmidt. In particular, it is symmetric and compact, so by the spectral theorem there exists a sequence $\lambda_1 \ge \lambda_2 \ge \dots > 0$ of strictly positive eigenvalues and corresponding eigenfunctions $\varphi_n$, that together with those eigenfunctions that correspond to the eigenvalue $0$ form an orthonormal basis for $L^2(\reals^d)$. We will now formally define $X$ via the (generalized) Karhunen--Lo\`{e}ve expansion
\begin{equation}\label{eq:def_X}
  X(x) := \sum_{n=1}^\infty A_n \sqrt{\lambda_n} \varphi_n(x), \qquad x\in\R^d,
\end{equation}
where $A_n$ are i.i.d. $N(0,1)$ random variables. Note that the functions $\varphi_n$ are supported on $U$. Let us now show that this sum converges in a suitable Sobolev space of generalized functions -- we refer the reader to Section \ref{sec:spaces} for the definition of the $L^2$-based standard Sobolev spaces $H^{s}(\reals^d)$. While this result is well known for the GFF, and probably not very surprising to readers familiar with log-correlated fields,  we choose to give a detailed proof of it here as it does not seem to appear in the literature. 
\begin{proposition}\label{prop:karhunen_loeve}
  The series on the right-hand side of \eqref{eq:def_X} converges in $H^{-\varepsilon}(\reals^d)$ for any $\varepsilon > 0$ to a $H^{-\varepsilon}(\reals^d)$-valued Gaussian random variable with covariance kernel $C_X$ satisfying \eqref{eq:cov} and \eqref{eq:assumptions}.
\end{proposition}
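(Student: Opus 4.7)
The plan is to realize $X$ as the $L^2(\Omega; H^{-\varepsilon}(\R^d))$-limit of the partial sums $X_N := \sum_{n=1}^N A_n \sqrt{\lambda_n} \varphi_n$. Since each $X_N$ is Gaussian and Gaussianity is preserved under $L^2$-limits, and since orthogonality of the $A_n$ gives
\[
\E \|X_N - X_M\|_{H^{-\varepsilon}}^2 \;=\; \sum_{n=M+1}^{N} \lambda_n\, \|\varphi_n\|_{H^{-\varepsilon}}^2,
\]
the entire matter reduces to checking that the positive series
\[
S(\varepsilon) \;:=\; \sum_{n=1}^\infty \lambda_n\, \|\varphi_n\|_{H^{-\varepsilon}}^2
\]
converges for every $\varepsilon>0$. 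Once this is done, the limit $X$ is automatically a Gaussian element of $H^{-\varepsilon}(\R^d)$, and its covariance is identified by testing against $f,h\in C_c^\infty(\R^d)$: passing $N\to\infty$ in $\E[X_N(f)X_N(h)] = \sum_{n=1}^{N}\lambda_n \langle\varphi_n,f\rangle\langle\varphi_n,h\rangle$ gives $\langle f, C_X h\rangle_{L^2}$ by the $L^2$-spectral expansion $C_X(x,y)=\sum_n\lambda_n\varphi_n(x)\varphi_n(y)$ together with the fact that $C_X(\cdot,y)\in L^2$ for a.e.\ $y$ by Lemma~\ref{le:integrability}.

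The heart of the proof is the summability of $S(\varepsilon)$, which I would handle by rewriting it as a double integral against the Bessel kernel. Let $G_{2\varepsilon}$ denote the integral kernel of $(1-\Delta)^{-\varepsilon}$, so that $\|\varphi\|_{H^{-\varepsilon}}^2 = \iint \varphi(x) G_{2\varepsilon}(x-y) \varphi(y)\, dx\, dy$ for every $\varphi\in L^2(\R^d)$. Since the $\varphi_n$ are supported in $U$, monotone convergence and the spectral expansion of $C_X$ yield
\[
S(\varepsilon) \;=\; \sum_{n=1}^\infty \lambda_n \iint_{U\times U} \varphi_n(x) G_{2\varepsilon}(x-y) \varphi_n(y)\, dx\, dy
\;=\; \iint_{U\times U} C_X(x,y)\, G_{2\varepsilon}(x-y)\, dx\, dy.
\]
Now the standard facts about Bessel kernels give $G_{2\varepsilon}(z)\lesssim |z|^{2\varepsilon-d}$ near the origin (with a logarithmic correction if $2\varepsilon=d$) and exponential decay at infinity, so $G_{2\varepsilon}\in L^p_{\mathrm{loc}}(\R^d)$ for some $p=p(\varepsilon)>1$ (one may take any $p<d/(d-2\varepsilon)$). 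Combined with Lemma~\ref{le:integrability}, which furnishes $C_X\in L^{p'}(U\times U)$ for the conjugate exponent $p'<\infty$, Hölder's inequality gives $S(\varepsilon)<\infty$. Since $U$ is bounded, the function $(x,y)\mapsto G_{2\varepsilon}(x-y)$ restricted to $U\times U$ is indeed in $L^p$, so there is no issue at infinity.

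The main obstacle I anticipate is precisely this $L^p$-versus-$L^{p'}$ pairing: one must exploit the full strength of Lemma~\ref{le:integrability} (all $p<\infty$, not just $p=2$), because the Bessel kernel fails to be in $L^{p}$ for $p$ close to $\infty$ when $d\ge 2$, so the pairing only works after trading regularity between the two factors. Once $S(\varepsilon)<\infty$ is established, the remaining verifications are essentially bookkeeping: the Cauchy sequence produces a strong $L^2(\Omega;H^{-\varepsilon})$-limit $X$, and for $f\in C_c^\infty(\R^d)$ the random variable $X(f)$ is the $L^2(\Omega)$-limit of the Gaussian variables $X_N(f)$, hence Gaussian, with the covariance computed as above.
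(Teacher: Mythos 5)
Your approach is genuinely different from the paper's. The paper proceeds in three stages: first it proves convergence of the partial sums in the weaker space $H^{-d/2-\varepsilon}(\R^d)$ using only $\|C_X\|_{HS}<\infty$; then it shows that $X$ in fact lies in $H^{-\varepsilon}(\R^d)$ by mollifying $X$, computing $\E\|X_\delta\|_{H^{-\varepsilon}}^2$ via the Fourier transform of $a_\delta(x)=\int C_{X_\delta}(u,u-x)\,du$, and passing to the limit with Fatou; finally it upgrades the convergence using martingale theory. Your plan is to prove directly that $S(\varepsilon)=\sum_n\lambda_n\|\varphi_n\|_{H^{-\varepsilon}}^2<\infty$, which would immediately give strong $L^2(\Omega;H^{-\varepsilon})$-convergence of the partial sums with no two-stage argument. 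That is structurally cleaner, and the final estimate you use -- pairing $C_X\in L^{p'}(U\times U)$ (from Lemma~\ref{le:integrability}) against $G_{2\varepsilon}\in L^p_{\mathrm{loc}}$ via H\"older -- is in substance the same as the paper's step two (the paper uses the Riesz kernel rather than the Bessel kernel, but on a bounded domain that is immaterial).

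The gap is the identity
\[
S(\varepsilon)=\sum_{n}\lambda_n\iint_{U\times U}\varphi_n(x)G_{2\varepsilon}(x-y)\varphi_n(y)\,dx\,dy
=\iint_{U\times U}C_X(x,y)G_{2\varepsilon}(x-y)\,dx\,dy,
\]
which you justify by ``monotone convergence and the spectral expansion of $C_X$.'' This is not rigorous as stated. The spectral series $\sum_n\lambda_n\varphi_n(x)\varphi_n(y)$ converges to $C_X(x,y)$ only in $L^2(U\times U)$; it does not converge monotonically pointwise, since the products $\varphi_n(x)\varphi_n(y)$ change sign. And the $L^2$-convergence is not enough to pass to the limit against $G_{2\varepsilon}(x-y)\mathbf{1}_{U\times U}$, because the latter is not in $L^2(U\times U)$ once $\varepsilon<d/4$ (recall $G_{2\varepsilon}(z)\sim|z|^{2\varepsilon-d}$ near $0$). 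So neither monotone nor dominated convergence applies directly to this interchange, and that interchange is precisely the technical heart of the statement.

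A partial repair is to work in Fourier space: $\|\varphi_n\|_{H^{-\varepsilon}}^2=\int(1+|\xi|^2)^{-\varepsilon}|\hat\varphi_n(\xi)|^2\,d\xi$, and now $\sum_n\lambda_n|\hat\varphi_n(\xi)|^2$ is a series of nonnegative terms, so Tonelli legitimately gives $S(\varepsilon)=\int(1+|\xi|^2)^{-\varepsilon}\Phi(\xi)\,d\xi$ with $\Phi(\xi)=\sum_n\lambda_n|\hat\varphi_n(\xi)|^2=\langle C_Xe_\xi,e_\xi\rangle$, $e_\xi(x)=e^{2\pi i\xi x}\mathbf{1}_U(x)$. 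But you still need a second, non-trivial Fubini interchange to turn $\int(1+|\xi|^2)^{-\varepsilon}\langle C_Xe_\xi,e_\xi\rangle\,d\xi$ into $\iint C_X(x,y)G_{2\varepsilon}(x-y)\,dx\,dy$, and $(1+|\xi|^2)^{-\varepsilon}$ is not in $L^1(\R^d)$ so absolute integrability fails. This is exactly why the paper introduces the mollification $X_\delta$: with $C_{X_\delta}\in C_c^\infty$, the kernel $a_\delta$ is a Schwartz function and the Parseval identity against $|\cdot|^{-2\varepsilon}$ holds without effort; the limit $\delta\to0$ is then taken by Fatou. If you insist on the eigenfunction route, you could instead regularize $G_{2\varepsilon}$ by a cutoff with compactly supported nonnegative Fourier multiplier, for which the interchange with the $L^2$-convergent spectral series is elementary, and then pass to the limit in the multiplier; but some such device is needed, and ``monotone convergence'' by itself does not supply it. The remaining steps of your argument -- Gaussianity under $L^2$-limits, the H\"older pairing with Lemma~\ref{le:integrability}, and the identification of the covariance by testing against $C_c^\infty$-functions -- are all fine.
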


\begin{proof}
  We start by showing that the series converges in $H^{-d/2-\varepsilon}(\reals^d)$ for any $\varepsilon > 0$. Let $X_n(x) := \sum_{k=1}^n A_k \sqrt{\lambda_k} \varphi_k(x)$ denote the $n$th partial sum of \eqref{eq:def_X}. Then $X_n$ form a $H^{-d/2-\varepsilon}(\reals^d)$-valued martingale. As $H^{-d/2-\varepsilon}(\reals^d)$ is a Hilbert space, it is enough to show that 
  \begin{equation}\label{eq:unisobolev}
  \sup_{n \ge 1} \E \|X_n\|^2_{H^{-d/2-\varepsilon}} < \infty
  \end{equation}
  in view of the almost sure convergence of Hilbert space valued $L^2$-bounded martingales (see e.g. \cite[Theorem 3.61, Theorem 1.95]{HNVW}).  For $f \in L^1(\reals^d)$ we denote its Fourier transform by $\widehat{f}(\xi) := \int_{\reals^d} f(x) e^{-2\pi i \xi \cdot x} \, dx$. 
  Using elementary bounds along with orthogonality of the eigenfunctions, we may compute
  \begin{align*}
    \E \|X_n\|^2_{H^{-d/2-\varepsilon}} & = \int_{\reals^d} \frac{\E |\widehat{X}_n(\xi)|^2}{(1 + |\xi|^2)^{d/2 + \varepsilon}} \, d\xi \le \int_{\reals^d} \frac{\int_{U \times U} |\E X_n(x) X_n(y)| \, dx \, dy}{(1 + |\xi|^2)^{d/2 + \varepsilon}} \, d\xi \\
    & \le  C_\varepsilon\int_{U \times U} \big|\sum_{k=1}^n \lambda_k \varphi_k(x) \varphi_k(y)\big| \, dx \, dy \\
    & \le C_\varepsilon |U| \left(\int_{\R^d \times \R^d} \Big|\sum_{k=1}^n \lambda_k \varphi_k(x) \varphi_k(y)\Big|^2 \, dx \, dy\right)^{1/2}\\
    & =C_\varepsilon |U| \sqrt{\sum_{k=1}^n \lambda_k^2}\;\;  \le\;\; C_\varepsilon |U| \|C_X\|_{HS}<\infty
  \end{align*}
  for some constant $C_\varepsilon > 0$ and $\|C_X\|_{HS}$ denoting the Hilbert--Schmidt norm of $C_X$. This proves \eqref{eq:unisobolev}.

  Next we show that $X$ actually takes values almost surely in $H^{-\varepsilon}(\reals^d)$. We denote by $X_\delta:=\psi_\delta*X$ a standard mollification of the field $X$ (here $\psi_\delta$ is as in the proof of Lemma \ref{le:integrability}) whose covariance satisfies $C_{X_\delta} \in C_c^\infty(\reals^{2d})$. Moreover, writing $a_\delta(x) := \int_{\reals^d} C_{X_\delta}(u, u-x) \, du$ we have $a_\delta \in C_c^\infty(\reals^d)$ and
  \[\E |\widehat X_\delta (\xi)|^2=\int_{\reals^{2d}} C_{X_\delta}(x,y) e^{2\pi i \xi \cdot (y-x)} \, dx \, dy = \widehat{a}_{\delta}(\xi).\]
  We compute for large enough $p$ and small enough $\delta > 0$ that
  \begin{align*}
    \E \|X_\delta\|_{H^{-\varepsilon}(\reals^d)}^2 & = \int_{\reals^d} \frac{\E |\widehat{X_\delta}(\xi)|^2}{(1 + |\xi|^2)^{\varepsilon}} \, d\xi \le  \int_{\reals^d} \frac{\E |\widehat{X_\delta}(\xi)|^2}{|\xi|^{2\varepsilon}} \, d\xi  \\
    & = \int_{\reals^d} \frac{\widehat{a}_\delta(\xi)}{|\xi|^{2\varepsilon}} \, d\xi = c_\varepsilon \int_{\reals^d} \frac{a_\delta(x)}{|x|^{d - 2\varepsilon}} \, dx = c_\varepsilon \int_{U'^2} \frac{C_{X_\delta}(x, y)}{|x - y|^{d - 2\varepsilon}} \, dx \, dy \\
    & \le c'_{\varepsilon,p} \|C_{X_\delta}\|_{L^p(U'^2)} \le c'_{\varepsilon,p} \|C_X\|_{L^p(U^2)} <\infty,
  \end{align*}
  where the last inequality is due to Lemma~\ref{le:integrability} and the second to last from Young's convolution inequality.
  Above $U' = U + B(0,1)$ and we used the fact that $(|\cdot|^{-2\varepsilon})^{\widehat{\;}} = c_\varepsilon |\cdot|^{-d+2\varepsilon}$. We then obtain
  \[\E \|X\|^2_{H^{-\varepsilon}(\reals^d)} = \E \lim_{\delta \to 0} \|X_\delta\|^2_{H^{-\varepsilon}(\reals^d)} \le \liminf_{\delta \to 0} \E \|X_\delta\|^2_{H^{-\varepsilon}(\reals^d)} \le c'_{\varepsilon,p} \|C_X\|_{L^p(U^2)}<\infty.\]

  Finally, we lift the convergence $X_n \to X$ from $H^{-d/2-\varepsilon}(\reals^d)$ to $H^{-\varepsilon}(\reals^d)$. By the previous argument and by construction, the  $H^{-\varepsilon}(\reals^d)$-valued random variables $X_n$ and $X-X_n$ are symmetric, independent, and their norms have finite variance. By considering  $H^{-\varepsilon}(\R^d)$ as a real Hilbert space, the symmetry and independence yield for any $n\geq 1$
\begin{align*}
  \E\| X\|^2_{H^{-\varepsilon}(\reals^d)} &=  \E\| X-X_n\|^2_{H^{-\varepsilon}(\reals^d)}+2\E \langle X-X_n,X_n\rangle_{H^{-\varepsilon}(\reals^d)}+\E\| X_n\|^2_{H^{-\varepsilon}(\reals^d)}\; \\ &\geq \; \E\| X_n\|^2_{H^{-\varepsilon}(\reals^d)}.
\end{align*}
Thus $(X_n)$ is a $L^2$-bounded ${H^{-\varepsilon}(\reals^d)}$-valued martingale, which again yields the stated convergence.
\end{proof}

\begin{remark}
  The existence of $X$ as say a random tempered distribution could also be deduced by many other ways, e.g. it is a rather direct consequence of Bochner--Minlos' theorem (see e.g. \cite[Theorem 2.3]{Simon}). However, we wanted to avoid the more abstract framework and obtain directly the optimal Sobolev regularity. \hfill $\blacksquare$
\end{remark}

To give the reader a sharper picture of what kind of objects log-correlated fields are, we discuss a bit further their smoothness properties. It is well-known and easy to show that the field $X$ is almost surely not a Borel measure. However,  it only barely fails  being one, or even a function, since an arbitrarily small degree of  smoothing makes $X$ a continuous function. In order to make this precise, we recall that given $\delta\in\R$ there is a standard $\delta$-lift operator $I^\delta$ that smoothes a given tempered distribution ``by an amount of $\delta$'', see \eqref{eq:lift} below.
Here is the exact statement concerning $X$ being nearly a continuous function:
\begin{lemma}\label{le:tarkempi}
  Let us assume that $C_X$ is as in \eqref{eq:cov} and \eqref{eq:assumptions}.  For any $\delta > 0$ there is an $\varepsilon >0$ so that almost surely $I^\delta X \in C^{\varepsilon}(\R^d)$ -- the space of $\varepsilon$-H\"older continuous functions. A fortiori, $X \in C^{-\varepsilon}(\reals^d)$ for any $\varepsilon > 0$. \end{lemma}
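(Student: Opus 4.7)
The plan is to apply Kolmogorov's continuity criterion to the centered Gaussian field $Y := I^\delta X$. By continuity of $I^\delta$ on the Sobolev scale together with Proposition~\ref{prop:karhunen_loeve}, $Y$ is a well-defined Gaussian tempered distribution. Since $Y$ is Gaussian, the identity $\E|Y(x)-Y(y)|^{2p} = (2p-1)!!\,(\E|Y(x)-Y(y)|^2)^p$ promotes any variance bound automatically to all even moments, so the entire problem reduces to showing that for every bounded set $K\subset\R^d$ and every $\alpha<\min(\delta,1)$ there is $C = C(K,\delta,\alpha) < \infty$ with
\[
\E|Y(x)-Y(y)|^2 \;\leq\; C|x-y|^{2\alpha}, \qquad x,y\in K.
\]
Granted this, choosing $p$ so large that $2p\alpha > d$ and invoking Kolmogorov yields a Hölder modification of $Y$ on $K$ of any order strictly less than $\alpha$. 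Taking $K$ to be a ball containing a neighbourhood of $\overline U$ suffices for the global statement, because $X$ is supported in $\overline U$ (its covariance vanishes off $U\times U$), so $Y = K_\delta * X$ is a smooth rapidly decaying function outside any neighbourhood of $\overline U$.

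To establish the variance bound, I would analyse the covariance of $Y$ directly. Writing $K_\delta$ for the kernel of $I^\delta$, one has
\[
\E|Y(x)-Y(y)|^2 \;=\; \int_{\R^{2d}} \bigl(K_\delta(x-u)-K_\delta(y-u)\bigr)\bigl(K_\delta(x-v)-K_\delta(y-v)\bigr)\,C_X(u,v)\,du\,dv.
\]
Decomposing $C_X$ via \eqref{eq:cov} into a logarithmic part and the continuous bounded-above piece $g$, the $g$-contribution is easily handled (it produces a term that inherits the Lipschitz behaviour of the double difference of $K_\delta$ against a bounded measurable function, using Lemma~\ref{le:integrability} to control the integrals), while the logarithmic part is handled on the Fourier side: its spectral density behaves like $|\xi|^{-d}$ at high frequencies, and the standard estimate
\[
\int_{\R^d}(1+|\xi|^2)^{-\delta}|\xi|^{-d}\bigl(1-\cos(2\pi\xi\cdot h)\bigr)\,d\xi \;\lesssim\; |h|^{2\min(\delta,1)}
\]
(with an extra logarithmic factor in the borderline case $\delta=1$) then gives the claimed modulus with $\alpha$ any number strictly less than $\min(\delta,1)$.

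The \emph{a fortiori} assertion $X \in C^{-\varepsilon}(\R^d)$ for every $\varepsilon>0$ is then immediate from the definition of negative-index Hölder--Zygmund spaces via the lift: applying the first part with $\delta = \varepsilon$ produces some $\varepsilon'>0$ with $I^\varepsilon X \in C^{\varepsilon'}$ almost surely, and since $I^{-\varepsilon}$ maps $C^{\varepsilon'}$ continuously into $C^{\varepsilon'-\varepsilon}$ by the standard mapping properties of the Bessel potential on Hölder--Zygmund spaces, we obtain $X = I^{-\varepsilon}(I^\varepsilon X) \in C^{\varepsilon'-\varepsilon} \subset C^{-\varepsilon}$. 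The main technical obstacle is the Fourier estimate of the second paragraph: $C_X$ is not stationary because of the cut-off to $U$ and the presence of $g$, so the high-frequency argument has to be localised near the diagonal, and one has to verify that the non-stationary corrections really are lower order than the leading $|h|^{2\min(\delta,1)}$ modulus coming from the logarithmic singularity. This reduction ultimately rests on the continuity of $g$ together with the smoothness and rapid decay of the kernel $K_\delta$ away from the origin.
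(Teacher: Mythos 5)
Your high-level strategy — show that the covariance of $Y = I^{\delta}X$ has a H\"older modulus of continuity and then invoke a Gaussian regularity theorem (Kolmogorov, or equivalently the Adler--Taylor result the paper cites) — matches the paper's. However, the execution is genuinely different, and your route is both longer and has two points that need more care. First, the paper never decomposes $C_X$ into the logarithmic kernel plus $g$. Instead it works entirely in real space: using the classical representation $G_{\delta}(z)=|z|^{\delta-d}H(|z|)$ of the Bessel kernel, it establishes an $L^{p}$-H\"older bound $\|(G_{\delta}\otimes G_{\delta})(\cdot-x)-(G_{\delta}\otimes G_{\delta})(\cdot)\|_{L^{p}(B\times B)}\lesssim |x|^{\alpha}$ for $p$ slightly larger than $1$, and then pairs this against $C_X\in L^{q}(\R^{2d})$ for all $q<\infty$ (Lemma~\ref{le:integrability}) via H\"older's inequality. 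That single estimate handles the entire kernel at once, with no Fourier argument and no need to confront non-stationarity. Second, in your split you describe the $g$-contribution as pairing the kernel difference ``against a bounded measurable function.'' Under the paper's hypotheses \eqref{eq:assumptions}, $g$ is only bounded \emph{above} (e.g. for the GFF, $g(x,y)\to-\infty$ as $x,y\to\partial U$), so this step is not valid as stated; fixing it requires exactly the $L^{p}/L^{q}$ pairing that the paper uses — at which point you might as well treat the logarithmic part the same way and drop the Fourier argument. Third, for the logarithmic part, you correctly flag that the stationary Fourier estimate $\int (1+|\xi|^{2})^{-\delta}|\xi|^{-d}(1-\cos(2\pi\xi\cdot h))\,d\xi\lesssim|h|^{2\min(\delta,1)}$ does not apply directly once $\log|u-v|^{-1}$ is cut off to $U\times U$; making that localisation rigorous is a non-trivial piece of work that your sketch leaves open, whereas the paper's real-space argument sidesteps it entirely. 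Your ``a fortiori'' step via $I^{-\varepsilon}\colon C^{\varepsilon'}\to C^{\varepsilon'-\varepsilon}$ matches the paper exactly.
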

\begin{proof}
  We assume that $\delta\in (0,1).$ The covariance of $I^\delta X$ is given by $C_\delta := (G_\delta \otimes G_\delta)* C_X$, where $G_\delta$ is the so-called Bessel kernel, which is the integral kernel of the operator $(I-\Delta)^{-\delta/2}$ --  see \eqref{eq:lift}. Classical representations  (see  \cite[(3,1)--(3,5), (4,1)]{AS}) of the Bessel-kernel  $G_\delta$  imply that 
 $$
 G_\delta (x-y)=|x-y|^{\delta-d}H(|x-y|),
 $$
where $H$ is an entire analytic function (as a side remark one may note that the main term in the resulting asymptotics has the same behaviour as the Riesz potential). 
  Using this representation one can verify that given  any $\delta>0$, there is a $p_0(\delta)>1$ and $\alpha>0$ such that for $p\in (1,p_0(\delta))$ it holds that
  $$
  \|(G_\delta \otimes G_\delta)(\cdot-x)- (G_\delta \otimes G_\delta) (\cdot)\|_{L^p(B\times B)}\; \lesssim  |x|^{\alpha}
  $$
for any ball $B\subset\R^d$ and $x\in B\times B$. When this is combined with the fact that $C_X$ has compact support and $C_X\in L^q(\R^{2d})$ for all $q<\infty$ by Lemma \ref{le:integrability}, one obtains by H\"older's inequality that the Gaussian field $I^\delta X$ has a H\"older-continuous covariance. In turn, this is well-known \cite[Theorem 1.4.1]{AT} to imply that the realizations of $I^\delta X$ can be taken to be H\"older continuous.

The final statement then follows from basic properties of the operator $I^{\delta}$, see the discussion around \eqref{eq:lift}.
\end{proof}
\noindent In comparison, Proposition \ref{prop:karhunen_loeve} states that $X$ only barely fails being an $L^2$-function, while Lemma \ref{le:tarkempi} states that $X$ only barely fails being a H\"older continuous function, which is of course a stronger claim.

\medskip

We now point out two examples of log-correlated Gaussian fields which will also play a role in our applications later on.

\begin{example}\label{ex:fields}
{\rm{Most common examples of log-correlated fields involve the two-dimensional Gaussian free field. While there are many related examples, we will consider the following two as they will be important in our applications to the Ising model and random matrices.

\begin{itemize}[leftmargin=0.5cm]
  \item[1.] \label{ex:fields1}Let $U\subset \R^2$ be a bounded simply connected domain. Then the Gaussian free field on $U$ with zero boundary conditions is the $\mathcal{D}'(\reals^2)$-valued Gaussian random field with covariance 
\begin{equation}\label{eq:GFFcovariance}
    C_X(x,y)=G_U(x,y)=\log \left|\frac{1 - \varphi(x) \overline{\varphi(y)}}{\varphi(x) - \varphi(y)}\right|,
\end{equation}

    \noindent where $G_U$ is the Green's function of the Laplacian in $U$ with zero Dirichlet boundary conditions, and $\varphi:U\to \D$ is any conformal bijection. We could equivalently write $G_U(x,y)=\log \left|\frac{\psi(x) -\overline{\psi(y)}}{\psi(x) - \psi(y)}\right|$, where now $\psi:U\to \mathbb{H}^+$ is any conformal bijection  from $U$ to the upper half-pane. The generalized Karhunen--Lo\`{e}ve expansion obtained in Proposition~\ref{prop:karhunen_loeve} lets us write
    \[X(x)=\sum_{k=1}^\infty\frac{1}{\sqrt{\lambda_k}}A_k \varphi_k(x)\]
    with convergence in $H^{-\varepsilon}(\reals^d)$ in the norm-topology. Here $(\lambda_k)_{k=1}^\infty$ are the eigenvalues of $-\Delta$, $\varphi_k$ the associated eigenfunctions with unit $L^2$-norm (interpreted as zero outside of $U$), and $(A_k)_{k=1}^\infty$ i.i.d. standard Gaussians. 
    
 The   covariance given by the Green's function $G_U$ satisfies condition \eqref{eq:assumptions} which may seen by applying the standard comparison    $0\leq G_U(z,w)\leq G_{U'}(z,w)$, where $U'\supset U $ is any larger simply connected domain and $z,w\in U$. The integrability and the needed upper bound are  obtained via this inequality by picking a ball $B$ such that $U\subset B$ and setting $U'=2B$.
  \item[2.] The trace of the whole plane Gaussian free field on the unit circle $\mathbb{T}$ is the $\mathcal{D}'(\mathbb{T})$-valued Gaussian random variable with covariance 
$$
C_X(z,w)=-\log |z-w|
$$

    \noindent with $|z|=|w|=1$. Again $X$ can be expressed in terms of a sum. Let $(W_k)_{k=1}^\infty$ be i.i.d. standard complex Gaussian random variables, i.e. $W_k = \frac{1}{\sqrt{2}} A_k + i \frac{1}{\sqrt{2}} B_k$ with $A_k,B_k \sim N(0,1)$ and i.i.d.. Then one has
$$
X(z)=\sqrt{2}\mathrm{Re}\sum_{k=1}^\infty \frac{1}{\sqrt{k}}z^kW_k,
$$

    \noindent where the sum converges pointwise almost surely in $\mathcal{D}'(\mathbb{T})$ (again actually in $H^{-\varepsilon}(\mathbb{T})$ with respect to the norm topology for any $\varepsilon>0$).
    
While the unit circle $\mathbb T$ is not an open subset of $\R^d$, we can say write $z=e^{ix}$ and take $x\in(-\pi,\pi)$ or something similar and see that the conditions \eqref{eq:cov} and \eqref{eq:assumptions} can be verified with various interpretations.    \hfill $\blacksquare$
\end{itemize} }}
\end{example}

As $X$ is a random generalized function and not an honest function, we need to define the exponential $e^{i\beta X}$ in terms of a renormalization procedure, where we smooth $X$ into a function, exponentiate and then remove the smoothing. We will require our smoothing to have  particular properties that are usually satisfied by  most  natural approximations of log-correlated fields (and are typical  in the general theory of multiplicative chaos). We will call this type of an approximation  a \emph{standard approximation}:

\begin{definition}[Standard approximation]\label{def:standard}
    Let the covariance $C_X$ be as in \eqref{eq:cov} and \eqref{eq:assumptions}. We say that a sequence $(X_n)_{n\geq1}$ of continuous jointly Gaussian centered fields on $U$ is a standard approximation of $X$ if it satisfies:

\begin{itemize}[leftmargin=0.5cm]
\item[(i)] One has
$$
\lim_{(m,n)\to \infty}\E X_m(x)X_n(y)=C_X(x,y),
$$

\noindent where convergence is in measure with respect to the Lebesgue measure on $U\times U$. 

\item[(ii)] There exists a sequence $(c_n)_{n=1}^\infty$ such that $c_1\geq c_2\geq ...>0$, $\lim_{n\to\infty}c_n=0$,  and for every compact $K\subset U$
$$
\sup_{n\geq 1}\sup_{x,y\in K}\left|\E X_n(x)X_n(y)-\log \frac{1}{\max(c_n,|x-y|)}\right|<\infty.
$$

\item[(iii)] We have 
$$
\sup_{n\geq 1}\sup_{x,y\in U}\left[\E X_n(x)X_n(y)-\log\frac{1}{|x-y|}\right]<\infty.
$$ \hfill $\blacksquare$
\end{itemize}
\end{definition}

\smallskip

There can of course be various standard approximations. For example, one can check that for the GFF restricted to the unit circle from Example \ref{ex:fields}, one could take $X_n$ to be the truncation of the sum at $k=n$ -- see Example~\ref{ex:fourier_standard}. Perhaps the  most important class is provided by the usual mollifications of the field:

\begin{lemma}\label{le:standard}
Let $X$ be as in Proposition \ref{prop:karhunen_loeve}, and let $\eta\in C_c^\infty(\R^d)$ be non-negative, radially symmetric, with unit mass$:$ $\int_{\R^d}\eta(x)dx=1$, and with support ${\rm supp}(\eta)\subset B(0,1).$  For $x\in U$, $y\in \R^d$, and $\varepsilon>0$ define $\eta_{\varepsilon}(y)=\varepsilon^{-d}\eta(y/\varepsilon)$ and  set $X_{\varepsilon}(x):=X*\eta_\varepsilon(x)\times \1_U(x)$ for $x\in \reals^d$.\footnote{Recall that $X\in H^{-s}(\R^d)\subset \mathcal S'(\R^d)$ for any $s>0$, so as $\eta_\varepsilon\in \mathcal S(\R^d)$, this convolution makes sense.}

Let $K\subset U$ be a compact set, $0<\varepsilon<\delta$, and $x,y\in U$. We then have the estimates 
\begin{equation}\label{eq:conv1}
\sup_{0<\varepsilon<\delta<1}\sup_{x,y\in K}\left|\E X_{\varepsilon}(x)X_{\delta}(y)-\log \frac{1}{\max(|x-y|,\delta)}\right|<\infty,
\end{equation}
\begin{equation}\label{eq:conv2}
  \lim_{\delta\to 0}\E X_{\varepsilon}(x)X_{\delta}(y)=C_X(x,y) \qquad \text{for } x\neq y \text{ fixed},
\end{equation}
\begin{equation}\label{eq:conv3}
\sup_{\varepsilon>0}\sup_{x,y\in U}\left[\E X_{\varepsilon}(x)X_{\varepsilon}(y)-\log \frac{1}{|x-y|}\right]<\infty,
\end{equation}

\noindent and finally there exists a constant $C>0$ depending only on $K$ and $\eta$ so that for $x,y\in K$ 
\begin{equation}\label{eq:conv4}
\E (X_{\varepsilon}(x)-X_{\varepsilon}(y))^2\leq C|x-y| \varepsilon^{-1}.
\end{equation}
Especially, for any sequence $\delta_n\searrow 0$ the convolutions $X_{\delta_n}$, $n\geq 1$, provide a standard approximation.
\end{lemma}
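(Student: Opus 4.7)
The key formula is the double-integral representation for jointly Gaussian centered fields obtained by mollification:
$$\E X_\varepsilon(x)X_\delta(y) \;=\; \int_{\R^{2d}}\eta_\varepsilon(x-u)\eta_\delta(y-v)C_X(u,v)\,du\,dv,$$
and throughout I would split $C_X(u,v)=\log\tfrac{1}{|u-v|}+g(u,v)$, treating each piece separately. The $g$-piece is the easy one: on a compact $K\subset U$ it is bounded by continuity, so its contribution to \eqref{eq:conv1}, \eqref{eq:conv2}, \eqref{eq:conv4} is $O(1)$; for the uniform estimate \eqref{eq:conv3} over all of $U$, one uses the assumed global upper bound on $g$ instead. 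All four estimates then reduce to analyzing the convolution of the logarithmic kernel against $\eta_\varepsilon\otimes\eta_\delta$.

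For \eqref{eq:conv1} (from which \eqref{eq:conv3} follows by taking $\varepsilon=\delta$), I change variables $u=x-\varepsilon u'$, $v=y-\delta v'$ to reduce the log-piece to
$$\int\int \eta(u')\eta(v')\log\frac{1}{|x-y+\delta v'-\varepsilon u'|}\,du'\,dv',$$
and I split into two regimes. When $|x-y|\geq 4\delta$ the argument equals $|x-y|(1+O(1))$ since $\varepsilon\leq\delta$ and $|u'|,|v'|\leq 1$, so the integral evaluates to $\log\tfrac{1}{|x-y|}+O(1)$. When $|x-y|<4\delta$, I factor out $\log\tfrac{1}{\delta}$, reducing to $\int\int\eta(u')\eta(v')\log\tfrac{1}{|(x-y)/\delta+v'-(\varepsilon/\delta)u'|}\,du'\,dv'$ over a bounded range of parameters, where the integrand is locally integrable and the integral is uniformly $O(1)$. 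The two regimes combine to give $\log\tfrac{1}{\max(|x-y|,\delta)}+O(1)$. For \eqref{eq:conv2}, with $x\neq y$ fixed and $\varepsilon<\delta\to 0$, eventually $\delta<|x-y|/3$, so the integration avoids the diagonal where $C_X$ is continuous, and dominated convergence yields $C_X(x,y)$.

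For \eqref{eq:conv4}, I distinguish two regimes by comparing $|x-y|$ with $\varepsilon$. When $|x-y|\leq\varepsilon$, $X_\varepsilon$ is smooth on a neighborhood of $K$ contained in $U$, so I write
$$X_\varepsilon(x)-X_\varepsilon(y)\;=\;\int_0^1 (x-y)\cdot\nabla X_\varepsilon((1-t)x+ty)\,dt,$$
apply Cauchy--Schwarz, and reduce matters to $\E|\nabla X_\varepsilon(z)|^2=O(\varepsilon^{-2})$. This gradient-variance bound comes from the representation $\int\int\partial_i\eta_\varepsilon(z-u)\partial_i\eta_\varepsilon(z-v)C_X(u,v)\,du\,dv$: after rescaling by $\varepsilon$ the cancellation $\int\partial_i\eta_\varepsilon=0$ absorbs the singular $\log\varepsilon$ coming from the log-kernel, and boundedness handles the $g$-piece. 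This yields $\E(X_\varepsilon(x)-X_\varepsilon(y))^2\leq C|x-y|^2/\varepsilon^2\leq C|x-y|/\varepsilon$. In the opposite regime $|x-y|>\varepsilon$, I expand
$$\E(X_\varepsilon(x)-X_\varepsilon(y))^2\;=\;\E X_\varepsilon(x)^2+\E X_\varepsilon(y)^2-2\E X_\varepsilon(x)X_\varepsilon(y)$$
and insert \eqref{eq:conv1} to obtain $2\log(|x-y|/\varepsilon)+O(1)$, bounded by $C|x-y|/\varepsilon$ via $\log t\leq t$ for $t\geq 1$.

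The final assertion that $X_{\delta_n}$ is a standard approximation then follows immediately: \eqref{eq:conv2} gives (i) of Definition~\ref{def:standard} (pointwise convergence off the diagonal, a null set, yields convergence in measure on the bounded set $U\times U$), \eqref{eq:conv1} at $\varepsilon=\delta=\delta_n$ gives (ii) with $c_n=\delta_n$, and \eqref{eq:conv3} gives (iii). The main technical obstacle is keeping the constants in \eqref{eq:conv1} and \eqref{eq:conv3} genuinely uniform in $(\varepsilon,\delta,x,y)$ across both regimes $|x-y|\gtrless\delta$; a secondary subtlety in \eqref{eq:conv4} is that the two regimes require qualitatively different arguments (smoothness of $X_\varepsilon$ versus covariance expansion), which fortuitously produce the same envelope $C|x-y|/\varepsilon$.
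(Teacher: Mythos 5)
Your two-regime analysis for \eqref{eq:conv1} and the gradient-variance argument for \eqref{eq:conv4} are correct, and they proceed along a genuinely different route from the paper. For \eqref{eq:conv1} the paper first sets $H:=\eta_1*\log(|\cdot|^{-1})$, proves $\sup_x|H(x)-\log(1\wedge|x|^{-1})|\le C$ from $|D\log(1/|x|)|\le 1$ for $|x|\ge 1$, observes $\|DH\|_\infty<\infty$ to deduce $\|\eta_\lambda*H-H\|_\infty\le C$ uniformly for $\lambda\in(0,1)$, and then scales; this delivers the estimate uniformly in $(\varepsilon,\delta,x,y)$ in a single stroke, with no case split on $|x-y|\gtrless\delta$. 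For \eqref{eq:conv4} the paper instead proves the pointwise derivative bound $|D_xC_{X_\varepsilon}|,|D_yC_{X_\varepsilon}|\lesssim\varepsilon^{-1}$ and applies the mean-value theorem; your bound $\E|\nabla X_\varepsilon(z)|^2=O(\varepsilon^{-2})$ via the cancellation $\int\partial_i\eta=0$ packages the same mechanism, and the second regime $|x-y|>\varepsilon$ handled through the covariance expansion and $\log t\le t$ is a valid, if less uniform, alternative. Both approaches reach the same constants up to bookkeeping.

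There are two gaps. The more substantive one: Definition~\ref{def:standard} requires $(X_n)$ to be a sequence of \emph{continuous jointly Gaussian} centered fields, and you never verify either property for $X_{\delta_n}$. The check is short — joint Gaussianity follows by writing $\sum_k t_k X_{\delta_{n_k}}(x_k)=X\big(\sum_k t_k\eta_{\delta_{n_k}}(\cdot-x_k)\big)$ and recalling that $X$ is a Gaussian random element of $H^{-\varepsilon}(\R^d)$, while pathwise continuity follows from the continuity of $x\mapsto\eta_\delta(\cdot-x)$ in $H^{\varepsilon}(\R^d)$ together with duality — but without it the lemma's very conclusion, that $(X_{\delta_n})$ forms a standard approximation, is not established. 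The second gap concerns the $g$-piece in \eqref{eq:conv1}: the supremum there ranges over all $0<\varepsilon<\delta<1$, so $\delta$ can be of order one, and then the mollifier support around $y\in K$ exits any fixed compact on which $g$ is controlled by continuity (and may exit $U$ itself, where $C_X\equiv 0$). Your justification ``on a compact $K\subset U$ it is bounded by continuity'' does not cover this range; the paper treats it separately by Young's inequality and the assumption $g\in L^1(U\times U)$, bounding the $g$-contribution by $\lesssim a^{-2d}\|g\|_{L^1}$ when $\delta>a:=d(K,\partial V)$ for a fixed open $V$ with $K\subset V\subset\overline{V}\subset U$. You need the same patch to make \eqref{eq:conv1} hold as stated.
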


\begin{proof} We begin with the proof of \eqref{eq:conv1} and observe that by definition
\begin{equation}\label{eq:eka}
\E X_{\varepsilon}(x)X_{\delta}(y) = \big((\eta_\varepsilon\otimes\eta_\delta)* C_X\big) (x,y)\mathbf{1}_{U\times U}(x,y).
\end{equation}
  Note that by our definition, $C_X$ is extended to be zero outside $U\times U$, and $C_X$ is integrable (actually belongs to all $L^p$-spaces by Lemma \ref{le:integrability}), so the convolution is well-defined in all of $\R^{2d}$. In turn, the factor $\mathbf{1}_{U\times U}$ verifies that the approximations are supported on $U$. 
Pick an open set $V$ such that $K\subset V\subset \overline{V}\subset U.$ Denote $a=a_K:={\rm dist}(K,\partial V)>0$. 
Locally the function $g$ is bounded uniformly from above and below on $\overline{V}$ by the assumed continuity, so its contribution to the convolution \eqref{eq:eka} is also uniformly bounded if $x,y\in K$ and $\varepsilon,\delta\leq a$. For other values of $\delta,\varepsilon$ the contribution of $g$ is upper bounded by   $\lesssim a^{-2d}$ from the integrability of $g$. Hence it remains to verify \eqref{eq:conv1} just  for the logarithmic term.

As the logarithmic term depends only on the difference $x-y$ we may write
\begin{equation}\label{eq:toka}
 \big((\eta_\varepsilon\otimes\eta_\delta)* \log (|\cdot -\cdot |^{-1})\big)(x,y)= \big((\eta_\varepsilon* \eta_\delta)*\log(|\cdot |^{-1})\big)(x-y).
\end{equation}
Given any differentiable function $h:\R^d\to \R$ we have the easy estimate
\begin{equation}\label{eq:apu}
\|\eta_\varepsilon*h-h\|_{L^\infty (B(x,r-\varepsilon))}\lesssim \varepsilon\|Dh\|_{L^\infty (B(x,r))}
\end{equation}
for any $0<\varepsilon<r$ and $x\in\R^d$.
Let us denote $H:=\eta_1*\log(1/|\cdot|)$. As a smooth function $H$ is uniformly bounded near the origin. Moreover, $|D\log (1/|x|)| \leq 1$ for $|x|\ge 1$, whence \eqref{eq:apu} yields that $|H(x)-\log(1/|x|)|\leq C$ for
$|x|\geq 1$. These observation may be combined as follows:
\begin{equation}\label{eq:apu1}
  \sup_{x\in \R^d}\left| H(x)- \log (1\wedge |x|^{-1})\right| \le C.
\end{equation}
Using the smoothness of $H$ and again the bound $|D\log (1/|x|)| \leq 1$ for $|x|\geq 1$, we see that $|DH|$ is uniformly bounded in $\R^d$, and hence
\eqref{eq:apu} implies  the inequality
$\|\eta_\varepsilon*H -H\|_{L^\infty (\R^d)}<C$ uniformly in   $\varepsilon\in (0,1)$. Putting things together we have shown that
$$
\Big|\big((\eta_1* \eta_\varepsilon)*\log(|\cdot |^{-1})(x)- \log (1\wedge |x|^{-1})\Big| \leq C \qquad \textrm{for all}\quad \varepsilon\in (0,1)\;\;\textrm{and}\;\;  x\in \R^d.
$$
This is \eqref{eq:conv1} for $1=\delta >\varepsilon >0$, and scaling yields the general case
\begin{equation}\label{eq:stand_gen}
  \big|(\eta_\varepsilon * \eta_\delta)*\log(|\cdot|^{-1})(x) - \log\big(\frac{1}{\varepsilon \vee \delta \vee |x|}\big)\big| \le C.
\end{equation}

  The convergence in \eqref{eq:conv2} is immediate from standard properties of convolution and  the continuity of $C_X$ outside the diagonal.  Next,  \eqref{eq:conv3} follows  from \eqref{eq:eka}, \eqref{eq:toka}, \eqref{eq:stand_gen} and the upper boundedness of $g$. Finally, for  \eqref{eq:conv4} we may clearly assume  that $\varepsilon\leq a/2$ (where $a$ depends on $K$ as was defined in the beginning of the proof) and that $g$ is continued as a uniformly bounded measurable function to the whole of $\R^d$ (the extension need not to be a covariance). For  \eqref{eq:conv4}  it is enough to prove the derivative bounds $|D_x C_{X_\varepsilon}|,|D_y C_{X_\varepsilon}| \lesssim \varepsilon^{-1}$. Since  $\int_{\R^d} |D\eta_\varepsilon|\lesssim \varepsilon^{-1}$, we obtain the stated bounds for the contribution of $g$ to the derivative.  In turn, for the contribution of the logarithm one assumes first that $\varepsilon =1$. Then the uniform boundedness of the derivatives follow from \eqref{eq:toka} and the fact that $\|DH\|_\infty <\infty$, where $H$ is as before. The case of general $\varepsilon \in (0,1)$ is again obtained by scaling.

Finally we note that conditions (i), (ii), and (iii) of a standard approximation follow from \eqref{eq:conv1}, \eqref{eq:conv2}, and \eqref{eq:conv3}. Thus we only need to check that $(X_{\delta_n})$ are jointly Gaussian and continuous. We recall the simple argument for the convenience of a reader unfamiliar with such matters. By construction, all of the processes $(n,x)\mapsto X_{\delta_n}(x)$ live on the same probability space. Moreover, for any fixed $N\in \Z_+$, $x_1,...,x_N\in U$, $n_1,...,n_N\in \Z_+$, and $t_1,...,t_N\in \R$,

$$
\sum_{k=1}^N t_k X_{\delta_{n_k}}(x_k)=X\left(\sum_{k=1}^N t_k\eta_{\delta_{n_k}}(\cdot-x_k)\right)
$$

\noindent and as we have e.g. $\sum_{k=1}^N t_k\eta_{\delta_{n_k}}(\cdot-x_k)\in H^\varepsilon(\R^d)$, this is a Gaussian random variable by definition, so indeed we have joint Gaussianity. Finally  continuity follows by observing that $\eta_{\delta}(\cdot-x')\to \eta_{\delta}(\cdot-x)$ in $H^\varepsilon(\R^d)$ as $x'\to x$ and using the duality between $H^{-\varepsilon}$ and $H^\varepsilon$.
\end{proof}
The proof of this result can be used to prove that other natural approximations are also standard approximations. As an example, we give the following one.

\begin{example}\label{ex:fourier_standard}
  Let $X_n(x) = \sqrt{2} \mathrm{Re}\sum_{k=1}^n\frac{1}{\sqrt{k}}e^{ikx}W_k$, where $W_k$ are as in Example~\ref{ex:fields} part 2.
  Then the sequence $(X_n)_{n\ge 1}$ forms a standard approximation. Intuitively, this follows, since for the approximation 
    $$
    \widetilde X_n(x) :=  \sqrt{2} \mathrm{Re}\left(\sum_{k=1}^{n} \frac{\sqrt{n-k}}{\sqrt{nk}}e^{ikx}W_k  \right)
  $$
  we have $\E \widetilde X_n(x) \widetilde X_n(y) = \sum_{k=1}^n \frac{n-k}{nk}\cos(k(x-y))$. 
    The last written sum is a convolution of the logarithmic  kernel with a standard Fej\'er kernel,  and the difference between the Fej\'er partial sum and Fourier partial sum is uniformly bounded by direct inspection. Finally, the Fej\'er partial sum of the logarithm is essentially a convolution approximation which behaves like the covariance of a standard approximation by the proof of Lemma \ref{le:standard}. For a detailed argument, see e.g. the beginning of the proof of Lemma~6.5 in \cite{JS}.
  \end{example}

To conclude this preliminary section, we discuss briefly the spaces of generalized functions that we will discuss in this article.

\subsection{Classical function spaces }\label{sec:spaces}

Realizations of the  imaginary chaos that we define in the next section  are rather singular objects and one can't have convergence in any space of honest functions or even complex measures, so we must study convergence in suitable spaces of distributions. In fact this holds even true for log-correlated fields that were defined in the previous subsection, and therein we used  the basic negative index Sobolev Hilbert spaces as a suitable tool. Here we recall for the convenience of readers less familiar with various spaces of generalized functions the definition of Sobolev spaces as well as of the other function spaces we use in the article.  

For any smoothness index $s\in \R$ we define
\begin{equation}\label{eq:sobo}
  H^{s}(\R^d)=\left\lbrace \varphi\in \mathcal{S}'(\R^d): \|\varphi\|_{H^s(\R^d)}^2=\int_{\R^d}(1+|\xi|^2)^s \big|\widehat{\varphi}(\xi)\big|^2 \, d\xi <\infty\right\rbrace,
\end{equation}
where $\widehat{\varphi}$ stands for the Fourier transform of the tempered distribution $\varphi$ -- our convention for the Fourier transform is 
$$
\widehat{\varphi}(\xi)=\int_{\R^d}e^{-2\pi i \xi \cdot x}\varphi(x)dx
$$
for any Schwartz function $\varphi\in \mathcal{S}(\R^d)$. Some basic facts about the spaces $H^s(\R^d)$ are e.g. that they are Hilbert spaces,  for $s>0$, $H^{-s}(\R^d)$ is the dual of $H^s(\R^d)$ with respect to the standard dual pairing, $H^s(\R^d)$ is a subspace of $C_0(\R^d)$ for $s>d/2$, i.e. there is a continuous embedding into the space of continuous functions vanishing at infinity, and for $s<-d/2$, compactly supported Borel measures (especially $\delta$-masses) are elements of $H^{s}(\R^d)$. 

A more extensive scale of measuring the simultaneous size and smoothness properties of functions is provided by Besov spaces on $\R^d$. In order to recall their definition, fix radial and non-negative Schwartz test functions $\phi_0,\phi_1\in \mathcal S(\R^d)$, denote  $\phi_k(x):=2^{kd}\phi_1(2^kx)$ and assume that 
$$
\textrm{ supp}(\widehat \phi_0)\subset B(0,2),\qquad \textrm{supp}(\widehat \phi_1)\subset B(0,4)\setminus B(0,1),
$$
  together with the partition of unity property
$
\sum_{k=0}^\infty \widehat \phi_k(\xi )=1\quad \textrm{for all}\quad \xi\in \R^d.
$ 
Assume that $1\leq p,q\leq\infty.$ A function (or Schwartz distribution) $f$ on $\R^d$ belongs to the Besov space $B^{s}_{p,q}(\R^d)$ if
\begin{equation}\label{eq:besov}
\| f\|_{B^{s}_{p,q}(\R^d)}:=\left(\sum_{k=0}^\infty 2^{qks}\|\phi_k*f\|^q_{L^p(\R^d)}\right)^{1/q} <\infty,
\end{equation}
where the interpretation for $q=\infty$ is $\| f\|_{B^{s}_{p,q}}:=\sup_{k\geq 0}2^{ks}\|\phi_k*f\|_{L^p(\R^d)}$.
These spaces include many standard spaces. First of all,  $B^{s}_{2,2}(\R^d)=W^{s,2}(\R^d)=H^s(\R^d).$ Moreover, if $s\in (0,1)$ we have $B^{s}_{\infty,\infty}(\R^d)=C^s(\R^d)$ (with equivalent norms), where $C^s$ is the well-known space of bounded H\"older continuous functions with the norm
$$
\|f\|_{C^s(\R^d)}:=\| f\|_{L^\infty (\R^d)}+\sup_{x,y\in \R^d}\frac{|f(x)-f(y)|}{|x-y|^s}.
$$
Indeed, as is standard in harmonic analysis,  one defines $C^{s}(\R^d):=B^{s}_{\infty,\infty}(\R^d)$ for arbitrary $s\in\R$.

Our motivation for proving in this paper basically optimal results for  membership of the imaginary chaos in general Besov spaces comes from the fact that this yields considerably more knowledge on the smoothness and size of these objects than is obtained by just using the spaces $H^s(\R^d)$. Recall for example, that in the setting of log-correlated fields, our Proposition \ref{prop:karhunen_loeve} said that the field $X$, if smoothed a little bit, becomes an $L^2$-function, which is far weaker than saying that it becomes continuous as was stated in Lemma \ref{le:tarkempi}. The latter result indeed measures smoothness using the Besov scale $B^s_{\infty,\infty},$ i.e. H\"older-spaces. 

Another scale of function spaces is provided by the Triebel--Lizorkin spaces $F^{s}_{p,q}(\R^d)$, where we assume that
$1\leq p,q<\infty$ and set
$$
\| f\|_{F^{s}_{p,q}(\R^d)}:=\Big\| \left(\sum_{k=0}^\infty 2^{qks}|\phi_k*f|^q\right)^{1/q}\Big\|_{L^p(\R^d)}.
$$
This space contains as special cases e.g. the general Sobolev spaces  $W^{k,p}(\R^d)=F^{k}_{p,2}(\R^d).$
However, we do not need to know more of them, since we will transfer our smoothness  results from the Besov case to the Triebel--Lizorkin scale in view of  the simple embeddings
\begin{equation}\label{eq:betrie}
B^{s+\delta}_{p,p}(\R^d)\subset F^{s}_{p,q}(\R^d)\subset B^{s-\delta}_{p,p}(\R^d)
\end{equation}
which hold for any $\delta >0$,  all $1\leq p,q<\infty$ and $s\in\R.$ This  is easily shown  from the very definitions of the spaces. For example, by H\"older's inequality we have for any sequence $(a_k)_{k\geq 1}$ and $\delta >0$  that
$\|(a_k)_{k\geq 1}\|_{\ell^{q'}}\lesssim \|(2^{k\delta}a_k)_{k\geq 1}\|_{\ell^{q}}$ for any $q,q'\in [1,\infty]$. This shows that $F^{s}_{p,q}(\R^d)\subset F^{s-\delta}_{p,q'}(\R^d)$ for any $q,q'$. By choosing $q'=p$ and noting that $\|f\|_{F_{p,p}^s(\R^d)}=\|f\|_{B_{p,p}^s(\R^d)}$, we obtain the right hand inequality in \eqref{eq:betrie}, and the other one is proven in a similar way.

We need a couple of additional facts about Besov spaces. Fix $K\subset \R^d$ compact. Then for a distribution $f$ in $\R^d$ with support contained in $K$ we have also (now for  the full range $1\leq p,q\leq\infty$)
\begin{equation}\label{eq:bebe}
\|f\|_{B^{s}_{\infty,\infty }(\R^d)}\; {\lesssim}\;   \|f\|_{B^{s'}_{p,p}(\R^d)} \quad \textrm{if}\quad s'\geq s+\frac{d}{p}.
\end{equation}
and
\begin{equation}\label{eq:beem}
\|f\|_{B^{s-\delta}_{1,1}(\R^d)} \; {\lesssim}\; \|f\|_{B^{s}_{p,q}(\R^d)}\; {\lesssim}\; \|f\|_{B^{s+\delta}_{\infty,\infty}(\R^d)} .
\end{equation}
with the implied constants in \eqref{eq:beem} possibly depending on $K$. \eqref{eq:bebe} is found in \cite[Section 2.7.1.]{Tr}, and \eqref{eq:beem} follows by combining the reasoning from the end of the last paragraph with a standard expression for the Besov-norm using wavelets -- see \cite[Chapter 6]{M}. One finally uses the simple fact that for functions $f$ supported in a compact set $K'$ we have by H\"older's inequality that  $\|f\|_{L^{p_1}}\lesssim \|f\|_{L^{p_2}}$ for $1\leq p_1\leq p_2\leq \infty$.

For a subdomain $U\subset\R^d$ (naturally one may have $U=\R^d$) one says that a distribution $\lambda\in \mathcal{D}'(U)$ lies in the space $H^s_{loc}(U)$ if for all $\psi\in C_c^\infty(U)$ one has $\psi\lambda\in H^s(\R^d)$.  In turn, one says that $\lambda\in H^s(U)$ assuming that there is $f\in H^s(\R^d)$ such that $\lambda=f_{|U}$ (then one defines $\|\lambda\|_{H^s(U)}:=\inf\{ \|f\|_{H^s(\R^d)}\;|\;  \lambda=f_{|U}\}$). Similar conventions are used for other function spaces defined initially on $\R^d.$

One final general fact about the function spaces we will use is the standard $\delta$-lift $I^\delta f$ (``smoothing by an amount $\delta$'') of a given $f\in\mathcal{S}'(\R^d)$, which for any fixed $\delta\in\R$ is defined by using the Fourier-transform as follows:
\begin{equation}\label{eq:lift}
I^\delta f :=\mathcal{F}^{-1}\left((1+|\cdot |^2)^{-\delta/2}\widehat f\right)= G_\delta*f,
\end{equation}
where $G_\delta$ is the Bessel potential kernel. For any $\delta,s\in\R$ and $p,q\in [1,\infty]$ the map $I^\delta:
B^{s}_{p,q}(\R^d)\to B^{s+\delta}_{p,q}(\R^d)$ is a continuous, linear and bijective isomorphism (see \cite[Section~2.3.8]{Tr}).

For an  introduction to the basic properties of  the $L^2$-Sobolev spaces, as well as for the Besov and Triebel spaces we refer in general to \cite[Chapter 2]{Gr2}, \cite{Tr}, \cite{M}.

This concludes our preliminary discussion about log-correlated fields and spaces of generalized functions. We will now move onto imaginary chaos.

\section{Basic properties of imaginary multiplicative chaos}\label{sec:ichaos}

In this section, we prove our results stated in Section \ref{subsec:ichaosresults} concerning basic properties of imaginary multiplicative chaos as well as prove some auxiliary ones. We begin with Section \ref{subsec:construction} where we construct our imaginary multiplicative chaos and give some uniqueness results. In Section  \ref{subsec:moments}, we discuss stochastic properties of imaginary multiplicative chaos, namely we provide some general moment estimates, based on a generalisation of so-called  Onsager type (electrostatic) inequalities  (they will be discussed in Subsection \ref{subsec:moments} below) for general covariances with a logarithmic singularity on the diagonal. These are used to obtain uniqueness statements in terms of moments  and tail estimates for the law of the imaginary chaos tested against a given test function. We then move on to proving basic estimates for the regularity of imaginary chaos in Section \ref{subsec:regularity}. Section \ref{subsec:universality} verifies that in the definition of ``$e^{i\beta X}$'' there is a lot of freedom in replacing $x\mapsto e^{ix}$ by another periodic function. Finally, in Section  \ref{subsec:critical} we investigate what happens in the limit $\beta\nearrow \beta_{\rm crit}=\sqrt{d}$. It is known from \cite{LRV} that $\beta_{\rm crit}$ is the critical value for $\beta$ beyond which the naive renormalization scheme of dividing $e^{i\beta X_n(x)}$ by $\E e^{i\beta X_n(x)}$ does not produce a non-trivial limiting object, and our Theorem \ref{thm:crit} gives another manifestation of this fact.

\subsection{Construction of imaginary chaos}\label{subsec:construction} We begin by constructing imaginary multiplicative chaos and verifying some uniqueness properties, namely that the constructed object does not depend very much on the approximation used -- see Theorem \ref{th:existuniq}. Before starting, we recall that under a slightly more restrictive class of  covariances $C_X$, the existence of the object follows already from results in \cite{LRV}, where complex multiplicative chaos was studied, but we offer a simple alternative proof here. We also mention that if one were to work for example in the class of tempered distributions, proving existence would be slightly simpler, but this would give very little insight into the regularity of these objects.

Let us start by proving existence. In our approach we are given a sequence of approximations $(X_n)_{n\geq 1}$ of the log-correlated field $X$ on the domain $U$, which we use to define what we hope are approximations to our multiplicative chaos distribution: 
\[\mu_n(x) := \exp\Big(\frac{\beta^2}{2}\E [X_n(x)^2] +i\beta X_n(x)\Big)\mathbf{1}_U(x).\]
We will first prove the convergence of $\mu_n$ in a suitable Sobolev space, assuming that $X_n$ forms a standard approximation sequence as in Definition~\ref{def:standard}. As we will see in Section \ref{subsec:regularity}, the smoothness index we obtain here is not optimal, but we will return to finer regularity properties later. We also mention here that as follows from \cite[Theorem 4.2]{LRV} (under slightly more restrictive assumptions on $g$), one should not expect that $\mu_n$ has a limit for $\beta\geq \sqrt{d}$ unless it is multiplied by a suitable quantity tending to zero, in which case the limit should be proportional to white noise. As this is perhaps not as interesting a limiting object, we choose to focus on the regime $0<\beta<\sqrt{d}$. The following proposition is the first ingredient of Theorem \ref{th:existuniq}.

\begin{proposition}\label{prop:mu_convergence}
  Let  $(X_n)_{n\geq 1}$ be a standard approximation of a given log-correlated field $X$ on a domain $U$ $($see Definition~\ref{def:standard}$)$.  When $0<\beta <\sqrt{d}$, the functions $\mu_n$ converge in probability in $H^s(\reals^d)$ for $s < -\frac{d}{2}$. The limit $\mu$ is a non-trivial random element of $H^s(\R^d)$, supported on $\overline{U}.$
\end{proposition}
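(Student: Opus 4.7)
The strategy is to prove Cauchy convergence of $(\mu_n)$ in $L^2(\Omega; H^s(\R^d))$; this immediately yields convergence in probability in $H^s$, and by completeness produces a limit $\mu \in L^2(\Omega; H^s)$. Using the Fourier-analytic characterisation of the $H^s$-norm and Fubini, I would write
\[
\E \|\mu_n - \mu_m\|_{H^s}^2 = \int_{\R^d} (1+|\xi|^2)^s \, \E|\widehat{\mu_n}(\xi) - \widehat{\mu_m}(\xi)|^2 \, d\xi.
\]
The Gaussian computation (complete the square in the exponent) gives the identity $\E[\mu_n(x)\overline{\mu_m(y)}] = e^{\beta^2\, \E X_n(x) X_m(y)}$, and therefore
\[
\E|\widehat{\mu_n}(\xi) - \widehat{\mu_m}(\xi)|^2 = \int_{U\times U} e^{-2\pi i \xi\cdot(x-y)} A_{n,m}(x,y)\, dx\, dy,
\]
where $A_{n,m}(x,y)$ is the alternating sum of the four quantities $e^{\beta^2\,\E X_k(x) X_\ell(y)}$ for $k,\ell\in\{n,m\}$.

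The next step is to invoke the standard-approximation hypotheses. Property (i) in Definition~\ref{def:standard} says each of the four cross-covariances converges in measure on $U\times U$ to $C_X(x,y)$, whence $A_{n,m}(x,y) \to 0$ in measure as $n,m\to\infty$. Property (iii) gives the uniform bound $\E X_k(x) X_\ell(y) \le \log|x-y|^{-1} + C$, so that $e^{\beta^2\,\E X_k(x) X_\ell(y)} \lesssim |x-y|^{-\beta^2}$; since $\beta^2 < d$ this majorant is integrable on $U \times U$. Dominated convergence then yields $\int_{U\times U}|A_{n,m}|\,dx\,dy \to 0$, so $\E|\widehat{\mu_n}(\xi) - \widehat{\mu_m}(\xi)|^2 \to 0$ pointwise in $\xi$, while the same majorant gives a uniform-in-$\xi,n,m$ bound. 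Because $(1+|\xi|^2)^s$ is integrable for $s < -d/2$, a second dominated convergence argument yields $\E\|\mu_n - \mu_m\|_{H^s}^2 \to 0$, as required.

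The support statement follows from the fact that each $\mu_n$ is supported on $\overline U$ and the restriction of $H^s$-distributions to a closed set is a continuous operation, so the support of the limit is contained in $\overline U$. For non-triviality I would test against any non-negative $f \in C_c^\infty(U)$ with $f \not\equiv 0$: passing to the limit in the $L^2$-convergence above gives
\[
\E|\mu(f)|^2 = \int_{U\times U} f(x) f(y)\, e^{\beta^2 C_X(x,y)} \, dx \, dy,
\]
which is strictly positive because the integrand is positive (the logarithmic singularity of $C_X$ makes $e^{\beta^2 C_X}$ positive almost everywhere, with $g$ locally bounded ensuring the integrand is nontrivially positive on a set of full measure). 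The main technical point is really the uniform $|x-y|^{-\beta^2}$ domination that lets us exchange limit and integral, and this is exactly what property (iii) of a standard approximation provides; without it, or at $\beta \ge \sqrt{d}$, the majorant fails to be integrable and the argument breaks down, consistent with the known collapse of the naive renormalisation at $\beta = \sqrt d$.
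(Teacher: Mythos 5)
Your overall Fourier-Fubini strategy matches the paper's, but there is a genuine gap at the step where you invoke dominated convergence: the integrable majorant $|x-y|^{-\beta^2}$ is \emph{not} available for the cross terms.

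Property (iii) of Definition~\ref{def:standard} reads $\sup_n\sup_{x,y\in U}[\E X_n(x)X_n(y)-\log|x-y|^{-1}]<\infty$; this bounds only the \emph{diagonal} covariances $\E X_n(x)X_n(y)$. It does not control the off-diagonal $\E X_n(x)X_m(y)$ with $n\neq m$. Indeed, for jointly Gaussian fields the only free bound is
\[
\E X_n(x)X_m(y)\le\tfrac12\bigl(\E X_n(x)^2+\E X_m(y)^2\bigr)\sim\tfrac12\bigl(\log c_n^{-1}+\log c_m^{-1}\bigr),
\]
which blows up as $n,m\to\infty$ and in particular need not be $\le\log|x-y|^{-1}+O(1)$ for $x$ close to $y$. (For the concrete case of convolution approximations such a bound \emph{does} hold, via \eqref{eq:conv1}, but the proposition is stated for arbitrary standard approximations, which only need to satisfy (i)--(iii).) Consequently your claim ``$e^{\beta^2\,\E X_k(x)X_\ell(y)}\lesssim|x-y|^{-\beta^2}$'' for $k\neq\ell$ is unjustified, and the dominated convergence step that relies on it does not go through.

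The paper circumvents this by not trying to dominate the cross term at all: it tests against a nonnegative $\varphi\in L^\infty$, expands $\E|\mu_m(\varphi)-\mu_n(\varphi)|^2$ into diagonal minus cross contributions, uses DCT (with the property (iii) majorant) for the two diagonal terms, and then uses \emph{Fatou's lemma} for the nonnegative cross term $\int\varphi(x)\varphi(y)\,e^{\beta^2 C_{n,m}(x,y)}\,dx\,dy$. Combined with the trivial lower bound $\E|\mu_m(\varphi)-\mu_n(\varphi)|^2\ge 0$, this squeeze shows that the expectation tends to zero. Once you have the Cauchy property for a single nonnegative bounded $\varphi$, splitting a general $\varphi\in L^\infty$ into positive and negative real and imaginary parts gives the uniform bound $\E|\mu_m(\varphi)-\mu_n(\varphi)|^2\le C\|\varphi\|_\infty^2$, and that is what actually feeds into the $H^s$ dominated-convergence step with $\varphi=e^{-2\pi i\xi\cdot}$. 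To fix your argument, replace the pointwise-in-$(x,y)$ DCT by this Fatou-squeeze on nonnegative test functions and then decompose general $\varphi$. Your non-triviality and support observations are fine as they stand.
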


\begin{proof}
  Assume first that $\varphi \in L^\infty(\reals^d)$ is positive and let us write $C_{n,m}(x,y)=\E X_n(x)X_m(y)$, whence we have $C_{n,m}(x,y)=C_{m,n}(y,x).$ 
  Then a short calculation shows that
  \begin{align*}
    & \E |\mu_m(\varphi) - \mu_n(\varphi)|^2 
    \; =\;  \int_U \int_U \varphi(x) \varphi(y) \Big( e^{\beta^2 C_{n,n}(x,y)} + e^{\beta^2 C_{m,m}(x,y)} - 2e^{\beta^2 C_{n,m}(x,y)} \Big) \, dx \, dy.
  \end{align*}
  By (iii) of Definition \ref{def:standard}, we have $e^{\beta^2 C_{n,n}(x,y)}=\mathcal{O}( |x-y|^{-\beta^2})$, where the implied constant is independent of $x,y,n$. Note that as $\beta^2<d$, $|x-y|^{-\beta^2}$ is an integrable singularity (this is the role the $0<\beta<\sqrt{d}$ condition plays). Thus by the dominated convergence theorem, 
    \begin{align*}
    0 & \le \limsup_{(n,m) \to \infty} \E |\mu_m(\varphi) - \mu_n(\varphi)|^2 
   \; =\;  \int_U \int_U \varphi(x) \varphi(y) (e^{\beta^2 C_X(x,y)} + e^{\beta^2 C_X(x,y)}) \, dx \, dy \\
    &\qquad\; - \; \liminf_{(n,m) \to \infty} \int_U \int_U \varphi(x) \varphi(y) (e^{\beta^2 C_{n,m}(x,y)} + e^{\beta^2 C_{m,n}(x,y)}) \, dx \, dy \;\le \;0,
  \end{align*}
  where the last inequality follows by Fatou's lemma and property (i) of Definition \ref{def:standard}. Thus we get
  \[\lim_{(n,m) \to \infty} \E |\mu_m(\varphi) - \mu_n(\varphi)|^2 = 0,\]
  implying that $\mu_n(\varphi)$ is a Cauchy sequence in $L^2(\P)$.
  Moreover, by property (iii) of Definition \ref{def:standard}, we have the simple upper bound
  \[\E |\mu_m(\varphi) - \mu_n(\varphi)|^2 \le \|\varphi\|_\infty^2 \int_U \int_U  (e^{\beta^2 C_{n,n}(x,y)} + e^{\beta^2 C_{m,m}(x,y)}) \, dx \, dy \le C \|\varphi\|_\infty^2\]
  for some constant $C > 0$.
  By splitting a complex valued $\varphi$ into positive and negative real and imaginary parts we get the convergence in $L^2(\P)$  of $\mu_n(\varphi)$ for all $\varphi \in L^\infty(\reals^d)$\footnote{Note that this result is essentially  enough to ensure the existence of say a random tempered distribution $\mu_n$ converges to, but as stated before, it gives very little insight into the regularity of the object. Hence we work a bit harder to prove convergence in a Sobolev space, and later to extract the optimal regularity.}, as well as the upper bound
  \begin{equation}\label{eq:321}
  \E |\mu_m(\varphi) - \mu_n(\varphi)|^2 \le 16 C \|\varphi\|_\infty^2.
  \end{equation}

  We next compute 
  \begin{align*}
    \E \|\mu_m - \mu_n\|^2_{H^s} & = \int_{\reals^d} (1 + |\xi|^2)^s \E |\widehat{\mu_m}(\xi) - \widehat{\mu_n}(\xi)|^2 \, d\xi \\
    & = \int_{\reals^d} (1 + |\xi|^2)^s \E |\mu_m(e^{-2\pi i \xi \cdot}) - \mu_n(e^{-2\pi i \xi \cdot})|^2 \, d\xi.
  \end{align*}
  Notice that if $s < -\frac{d}{2}$, then the estimate \eqref{eq:321} and the dominated convergence theorem show us that as elements of $H^s$, the sequence is Cauchy in $L^2(\P)$.
  Thus there exists a random element of $H^s$, say $\mu$, living on the same probability space as our approximations, and satisfying $\E\|\mu\|_{H^s}^2<\infty$ as well as $\lim_{n\to\infty}\E\|\mu_n-\mu\|_{H^s}^2=0$.
  In particular this implies convergence in probability in $H^s$ of $\mu_n$ to $\mu$.
  
  Non-triviality of $\mu$ follows from $L^2$-convergence: one has e.g. 
  
  $$\E |\mu(\varphi)|^2=\int_{U\times U}\varphi(x)\varphi(y)e^{\beta^2 g(x,y)}|x-y|^{-\beta^2}dxdy.$$
  Finally, the claim of the support is evident since all the approximations $\mu_n$ are supported on $\overline{U}$ by definition.
\end{proof}

Having proven that limiting objects exist, the next natural step is to check that the limit $\mu$ does not depend on our approximating sequence $\mu_n$ in some sense. There are various statements of this flavor one could formulate; one example being that the law of the limit would be independent of the standard approximation. We return to such a question later with moments and now show with a simple argument that if there are two standard approximations living on the same probability space and are compatible in a certain way, then they converge in probability to the same random variable. The next proposition is the uniqueness portion of Theorem \ref{th:existuniq}.

\begin{proposition}\label{prop:uniqueness}
  Suppose that $X_n$ and $\widetilde{X}_n$ are two jointly Gaussian sequences of standard approximations of the same log-correlated field $X$  and that
  \[\lim_{n\to\infty} \E X_n(x) \widetilde{X}_n(y) = C_X(x,y),\]
  where the convergence takes place in measure on $U\times U.$
  Then the corresponding imaginary chaoses $\mu$ and $\widetilde{\mu}$ are equal almost surely.
\end{proposition}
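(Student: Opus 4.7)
\medskip
\noindent\textbf{Proof proposal for Proposition \ref{prop:uniqueness}.}

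The plan is to mimic the $L^2$-argument used in Proposition~\ref{prop:mu_convergence}, applied to the difference $\mu_n - \widetilde\mu_n$, and then pass this convergence from the level of tested-against-a-bounded-function to the Sobolev space $H^s$. First, for a nonnegative $\varphi \in L^\infty(\reals^d)$ I would expand
\[
\E |\mu_n(\varphi) - \widetilde\mu_n(\varphi)|^2 = \int_{U \times U} \varphi(x)\varphi(y)\, K_n(x,y) \, dx\, dy
\]
where
\[
K_n(x,y) = e^{\beta^2 \E X_n(x) X_n(y)} + e^{\beta^2 \E \widetilde X_n(x) \widetilde X_n(y)} - e^{\beta^2 \E X_n(x) \widetilde X_n(y)} - e^{\beta^2 \E \widetilde X_n(x) X_n(y)},
\]
using the fact that for jointly Gaussian $(A,B)$ centred one has $\E\, e^{i\beta A - i\beta B} e^{\frac{\beta^2}{2}\E A^2 + \frac{\beta^2}{2}\E B^2} = e^{\beta^2 \E AB}$.

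The two positive diagonal terms are pointwise controlled by a constant times $|x-y|^{-\beta^2}$ uniformly in $n$ by property (iii) of Definition~\ref{def:standard}, and they converge pointwise to $e^{\beta^2 C_X(x,y)}$ by the definition of a standard approximation, so dominated convergence handles them. For the two nonnegative cross terms I would use Fatou's lemma exactly as in the proof of Proposition~\ref{prop:mu_convergence}: the hypothesis $\E X_n(x)\widetilde X_n(y) \to C_X(x,y)$ in measure on $U \times U$ gives convergence in measure of $e^{\beta^2 \E X_n(x)\widetilde X_n(y)}$ to $e^{\beta^2 C_X(x,y)}$, and since $\varphi(x)\varphi(y) \ge 0$, Fatou's inequality yields
\[
\liminf_{n\to\infty}\int_{U\times U}\!\!\varphi(x)\varphi(y)\!\left(e^{\beta^2 \E X_n(x)\widetilde X_n(y)}\!+\!e^{\beta^2 \E \widetilde X_n(x)X_n(y)}\right) dx\,dy \geq \int_{U\times U}\!\! 2\varphi(x)\varphi(y) e^{\beta^2 C_X(x,y)}\,dx\,dy.
\]
Combining, $\limsup_n \E |\mu_n(\varphi) - \widetilde\mu_n(\varphi)|^2 \le 0$. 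Splitting a general complex $\varphi \in L^\infty$ into positive/negative real and imaginary parts then gives $\mu_n(\varphi) - \widetilde\mu_n(\varphi) \to 0$ in $L^2(\P)$ for every $\varphi \in L^\infty$.

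To upgrade this to $H^s$-convergence, I would apply the argument to the Fourier test functions $\varphi_\xi(\cdot) = e^{-2\pi i\xi\cdot}$ and write
\[
\E \|\mu_n - \widetilde\mu_n\|_{H^s}^2 = \int_{\reals^d} (1+|\xi|^2)^s\, \E\big|\mu_n(\varphi_\xi) - \widetilde\mu_n(\varphi_\xi)\big|^2 \, d\xi.
\]
The integrand tends to zero pointwise in $\xi$, and is uniformly bounded (in $n$ and $\xi$) by a constant multiple of $(1+|\xi|^2)^s$ thanks to the uniform bound $\E|\mu_n(\varphi_\xi) - \widetilde\mu_n(\varphi_\xi)|^2 \le C\|\varphi_\xi\|_\infty^2 = C$ obtained just as in \eqref{eq:321}. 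Since $s < -d/2$ makes this integrable, dominated convergence yields $\E \|\mu_n - \widetilde\mu_n\|_{H^s}^2 \to 0$, so $\mu_n - \widetilde\mu_n \to 0$ in probability in $H^s$. Combining with the already established $\mu_n \to \mu$ and $\widetilde\mu_n \to \widetilde\mu$ in probability in $H^s$ (Proposition~\ref{prop:mu_convergence}) gives $\mu = \widetilde\mu$ almost surely.

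The only genuine subtlety is the use of Fatou on the cross-covariance terms, since, unlike the diagonal covariances, the mixed covariance $\E X_n(x)\widetilde X_n(y)$ is \emph{not} directly controlled by property (iii) applied to a single standard approximation (Cauchy--Schwarz on jointly Gaussian vectors only yields $\lesssim \log(1/c_n)$ on the diagonal). Precisely for this reason, forcing Fatou on the nonnegative cross terms rather than seeking a dominating function is the natural (and perhaps only) route, and it is exactly what the nonnegativity assumption on $\varphi$ is there to enable.
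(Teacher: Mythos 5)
Your proposal is correct and essentially matches the paper's argument; the key computation is the same expansion of $\E|\mu_n(f)-\widetilde\mu_n(f)|^2$ into the four-exponential kernel, and the key input is the same measure-convergence-plus-Fatou step. The paper handles the limit in one stroke via the \emph{reverse} Fatou lemma (using that the whole integrand is bounded above by the two diagonal terms, which are dominated by $c|x-y|^{-\beta^2}$), whereas you split: dominated convergence for the diagonal terms and Fatou for the nonnegative cross terms, which is a verbatim mirror of the argument already used in Proposition~\ref{prop:mu_convergence}. The two routes are dual and equally rigorous. Your closing remark about why Fatou rather than a dominating function is the right tool for the cross terms is exactly on point: properties (ii)--(iii) give no uniform bound on $\E X_n(x)\widetilde X_n(y)$ off the diagonal, only the assumed convergence in measure. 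One small slip: in the DCT step for the diagonal terms you say they "converge pointwise", but property (i) only gives convergence in measure; this costs nothing since DCT works along a.e.-convergent subsequences. Finally, you are somewhat more explicit than the paper in spelling out the $H^s$ upgrade via Fourier test functions $\varphi_\xi$; the paper simply notes it is "enough" to prove the $L^2$ statement and leaves the passage from tested-against-$f$ to equality in $H^s$ implicit.
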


\begin{proof}
  It is enough to show that for all $f \in C_c^\infty(\R^d)$ we have
  \[\lim_{n \to \infty} \E |\mu_n(f) - \widetilde{\mu}_n(f)|^2 = 0.\]
  A straightforward computation shows that the expectation equals
  \[\int_U \int_U f(x) \overline{f(y)} \big( e^{\beta^2 \E X_n(x) X_n(y)} + e^{\beta^2 \E \widetilde{X}_n(x) \widetilde{X}_n(y)} - e^{\beta^2 \E X_n(x) \widetilde{X}_n(y)} - e^{\beta^2 \E \widetilde{X}_n(x) X_n(y)} \big) \, dx \, dy.\]
  Notice that since $X_n$ and $\widetilde{X}_n$ are standard approximations, there exists a constant $c > 0$ such that on $U\times U$
  \[e^{\beta^2 \E X_n(x) X_n(y)} + e^{\beta^2 \E \widetilde{X}_n(x) \widetilde{X}_n(y)} \le \frac{c}{|x-y|^{\beta^2}}.\]
  Thus by the reverse Fatou lemma we have
  \begin{align*}
     \limsup_{n \to \infty} \E |\mu_n(f) - \widetilde{\mu}_n(f)|^2 &\le  \int_U \int_U f(x)f(y)\limsup_{n \to \infty} \big( e^{\beta^2 \E X_n(x) X_n(y)} + e^{\beta^2 \E \widetilde{X}_n(x) \widetilde{X}_n(y)}\\
    &\qquad  \qquad - e^{\beta^2 \E X_n(x) \widetilde{X}_n(y)} - e^{\beta^2 \E \widetilde{X}_n(x) X_n(y)} \big) \, dx \, dy\\
    &=\;0. \qedhere
  \end{align*}
\end{proof}
By combining Propositions \ref{prop:mu_convergence} and \ref{prop:uniqueness} we conclude the proof of Theorem \ref{th:existuniq}.

\begin{remark}\label{rem:1} Given a log correlated field $X$ as in Proposition \ref{prop:karhunen_loeve} and $\beta\in (0,\sqrt{d})$, when  we speak of the  imaginary chaos $\mu= ``\exp(i\beta X)"$ we mean the chaos defined via Proposition \ref{prop:mu_convergence} using convolution approximations. The definition is well-posed since convolution approximations yield a standard approximation according to Lemma \ref{le:standard}, and the outcome does not depend on the approximation used as one may easily check that two different sequences of convolution approximations satisfy the conditions of Proposition \ref{prop:uniqueness}.  \hfill $\blacksquare$
\end{remark}

In our application to the Ising model, what will turn out to be important is the real part of imaginary chaos. We now define this properly.

\begin{definition}\label{def:kosini}
Given a log-correlated field $X$, satisfying our assumptions \eqref{eq:cov} and \eqref{eq:assumptions}, and $\beta\in (0,\sqrt{d})$ the cosine of $X$ (simply denoted by ``$\cos(\beta X)$'') is defined as the real part of the imaginary chaos, or in other words, for any test-function $\varphi\in C_c^\infty(\R^d)$ one has
$$
\langle \cos(\beta X),\varphi\rangle := \lim_{n\to\infty} \int_{U} e^{\frac{1}{2}\beta^2\E (X_n(x))^2}\cos(\beta X_n(x))\varphi(x)dx,
$$
where the limit is in probability, and $(X_n)_{n\geq 1}$ is a sequence of convolution approximations of $X$. \hfill $\blacksquare$
\end{definition}
\noindent The most important example of $``\cos(\beta X)"$ is the one corresponding to a Gaussian free field (GFF) on a given simply connected planar domain $U\subset \R^2, $ see the first part of Example \ref {ex:fields}. In Section \ref{subsec:moments} we shall characterise the laws of both $``\exp(i\beta X)"$ and $``\cos(\beta X)"$ via moments.

\smallskip

Before concluding this section about the existence and uniqueness of imaginary chaos, we mention that it is natural to ask whether the definition of the imaginary chaos could be done via the approximations given by the partial sums of the Karhunen--Lo\`{e}ve expansion \eqref{eq:def_X}:
\begin{equation}\label{eq:KLapp}
X_{KL,n}(x) := \sum_{k=1}^n A_k \sqrt{\lambda_k} \varphi_k(x).
\end{equation}
The benefit of such a definition would be that it would allow using powerful probabilistic tools such as martingale theory and the Kolmogorov 0--1 law, which sometimes simplify proofs significantly. Unfortunately, checking even the uniform integrability condition (iii) in Definition \ref{def:standard} appears to be quite complicated in the case of the Karhunen--Lo\`{e}ve approximations $X_{KL,n}(x)$, so we cannot refer to the above statements. However, under a mild further assumption, we will be able to settle the question by a more probabilistic  argument.

\begin{lemma}\label{le:KLdef} Assume that  $\beta\in (0,\sqrt{d})$ and that $X$ is the GFF on a bounded simply connected subdomain of $\C$, or more generally, that $X$ is a log-correlated field on a bounded domain in $\R^d$ with covariance satisfying our basic assumptions \eqref{eq:cov} and \eqref{eq:assumptions}, and the additional size-condition $\sup_{x\in U}\| g(x,\cdot)\|_{L^2(U)} <\infty.$ Denote $\nu_n(x):=\exp \big(\frac{1}{2}\beta^2\E [X_{KL,n}(x)^2]+i\beta X_{KL,n}(x)\big).$ As $n\to\infty,$  $\nu_n$ converges  to the imaginary chaos $\mu$ $($see Remark \ref{rem:1}$)$. More specifically, given
$\phi\in C_c^\infty (\R^d),$ we have as $N\to\infty$
$$
\langle \nu_n,\phi\rangle \to \langle \mu,\phi\rangle,
$$
where the convergence is almost sure. Moreover, $\nu_m\to\mu$ almost surely in the Sobolev space $H^s(\R^d)$ for any $s<-d/2.$
\end{lemma}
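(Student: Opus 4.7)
The plan is to realize $(\nu_n(\phi))_{n\geq 1}$ as a Doob-type martingale in the Gaussian filtration and identify it with the conditional expectations of $\mu(\phi)$. Set $\mathcal{F}_n := \sigma(A_1,\dots,A_n)$; the core step is the identity
\begin{equation*}
  \nu_n(\phi) \;=\; \E[\mu(\phi) \mid \mathcal{F}_n] \qquad \text{a.s.}
\end{equation*}
Once this is established, Doob's martingale convergence theorem together with $\mu(\phi) \in L^2(\P)$ (from Proposition~\ref{prop:mu_convergence}) gives $\nu_n(\phi) \to \E[\mu(\phi)\mid \mathcal{F}_\infty]$ almost surely and in $L^2$; since $\mathcal{F}_\infty = \sigma(X)$ and $\mu(\phi)$ is $\mathcal{F}_\infty$-measurable, the limit is exactly $\mu(\phi)$.

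To prove the identity, I would introduce convolution approximations $X_\delta := X * \eta_\delta$ (which are standard by Lemma~\ref{le:standard}) and set $\mu_\delta(\phi) := \int \phi(x)\exp(\tfrac{\beta^2}{2}\E X_\delta(x)^2 + i\beta X_\delta(x))\,dx$, so that $\mu_\delta(\phi) \to \mu(\phi)$ in $L^2$. Using the orthogonal decomposition $X = X_{KL,n} + X_{KL,n}^\perp$ with the two summands independent, and convolving by $\eta_\delta$, the elementary identity $\E[\exp(\tfrac{\beta^2}{2}\E Y^2 + i\beta Y)] = 1$ for a centered Gaussian $Y$ yields by Fubini
\begin{equation*}
  \E[\mu_\delta(\phi) \mid \mathcal{F}_n] \;=\; \int \phi(x) \exp\!\Bigl(\tfrac{\beta^2}{2}\E X_{KL,n,\delta}(x)^2 + i\beta X_{KL,n,\delta}(x)\Bigr)\,dx \;=:\; \nu_{n,\delta}(\phi).
\end{equation*}
As $\delta \to 0$, the left-hand side converges in $L^2$ to $\E[\mu(\phi)\mid \mathcal{F}_n]$ by $L^2$-continuity of conditional expectation, while the right-hand side converges to $\nu_n(\phi)$ by dominated convergence. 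This last step is exactly where the extra hypothesis $\sup_{x\in U}\|g(x,\cdot)\|_{L^2(U)}<\infty$ is needed: it makes each eigenfunction $\varphi_k(x)=\lambda_k^{-1}\int C_X(x,z)\varphi_k(z)\,dz$ continuous in $x$, via Cauchy--Schwarz and continuity of $x \mapsto C_X(x,\cdot) \in L^2(U)$, so that $X_{KL,n}$ and $\E X_{KL,n}(x)^2$ are continuous on $\overline U$ and provide a uniform deterministic dominating bound for $|\nu_{n,\delta}(x)|$ on $\mathrm{supp}(\phi)$.

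For the $H^s$-statement with $s<-d/2$, I would view $(\nu_n)$ as an $H^s(\R^d)$-valued martingale. Applying the identity with $\phi(x)=e^{-2\pi i\xi\cdot x}$ (legitimate after multiplication by a cutoff equal to $1$ on $\overline U$, since $\mu$ is compactly supported), Fubini and Jensen give
\begin{equation*}
  \E\|\nu_n\|_{H^s}^2 \;=\; \int_{\R^d}(1+|\xi|^2)^s\,\E|\widehat{\nu}_n(\xi)|^2\,d\xi \;\leq\; \E\|\mu\|_{H^s}^2 \;<\; \infty
\end{equation*}
uniformly in $n$, so $(\nu_n)$ is an $L^2$-bounded martingale in the separable Hilbert space $H^s(\R^d)$. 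The Hilbert-valued martingale convergence theorem already invoked in the proof of Proposition~\ref{prop:karhunen_loeve} then yields almost sure norm convergence in $H^s$, and pairing with arbitrary $\phi\in C_c^\infty$ identifies the limit as $\mu$ via the scalar statement. The main obstacle is justifying the conditional expectation identity above; the rest is a routine application of martingale theory.
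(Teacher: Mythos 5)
Your proposal is correct and follows essentially the same route as the paper: realize $\nu_n(\phi)$ as the conditional expectation $\E[\mu(\phi)\mid\mathcal F_n]$ by conditioning the convolution approximations $\mu_\delta$ and letting $\delta\to 0$, then invoke Doob (respectively Hilbert-space-valued martingale convergence for the $H^s$ statement). One small overreach: you claim the extra hypothesis makes each $\varphi_k$ \emph{continuous} via $L^2$-continuity of $x\mapsto C_X(x,\cdot)$, but that continuity is not immediate from the assumptions; what the hypothesis directly yields (and what the paper uses) is $\varphi_k\in L^\infty(U)$ via Cauchy--Schwarz, combined with a.e. pointwise convergence $(\varphi_k)_\delta\to\varphi_k$ for $L^1$ functions --- which is exactly enough for the dominated-convergence step.
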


\begin{proof} We may assume that $X$ is given by the Karhunen--Lo\`{e}ve decomposition  \eqref{eq:def_X}. Let us denote
$$
Y_n:= \langle \nu_n,\phi\rangle=\int_U \exp \big(\frac{1}{2}\beta^2\E [X_{KL,n}(x)^2]+i\beta X_{KL,n}(x)\big)\phi(x)dx,
$$
whence $Y_n$ is a martingale by construction. Here the integral is well-defined since by Cauchy--Schwarz, the  condition $\sup_{x\in U}\| g(x,\cdot)\|_{L^2(U)} <\infty$ implies that each eigenfunction $\varphi_k$ (corresponding to a non-zero eigenvalue) belongs to $L^\infty (U).$  In order to prove convergence of $Y_n$ to something, as $n\to\infty$, the martingale structure implies that it is enough to verify that $Y_n$ is $L^2$-bounded. Denote by $X_{\delta_k}$ a standard convolution approximation and note that
since $X-X_{KL,n}\perp X_{KL,n}$ we may write $X_{\delta_k}= (X_{KL,n})_{\delta_k}+ (X-X_{KL,n})_{\delta_k},$ where the summands are independent. This implies that

\begin{equation}\label{eq:kaava}
\E \Big(\exp \big(\frac{1}{2}\beta^2\E [X_{\delta_k}(x)^2]+i\beta X_{\delta_k}(x)\big)\big| \mathcal{F}_n\Big)=
\exp \big(\frac{1}{2}\beta^2\E [(X_{KL,n})_{\delta_k}(x)^2]+i\beta (X_{KL,n})_{\delta_k}(x)\big),
\end{equation}
where $\mathcal{F}_n$ is the $\sigma$-algebra generated by $\{A_1,\ldots , A_n\}$, and $A_i$ are the i.i.d. standard Gaussians from \eqref{eq:KLapp}.  By basic  real analysis, as we are convolving $L^1$-functions with nice bump functions, there is a set $E\subset U$ of zero Lebesgue measure so that  we have $(\varphi_j)_{\delta_k}(x)\to\varphi_j(x)$ for each $j$ and  $x\in U\setminus E$. 
Hence, if we denote 
$$
 Y_{n,k}:= \langle \nu_n,\phi\rangle=\int_U \exp \big(\frac{1}{2}\beta^2\E [(X_{KL,n})_{\delta_k}(x)^2]+i\beta (X_{KL,n})_{\delta_k}(x)\big)\phi(x)dx,
$$
then we have
$
Y_{n,k}\to Y_n
$ almost surely as $k\to \infty$. By dominated convergence and \eqref{eq:kaava} it follows for every $n$ that if we write $\mu_k$ for the approximation to $\mu$ given by $X_{\delta_k}$, then 
$$
\E |Y_n|^2\leq \sup_{k} \E|Y_{n,k}|^2 =  \sup_{k} \E\left|\E \big(\langle \mu_k ,\phi\rangle\big|\mathcal{F}_n\big)\right|^2 \leq \sup_{k}\E[|\langle \mu_k ,\phi\rangle|^2] :=C <\infty,
$$
where the last inequality used again the uniform $L^2$-bound on approximations of $\mu$ coming from convolution approximations, which in turn followed from \eqref{eq:conv3}. Further, the above reasoning\footnote{More precisely: multiplying \eqref{eq:kaava} by $\phi(x)$, integrating over $U$, and letting $k\to\infty$, one sees that the left hand side of \eqref{eq:kaava} becomes $\E(\langle \mu,\phi\rangle|\mathcal F_n)$ -- this used the fact that $\mu_k\to \mu$ in $L^2$. On the other hand, before taking the $k\to\infty$ limit, the right hand side equals $Y_{n,k}$ and we saw that this tends to $Y_n$ as $k\to\infty$.}  also verifies that $Y_n=\E (\langle \mu ,\phi\rangle|\mathcal{F}_n)$. Here both sides converge almost surely by the martingale property and $L^2$-boundedness, and the right hand side converges to $\langle \mu ,\phi\rangle$ simply by the fact that $\langle \mu ,\phi\rangle$ is measurable with respect to the $\sigma$-algebra $\sigma (\cup_{j=1}^\infty \mathcal{F}_j).$

The stated convergence in the Sobolev space  now follows  since the above reasoning yields the uniform estimate $\E |Y_n|^2\leq c \|\phi\|_\infty^2$, which leads to $\nu_n$ being a $L^2$-bounded $H^s$-valued martingale. Finally, the GFF on a bounded planar domain $U\subset\C$ satisfies the extra size condition as we then have $0\leq C_X(z,w)\leq c+\log(1/|z-w|)$ for any $z,w\in U.$
\end{proof}

This concludes our basic discussion about existence and uniqueness of imaginary chaos, and we move onto discussing probabilistic properties of imaginary chaos.

\subsection{Moment and tail bounds}\label{subsec:moments} In this section we will prove moment and tail bounds for imaginary chaos, namely Theorem \ref{th:moments} and Theorem \ref{th:tails}.
The situation is quite different from real chaos (or complex chaos in general),
since, as we will see in this section,  for $\mu$ from Theorem \ref{th:existuniq},  the moments $\E |\mu(f)|^{2N}$ are finite
for all $N \ge 1$ and all $f\in C_c^{\infty}(U)$. Moreover, it will turn out that (under minor smoothness assumptions on $g$ from \eqref{eq:cov}) these moments grow slowly enough for one to be able to characterize the law of $\mu(f)$ in terms of its moments. This makes proving that something converges to imaginary chaos rather straightforward since it is then a question about controlling moments -- indeed, this is what we will show for the XOR-Ising model.

Before going into details about the moments, let us point out that a  (formal) straightforward Gaussian computation yields the formula
\begin{equation}\label{eq:formal_moments}
  \E |\mu(f)|^{2N}\ ``=" \ \int_{U^{2N}} \frac{\prod_{1 \le i < j \le N} e^{-\beta^2 C_X(x_i,x_j)} \prod_{1 \le i < j \le N} e^{-\beta^2 C_X(y_i,y_j)}}{\prod_{1 \le i,j \le N} e^{-\beta^2 C_X(x_i,y_j)}} \prod_{i=1}^N f(x_i)\overline{f(y_i)}   dx_i dy_i,
\end{equation}
where we have written $``="$ to indicate that we have not justified this identity beyond $N=1$, or that one would have convergence of say $\mu_\delta$ to $\mu$ in all $L^p$-spaces. Nevertheless, let us not worry about rigor for a moment. The archetypical case of \eqref{eq:formal_moments} would be $C_X(x,y) = \log \frac{1}{|x-y|}$ and $f \equiv 1$ (or more precisely, $f\in C_c^{\infty}(\R^{d})$ and $f|_U=1$), in which case \eqref{eq:formal_moments} becomes the following interesting integral:
\begin{equation}\label{eq:formal_moments2}
\int_{U^{2N}} \frac{\prod_{1 \le i < j \le N} |x_i-x_j|^{\beta^2} \prod_{1 \le i < j \le N} |y_i - y_j|^{\beta^2}}{\prod_{1 \le i,j \le N} |x_i - y_j|^{\beta^2}} \, dx_1 \dots dx_N dy_1 \dots dy_N
\end{equation}
The finiteness of \eqref{eq:formal_moments} for all $\beta \in (-\sqrt{d},\sqrt{d})$ is not completely trivial, although it is well-known to experts and can be proven e.g. by using the techniques in \cite[Appendix A]{LRV}.
Rather precise lower and upper bounds for \eqref{eq:formal_moments2} are known for $d = 2$, see e.g. \cite{GP, LSZ}. As we will see later on, these bounds imply in particular that the law of $\mu(f)$ is determined by its moments.
Our goal in this section is to prove similar bounds in all dimensions and for more general covariance kernels. This is also crucial for us in Section~\ref{sec:ising}, where we deal with the convergence of the XOR-Ising model. Note that in this case, the relevant field is the zero boundary condition GFF from Example \ref{ex:fields1} and moment bounds on the corresponding imaginary chaos do not follow directly e.g. from \cite{GP,LSZ}.

In  \cite{GP} estimates for moments in the case of the purely logarithmic kernel are obtained via first establishing a  2-dimensional version of a famous inequality called Onsager's lemma \cite{O} (also sometimes called the electrostatic inequality). The original 3-d version of Onsager's inequality (where one has the $|x|^{-1}$-kernel instead  of our logarithmic kernel) has been used e.g. in the modern theory of stability of matter  \cite{DL,FL}, and we  refer to \cite{FL} or \cite{So} for a  mathematical proof of the inequality. These proofs do not apply as such for our general logarithmic covariance kernels,  especially in the case of $d\neq 2$, but we will shortly discuss  in more detail how this can be overcome and explain the various versions of the generalised inequality we shall need. 

In any case, after a suitable version of Onsager is at our hand, we may then finish the proof of the desired moment bounds by implementing the combinatorial part of the argument in \cite{GP} as stated in  Lemma \ref{lemma:combinatorial_argument} below. We  include a proof of the lemma in the appendix for the reader's convenience as the proof in \cite{GP} is for $d=2$ and there are cosmetic differences for $d\not=2$. Moreover, we also note that the approach of \cite{GP} for lower bounds of the moments generalizes to some extent, and we record consequences for the tail of the imaginary chaos. 
Finally, it is to be noted that very precise estimates for the moments in the case of $d=2$ and the purely logarithmic kernel were obtained recently in \cite{LSZ}, with applications to the tails of the corresponding imaginary chaos.

Let us then discuss our   versions of Onsager's lemma, of which there are four in total.
Our first version  (see Proposition \ref{prop:electrostatic_inequality} (i) below) takes care of general 2-dimensional covariances for which $g \in C^2(U \times U)$. This generalizes the one in \cite{GP}, which considers just the  purely logarithmic kernel. To achieve this generalization, we need to replace the complex analytic proof of \cite{GP} by a more probabilistic one. The effect  of the term $g$  in the covariance is dealt with by a rather direct error analysis. Surprisingly enough, this proof or the other known ones appear not to work for dimensions $d\not=2$, and for that purpose we require a more complicated approach based on a general decomposition principle of logarithmic covariances -- indeed,  our second version of Onsager's inequality is Theorem \ref{th:onsager_general} below, and its proof will be published elsewhere as it relies on the above decomposition principle whose proof we feel does not belong in this article. The above versions of Onsager are local in the sense that one considers points lying in a fixed subset of $U$. In contrast, our third version (Proposition \ref{prop:electrostatic_GFF} below) is a global result in the case of the $GFF$ on a bounded domain. Finally, our  fourth version  (Proposition \ref{prop:electrostatic_inequality} (ii) below) is an auxiliary result that does not require further regularity from $g$, but comes at the cost of having error of order $O(N^2)$ instead of $O(N)$. Hence  it is not  an 'honest Onsager inequality' from our point of view. In fact, quadratic error in $N$ is too large to prove
that the moments determine the distribution, but we may use this version of the inequality to verify that $\E |\mu_\varepsilon(f)|^{2N}$ converges to \eqref{eq:formal_moments} as $\varepsilon \to 0$, validating our  formal computations and verifying that all moments are finite. 

We start with the first and fourth version of  our Onsager inequalities.

\begin{proposition}\label{prop:electrostatic_inequality}
  Let $K$ be a compact subset of $U$, $N \ge 1$, $q_1, \dots, q_N \in \{-1,1\}$, and $x_1, \dots, x_N \in K$. Assume that the covariance of $X$ is as in \eqref{eq:cov} and that $g$ satisfies the assumptions \eqref{eq:assumptions}. We then have the following two Onsager-type inequalities$:$
 \begin{enumerate}[label={\rm (\roman*)}]
    \item Let $d = 2$ and assume that in addition to \eqref{eq:assumptions} we have $g \in C^2(U \times U)$. Then
      \[-\sum_{1 \le j < k \le N} q_j q_k \E X(x_j) X(x_k) \le \frac{1}{2} \sum_{j=1}^N \log \frac{1}{\frac{1}{2} \min_{k \neq j} |x_j - x_k|} + C N,\]
    for some constant $C > 0$ depending only on $g$ and $K$.
    \item Let $d \ge 1$ be arbitrary. For convolution approximations $X_\varepsilon$ $($as in Lemma \ref{le:standard}$)$ of $X$ we have
      \[-\sum_{1 \le j < k \le N} q_j q_k \E X_\varepsilon(x_j) X_\varepsilon(x_k) \le \frac{1}{2} \sum_{j=1}^N \log \frac{1}{\frac{1}{2} \min_{k \neq j} |x_j - x_k|} + C N^2\]
      for some constant $C > 0$ that is independent of  $\varepsilon > 0$, and depends only on $g$ and $K$. Note that no extra assumptions beyond \eqref{eq:assumptions} on $g$ are required in this case.
  \end{enumerate}
\end{proposition}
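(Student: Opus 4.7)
For each $j$, set $r_j := \tfrac{1}{2}\min_{k\ne j}|x_j - x_k|$, so the closed disks $\overline{B(x_j,r_j)}$ are pairwise disjoint and lie in a bounded enlargement of $K$. Let $\sigma_j$ denote the uniform probability measure on the circle $\partial B(x_j,r_j)$, and introduce the jointly centered Gaussian family $Y_j := \langle X,\sigma_j\rangle$, defined as the $L^2$-limit of $\langle X, \eta_\delta * \sigma_j\rangle$ as $\delta\to 0$; finiteness is granted since $\iint C_X\,d\sigma_j\otimes d\sigma_j$ is finite by continuity of $g$ and the two-dimensional mean value identity invoked below. Setting $Y := \sum_j q_j Y_j$, the nonnegativity $0 \le \E Y^2$ rearranges into the Onsager-type bound
\begin{equation*}
  -\sum_{j<k} q_j q_k\,\E Y_j Y_k\;\le\;\tfrac{1}{2}\sum_{j=1}^N \E Y_j^2,
\end{equation*}
so it suffices to compute the smeared covariances and control their deviation from the point-evaluated ones.

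For part (i), the planar mean value property of $\log(1/|\cdot|)$, which applies precisely because the disks are pairwise disjoint, gives the exact identity $\iint \log(1/|u-v|)\,d\sigma_j(u)\,d\sigma_k(v) = \log(1/|x_j-x_k|)$ for $j\ne k$, and $\log(1/r_j)$ when $k=j$. For the $g$-contribution, Taylor expanding $g\in C^2(U\times U)$ around $(x_j,x_k)$ and using the first order vanishing $\int(u-x_j)\,d\sigma_j(u) = 0$ yields
\begin{equation*}
  \iint g(u,v)\,d\sigma_j(u)\,d\sigma_k(v) = g(x_j,x_k) + O(r_j^2 + r_k^2),
\end{equation*}
with implied constant depending only on $\|g\|_{C^2(K'\times K')}$ for a suitable compact neighborhood $K'\supset K$. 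Combining, $\E Y_j Y_k = \E X(x_j)X(x_k) + O(r_j^2+r_k^2)$ and $\E Y_j^2 = \log(1/r_j) + O(1)$. Finally, the two-dimensional packing estimate $\sum_j r_j^2 \le C$, which follows from the disjoint disks lying in a bounded region, gives $\sum_{j<k}(r_j^2 + r_k^2) = (N-1)\sum_j r_j^2 = O(N)$, producing the claimed linear error.

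For part (ii) one runs the same argument with $X_\varepsilon$ in place of $X$, smearing at scale $\rho_j := \max(r_j,\varepsilon)$ so that the smearing dominates the $\varepsilon$-convolution. The uniform bounds of Lemma~\ref{le:standard} replace Taylor expansion: they give $\E Y_j^2 \le \log(1/\rho_j) + C \le \log(1/r_j) + C$, so the main term $\tfrac12\sum_j\log(1/r_j) + O(N)$ still emerges. However, the transfer from $\E Y_j Y_k$ to $\E X_\varepsilon(x_j)X_\varepsilon(x_k)$ now costs only $O(1)$ per pair, since only boundedness of $g$ is available and no cancellation from Taylor expansion can be invoked; a short case split handles the regime $r_j,r_k<\varepsilon$ where the smeared balls may overlap, using that there $\E X_\varepsilon(x_j)X_\varepsilon(x_k) = \log(1/\varepsilon) + O(1) \le \log(1/r_j) + O(1)$. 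Summed over the $\binom{N}{2}$ pairs, this yields the stated $O(N^2)$ error.

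The main obstacle is the $g$-contribution in (i): a naive pointwise replacement $g(u,v)\leadsto g(x_j,x_k)$ would cost $O(1)$ per pair and hence $O(N^2)$ in total, matching only the weak bound (ii). The decisive gain to $O(r_j^2+r_k^2)$ per pair rests on the mean-zero property $\int(u-x_j)\,d\sigma_j(u) = 0$ combined with $g\in C^2$, which kills the linear Taylor term and leaves a quadratic error summable via the 2d disk packing. This is also the reason the argument is inherently two-dimensional: both the exact mean value identity for $\log(1/|\cdot|)$ and the packing bound $\sum_j r_j^2 \le C$ are special to the plane, which is why the general $d\ne 2$ case in Theorem~\ref{th:onsager_general} requires the more elaborate decomposition of \cite{JSW}.
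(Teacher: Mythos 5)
Your approach is essentially the paper's: circle-average Gaussians $Y_j$ at half the nearest-neighbour distance, positive semi-definiteness $0\le\E\big(\sum_j q_jY_j\big)^2$, the mean-value property of $\log(1/|\cdot|)$, a second-order Taylor expansion of $g$ whose linear term vanishes by symmetry of the circle measure, and the planar disk-packing bound $\sum_j r_j^2\lesssim |U|$ to convert per-pair $O(r_j^2+r_k^2)$ errors into a global $O(N)$ error. Part (ii) likewise matches: the paper takes $Z_j := X_{\max(\varepsilon,r_j)}(x_j)$ (point-evaluated convolution rather than a circle average) and uses the uniform estimates of Lemma~\ref{le:standard} in place of the Taylor argument, yielding the $O(1)$-per-pair transfer cost you describe; the special treatment of the regime $r_j,r_k<\varepsilon$ you invoke is already subsumed in those uniform estimates, so that case split is harmless but unnecessary.

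There is one small genuine gap. You set $r_j=\tfrac12\min_{k\ne j}|x_j-x_k|$ and observe that $\overline{B(x_j,r_j)}$ lies in a bounded enlargement of $K$, but what is actually needed is that it lies inside $U$. The covariance has the form $\log(1/|\cdot|)+g$ only on $U\times U$ (recall $C_X$ is extended by zero outside $U\times U$), and $g$ is only assumed $C^2$ on $U\times U$, so if part of $\partial B(x_j,r_j)$ exits $U$ the mean-value identity no longer produces $\log(1/|x_j-x_k|)+g(x_j,x_k)$ and the Taylor expansion of $g$ is not available either. The fix, which is what the paper does, is to cap $r_j$ by $\tfrac12\dist(K,\partial U)$; this keeps all circles inside $U$, and the only extra cost is an $O(N)$ term coming from
\[
\log\frac{1}{\tfrac12\big(\min_{k\ne j}|x_j-x_k|\wedge\dist(K,\partial U)\big)}\;\le\;\log\frac{1}{\tfrac12\min_{k\ne j}|x_j-x_k|}\;+\;\max\Big(0,\log\frac{1}{\tfrac12\dist(K,\partial U)}\Big),
\]
which is absorbed into the $CN$ on the right-hand side. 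With that correction, your argument matches the paper's proof.
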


\begin{proof}
  Let $r_j = \frac{1}{2} \big(\min_{k \neq j} |x_j - x_k| \wedge \dist(K,\partial U)\big)$ and set (see here Remark~\ref{rem:sloppy})
  \[Z_j = \frac{1}{2\pi} \int_0^{2\pi} X(x_j + r_j e^{i \theta}) \, d\theta.\]
  We have
  \begin{align} \label{eq:tarvis} 
  \E Z_j^2 & = \frac{1}{(2\pi)^2} \int_0^{2\pi} \int_0^{2\pi} \Big( \log\frac{1}{|r_j e^{i \theta} - r_j e^{i \varphi}|} + g(x_j + r_j e^{i \theta}, x_j + r_j e^{i \varphi})\Big) \, d\theta \, d\varphi \\
    & = \log \frac{1}{r_j} + \frac{1}{(2\pi)^2} \int_0^{2\pi} \int_0^{2\pi} g(x_j + r_j e^{i \theta}, x_j + r_j e^{i \varphi}) \, d\theta \, d\varphi\nonumber
  \end{align}
  by harmonicity of $\log(|\cdot|^{-1})$.
  Moreover, for $j \neq k$ we obtain, again using harmonicity of the log,
  \begin{align*}
     \E Z_j Z_k     & = \frac{1}{(2\pi)^2} \int_0^{2\pi} \int_0^{2\pi} \Big( \log\frac{1}{|x_j + r_j e^{i \theta} - x_k - r_k e^{i \varphi}|} + g(x_j + r_j e^{i \theta}, x_k + r_k e^{i \varphi})\Big) \, d\theta \, d\varphi \\
    & = \log \frac{1}{|x_j - x_k|} + \frac{1}{(2\pi)^2} \int_0^{2\pi} \int_0^{2\pi} g(x_j + r_j e^{i \theta}, x_k + r_k e^{i \varphi}) \, d\theta \, d\varphi.
  \end{align*}
  Letting $c_{j,k} = \frac{1}{(2\pi)^2} \int_0^{2\pi} \int_0^{2\pi} g(x_j + r_j e^{i \theta}, x_k + r_k e^{i \varphi}) \, d\theta \, d\varphi$ this means that
  \[\E Z_j^2 = \log \frac{1}{r_j} + c_{j,j} \quad \text{and} \quad \E Z_j Z_k = \log \frac{1}{|x_j - x_k|} + c_{j,k}.\]
 A simple computation  (where we allow also $j = k$) yields that
  \begin{align*}
    c_{j,k} & = \frac{1}{(2\pi)^2} \int_0^{2\pi} \int_0^{2\pi} g(x_j + r_j e^{i \theta}, x_k + r_k e^{i \varphi}) \, d\theta \, d\varphi \\
    & = \frac{1}{(2\pi)^2} \int_0^{2\pi} \int_0^{2\pi} (g(x_j, x_k) + \left(\begin{matrix}r_j e^{i \theta} \\ r_k e^{i \varphi}\end{matrix}\right) \nabla g(x_j, x_k) + \xi(r_j e^{i\theta}, r_k e^{i\varphi})) \, d\theta \, d\varphi \\
      & = g(x_j, x_k) + \int_0^{2\pi} \int_0^{2\pi} \xi(r_j e^{i\theta}, r_k e^{i\varphi}) \, d\theta \, d\varphi, 
     \;  =:\;  g(x_j, x_k) + d_{j,k},
  \end{align*}
  where $\xi$ is the remainder in the Taylor expansion of $g$ at the point $(x_j,x_k)$, and the error $d_{j,k}$ is of  the order $$|d_{j,k}|\lesssim \max(r_j^2,r_k^2).$$ Since $Z_j$ are jointly Gaussian, their covariance is positive definite, and in particular
  \begin{align*}
    0 & \le \sum_{1 \le j,k \le N} q_j q_k \E Z_j Z_k = \sum_{j = 1}^N \E Z_j^2 + \sum_{j \neq k} q_j q_k \E Z_j Z_k \\
    & = \sum_{j=1}^N \log \frac{1}{\frac{1}{2}(\min_{k \neq j} |x_k - x_j| \wedge \dist(K,\partial U))} + 2 \sum_{1 \le j < k \le N} q_j q_k \E X(x_j) X(x_k) \\
    & \quad + \sum_{j=1}^N d_{j,j} + 2 \sum_{1 \le j < k \le N} q_j q_k d_{j,k}.
  \end{align*}
 A key observation for the proof is that by the disjointness of the circles and since $d=2$ we have the area estimate \begin{equation}\label{eq:area}
 |\sum_{j=1}^N d_{j,j}| \lesssim \sum_{j=1}^N r_j^2 \lesssim |U|.
 \end{equation}
 In turn,
  \[\log \frac{1}{\frac{1}{2}\big(\min_{k \neq j} |x_k - x_j| \wedge \dist(K,\partial U)\big)} \le \log \frac{1}{\frac{1}{2}\min_{k \neq j} |x_k - x_j|} + \max\big(\log \frac{1}{\frac{1}{2}\dist(K,\partial U)},0\big).\]
  Moreover, \eqref{eq:area} implies that
  \[|\sum_{1 \le j < k \le N} q_j q_k d_{j,k}| \le \sum_{1 \le j < k \le N} c \max(r_j^2, r_k^2) \le 2 N c |U|\]
  for some constant $c > 0$ that depends on $g$. By putting all the observations together, part (i) of the claim follows.

In order to prove the second inequality, we again employ auxiliary random variables $Z_j$. Letting the radii $r_j$ be as before we set this time
  \[Z_j := X_{\max(\varepsilon,r_j)}(x_j).\]
  By Lemma~\ref{le:standard} we have
  \[\E Z_j^2 = \log \frac{1}{\max(\varepsilon,r_j)} + O(1)\]
  and
  \[\E Z_j Z_k = \log \frac{1}{\max(\varepsilon,|x_j-x_k|)} + O(1) = \E X_\varepsilon(x_j) X_\varepsilon(x_k) + O(1).\]
  Hence
  \begin{align*}
    0 & \le \sum_{1 \le j,k \le N} q_j q_k \E Z_j Z_k = \sum_{j=1}^N \E Z_j^2 + \sum_{j\neq k} q_j q_k \E Z_j Z_k \\
    & \le \sum_{j=1}^N \log \frac{1}{\log(\frac{1}{2} (\min_{k \neq j} |x_k - x_j|))} + 2 \sum_{1 \le j < k \le N} q_j q_k \E X_\varepsilon(x_j) X_\varepsilon(x_k) + C N^2.
  \end{align*}
\end{proof}
\begin{remark}\label{rem:sloppy}Note that the definition of the variables $Z_j$ in the above proof is somewhat formal; we have only defined $X$ as an element of $H^{-\varepsilon}(\R^2)$, so it so it would seem that integrating $X$ over a circle can not be interpreted as $X$ acting on a valid test function. Nevertheless, the probabilistic objects we use are simply a device to obtain covariance inequalities. To make things precise, one might want to rephrase the definition of $Z_j$  as $Z_j:=X(\rho_{\varepsilon,x_j})$, where $\rho_{\varepsilon,x_j}\in C^\infty_c(\R^2)$ is a convolution approximation of uniform probability measure on a circle of radius $r_j$ around $x_j$. Then later in the obtained covariance inequalities, one simply lets $\varepsilon\to 0$ and gets the desired statements. However, we feel that this level of precision could obscure the idea of the proof and hope that the reader will be forgiving us for the slight inaccuracy in  the exposition.

\end{remark}

Let us next state the third version of Onsager's lemma, which is even more
local in nature than Proposition~\ref{prop:electrostatic_inequality} but works in arbitrary dimensions. For a definition of the space $H_{loc}^s$, we refer the reader to Section \ref{sec:spaces}.

\begin{theorem}\label{th:onsager_general}
  Assume that $X$ is a log-correlated field on the domain $U\subset\R^d$ with $0\in U$ and assume that $g \in H^{d + \varepsilon}_{loc}(U\times U)$ for some $\varepsilon > 0$. Then there is a neighbourhood $B_\delta (0)\subset U$ of the origin so that $X$ satisfies the following electrostatic inequality in $B_\delta (0):$\\
  \quad for any
  $N \ge 1$, $q_1, \dots, q_N \in \{-1,1\}$ and $x_1, \dots, x_N \in B_\delta (0)$ it holds that
\begin{equation}\label{eq:osager}
      -\sum_{1 \le j < k \le N} q_j q_k \E X(x_j) X(x_k) \le \frac{1}{2} \sum_{j=1}^N  \log \frac{1}{\frac{1}{2} \min_{k \neq j} |x_j - x_k|} + C N,
 \end{equation}     
where $C$ is independent of the points $x_j$ or $N$, but may depend on the neighbourhood $B_\delta(0).$
\end{theorem}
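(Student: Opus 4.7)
The plan is to establish the inequality via a general decomposition of log-correlated fields, as is hinted at in the paper itself. Concretely, I would seek a ball $B_\delta(0) \subset U$ on which $X$ decomposes as $X = \widetilde{X} + Y$, where $\widetilde{X}$ and $Y$ are independent centred Gaussian fields, $\widetilde{X}$ is an essentially star-scale invariant log-correlated field admitting a clean multiscale white-noise representation, and $Y$ is a Gaussian field whose covariance $R$ is continuous and uniformly bounded on $B_\delta(0)^2$. Producing such a decomposition under the hypothesis $g \in H^{d+\varepsilon}_{\mathrm{loc}}$ is the principal difficulty, and it is the step relegated to \cite{JSW}.

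Granting the decomposition, the inequality splits as
\[-\sum_{j<k} q_j q_k \E X(x_j) X(x_k) = -\sum_{j<k} q_j q_k \E \widetilde{X}(x_j) \widetilde{X}(x_k) - \sum_{j<k} q_j q_k R(x_j, x_k).\]
The $R$-contribution is handled purely by positive semi-definiteness of $R$: from
\[0 \le \sum_{j,k=1}^N q_j q_k R(x_j, x_k) = \sum_{j=1}^N R(x_j,x_j) + 2\sum_{j<k} q_j q_k R(x_j, x_k)\]
one obtains $-\sum_{j<k} q_j q_k R(x_j, x_k) \le \tfrac12 \sum_j R(x_j,x_j) \le \tfrac{C}{2} N$, with no geometric input about the configuration $(x_j)$ whatsoever.

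For the scale-invariant part $\widetilde{X}$, I would use its white-noise representation to build auxiliary Gaussian variables $Z_j$ by integrating $\widetilde{X}$ against a compactly supported bump concentrated in $B(x_j, r_j)$, with $r_j = \tfrac12 \min_{k\neq j}|x_j - x_k|$, exactly as in Proposition \ref{prop:electrostatic_inequality}. The point is to arrange the construction so that, using independence of the underlying white noise across scales together with the disjointness of the supports at scales finer than $r_j$, the fine-scale contributions to $\E Z_j Z_k$ vanish identically for $j\neq k$, while the coarse-scale contributions reproduce $\E \widetilde{X}(x_j)\widetilde{X}(x_k)$ up to $O(1)$ errors that sum to $O(N)$ over pairs. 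The self-energy bound $\E Z_j^2 \le \log(1/r_j) + O(1)$ comes out of the same multiscale analysis. Positive semi-definiteness $0 \le \sum_{j,k} q_j q_k \E Z_j Z_k$ then yields the desired inequality for $\widetilde{X}$, and combining with the $R$-estimate gives the theorem.

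The key obstacle is the decomposition itself: unlike in the $d=2$ argument of Proposition \ref{prop:electrostatic_inequality}, we cannot invoke harmonicity of $\log|\cdot|^{-1}$ to make the auxiliary variables directly out of $X$, since for $d\ne 2$ spherical averages of the logarithm no longer collapse to pointwise values. We must instead first extract an exactly (or nearly) scale-invariant piece $\widetilde{X}$ whose probabilistic structure is rigid enough to replace this analytic identity, and then argue that the leftover $Y$ is bounded. Pinning down the precise Sobolev regularity needed on $g$ to make $Y$ have a bounded covariance --- in particular locating the threshold at $d + \varepsilon$ --- is the delicate point, and is exactly where the auxiliary paper \cite{JSW} enters.
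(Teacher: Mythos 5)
The paper gives no proof of this theorem here: the body of the proof is the single line citing Theorem~7.1 of \cite{JSW}, and the introduction explicitly states that the argument there rests on a general decomposition principle for log-correlated fields whose development was deliberately deferred to that companion article. Your outline --- decompose $X$ near the origin into an independent sum $\widetilde X + Y$ with $\widetilde X$ a $*$-scale-invariant log-correlated field admitting a multiscale white-noise representation and $Y$ a Gaussian field with bounded continuous covariance, dispatch $Y$ by bare positive semi-definiteness of its covariance matrix, and run an Onsager-type argument on $\widetilde X$ via auxiliary variables $Z_j$ localised at scale $r_j$ --- is precisely the strategy the paper attributes to \cite{JSW}, and you correctly pinpoint both the decomposition step and the role of the $H_{loc}^{d+\varepsilon}$ hypothesis as the content that lives in that paper. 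Since the paper itself does not reproduce that argument, your reconstruction is as close a match as one could ask for.

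One point to sharpen. You write that the coarse-scale truncations $Z_j$ reproduce $\E\widetilde X(x_j)\widetilde X(x_k)$ ``up to $O(1)$ errors that sum to $O(N)$ over pairs,'' but with $\binom{N}{2}$ pairs an $O(1)$-per-pair bound gives $O(N^2)$ a priori --- exactly the quadratic loss Proposition~\ref{prop:electrostatic_inequality}(ii) already achieves with no decomposition at all, and which is too weak for the moment determinacy driving Theorem~\ref{th:moments}. To land at a linear error budget the off-diagonal errors must vanish outright. This happens cleanly if $Z_j$ is defined by truncating the scale integral of the white-noise representation at level $r_j$ rather than by spatially averaging $\widetilde X$ against a bump: when the seed kernel is supported in $B(0,1)$ and $|x_j - x_k| \ge 2\max(r_j,r_k)$, the fine scales being discarded contribute nothing to $\E\widetilde X(x_j)\widetilde X(x_k)$, so $\E Z_j Z_k = \E\widetilde X(x_j)\widetilde X(x_k)$ exactly for $j\neq k$, and all of the $O(N)$ slack sits in the diagonal terms $\E Z_j^2$ and in the $Y$-part. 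Obtaining an exactly $*$-scale-invariant $\widetilde X$ locally, with $Y$ still well-behaved, out of a covariance satisfying only $g\in H_{loc}^{d+\varepsilon}$ is the genuine difficulty that \cite{JSW} resolves.
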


\begin{proof}
  This is Theorem~7.1 in \cite{JSW}.
\end{proof}
\noindent One should observe that in the above result, in case $d=2$ (disregarding the more local nature that does not affect our moment estimates) the condition on $g$ is certainly satisfied if 
$g\in C^{2+\varepsilon}$. On the other hand, in a certain sense the class  $H_{loc}^{2+\varepsilon}(\R^2\times \R^2)$ is much larger than $C^{2}(\R^2\times \R^2)$, e.g. it allows for local behaviour of type $|x-x_0|^{\delta}$, $\delta>0$, so the conditions are not comparable but extend each other.

All the above results are local in nature. In order to obtain full grip of the moments, or optimal understanding of the imaginary chaos on a two-dimensional bounded domain as a random element in $\mathcal{S}'(\R^d)$, it is desirable to have a global version which is valid for all $x_1,\ldots, x_N\in U.$ This can  be achieved as a consequence of the previous results if $g$ continues with suitable smoothness in a neighbourhood of the closure $\overline{U}$ (by Theorem  \ref{th:onsager_general} the extension needs not to be even a covariance). We next show that one can also obtain a global Onsager inequality in the case of the GFF on a bounded simply connected domain $U\subset \R^2=\C$. For that end let us recall that the density of the hyperbolic metric of $U$ at a point $z\in U$ is given by 
$$
|d_Hz|:=\frac{2|\psi'(z)|}{1-|\psi(z)|^2}|dz|,
$$
where $\psi:U\to\D$ is any conformal map.
The hyperbolic distance between two points in $U$ is obtained by minimizing the integral $\int_\gamma |d_Hz|$ over all rectifiable curves in $U$ joining the given points. In a simply connected domain the classical Koebe estimate (\cite[Theorem 4.3]{GM} -- we refer overall to \cite{GM} on basic facts on hyperbolic metric) says that
\[\frac{1}{2}(d(z,\partial U))^{-1}|dz| \leq |d_Hz|\leq 2(d(z,\partial U))^{-1}|dz|.\]
In particular, the hyperbolic distance dominates a multiple of the standard metric.
The hyperbolic metric is conformally invariant, whence one easily computes that in the unit disc the hyperbolic distance of points $w,z\in \D$ equals 
$$
d_H(w,z)=\log\Big(\frac{1+\rho(w,z)}{1-\rho(w,z)}\Big),\qquad \textrm{with} \quad \rho(w,z):=\left|\frac{z-w}{1-\overline{z}w}\right|,
$$
where $\rho(w,z)$ is called  the pseudo hyperbolic metric between $z$ and $w.$ Also $\rho$ is an honest metric. Given $z_0\in U$ and $r>0$ we denote by $B_\rho(z_0,r)\subset U$ the pseudo-hyperbolic ball of radius $r$. We then have $B_\rho(z_0,r)=B_H(z_0, r')$, and this is the image of the ordinary ball $B(0,R)\subset\D$ under any conformal map $\psi^{-1}:\D\to U$ such that $\psi(z_0)=0$. Here $R=r$ and $r'$ is given by $r'=\log((1+r)/(1-r))$. 
\begin{proposition}\label{prop:electrostatic_GFF}
Assume that  $U\subset\R^2$ is simply connected and bounded and that $X$ is the zero boundary condition GFF on $U$. Let $N \ge 1$, $q_1, \dots, q_N \in \{-1,1\}$, and $x_1, \dots, x_N \in U$ be arbitrary. Then
      \[-\sum_{1 \le j < k \le N} q_j q_k \E X(x_j) X(x_k) \le \frac{1}{2} \sum_{j=1}^N \log \left( \frac{1}{\frac{1}{2} \min_{k \neq j} |x_j - x_k|}\right) \; +\, CN
      \]
    for some constant $C > 0$ depending only on  the domain $U$.
    \end{proposition}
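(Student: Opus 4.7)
The plan is to repeat the circle-averaging argument from the proof of Proposition \ref{prop:electrostatic_inequality}(i), but to replace Euclidean circles with pseudo-hyperbolic ones adapted to the conformal structure of $U$, so that the computations remain clean uniformly up to the boundary (where the function $g$ in $G_U = \log|\cdot|^{-1}+g$ blows up, preventing a direct application of part (i)). I would fix a conformal bijection $\varphi \colon U \to \D$ and set $\rho_U(x,y) := \rho(\varphi(x),\varphi(y))$, so that conformal invariance of the zero-boundary Green's function gives $G_U(x,y) = \log(1/\rho_U(x,y))$. Since $\rho_U$ is a genuine metric on $U$, the choice $\rho_j := \tfrac{1}{2}\min_{k \ne j}\rho_U(x_j,x_k)$ makes the pseudo-hyperbolic balls $B_{\rho_U}(x_j,\rho_j)$ mutually disjoint.

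Next I would introduce the auxiliary Gaussians. For each $j$, let $T_j$ be the M\"obius automorphism of $\D$ sending $\varphi(x_j)$ to $0$, set $\psi_j := T_j \circ \varphi$, parameterize the pseudo-hyperbolic circle of radius $\rho_j$ about $x_j$ by $\gamma_j(\theta) := \psi_j^{-1}(\rho_j e^{i\theta})$, and define $Z_j := \tfrac{1}{2\pi}\int_0^{2\pi} X(\gamma_j(\theta))\,d\theta$ (interpreted via the convolution device of Remark~\ref{rem:sloppy}). By conformal invariance, $X \circ \psi_j^{-1}$ has the law of the zero-boundary GFF on $\D$, and an elementary computation using $G_\D(z,w) = \log(1/|z-w|) + \log|1-\bar z w|$ together with the vanishing of the standard logarithmic integrals over the torus will give $\E Z_j^2 = \log(1/\rho_j)$. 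For $j \ne k$, disjointness makes $w \mapsto G_U(\gamma_j(\theta),w)$ harmonic on $B_{\rho_U}(x_k,\rho_k)$; transporting to $\D$ via $\psi_k$ and applying the ordinary Euclidean mean value property yields $\tfrac{1}{2\pi}\int G_U(\gamma_j(\theta),\gamma_k(\phi))\,d\phi = G_U(\gamma_j(\theta),x_k)$, and iterating in the other variable produces the clean identity $\E Z_j Z_k = G_U(x_j,x_k)$. Positive semi-definiteness of the covariance matrix of $(q_j Z_j)_{j=1}^N$ then gives
\begin{equation*}
0 \le \sum_{j,k=1}^N q_j q_k \E Z_j Z_k = \sum_{j=1}^N \log\frac{1}{\rho_j} + 2 \sum_{1 \le j < k \le N} q_j q_k G_U(x_j,x_k).
\end{equation*}

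To finish I would convert the pseudo-hyperbolic quantities into their Euclidean counterparts. By domain monotonicity of the Dirichlet Green's function, $G_U \le G_{U'}$ on $U \times U$ for any simply connected $U' \supset \overline U$; taking $U'$ to be a ball containing $\overline U$ yields $G_U(x,y) \le \log(1/|x-y|) + C_U$, or equivalently $\rho_U(x,y) \ge e^{-C_U}|x-y|$ on $U \times U$. Hence $\rho_j \ge \tfrac{1}{2}e^{-C_U}\min_{k \ne j}|x_j - x_k|$, so $\log(1/\rho_j) \le \log\bigl(1/(\tfrac{1}{2}\min_{k \ne j}|x_j - x_k|)\bigr) + C_U$ for each $j$; substituting into the display and rearranging gives the desired inequality with a constant depending only on $U$. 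The main obstacle I anticipate is verifying the \emph{exact} identity $\E Z_j Z_k = G_U(x_j,x_k)$ with no remainder term; this clean identity is the whole payoff of working with pseudo-hyperbolic circles, and it is what lets the argument run globally on $U$, bypassing the Taylor-remainder contributions which in Proposition \ref{prop:electrostatic_inequality}(i) were controlled by the $C^2$-norm of $g$ and would otherwise blow up here near $\partial U$.
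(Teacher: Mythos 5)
Your proof is correct and follows essentially the same strategy as the paper's: both define Gaussians as averages of $X$ over pseudo-hyperbolic circles of half the pseudo-hyperbolic nearest-neighbour radius (your $Z_j$ obtained by transporting uniform measure through $\psi_j^{-1}$ is exactly the harmonic measure average $Y_j$ the paper uses), both extract the clean identities $\E Z_j^2 = \log(1/\rho_j)$ and $\E Z_j Z_k = G_U(x_j,x_k)$ from conformal invariance and harmonicity, and both conclude via positive semidefiniteness. The only (minor) divergence is the final Euclidean-to-pseudo-hyperbolic comparison, where the paper uses the Koebe estimate for the hyperbolic metric while you invoke domain monotonicity of the Green's function — both are standard and give the same bound with a constant depending only on $U$.
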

\begin{proof}
We assume first that $U=\D$. Let $ r_j=\frac{1}{2}\inf_{k\not=j}d_\rho(x_j,x_k)$ be half the pseudo hyperbolic distance of $x_j$ to the nearest point. Denote $B_j:=B_\rho(x_j,r_j)$. Let $\nu_j$ stand for the harmonic measure on $\partial B_j$ with respect to the point $x_j$ (computed with respect to the ball $B_j$). We consider the random variables 
$$
Y_j=\int_{\partial B_j} X(z)\nu_j(dz)
$$
(concerning the definition, an analogue of Remark \ref{rem:sloppy} applies). By recalling \eqref{eq:GFFcovariance}, the covariance $C_X(z,w)$ is separately harmonic with respect to  both of the variables. Since the balls $B_j$ are disjoint, a standard limiting argument allows us to use the harmonicity of the Green's function to compute for $k\not=j$
\begin{eqnarray}\label{eq:askel1}
\E Y_jY_k&=& \int_{\partial B_j} \Big( \int_{\partial B_k}C_X(z,w) \nu_k(dw)\Big)\nu_j(dz)= \int_{\partial B_j}C_X(z,x_k)\nu_j(dz)\nonumber\\
&=& C_X(x_j,x_k).
\end{eqnarray}
We next observe that by the conformal invariance of the harmonic measure we have for any $h\in C(\partial B_j)$  that
$$
\int_{\partial B_j}h(z)\nu_j(dz)=\vint_{\partial B_\rho(0,r_j)}h(\tau(w))|dw|,
$$
where $\vint_{}$ stands for the averaged integral and $\tau$ is a conformal self map of $\D$ that carries $B_\rho(0,r_j)\subset \D$ to $B_j$. By applying this formula and the conformal invariance of the GFF covariance we thus obtain
\begin{eqnarray}\label{eq:askel2}
\E Y_j^2&=& \int_{\partial B_j} \Big( \int_{\partial B_j}C_X(z,w) \nu_j(dw)\Big)\nu_j(dz)
\;=\;  \vint_{\partial B_\rho(0,r_j)\times \partial B_\rho(0,r_j)}\log\Big|\frac{1-\overline{z}w}{z-w}\Big||dw||dz|\nonumber\\
&=& \vint_{\partial B_\rho(0,r_j)\times \partial B_\rho(0,r_j)}\log\Big|\frac{1}{z-w}\Big||dw||dz|\;\; =
\;\; \log(1/ r_j),
\end{eqnarray}
where we noted the harmonicity of $\log|1-\overline{z}w|$ and recalled the computation \eqref{eq:tarvis}. We also used the fact that the standard radius of the pseudo hyperbolic ball centred at the origin is the same as the pseudo-hyperbolic one. 

By performing our standard consideration of the expectation $\E \big|\sum_{k=1}^N q_jY_j\big|^2$, in view of \eqref{eq:askel1} we thus obtain the desired inequality with the right hand side
$$
\frac{1}{2} \sum_{j=1}^n  \log \left( \frac{1}{\frac{1}{2} \min_{k \neq j} \rho(x_j, x_k)}\right).
$$
The conformal invariance of both the covariance and the pseudo hyperbolic metric ensures that the stated inequality with the above right hand side is actually true on any simply connected domain. This yields the claim as we finally note that for any bounded domain  there is a constant $a>0$ so that $|z-w|\leq a\rho(z,w)$. This last inequality is seen by noting that Koebe's estimate yields  $|z-w|\leq (2{\rm diam}(U)) d_H(z,w)\approx \rho(z,w)$ for
  $\rho(z,w)\leq 1/2,$ and by boundedness of $U$ this yields the claim. 
\end{proof}

Our goal in this section was to bound the moments of imaginary
chaos. As noted already before, after Onsager's lemma the second ingredient we need for the upper bound
is the following estimate. As the proof is a rather straightforward generalization of the 2-dimensional
result in \cite{GP} it is given in the appendix.

\begin{lemma}\label{lemma:combinatorial_argument}
  Let $B(0,1)$ be the unit ball in $\reals^d$. We have
  \[\int_{B(0,1)^N} \exp\Big(\frac{\beta^2}{2} \sum_{j=1}^{N} \log \frac{1}{\frac{1}{2} \min_{k \neq j} |x_j - x_k|}\Big) \, dx_1 \dots dx_{N} \le c^N N^{N \frac{\beta^2}{2d}}\]
  for some constant $c > 0$.
\end{lemma}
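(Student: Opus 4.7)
Set $\alpha := \beta^2/2 \in (0, d/2)$ and $r_j := \min_{k\neq j}|x_j - x_k|$. The integrand equals $2^{N\alpha}\prod_j r_j^{-\alpha}$, so after absorbing $2^{N\alpha}$ into the constant $c^N$ it suffices to bound
\[ J_N := \int_{B(0,1)^N}\prod_{j=1}^N r_j^{-\alpha}\,dx_1\cdots dx_N \]
by $c^N N^{N\alpha/d}$. The key geometric input is the packing inequality $\sum_j r_j^d \leq C_d$, which follows from the pairwise disjointness of the open balls $B(x_j, r_j/2)$ (since $|x_j-x_k| \geq \max(r_j,r_k) \geq (r_j+r_k)/2$ for $j\neq k$). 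Applying Jensen's inequality to this constraint already yields the pointwise lower bound $\prod r_j^{-\alpha} \geq (N/C_d)^{N\alpha/d}$, confirming that the target bound is of the correct order of magnitude and reducing the task to matching it from above.

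I would then follow the two-dimensional scheme of \cite{GP}, adapted to general $d$. First, introduce the nearest-neighbor map $\sigma: \{1,\ldots,N\}\to \{1,\ldots,N\}$, $\sigma(j) := \arg\min_{k\neq j}|x_j - x_k|$, so that the integrand becomes $\prod_j |x_j - x_{\sigma(j)}|^{-\alpha}$. A standard ``$\pi/3$ angular separation'' argument bounds the in-degree of each vertex in the functional graph of $\sigma$ by the $d$-dimensional kissing number $K_d$, so there are at most $C_d^N$ admissible graphs. For each such $\sigma$, the functional graph decomposes into $\rho$-shaped components (trees rooted at a unique $2$-cycle) and one integrates the variables from the leaves inward: each leaf integration contributes the finite integral $\int_{B(0,2)}|y|^{-\alpha}\,dy$ (finite because $\alpha<d$), while each central $2$-cycle contributes $\int |x-y|^{-2\alpha}\,dx\,dy$ (finite because $2\alpha < d$).

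The factor $N^{N\alpha/d}$ arises from combining this integration scheme with the packing constraint. After reordering the indices so that $r_1 \leq \cdots \leq r_N$ at a cost of $N!$, the packing bound sharpens to $r_j \leq (C_d/(N-j+1))^{1/d}$; this restricts the effective domain of each successive integration and produces a scaling $N^{\alpha/d}$ per variable rather than the naive $O(1)$. After accounting for the various factorials, the desired bound follows.

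The main technical obstacle is the combinatorial bookkeeping: arranging the integration order so that the size-ordering of the $r_j$ interacts compatibly with the nearest-neighbor graph structure, without introducing spurious factors of $N^N$ in the count over functional graphs. This is the content of the careful recursive integration in \cite{GP} for $d = 2$, and the adaptation to general $d$ is largely cosmetic, amounting to replacing planar area estimates by their $d$-dimensional volume analogues throughout.
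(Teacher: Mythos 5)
Your overall strategy (decompose by the nearest-neighbor functional graph into $\rho$-shaped components, exploit the packing bound $\sum_j r_j^d\lesssim 1$) is indeed the Gunson--Panta scheme, which is also what the paper follows. However, there is one clear error and one acknowledged gap in the proposal.

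The error is the claim that ``there are at most $C_d^N$ admissible graphs.'' Bounding the in-degree by the kissing number does \emph{not} make the count of labeled nearest-neighbor graphs exponential in $N$: even labeled trees with maximum degree $2$ (paths) already number $N!/2$, and for any fixed degree bound the count remains of order $(cN)^N$. The paper counts the graphs honestly --- for $k$ components the number is $\frac{N!\,2k\,N^{N-2k-1}}{2^kk!(N-2k)!}\lesssim c^N(N-k)!$, a superexponential quantity --- and the proof only closes because the per-graph integral is \emph{superexponentially small}, not $O(1)$. Incidentally, the paper explicitly remarks that it does not need the bounded in-degree fact at all, since overcounting graphs is harmless on the upper-bound side.

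The gap is where your proposed source of the factor $N^{N\alpha/d}$ lives. The reordering-plus-refined-packing idea ($r_j\le (C_d/(N-j+1))^{1/d}$ after ordering) is the right intuition, but as you say yourself, the bookkeeping of how the order statistics interact with the tree-integration scheme is the crux, not a cosmetic detail; moreover you cannot simultaneously pay $N!$ for the ordering and sum over $(cN)^N$ graphs without overcounting. The paper sidesteps this cleanly: after fixing the nearest-neighbor function $F$ it changes variables to $u_j=\tfrac12(x_j-x_{F(j)})$ (with a slight variant at the $2$-cycle roots), the packing bound turns into $\sum_j|u_j|^d\le 2^d$, and Dirichlet's integral formula gives the per-graph bound $c^N/\Gamma\bigl(N(1-\tfrac{\beta^2}{2d})-k+1\bigr)$ in closed form. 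Multiplying by the graph count $c^N(N-k)!$ and applying Stirling then yields $c^NN^{N\beta^2/(2d)}$ without any recursive case analysis on the order statistics.
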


\noindent This lemma and  Proposition \ref{prop:electrostatic_inequality} (ii) yield a bare uniform integrability statement which will be used to show that all the moments exist and that the formula \eqref{eq:formal_moments} is indeed correct. This verifies the part of Theorem \ref{th:moments} which claims that $\E|\mu(f)|^k<\infty$ for all $k$.

\begin{corollary}\label{cor:integrability}
  Let $K$ be a compact subset of $U$ and assume that $x_1,\dots,x_N,y_1,\dots,y_N \in K$. Denote $z_1 = x_1$, \dots, $z_N = x_N$, $z_{N+1} = y_1$, \dots, $z_{2N} = y_N$. We have the uniform bound
  \begin{align*}
    & e^{-\beta^2 \sum_{1 \le j < k \le N} (C_{X_\varepsilon}(x_j, x_k) + C_{X_\varepsilon}(y_j, y_k)) + \beta^2 \sum_{1 \le j, k \le N} C_{X_\varepsilon}(x_j, y_k)} \nonumber \\
    & \le \exp\Big(\frac{\beta^2}{2} \sum_{j=1}^{2N} \log \frac{1}{\frac{1}{2}\min_{k \neq j} |z_j - z_k|} + c N^2\Big) =: \Xi_{N}(z_1,\dots,z_{2N})
  \end{align*}
  for all $\varepsilon > 0$. Here the majorant $\Xi_{N}$ depends on the subset $K$ through the constant $c$, and is integrable over $K^{2N}$.  A fortiori, the formula \eqref{eq:formal_moments} for the moments is valid  for any $f\in C_c^\infty (U)$ under our standard assumptions \eqref{eq:assumptions}.
  \end{corollary}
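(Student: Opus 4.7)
The plan is to view the expression on the left-hand side of the claimed inequality as coming from an Onsager-type sum for $2N$ points with alternating charges, then to invoke Proposition~\ref{prop:electrostatic_inequality}(ii) to bound the exponent, and finally to combine uniform integrability (given by the resulting majorant together with Lemma~\ref{lemma:combinatorial_argument}) with $L^2$-convergence of $\mu_\varepsilon(f)$ to pass to the limit.

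More concretely, I would first assign charges $q_j=+1$ for $j=1,\dots,N$ and $q_{N+j}=-1$ for $j=1,\dots,N$ to the points $z_1,\dots,z_{2N}$. A direct computation shows that
\[
-\sum_{1\le j<k\le 2N} q_j q_k\, \E X_\varepsilon(z_j)X_\varepsilon(z_k)
= -\sum_{1\le j<k\le N}\!\big(C_{X_\varepsilon}(x_j,x_k)+C_{X_\varepsilon}(y_j,y_k)\big) + \sum_{1\le j,k\le N} C_{X_\varepsilon}(x_j,y_k),
\]
so Proposition~\ref{prop:electrostatic_inequality}(ii) applied with the $2N$ points $z_j$ in a compact $K\subset U$ (with a harmless adjustment of the constant $c$ to absorb the $(2N)^2$ vs.\ $N^2$ issue) yields the claimed pointwise bound by $\Xi_N(z_1,\dots,z_{2N})$, with a constant that depends only on $K$. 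Integrability of $\Xi_N$ over $K^{2N}$ then follows from Lemma~\ref{lemma:combinatorial_argument} after covering $K$ by a fixed ball and rescaling: the factor $e^{cN^2}$ is harmless for fixed $N$, and $\beta^2<d$ keeps the combinatorial integral finite.

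To establish the moment formula \eqref{eq:formal_moments}, I would expand $|\mu_\varepsilon(f)|^{2N}$ as a $2N$-fold integral and compute the Gaussian expectation explicitly: the renormalisation $\frac{\beta^2}{2}\E[X_\varepsilon(x)^2]$ exactly cancels the diagonal terms in $\E|\sum_j X_\varepsilon(x_j)-\sum_j X_\varepsilon(y_j)|^2$ and produces precisely the exponent appearing in the corollary. Thanks to the uniform bound by $\|f\|_\infty^{2N}\Xi_N$ and pointwise convergence of $C_{X_\varepsilon}(\cdot,\cdot)\to C_X(\cdot,\cdot)$ off the diagonal (which has measure zero), dominated convergence identifies $\lim_\varepsilon \E|\mu_\varepsilon(f)|^{2N}$ with the right-hand side of \eqref{eq:formal_moments}. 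On the other hand, by Proposition~\ref{prop:mu_convergence} we have $\mu_\varepsilon(f)\to\mu(f)$ in $L^2(\P)$, hence in probability, and the uniform bound applied at level $2N+2$ shows that $\{|\mu_\varepsilon(f)|^{2N}\}_\varepsilon$ is uniformly integrable, so $\E|\mu_\varepsilon(f)|^{2N}\to \E|\mu(f)|^{2N}$, which gives \eqref{eq:formal_moments} and finiteness of all moments simultaneously.

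The only non-routine step is checking that Proposition~\ref{prop:electrostatic_inequality}(ii) really applies to the $2N$-point configuration inside a fixed compact $K\subset U$ without losing uniformity in $\varepsilon$; this is where the requirement that $f$ have compact support in $U$ enters, since it allows us to fix $K=\supp(f)$ from the start so that the constant $C$ in the Onsager bound is independent of $N$ and $\varepsilon$. Everything else, including the cancellation producing the formula \eqref{eq:formal_moments}, is a mechanical Gaussian computation.
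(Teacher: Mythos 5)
Your proposal is correct and follows essentially the same route as the paper: assign charges $\pm1$ to the $x$'s and $y$'s, apply Proposition~\ref{prop:electrostatic_inequality}(ii) to the $2N$-point configuration to obtain the majorant $\Xi_N$, invoke Lemma~\ref{lemma:combinatorial_argument} for integrability, and pass to the limit in $\varepsilon$. Your final step is in fact spelled out a bit more carefully than the paper's terse ``dominated convergence'' remark: you correctly observe that DCT only identifies $\lim_\varepsilon \E|\mu_\varepsilon(f)|^{2N}$ with the right-hand side of \eqref{eq:formal_moments}, and that identifying this limit with $\E|\mu(f)|^{2N}$ requires combining $L^2$-convergence of $\mu_\varepsilon(f)$ with uniform integrability of $|\mu_\varepsilon(f)|^{2N}$, which you obtain by applying the same $\varepsilon$-uniform bound at the level of the $(2N+2)$-th moment.
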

  \begin{proof}
    We begin by writing out the moment $\E |\mu_\varepsilon(f)|^{2N}$ as a multiple integral
    \begin{align*}
  \E |\mu_\varepsilon(f)|^{2N} &= \int_{U^{2N}} \prod_{j=1}^N dx_j f(x_j) \prod_{j=1}^N dy_j \overline{f(y_j)} \E e^{i \beta \sum_{j=1}^N (X_\varepsilon(x_j) - X_\varepsilon(y_j)) + \frac{\beta^2}{2} \sum_{j=1}^N (\E X_\varepsilon(x_j)^2 + \E X_\varepsilon(y_j)^2)}\\
      & = \int_{U^{2N}} \prod_{j=1}^N dx_j f(x_j) \prod_{j=1}^N dy_j \overline{f(y_j)} e^{-\beta^2 \sum_{1 \le j < k \le N} (C_{X_\varepsilon}(x_j, x_k) + C_{X_\varepsilon}(y_j, y_k)) + \beta^2 \sum_{1 \le j, k \le N} C_{X_\varepsilon}(x_j, y_k)} \\
    & \le \|f\|_{\infty}^{2N} \int_{(\supp f)^{2N}} \exp\Big(\frac{\beta^2}{2} \sum_{j=1}^{2N} \log \frac{1}{\frac{1}{2}\min_{k \neq j} |z_j - z_k|} + c N^2\Big) \, dz_1 \dots dz_{2N}.
  \end{align*}
    Since the upper bound is independent of $\varepsilon$ we may use the dominated convergence theorem to let $\varepsilon\to0$ and deduce that the moments are finite and given by the right formula. 
\end{proof}

Lemma \ref{lemma:combinatorial_argument} combined with our versions of Onsager's inequality allows us to finally prove an
upper bound for the moments of the purely imaginary chaos, verifying the moment bound portion of Theorem~\ref{th:moments}.

\begin{theorem}\label{thm:moment_upper_bound_2}
  Assume that either $d = 2$ and $g \in C^2(U \times U)$, or $d$ is arbitrary and $g \in H_{loc}^{d+\varepsilon}(U \times U)$ for some $\varepsilon > 0$.
  Then for every $N \ge 1$ and $f \in C_c^\infty(U)$ we have for $\mu$ from Theorem~\ref{th:existuniq}
  \[\E |\mu(f)|^{2N} \le \|f\|_\infty^{2N} C^N N^{\frac{\beta^2 N}{d}}\]
  for some constant $C > 0$ $($which may depend on the support of $f)$.
\end{theorem}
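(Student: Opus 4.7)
The starting point is the exact moment formula supplied by Corollary~\ref{cor:integrability}, which validates
\[
\E|\mu(f)|^{2N}=\int_{U^{2N}}\!\prod_{j=1}^N f(x_j)\overline{f(y_j)}\,K_N(x,y)\,dx\,dy,
\]
with $K_N(x,y)=\exp\bigl(-\beta^2\!\sum_{j<k}C_X(x_j,x_k)-\beta^2\!\sum_{j<k}C_X(y_j,y_k)+\beta^2\!\sum_{j,k}C_X(x_j,y_k)\bigr)$. Setting $z_1,\dots,z_{2N}=(x_1,\dots,x_N,y_1,\dots,y_N)$ and assigning charges $q_j=+1$ for $j\le N$ and $q_j=-1$ for $j>N$, a direct inspection of signs gives $K_N(x,y)=\exp\bigl(-\beta^2\sum_{1\le j<k\le 2N} q_jq_k\,C_X(z_j,z_k)\bigr)$. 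This is precisely the quantity controlled by our Onsager inequalities.

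Assume first that $\supp f$ is contained in a set $K$ to which an Onsager inequality with linear error applies — this is the compact $K\subset U$ of Proposition~\ref{prop:electrostatic_inequality}\,(i) when $d=2$, and a sufficiently small ball provided by Theorem~\ref{th:onsager_general} when $d\ne 2$. In either case the Onsager bound applied to $2N$ points yields
\[
K_N(x,y)\le e^{2C\beta^2 N}\exp\!\Bigl(\tfrac{\beta^2}{2}\sum_{j=1}^{2N}\log\tfrac{1}{\tfrac12\min_{k\ne j}|z_j-z_k|}\Bigr).
\]
Plugging this into the moment formula, bounding $\prod f(x_j)\overline{f(y_j)}$ by $\|f\|_\infty^{2N}$, and rescaling $\supp f$ inside some fixed ball, Lemma~\ref{lemma:combinatorial_argument} applied with $2N$ points gives
\[
\int_{(\supp f)^{2N}}\exp\!\Bigl(\tfrac{\beta^2}{2}\sum_{j=1}^{2N}\log\tfrac{1}{\tfrac12\min_{k\ne j}|z_j-z_k|}\Bigr)dz\;\le\;c^{2N}(2N)^{2N\beta^2/(2d)}.
\]
Since $(2N)^{N\beta^2/d}=2^{N\beta^2/d}N^{N\beta^2/d}$, all the multiplicative factors $e^{2C\beta^2 N}$, $c^{2N}$, $2^{N\beta^2/d}$ can be absorbed into a single constant $C^N$, giving the desired estimate in this local case.

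To finish the argument for general $d$, where Theorem~\ref{th:onsager_general} is only local, I would remove the restriction on $\supp f$ via a partition of unity. Since $\supp f$ is a compact subset of $U$, it can be covered by finitely many balls $B_1,\dots,B_M$ on each of which Theorem~\ref{th:onsager_general} holds (after re-centering). Choose $\chi_1,\dots,\chi_M\in C_c^\infty(U)$ with $\chi_i\in C_c^\infty(B_i)$ and $\sum_i\chi_i\equiv 1$ on $\supp f$, and write $f=\sum_{i=1}^M f_i$ with $f_i:=\chi_i f$. Then $\mu(f)=\sum_i\mu(f_i)$, so by the elementary inequality $|\sum_i a_i|^{2N}\le M^{2N-1}\sum_i|a_i|^{2N}$, the problem reduces to bounding each $\E|\mu(f_i)|^{2N}$, which falls under the local case treated above (as $\|f_i\|_\infty\le\|f\|_\infty$). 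The extra factor $M^{2N}$ is absorbed into $C^N$, completing the argument.

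The main obstacle is the local-to-global step for $d\ne 2$; this is where one really uses the \emph{linearity} in $N$ of the Onsager error term, since only then is the extra combinatorial factor $M^{2N}$ from the partition of unity harmless — a quadratic error like the one in Proposition~\ref{prop:electrostatic_inequality}\,(ii) would produce $e^{cN^2}$ growth and prevent the moment problem from being determinate. The rest is essentially bookkeeping.
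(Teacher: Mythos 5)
Your proof is correct and follows essentially the same route as the paper: the exact moment formula from Corollary~\ref{cor:integrability}, the charge interpretation, Onsager's inequality (Proposition~\ref{prop:electrostatic_inequality}(i) for $d=2$, Theorem~\ref{th:onsager_general} locally for general $d$), the counting bound of Lemma~\ref{lemma:combinatorial_argument} applied to $2N$ points, and a finite partition of unity to pass from the local Onsager bound to general $\supp f$ when $d\neq 2$. The paper uses the elementary bound $\E|\sum_\ell\mu(f\eta_\ell)|^{2N}\le m^{2N}\sum_\ell\E|\mu(f\eta_\ell)|^{2N}$ where you use $|\sum_i a_i|^{2N}\le M^{2N-1}\sum_i|a_i|^{2N}$; these are equivalent up to constants, and your closing remark correctly pinpoints why the linear-in-$N$ error term in Onsager is the decisive ingredient.
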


\begin{proof}
To obtain the stated upper bounds, assume first that we are in the case $d = 2$ and $g \in C^2(U \times U)$. Then we may use  Corollary \ref{cor:integrability}   to infer
  \begin{align*}
    \E |\mu(f)|^{2N} & = \int_{U^{2N}} \prod_{j=1}^N dx_j f(x_j) \prod_{j=1}^N dy_j \overline{f(y_j)} e^{-\beta^2 \sum_{1 \le j < k \le N} (C_{X}(x_j, x_k) + C_{X}(y_j, y_k)) +\beta^2 \sum_{1 \le j, k \le N} C_{X}(x_j, y_k)} \\
    & \le \|f\|_{\infty}^{2N} \int_{(\supp f)^{2N}} \exp\Big(\frac{\beta^2}{2} \sum_{j=1}^{2N} \log \frac{1}{\frac{1}{2} \min_{k \neq j} |z_j - z_k|} + cN\Big) \, dz_1 \dots dz_{2N},
  \end{align*}
  where the last inequality is a consequence of part (1) of Proposition~\ref{prop:electrostatic_inequality}.
  The claim now follows from Lemma~\ref{lemma:combinatorial_argument}.

  In the case where $d$ is arbitrary and $g \in H^{d + \varepsilon}_{loc}(U \times U)$, we may by using compactness first cover $\supp f$ with a finite number of balls $B(a_1,\delta_1/2),\dots,B(a_m,\delta_m/2)\subset U$, where $\delta_\ell$ are given by Theorem~\ref{th:onsager_general}. Moreover, we can find a smooth partition of unity of non-negative functions $\eta_1,\dots,\eta_m$ such that $\supp \eta_\ell \subset B(a_\ell,\delta_\ell)$ and for any $x$ in a small neighbourhood of $\supp f$ we have $\sum_{\ell=1}^m \eta _\ell(x) = 1$. Then
  \begin{align*}
    \E |\mu(f)|^{2N} & = \E |\sum_{\ell=1}^m \mu(f \eta _\ell)|^{2N} \le m^{2N} \E \max_\ell(|\mu(f \eta_\ell)|^{2N}) \le m^{2N} \sum_{\ell=1}^m \E |\mu(f \eta_\ell)|^{2N},
  \end{align*}
  and each summand may be approximated as in the previous case, replacing the use of Proposition~\ref{prop:electrostatic_inequality} with Theorem~\ref{th:onsager_general}.
\end{proof}

As the final component in the proof of Theorem \ref{th:moments}, we record the following basic fact about the moments from Theorem \ref{thm:moment_upper_bound_2} growing slowly enough for the moments to determine the law of $\mu$.

\begin{corollary}\label{co:detemination}
  Under the conditions of Theorem \ref{thm:moment_upper_bound_2} all the exponential moments $\E e^{\lambda |\mu(\varphi)|}$ for $\lambda \in \reals$ and $\varphi \in C^\infty_c(U)$ are finite and in particular the moments $\E \mu(\varphi)^k \overline{\mu(\varphi)}^l$ for $k,l \ge 0$ exist and they determine the distribution of $\mu$ as a random distribution in $\mathcal{D}'(U).$
\end{corollary}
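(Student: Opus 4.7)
The plan is to extract both statements directly from the moment bound of Theorem~\ref{thm:moment_upper_bound_2}. First I would establish finiteness of all exponential moments. For any integer $k$, an application of H\"older's inequality together with the even moment bound $\E|\mu(\varphi)|^{2N}\leq \|\varphi\|_\infty^{2N}C^N N^{\beta^2 N/d}$ gives
\[\E|\mu(\varphi)|^k \;\leq\; D^k\, k^{\beta^2 k/(2d)}\]
for a constant $D$ depending on $\varphi$. Substituting this into the Taylor series $\E e^{\lambda|\mu(\varphi)|}=\sum_{k\geq 0}\lambda^k\E|\mu(\varphi)|^k/k!$ and using the crude Stirling bound $k!\geq (k/e)^k$, each term is dominated by $(e|\lambda| D)^k k^{-k(1-\beta^2/(2d))}$. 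Since $\beta^2/(2d)<1/2$, the exponent $1-\beta^2/(2d)$ is positive and bounded away from zero, so the terms decay super-exponentially and the series converges for every $\lambda\in\R$.

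Next I would use this analytic control to show that, for any single complex-valued test function $\psi \in C_c^\infty(U)$, the moments $\E\mu(\psi)^k\overline{\mu(\psi)}^l$ determine the law of the complex random variable $\mu(\psi)$. Writing $\psi=\Re\psi+i\,\Im\psi$ and bounding $|\mu(\psi)|\leq |\mu(\Re\psi)|+|\mu(\Im\psi)|$ shows that $\E e^{\lambda|\mu(\psi)|}<\infty$ for all $\lambda>0$; by dominated convergence the bivariate moment generating function $(z_1,z_2)\mapsto \E e^{z_1\Re\mu(\psi)+z_2\Im\mu(\psi)}$ is entire on $\C^2$, and its Taylor coefficients at the origin are the mixed moments of $(\Re\mu(\psi),\Im\mu(\psi))$. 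These mixed moments are in turn explicit finite linear combinations of the quantities $\E\mu(\psi)^k\overline{\mu(\psi)}^l$ via $\Re\mu(\psi)=\tfrac12(\mu(\psi)+\overline{\mu(\psi)})$ and $\Im\mu(\psi)=\tfrac{1}{2i}(\mu(\psi)-\overline{\mu(\psi)})$. Since an entire moment generating function is uniquely determined by its Taylor series at $0$, the law of $\mu(\psi)$ is determined.

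Finally I would lift this to the statement about $\mu$ as a random element of $\mathcal{D}'(U)$. The law of such a random distribution is determined by its finite-dimensional marginals $(\mu(\varphi_1),\dots,\mu(\varphi_n))$, $\varphi_j\in C_c^\infty(U,\R)$, and by the Cram\'er--Wold device these marginals are determined by the one-dimensional distributions of all real linear combinations $\sum_{j=1}^n (a_j\Re\mu(\varphi_j)+b_j\Im\mu(\varphi_j))$ with $a_j,b_j\in\R$. A short calculation, using complex linearity of $\mu$, rewrites this real linear combination as $\Re\mu(\psi)$ with $\psi=\sum_j(a_j-ib_j)\varphi_j\in C_c^\infty(U)$, and the previous paragraph applied to $\psi$ closes the argument. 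There is no real obstacle here; the proof is essentially bookkeeping around the key analytic input $\beta^2/d<1$, which is what produces the moment growth rate $N^{\beta^2 N/d}$ that is slow enough to keep the complex moment problem determinate.
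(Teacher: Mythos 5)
Your proof is correct and follows essentially the same route as the paper's: reduce to a single (complex) test function by linearity and the Cram\'er--Wold device, observe that the moment growth rate from Theorem~\ref{thm:moment_upper_bound_2} is slow enough to settle the moment problem, and obtain finiteness of exponential moments from the power series together with a H\"older (Jensen) interpolation of odd moments. The only cosmetic difference is that the paper cites a moment-growth determinacy criterion directly (Durrett, Theorem~3.3.12), whereas you pass through finiteness of the bivariate moment generating function in a neighbourhood of the origin; at this growth rate the two sufficient conditions are interchangeable.
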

\begin{proof} As is standard, by linearity, the joint distribution of $(\mu(\phi_1),\ldots \mu(\phi_m))$ for any number of test functions $\phi_j\in C_c^\infty (U)$ is determined as soon as the case of an arbitrary single test function, or $m=1$ is known. This on the other hand, follows from Theorem \ref{thm:moment_upper_bound_2}, since the stated growth rate of the moments is well-known to be small enough to determine the distribution, see e.g. \cite[Theorem~3.3.12]{D}. Finally, the finiteness of exponential moments follows from expanding the exponential as a power series and using Theorem \ref{thm:moment_upper_bound_2} coupled with a standard Jensen estimate.
\end{proof}
As mentioned, the proof of Theorem \ref{th:moments} now follows from combining Corollary \ref{cor:integrability}, Theorem \ref{thm:moment_upper_bound_2}, and Corollary \ref{co:detemination}.

\medskip

Asymptotics for moments in the case of the Gaussian Free Field (or more precisely for $g=0$) have been proven in \cite{GP} by scaling and space partition arguments. Below we show how to slightly alter their method to deal with a general covariance $C_X(x,y)$ and obtain the following lower bounds for the moments. One should note that the main term in the estimate is the same as for the upper bound.

\smallskip

\begin{proposition}\label{prop:moment_lower_bound}
  Let $f\in C^\infty_c(U)$ be  non-negative and not identically zero. Then for $\mu$ from Theorem \ref{th:existuniq},
  \[\log \E |\mu(f)|^{2N} \ge \frac{\beta^2}{d} N \log N + \mathcal{O}(N).\]
\end{proposition}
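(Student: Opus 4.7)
The argument will follow the strategy of \cite{GP}, adapted to handle a general covariance. My starting point is to exploit positivity of $f$: since $f \ge 0$ is continuous and not identically zero, I fix a ball $B_0 \subset U$ on which $f \ge c_0 > 0$ and restrict the integral in \eqref{eq:formal_moments} to $B_0^{2N}$. I will then choose scales $D,r \asymp N^{-1/d}$ with $r \le D/4$ and, for each permutation $\sigma \in S_N$, consider the ``pairing region'' $R_\sigma \subset B_0^{2N}$ consisting of configurations where the $y_i$'s are pairwise $D$-separated and $x_i \in B(y_{\sigma(i)}, r)$. The integrand $I(x,y)$ in \eqref{eq:formal_moments} is symmetric in $(y_1,\dots,y_N)$, so all $R_\sigma$ contribute the same value to the moment integral; moreover the $R_\sigma$ are pairwise disjoint, because $r \le D/4$ forces each $x_i$ to lie in at most one of the balls $B(y_j,r)$. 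This yields the decisive bound
\[\E|\mu(f)|^{2N} \;\ge\; \sum_{\sigma \in S_N} \int_{R_\sigma} I(x,y)\,dx\,dy \;=\; N!\,\int_{R_{\mathrm{id}}} I(x,y)\,dx\,dy,\]
so that, via Stirling's formula, the factor $N!$ will supply the crucial $N\log N$ term.

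The second step is to lower bound $\int_{R_{\mathrm{id}}} I$. A standard inclusion--exclusion estimate shows that the measure of admissible $y$-configurations (those with pairwise distances $\ge D$) is at least $e^{-cN}$ for some $c>0$, contributing only $O(N)$ to the logarithm. After the change of variable $z_i = x_i - y_i$ and using $f \ge c_0$ on $B_0$, the self-interactions dominate and
\[\prod_{i=1}^N \int_{|z_i| \le r}|z_i|^{-\beta^2}\,dz_i \;\asymp\; r^{N(d-\beta^2)} \;\asymp\; N^{(\beta^2/d - 1)N}.\]

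What remains is to show that the cross-ratio factors $\prod_{i<j}(|x_i-y_j||x_j-y_i|/|x_i-x_j||y_i-y_j|)^{\beta^2}$, together with the $g$-cross exponentials $\prod_{i<j} e^{\beta^2 G_{ij}}$ where $G_{ij} = g(x_i,y_j)+g(x_j,y_i)-g(x_i,x_j)-g(y_i,y_j)$, collectively contribute at most $e^{O(N)}$ after integration. Both expressions have a ``dipole--dipole'' or finite second-difference structure: Taylor expanding around the pair centers $(y_i,y_j)$, one checks by direct algebra that the constant and linear terms in $(z_i,z_j)$ cancel identically, and that the leading quadratic contribution $\propto (z_i\cdot z_j)/|y_i-y_j|^2$ has vanishing mean under the uniform product measure on $B(0,r)^N$ by independence and rotational symmetry of the $z_i$. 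A Jensen-type inequality then converts the integral into the exponential of an average, whose $O(N)$ size follows by summing the higher-order remainders over well-separated pairs.

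Combining everything with Stirling's $\log N! = N\log N + O(N)$,
\[\log \E|\mu(f)|^{2N} \;\ge\; \log N! + N(d-\beta^2)\log r + O(N) \;=\; N\log N - \tfrac{d-\beta^2}{d}N\log N + O(N) \;=\; \tfrac{\beta^2}{d}N\log N + O(N),\]
which is the claim. The main obstacle, and the point where the argument departs most from \cite{GP}, will be the careful control of the cross-ratio and $g$-cross contributions: the algebraic cancellations of low-order Taylor terms are straightforward, but organizing the higher-order remainders so they sum to $O(N)$ instead of the naive $O(N^2)$ requires exploiting both the independence of the $(z_i)$ and the separation $|y_i - y_j| \ge D$, together with the symmetry $g(a,b) = g(b,a)$ which ensures $G_{ij}$ is genuinely a mixed second difference.
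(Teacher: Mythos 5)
Your broad strategy --- restrict to a ball where $f$ is bounded below, place $N$ ``dipoles'' in disjoint cells at scale $\sim N^{-1/d}$, extract a combinatorial factor $N!$ (or $(N!)^2$), and control the dipole--dipole cross-terms by Jensen's inequality --- is the same architecture as the paper's proof, which is organized around the superadditivity inequality
\[
\frac{Z_{\beta,2N}(K)}{(N!)^2} \;\ge\; \prod_{j=1}^{k}\frac{Z_{\beta,2N_j}(\Omega_j)}{(N_j!)^2}
\]
for disjoint $\Omega_j \subset K$, followed by specialization to a partition of $K$ into $\asymp N$ cubes each containing one pair. What differs, and where your proposal has a genuine gap, is the mechanism for showing that the cross-terms contribute only $e^{O(N)}$.

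You propose to Taylor expand the mixed second difference $G_{ij} = g(x_i,y_j)+g(x_j,y_i)-g(x_i,x_j)-g(y_i,y_j)$ (and the analogous log cross-ratio) around the pair centers. This runs into two problems. First, the proposition assumes only that $g$ is continuous (assumptions \eqref{eq:assumptions}), so there is no Taylor expansion of $g$ available. Second, even granting enough smoothness, the expectation $\E[G_{ij}]$ under a centered product measure on the $z_i$'s does not vanish at the order you need: writing $\phi(s,t) = g(y_i+s,y_j+t)$, the second-difference structure kills all monomials $s^a t^b$ with $a=0$ or $b=0$, and centeredness with independence kills those with $a$ or $b$ odd, but the $(a,b)=(2,2)$ term survives with size $O(r^4)=O(N^{-4/d})$. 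Summed over $\binom{N}{2}$ pairs this is $O(N^{2-4/d})$, which is not $O(N)$ once $d \ge 5$; and unlike the logarithmic kernel, a generic $g$ has no decay of derivatives in the separation $|y_i-y_j|$, so the $D$-separation does not rescue the sum.

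The paper sidesteps both issues with an exact symmetry, requiring no smoothness of $g$: Jensen is applied with respect to a tilted probability measure $\nu$ that factorizes between the two groups of variables and is invariant under the involution $x^{(2)} \leftrightarrow y^{(2)}$, while the cross-interaction $U$ changes sign under that involution; hence $\E_\nu U = 0$ \emph{exactly}, in every dimension. The same symmetry is available in your setup: the full cross-term $V_{ij} = C_X(x_i,y_j)+C_X(x_j,y_i)-C_X(x_i,x_j)-C_X(y_i,y_j)$ flips sign under $x_i \leftrightarrow y_i$, so if you apply Jensen with respect to a reference measure that is invariant under each such swap (for instance, a density proportional to $\prod_i|x_i-y_i|^{-\beta^2}e^{\beta^2 g(x_i,y_i)}$ with each pair $(x_i,y_i)$ confined to its own small cube), the expected cross-term vanishes identically and no Taylor analysis is needed.
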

\begin{proof} By the assumption we may choose a cube $K\subset U$ so that $f \geq c_0>0$ on $K$. 
 With a simple scaling and translation argument we may assume that $K = [0,1]^d$ and $c_0=1.$
Let us denote
\begin{align*}
  Z_{\beta,2N}(\Omega)&=``\E |\mu(\mathbf{1}_\Omega)|^{2N}"=\int_{\Omega^{2N}}\frac{\prod_{i,j=1}^Ne^{\beta^2 C_X(x_i, y_j)}}{\prod_{1\leq i<j\leq N}e^{\beta^2C_X(x_i, x_j)+\beta^2C_X(y_i, y_j)}}\prod_{i=1}^Ndx_i\prod_{j=1}^N dy_j,
\end{align*}
for any measurable subset $\Omega \subset K$ and integer $N\geq 0$. Here we wrote $``\E |\mu(\mathbf{1}_\Omega)|^{2N}"$ to indicate that we ignore the discussion about whether or not $\mathbf{1}_\Omega$ is a suitable test function, since it's only the integral we are interested in. Note that $\E |\mu(f)|^{2N}\geq Z_{\beta,2N}(K)$.

Assume that  $0\leq N_1\leq N$ is an integer and write $N_2=N-N_1$. Let also $\Omega_1,\Omega_2\subset K$ be two measurable subsets (with positive $2N$-dimensional measure) satisfying $\Omega_1\cap\Omega_2=\emptyset$. Then the total integral defining $Z_{\beta,2N}(K)$, can be bounded from below by restricting to the subset of $\Omega^{2N}$ where precisely $N_1$ of both the $x$- and the $y$-variables are in $\Omega_1$ and $N_2$ of them are in $\Omega_2$. There are ${N\choose N_1}^2$ ways to choose the variables in this way and we find the following bound:

\begin{align*}
Z_{\beta,2N}(K)&\geq {N\choose N_1}^2Z_{\beta, 2N_1}(\Omega_1) Z_{\beta,2N_2}(\Omega_2)\E_\nu e^{\beta^2 U}\geq {N\choose N_1}^2Z_{\beta, 2N_1}(\Omega_1) Z_{\beta,2N_2}(\Omega_2)e^{\beta^2\E_\nu  U},
\end{align*} 

\noindent where in the last step we used Jensen's inequality, and we have also introduced the following notation: $\nu$ is a probability measure on $\Omega_1^{2N_1}\times \Omega_2^{2N_2}$ of the form 

\begin{align*}
\nu(dx^{(1)},dy^{(1)},dx^{(2)},dy^{(2)})&=\frac{1}{Z_{\beta,2N_1}(\Omega_1)}\frac{1}{Z_{\beta,2N_2}(\Omega_2)}\frac{\prod_{i,j=1}^{N_1}e^{\beta^2 C_X(x_i^{(1)}, y_j^{(1)})}}{\prod_{1\leq i<j\leq N_1}e^{\beta^2 C_X(x_i^{(1)}, x_j^{(1)})+\beta^2 C_X(y_i^{(1)}, y_j^{(1)})}}\\
&\quad \times \frac{\prod_{i,j=1}^{N_2}e^{\beta^2 C_X(x_i^{(2)}, y_j^{(2)})}}{\prod_{1\leq i<j\leq N_2}e^{\beta^2 C_X(x_i^{(2)}, x_j^{(2)})+\beta^2 C_X(y_i^{(2)}, y_j^{(2)})}}dx^{(1)}dy^{(1)}dx^{(2)}dy^{(2)},
\end{align*}

\noindent where $dx^{(i)}$ and $dy^{(i)}$ denote the Lebesgue measure on $\Omega_i^{N_i}$, and we write 

$$
U=\log \frac{\prod_{i=1}^{N_1}\prod_{j=1}^{N_2}e^{C_X(x^{(1)}_i, y^{(2)}_j)+C_X(y_i^{(1)}, x_j^{(2)})}}{\prod_{i=1}^{N_1}\prod_{j=1}^{N_2}e^{C_X(x_i^{(1)}, x_j^{(2)})+C_X(y_i^{(1)}, y_j^{(2)})}}.
$$

\noindent We point out that the density of $\nu$ (as well as the domain of $\nu$) is invariant under the transformation $x^{(2)}\leftrightarrow y^{(2)}$, but under this transformation $U$ is mapped to $-U$, so we see that $\E_\nu U=0$. We conclude that

$$
Z_{\beta,2N}(K)\geq {N\choose N_1}^2Z_{\beta, 2N_1}(\Omega_1) Z_{\beta,2N_2}(\Omega_2),
$$

\noindent or in other words

$$
\frac{1}{[N!]^2}Z_{\beta,2N}(K)\geq \frac{1}{[N_1!]^2}Z_{\beta, 2N_1}(\Omega_1) \frac{1}{[N_2!]^2}Z_{\beta,2N_2}(\Omega_2).
$$

\noindent By induction, if $(\Omega_j)_{j=1}^k$ are non-empty disjoint positive measure subsets of $K$ and $(N_j)_{j=1}^k$ are non-negative integers such that $N_1+...+N_k=N$, then 

\begin{equation}\label{eq:ineq}
\frac{1}{[N!]^2}Z_{\beta,2N}(K)\geq \prod_{j=1}^k \frac{1}{[N_j!]^2}Z_{\beta,2N_j}(\Omega_j).
\end{equation}

Let us now apply this inequality to the case where $k=\lceil N^{1/d}\rceil^d$, $N_j=1$ for all $j= 1,\dots,N$, $N_j = 0$ for $j = N+1,\dots,k$, and $\Omega_j$ is a translate of $[0,\lceil N^{1/d}\rceil^{-1})^d$. This yields 
  \[\log Z_{\beta,2N}(K)\geq \log [N!]^2+\sum_{i=1}^N \log Z_{\beta,2}(\Omega_i).\]
We now have for some vector $v_i\in[0,1)^d$
\begin{align*}
Z_{\beta,2}(\Omega_i)&=\int_{[0,\lceil N^{1/d}\rceil^{-1})^{2d}}\frac{e^{\beta^2 g(v_i+x,v_i+y)}}{|x-y|^{\beta^2}}dxdy
  \; \geq\;  e^{-\beta^2\|g\|_{L^\infty(K)}}\lceil N^{1/d}\rceil^{\beta^2-2d}\int_{[0,1)^{2d}}\frac{1}{|x-y|^{\beta^2}}dxdy
\end{align*}
so that 
\begin{align*}
\log Z_{\beta,2N}(K)&\geq 2 N\log N-2N +o(N)+\sum_{i=1}^N \left[\left(\frac{\beta^2}{d}-2\right)\log N+\mathcal{O}(1)\right]\\
&=\frac{\beta^2}{d}N\log N+\mathcal{O}(N).\qedhere
\end{align*}
\end{proof}

As an application of the moment bounds we close this subsection by proving Theorem \ref{th:tails}.

\smallskip

\begin{proof}[Proof of Theorem \ref{th:tails}]
  Fix $\lambda >1.$ By Chebyshev's inequality and Theorem~\ref{thm:moment_upper_bound_2} we have for any $N \ge 1$ that
  \begin{align*}
    \log \P(|\mu(\varphi)| > \lambda) & \le \log \frac{\E |\mu(\varphi)|^{2N}}{\lambda^{2N}} \\
    & \le \frac{\beta^2}{d} N \log(N) - 2N \log(\lambda) + c N
  \end{align*}
  for some $c > 0$.
  Letting $N = \left\lfloor \lambda^{\frac{2d}{\beta^2}} e^{-1 - \frac{cd}{\beta^2}}\right\rfloor$ and using the fact that the map $x \mapsto \frac{\beta^2}{d} x \log(x) - 2x \log(\lambda) + c x$ has Lipschitz constant of order $1$ when $x \approx \lambda^{\frac{2d}{\beta^2}}$, we get
  \begin{align*}
    \log \P(|\mu(\varphi)| > \lambda) & \le \frac{\beta^2}{d} \lambda^{\frac{2d}{\beta^2}} e^{-1-\frac{c d}{\beta^2}} \frac{2d}{\beta^2} \log(\lambda) - \frac{\beta^2}{d} \lambda^{\frac{2d}{\beta^2}} e^{-1-\frac{cd}{\beta^2}}(1 + \frac{cd}{\beta^2}) \\
    & \quad \quad - 2\lambda^{\frac{2d}{\beta^2}} e^{-1-\frac{cd}{\beta^2}} \log(\lambda) + c \lambda^{\frac{2d}{\beta^2}} e^{-1-\frac{cd}{\beta^2}} + O(1) \\
    & = -\frac{\beta^2}{d}\lambda^{\frac{2d}{\beta^2}}e^{-1-\frac{cd}{\beta^2}} + O(1).
  \end{align*}

  To prove the lower bound, assume that there exist arbitrarily large numbers $\lambda > 0$ such that
  \[\log \P(|\mu(\varphi)| > \lambda) \le - \lambda^{\frac{2d}{\beta^2} + \varepsilon}\]
  and fix some large enough $\lambda > 0$ (how large $\lambda$ is needed will be implicitly determined during the proof). By assuming that $\lambda$ is so large that $b := (\frac{1}{c})^{\frac{\beta^2}{2d}} \lambda^{1 + \frac{\beta^2}{2d} \varepsilon} >\lambda, $ we may compute for any $N \ge 1$ that
  \begin{align*}
    \E |\mu(\varphi)|^{2N} 
    & = 2N \Big(\int_0^{\lambda} +\int_{\lambda}^b + \int_b^\infty\Big) x^{2N-1} \P(|\mu(\varphi)| > x) \, dx \\
    & \le 2N a \int_0^{\lambda} x^{2N-1} e^{-c x^{\frac{2d}{\beta^2}}} \, dx + 2N a \int_{\lambda}^b x^{2N-1} e^{-\lambda^{\frac{2d}{\beta^2} + \varepsilon}} \, dx + 2N a \int_b^\infty x^{2N-1} e^{-c x^{\frac{2d}{\beta^2}}} \, dx,
  \end{align*}
  where we have used the bound $\P(|\mu(\varphi)| > x) \le a e^{-c x^{2d/\beta^2}}$ (for some $c > 0$ and $a >1$) coming from the first part of the proof, and applied  the monotonicity of $\P(|\mu(\varphi)| > x)$ and the fact $a>1$  when $x \in [\lambda, b]$. The length of the interval $[\lambda,b]$ is of the order $\lambda^{1 + \frac{\beta^2}{2d} \varepsilon}$.
  By differentiation it is easy to check that the function $x \mapsto x^{2N-1} e^{-c x^{\frac{2d}{\beta^2}}}$ has a unique maximum at $x_0 = \big(\frac{\beta^2(2N-1)}{2dc}\big)^{\frac{\beta^2}{2d}}$. 
  Fix some $\delta \in (0, \frac{\beta^2}{2d} \varepsilon)$. 
  If we now choose $N \in [\frac{1}{2} + \frac{dc}{\beta^2} \lambda^{\frac{2d(1+\delta)}{\beta^2}}, 2(\frac{1}{2} + \frac{dc}{\beta^2} \lambda^{\frac{2d(1+\delta)}{\beta^2}})]$ to be an integer, (this is possible for large enough $\lambda$), then
  by this choice of $N$, the function $x \mapsto x^{2N-1} e^{-c x^\frac{2d}{\beta^2}}$ is increasing on the interval $[0,\lambda]$ (simply due to the fact that with this choice of $N$, we have $x_0\geq \lambda^{1+\delta}$). The first integral is thus bounded by 
 $\displaystyle 2N a \lambda^{2N} e^{-c \lambda^{\frac{2d}{\beta^2}}}.$ 
  The second integral can be evaluated as 
  $\displaystyle a e^{-\lambda^{\frac{2d}{\beta^2} + \varepsilon}} (b^{2N} - \lambda^{2N})$, and
  finally the third integral has the upper bound
  \[2Na \int_b^\infty x^{2N-1} e^{-c x^{\frac{2d}{\beta^2}}} \, dx \le 2Na \int_b^\infty \frac{b^{2N+1} e^{-c b^{\frac{2d}{\beta^2}}}}{x^2} \, dx \le 2Na b^{2N+1} e^{-c b^{\frac{2d}{\beta^2}}},\]
  where we have used the fact that $b > 1$ for large enough $\lambda$, and also that the unique maximum of $x \mapsto x^{2N+1} e^{-c x^{\frac{2d}{\beta^2}}}$, which is at the point $\big(\frac{\beta^2 (2N+1)}{2dc}\big)^{\frac{\beta^2}{2d}}$, lies in $[\lambda,b]$ for large enough $\lambda$.
  Our choice of $N$ shows that both $\lambda^{2d/\beta^2+\varepsilon}$  and $b^{2d/\beta^2}$ grow quicker than $N^{1+\delta'}$ for some $\delta'>0$, and hence the second and the third integrals converge to zero as $\lambda\to\infty$ as $\log b$ is of the order $\log N$. From the first integral we obtain that by increasing $\lambda$, we can find arbitrarily large integers $N=N(\lambda)$ for which
  \[\E |\mu(\varphi)|^{2N} \lesssim e^{\frac{\beta^2 N}{d (1+\delta')} \log(N)}.\]
  This contradicts the lower bound given by Proposition~\ref{prop:moment_lower_bound}, and concludes the argument.
\end{proof}

\subsection{Regularity properties of imaginary chaos}\label{subsec:regularity}  In this section we continue our study of analytic properties of imaginary chaos, namely we shall study to which classical function spaces imaginary chaos belongs -- this corresponds to Theorem \ref{th:regularity}. We shall obtain essentially sharp results in Besov and Triebel--Lizorkin scales of function spaces, which include e.g. negative index H\"older spaces. As described in more detail in Section \ref{sec:spaces}, this gives much more combined size and smoothness information on the chaos than obtained by just considering the Hilbert--Sobolev spaces $H^s(\R^d)$.

We start by proving that we are dealing with true generalised functions, instead of say honest functions or even complex measures. This is the first component of Theorem \ref{th:regularity}. Though this is an important fact, it seems not to have been proven in the literature before.

\begin{theorem}\label{thm:l2_not_measure}
The imaginary chaos $\mu$ from Theorem \ref{th:existuniq} is almost surely not a complex measure.
\end{theorem}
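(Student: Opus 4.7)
The plan is to argue by contradiction: if $\mu$ were a.s. a complex measure on $\overline U$ with total variation $|\mu|(\overline U)<\infty$, then $|\mu(\phi)|\le\|\phi\|_\infty|\mu|(\overline U)$ a.s. for every continuous $\phi$, so it suffices to exhibit random test functions $\phi_n$ with $\|\phi_n\|_\infty\le 1$ and $|\mu(\phi_n)|\to\infty$ in probability.

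I will take $\phi_n(x):=\chi(x)e^{-i\beta X_n(x)}$, where $X_n=X\ast\psi_n$ is the convolution approximation of $X$ used to define $\mu$ (with $\psi_n(y)=n^d\psi(ny)$) and $\chi\in C_c^\infty(\R^d)$ is a fixed nonnegative cutoff with $\chi\equiv 1$ on a neighborhood of $\overline U$ and $\|\chi\|_\infty=1$. Each $\phi_n$ is then smooth, compactly supported, and bounded by $1$. Adapting the Gaussian computation from the proof of Proposition~\ref{prop:mu_convergence} to this random test function and sending the approximation parameter $m\to\infty$, I obtain
\[
\E\mu(\phi_n)=\int_U\exp\!\Big(\beta^2\,\E[X(x)X_n(x)]-\tfrac{\beta^2}{2}\E[X_n(x)^2]\Big)\,dx.
\]
A direct computation using Lemma~\ref{le:standard} gives $\E[X(x)X_n(x)]=\E[X_n(x)^2]+O(1)=\log n+O(1)$ uniformly for $x\in\mathrm{supp}\,\chi$, whence $\E\mu(\phi_n)\sim C\,n^{\beta^2/2}\to\infty$.

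The second moment $\E|\mu(\phi_n)|^2$ is again an explicit double integral over $U\times U$ obtained by expanding the variance of $X_m(x)-X_n(x)-X_m(y)+X_n(y)$. The decisive observation is that on the off-diagonal region $\{|x-y|\gg 1/n\}$ all the $X_n$-covariances decouple to their $C_X(x,y)$-limit and the integrand matches $\E[\mu(x)\phi_n(x)]\,\overline{\E[\mu(y)\phi_n(y)]}$ to leading order, while on the diagonal $\{|x-y|\lesssim 1/n\}$ the coefficients of $\log n$ in the exponent cancel identically (the tally is $-\tfrac12-\tfrac12+1+1+1-1-1=0$), leaving only the singularity $|x-y|^{-\beta^2}$; since $\beta^2<d$ the diagonal contribution is $O(n^{\beta^2-d})=o(n^{\beta^2})$. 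Consequently
\[
\operatorname{Var}\mu(\phi_n)=\E|\mu(\phi_n)|^2-|\E\mu(\phi_n)|^2=o\!\left(|\E\mu(\phi_n)|^2\right),
\]
and Chebyshev's inequality forces $\mu(\phi_n)/\E\mu(\phi_n)\to 1$ in probability, so $|\mu(\phi_n)|\to\infty$ in probability. This contradicts the a.s. bound $|\mu(\phi_n)|\le|\mu|(\overline U)<\infty$ that would hold were $\mu$ a complex measure, completing the proof.

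The main technical obstacle is justifying the interchange $\E\mu(\phi_n)=\lim_{m\to\infty}\E\mu_m(\phi_n)$, and similarly for the second moment. This reduces to showing $L^2(\P)$-convergence $\mu_m(\phi_n)\to\mu(\phi_n)$ for each fixed $n$, which I would establish by computing $\E|\mu_m(\phi_n)-\mu_{m'}(\phi_n)|^2$ with the same Gaussian formalism used in Proposition~\ref{prop:mu_convergence}: property~(iii) of Definition~\ref{def:standard} together with $\beta^2<d$ yields an $m,m'$-independent $L^1$-majorant on $U\times U$, and property~(i) supplies the almost-everywhere pointwise limit, so dominated convergence applies.
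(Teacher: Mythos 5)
Your strategy — test $\mu$ against the random, uniformly bounded functions $\chi\,e^{-i\beta X_n}$, compute first and second moments, show $\operatorname{Var}\mu(\phi_n)=o\bigl(|\E\mu(\phi_n)|^2\bigr)$, and conclude that $|\mu(\phi_n)|\to\infty$ — is essentially the route the paper takes. The paper uses exactly the same random test functions (it writes $f_k(x)=e^{-i\beta X_{1/k}(x)}\psi(x)$) and the same first/second-moment comparison $A_\delta^2/B_\delta\to 1$; the only cosmetic difference is that it finishes with Paley--Zygmund rather than Chebyshev, which has the side benefit of delivering the quantitative lower bound \eqref{eq:blowup_event} that the paper recycles in the proof of Theorem~\ref{th:besov}. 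Your tally of the $\log n$ coefficients in the second-moment exponent (treating the $\E X(x)X(y)$ term separately as the $|x-y|^{-\beta^2}$ singularity) is correct and matches the paper's cancellation. Your awareness of the justification needed for exchanging $\lim_m$ with the expectations is also right; for the second moment one in fact needs slightly more than $L^2$-convergence (the paper uses higher-moment bounds via Proposition~\ref{prop:electrostatic_inequality}(ii)), which you gesture at without spelling out.

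There is one genuine gap: your cutoff $\chi$ is taken $\equiv 1$ on a \emph{neighborhood of $\overline U$}, and you then assert that $\E[X(x)X_n(x)]=\E[X_n(x)^2]+O(1)=\log n+O(1)$ \emph{uniformly on $\operatorname{supp}\chi$}. This is false. For $x\notin U$ all of these quantities are $0$ (the approximations are defined to vanish outside $U$), and even for $x\in U$ close to $\partial U$ the two-sided estimate of Lemma~\ref{le:standard} (estimate \eqref{eq:conv1}, and property~(ii) of Definition~\ref{def:standard}) is only uniform on compact subsets $K\subset U$, with an implicit constant depending on $\operatorname{dist}(K,\partial U)$; only the one-sided bound \eqref{eq:conv3} holds globally. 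Consequently the two-sided control you invoke to get $\E\mu(\phi_n)\sim Cn^{\beta^2/2}$ is not available on all of $\operatorname{supp}\chi$. The fix is easy and is exactly what the paper does: take $\psi\in C_c^\infty(U)$ supported in a small ball $B$ with $\overline{2B}\subset U$ and $\mathbf 1_B\le\psi\le\mathbf 1_{2B}$. Then the lower bound $\E\mu(\phi_n)\gtrsim n^{\beta^2/2}$ comes from the compact set $B$, the second-moment estimates are uniform on $2B$, and the Paley--Zygmund/Chebyshev step closes the argument as you describe.
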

\begin{proof}
  What the claim means is that the total variation of $\mu$ is almost surely infinite. To prove this,  it is enough to find a sequence of smooth functions $(h_k)_{k \ge 1}$ on $U$ such that almost surely $\sup_{k \ge 1} \|h_k\|_\infty \le 1$ but $\sup_{k \ge 1} |\mu(h_k)| = \infty$. A suitable candidate turns out to be a subsequence of the random sequence
  \[f_k(x) = e^{- i \beta X_{1/k}(x)} \psi(x),\]
  where $X_{1/k}$ are standard mollifications of $X$, and the real-valued test function $\psi \in C_c^\infty(U)$ satisfies $\mathbf{1}_B \le \psi(x) \le \mathbf{1}_{2B}$, where $B=B(x_0,r_0)$ is a ball such that the double sized ball $2B:=B(x_0,2r_0)$ is compactly contained in $U$. The idea of the proof is to calculate $\E\mu(f_k)$ and $\E |\mu(f_k)|^2$ and argue by Paley--Zygmund that the total variation must be infinite with probability $1$.
 
  To simplify the notation, denote $g_\delta(x) = e^{-i\beta X_\delta(x)} \psi(x)$ so that $f_k(x) = g_{1/k}(x)$. Let us begin by computing $\E \mu(g_\delta)$. Using Proposition \ref{prop:mu_convergence}, we can pick a sequence $\varepsilon_n \searrow 0$ such that $\mu_{\varepsilon_n}\to \mu$ almost surely in say $H^{-d/2-1}(\R^d)$. Moreover, using the fact that $\E\|\mu_{\varepsilon_n}\|_{H^{s}(\R^{d})}^{2}$ is bounded, which was part of the proof of Proposition \ref{prop:mu_convergence}, to justify a standard dominated convergence argument below,  we see that 

 \begin{align*}
    \E \mu(g_\delta) & = \lim_{n \to \infty} \int_{2B} \E e^{i \beta X_{\varepsilon_n}(x) - i \beta X_{\delta}(x)} e^{\frac{\beta^2}{2} \E X_{\varepsilon_n}(x)^2} \psi(x)\, dx \\
    & = \lim_{n \to \infty} \int_{2B} e^{-\frac{\beta^2}{2} \E X_{\delta}(x)^2 + \beta^2 \E X_{\varepsilon_n}(x) X_{\delta}(x)} \psi(x)\, dx \\
    & = \int_{2B} e^{-\frac{\beta^2}{2} \E X_{\delta}(x)^2 + \beta^2 \E X(x) X_{\delta}(x)} \psi(x) \, dx =: A_\delta,
  \end{align*}  
  where $\E X(x) X_{\delta}(x) = \lim_{\varepsilon \to 0} \E X_\varepsilon(x) X_\delta(x)$. Note that by Lemma~\ref{le:standard} we have $A_\delta \gtrsim \delta^{-\frac{\beta^2}{2}}$.
  
  To compute $\E|\mu(g_\delta)|^2$, we argue in a similar way, but now $L^2$-boundedness is not sufficient for us to conclude. The remedy comes from Proposition \ref{prop:electrostatic_inequality}(ii) which can be used to check that say $\sup_{n\geq 1}\E\|\mathbf{1}_{2B}\mu_{\varepsilon_n}\|_{H^{-d/2-1}(\R^d)}^6<\infty$. In turn, by the smoothness of the covariance $C_{X_{\delta}}$ one easily verifies that $\E \|g_\delta\|_{H^{d/2+1}(\R^d)}^6 < \infty$ for each fixed $\delta > 0$. Thus one finds that we can interchange the order of the limit and integration and we now obtain
 
  \begin{align*}
    \E |\mu(g_\delta)|^2 & = \lim_{n \to \infty} \int_{2B} \int_{2B} \E e^{i \beta X_{\varepsilon_n}(x) - i \beta X_{\varepsilon_n}(y) - i \beta X_{\delta}(x) + i \beta X_{\delta}(y)} \cdot \\
    & \quad \quad \quad \quad \quad \quad \quad e^{\frac{\beta^2}{2} \E X_{\varepsilon_n}(x)^2 + \frac{\beta^2}{2} \E X_{\varepsilon_n}(y)^2} \psi(x)\psi(y)\, dx \, dy \\
    & = \lim_{n \to \infty} \int_{2B} \int_{2B} e^{- \frac{\beta^2}{2} \E X_{\delta}(x)^2 - \frac{\beta^2}{2} \E X_{\delta}(y)^2
    + \beta^2 \E X_{\varepsilon_n}(x) X_{\delta}(x) + \beta^2 \E X_{\varepsilon_n}(y) X_{\delta}(y)} \psi(x)\psi(y) \cdot \\
    & \quad \quad \quad \quad \quad \quad \quad e^{\beta^2 \E X_{\delta}(x) X_{\delta}(y) + \beta^2 \E X_{\varepsilon_n}(x) X_{\varepsilon_n}(y) - \beta^2 \E X_{\varepsilon_n}(x) X_{\delta}(y) - \beta^2 \E X_{\delta}(x) X_{\varepsilon_n}(y)} \, dx \, dy\\
    & = \int_{2B} \int_{2B} e^{-\frac{\beta^2}{2} \E X_\delta(x)^2 - \frac{\beta^2}{2} \E X_{\delta}(y)^2 + \beta^2 \E X_\delta(x) X(x) + \beta^2 \E X_\delta(y) X(y)} \psi(x) \psi(y) \cdot \\
    & \quad \quad \quad \quad \quad \quad \quad e^{\beta^2 \E X_{\delta}(x) X_{\delta}(y) + \beta^2 \E X(x) X(y) - \beta^2 \E X_\delta(x) X(y) - \beta^2 \E X_{\delta}(y) X(x)} \, dx \, dy \\
    & =: B_\delta.
  \end{align*} 
  Our aim is to show that $\lim_{\delta \to 0} \frac{A_\delta^2}{B_\delta} = 1$. Let
  \[a_\delta(x) := \delta^{\beta^2/2} e^{-\frac{\beta^2}{2} \E X_\delta(x)^2 + \beta^2 \E X_\delta(x) X(x)} \psi(x)\]
  and
  \[b_\delta(x,y) := e^{\beta^2 \E X_\delta(x) X_\delta(y) + \beta^2 \E X(x) X(y) - \beta^2 \E X_\delta(x) X(y) - \beta^2 \E X_\delta(y) X(x)}.\]
  Then we have
  \begin{equation}\label{eq:abfraction} \frac{B_\delta}{A_\delta^2} = \frac{\int_{2B} \int_{2B} a_\delta(x)a_\delta(y)b_\delta(x,y) \, dx \, dy}{\Big(\int_{2B} a_\delta(x) \, dx\Big)^2} = 1 + \frac{\int_{2B} \int_{2B} a_\delta(x)a_\delta(y) (b_\delta(x,y) - 1) \, dx \, dy}{\Big(\int_{2B} a_\delta(x) \, dx\Big)^2}.
  \end{equation}
  By Lemma~\ref{le:standard} we know that $a_\delta(x)$ is bounded both from above and away from $0$, uniformly in $\delta$ and $x$. Moreover, $b_\delta(x,y)$ has an integrable majorant of the form $C |x-y|^{-\beta^2}$ for some $C > 0$, and it converges to $1$ pointwise. Thus by the dominated convergence theorem the right hand side of \eqref{eq:abfraction} tends to $1$ as $\delta \to 0$, as desired.
  
  Since $\E |\mu(f_k)| \ge \E \mu(f_k)$, the Paley--Zygmund inequality shows that we have
  \[\P(|\mu(f_k)| > \theta \E |\mu(f_k)|) \ge (1 - \theta)^2 \frac{(\E \mu(f_k))^2}{\E |\mu(f_k)|^2}\]
  for any $\theta \in (0,1)$. Choosing $\theta = (\E |\mu(f_k)|)^{-\varepsilon}$ for some $\varepsilon > 0$ we thus see that
  \[\P(|\mu(f_k)| > (\E |\mu(f_k)|)^{1 - \varepsilon}) \ge (1 - (\E |\mu(f_k)|)^{-\varepsilon})^2 \frac{A_{1/k}^2}{B_{1/k}} \to 1\]
  as $k \to \infty$. As we noted above that $A_\delta \gtrsim k^{(1 - \varepsilon) \frac{\beta^2}{2}}$, this implies that
  \begin{equation}\label{eq:blowup_event}
    \P\big(|\mu(f_k)| \ge C k^{(1-\varepsilon)\frac{\beta^2}{2}} \text{ for infinitely many } k\big) = 1
  \end{equation}
  for some constant $C > 0$.
  This provides us with the desired subsequence $h_k$ and proves the claim. We note that for our purposes here one could have chosen for instance $\varepsilon = 1/2$, but we stated \eqref{eq:blowup_event} for later use in the proof of Theorem~\ref{th:besov} below.
\end{proof}

The following general result can be used to show that the imaginary chaos belongs to $C^s_{loc}(U)$ or $H^s_{loc}(U)$\footnote{The definition of localised functions spaces with subscript $loc$ was given in Subsection~\ref{sec:spaces}. We also recall that for general $s\in\R$, the interpretation of $C^s$ is $B_{\infty,\infty}^s$ -- see again Section \ref{sec:spaces}.} for indices $s<-\beta^2/2$, and this range is essentially optimal. Moreover, the optimality is not due to some special boundary effects since  it is shown using  localisations that lie compactly inside the domain $U$. This is the second part of Theorem~\ref{th:regularity}.

\begin{theorem}\label{th:besov} Assume that $\beta \in (0,\sqrt{d})$ and fix $1\leq p,q\leq\infty$. Moreover, let $X$ be a log-correlated field satisfying our basic assumptions \eqref{eq:cov} and \eqref{eq:assumptions}. Let $\mu$ be the imaginary chaos given by Theorem~\ref{th:existuniq}. Then the following are true. 
 \begin{enumerate}[label={\rm (\roman*)}]

\item We have almost surely $\mu \in B_{p,q,loc}^s(U)$ when $s < -\frac{\beta^2}{2}$, and $\mu\notin B_{p,q,loc}^{s}(U)$ for $s>-\frac{\beta^{2}}{2}$. 

\item Assume moreover that $g\in L^\infty (U\times U)$ or that $X$ is the $2d$ GFF with zero boundary conditions. Then almost surely $\mu \in B_{p,q}^s(\R^d)$ when $s < -\frac{\beta^2}{2}$. 

\item Analogous statements hold for the Triebel spaces in the case $p,q\in [1,\infty)$.

\end{enumerate}
\end{theorem}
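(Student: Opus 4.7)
The plan is to handle the positive (membership) and negative (non-membership) regularity statements of part (i) separately via Fourier-analytic moment methods, then run the membership argument globally for part (ii), and finally deduce (iii) from the Besov case via the embeddings \eqref{eq:betrie}. For the \emph{membership} part of (i), fix $\psi\in C_c^\infty(U)$ supported in a compact $K\subset U$ and work with the Littlewood--Paley blocks $\phi_k*(\psi\mu)(x)=\mu(g_{k,x})$, where $g_{k,x}(y)=\phi_k(x-y)\psi(y)$. Starting from the moment formula \eqref{eq:formal_moments} (justified for $g_{k,x}$ by Corollary~\ref{cor:integrability}), I will apply either the $d=2$ Onsager-type inequality of Proposition~\ref{prop:electrostatic_inequality}(i) or its higher-dimensional counterpart Theorem~\ref{th:onsager_general}, then rescale $y_j=x+2^{-k}y_j'$ and invoke a Schwartz-weighted variant of Lemma~\ref{lemma:combinatorial_argument}. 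The scaling count works out cleanly: the $2N$ kernels $\phi_k$ produce $2^{2Nkd}$, the $2N$ Jacobians produce $2^{-2Nkd}$, and the $2N$ Onsager singularities at physical scale $2^{-k}$ contribute exactly $2^{kN\beta^2}$, yielding
\[\sup_{x\in\R^d}\E|\phi_k*(\psi\mu)(x)|^{2N}\le C^N N^{N\beta^2/d}\,2^{kN\beta^2}.\]
Integrating in $x$ and using \eqref{eq:besov} gives $\mu\in B^s_{2N,2N,loc}$ for every $s<-\beta^2/2$; the embedding \eqref{eq:bebe} with $p=2N$ then yields $\mu\in B^s_{\infty,\infty,loc}$ for $s<-\beta^2/2-d/(2N)$, and letting $N\to\infty$ followed by the right-hand half of \eqref{eq:beem} upgrades this to $\mu\in B^s_{p,q,loc}$ for all $1\le p,q\le\infty$ and every $s<-\beta^2/2$.

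For the \emph{non-membership} part of (i), I will recycle the random test functions $f_k(x)=e^{-i\beta X_{1/k}(x)}\psi(x)$ from the proof of Theorem~\ref{thm:l2_not_measure}, which satisfy $\|f_k\|_\infty\le\|\psi\|_\infty$ while the blow-up bound \eqref{eq:blowup_event} provides $|\mu(f_k)|\ge c k^{(1-\varepsilon)\beta^2/2}$ for infinitely many $k$, for any prescribed $\varepsilon>0$. Assuming for contradiction that $\mu\in B^s_{p,q,loc}(U)$ for some $s>-\beta^2/2$, pick $\widetilde\psi\in C_c^\infty(U)$ with $\widetilde\psi\equiv 1$ on $\supp\psi$; the left-hand inclusion in \eqref{eq:beem} gives $\widetilde\psi\mu\in B^{s-\delta}_{1,1}(\R^d)$ for every $\delta>0$, and Besov duality $(B^{s-\delta}_{1,1})^*=B^{\delta-s}_{\infty,\infty}$ produces
\[|\mu(f_k)|=|(\widetilde\psi\mu)(f_k)|\le\|\widetilde\psi\mu\|_{B^{s-\delta}_{1,1}}\|f_k\|_{B^{\delta-s}_{\infty,\infty}}.\]
A direct Gaussian covariance calculation based on Lemma~\ref{le:standard} gives the pointwise variance bound $\E|\partial^\alpha X_{1/k}(x)|^2\lesssim k^{2|\alpha|}\log k$ for every multi-index $\alpha$ and $x\in K$, and a Borell--TIS supremum bound combined with Borel--Cantelli then yields $\|f_k\|_{C^m(K)}\lesssim k^{m+o(1)}$ almost surely for each integer $m$; interpolation with $\|f_k\|_\infty\le\|\psi\|_\infty$ then gives $\|f_k\|_{C^\alpha}\lesssim k^{\alpha+o(1)}$ a.s.\ for all $\alpha>0$. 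Choosing $\varepsilon\in(0,1)$ and $\delta>0$ small enough that $\varepsilon\beta^2/2+\delta<s+\beta^2/2$ makes the resulting upper bound $|\mu(f_k)|\lesssim k^{(\delta-s)_++o(1)}$ incompatible with the lower bound $k^{(1-\varepsilon)\beta^2/2}$ along the blow-up subsequence, giving the contradiction.

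Part (ii) is obtained by running the membership argument globally, dropping the cutoff $\psi$: for the zero-boundary GFF the required global Onsager inequality is supplied by Proposition~\ref{prop:electrostatic_GFF}, while for general $g\in L^\infty(U\times U)$ the covariance bound $e^{\beta^2 C_X(x,y)}\lesssim |x-y|^{-\beta^2}$ holds throughout $U\times U$, and this is enough to rerun the moment estimate using the $O(N^2)$-error Onsager bound of Proposition~\ref{prop:electrostatic_inequality}(ii) (the exact dependence on $N$ is irrelevant for Besov regularity, since only the finiteness of each moment matters once the scaling factor $2^{kN\beta^2}$ is extracted). Part (iii) is then immediate from (i) and (ii) together with \eqref{eq:betrie}, because the Triebel--Lizorkin scale sits between Besov spaces of nearby smoothness and the threshold $-\beta^2/2$ is an open condition. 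I expect the main obstacle to be the sharp moment estimate in the first paragraph: extracting exactly the power $2^{kN\beta^2}$ demands a careful rescaling and bookkeeping of the Onsager singularities, and any looser accounting would degrade the critical exponent and miss the optimal smoothness index $-\beta^2/2$.
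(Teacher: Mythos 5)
Your approach is essentially the paper's: for membership, bound $\E\|\psi\mu\|_{B^s_{2n,2n}}^{2n}$ by controlling the $2n$-th moments of the Littlewood--Paley blocks via an Onsager bound, rescale to extract the $2^{kN\beta^2}$ factor, and pass to general $(p,q)$ by the embeddings \eqref{eq:betrie}, \eqref{eq:bebe}, \eqref{eq:beem}; for non-membership, reuse the random test functions $f_k = e^{-i\beta X_{1/k}}\psi$ and the blow-up estimate \eqref{eq:blowup_event}, pair against duality $(B^{s-\delta}_{1,1})^* = B^{\delta-s}_{\infty,\infty}$, and control $\|f_k\|_{C^\alpha}$ by a Gaussian concentration plus Borel--Cantelli. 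Parts (ii) and (iii) are handled the same way as the paper.

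There is one genuine gap, though easily repaired. In the membership argument for part (i) you propose to invoke Proposition~\ref{prop:electrostatic_inequality}(i) (which needs $d=2$ and $g\in C^2(U\times U)$) or Theorem~\ref{th:onsager_general} (which needs $g\in H^{d+\varepsilon}_{loc}$). But Theorem~\ref{th:besov}(i) is stated under only the basic assumptions \eqref{eq:cov}, \eqref{eq:assumptions}, with no further regularity on $g$, so neither of those inequalities is available. The paper instead uses Proposition~\ref{prop:electrostatic_inequality}(ii), the $O(N^2)$-error version, which holds under the basic assumptions alone; its poorer $N$-dependence is harmless because $N$ is fixed while the $j$-sum runs, exactly the observation you make yourself when discussing part (ii). Swapping the reference repairs the argument. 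Two smaller points: your invocation of a ``Schwartz-weighted variant of Lemma~\ref{lemma:combinatorial_argument}'' is where the actual work lies -- the paper instead reduces, using the permutation symmetry of the Onsager majorant, to the product $\prod_i|x_i-y_i|^{-\beta^2}$ of \emph{pairwise} singularities before rescaling, which avoids re-deriving the full nearest-neighbour combinatorics inside the Besov estimate -- and the paper controls $\|f_k\|_{B^s_{\infty,\infty}}$ via the lift operator $I^\delta$ together with Lemma~\ref{le:tarkempi} and Fernique rather than bounding derivative variances of $X_{1/k}$ directly; both routes are fine, but the lift route gives the Hölder estimate with less chain-rule bookkeeping for $f_k = e^{-i\beta X_{1/k}}\psi$.
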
 
\begin{proof}
(i).  Fix $\psi\in C_c^\infty (U),$ and denote the support of $\psi$ by $K$ so that $K$ is a compact subset of $U$. In view of the inclusions \eqref{eq:betrie},\eqref{eq:bebe} and the embedding \eqref{eq:beem}, in order to prove the claim it is enough to establish that for any $s<-\beta^2/2$ and for  arbitrary large positive integers $n$ it holds  that  $\psi\mu \in B_{2n,2n}^s(\R^d)$ almost surely.

We fix a large $n$ and compute a suitable moment of the Besov-norm as follows 
  \begin{align*}
    \E \|\psi \mu\|_{{B}_{2n,2n}^s}^{2n} & = \E \sum_{j=0}^\infty 2^{2nsj} \int_{\reals^d} |((\psi\mu) * {\phi_j})(x)|^{2n} \, dx,
  \end{align*}
  where the $\phi_j$:s are as in the discussion leading to \eqref{eq:besov}.  
  By Proposition \ref{prop:electrostatic_inequality}(ii), and using the fact that the integrand is invariant under permutations of the whole set of variables $x_1,\dots,x_n,y_1,\dots,y_n$, we see that it is enough to check that 
  \begin{align*}
    \sum_{j=0}^\infty 2^{2nsj} \int_{\reals^d} \int_{K^{2n}} \frac{|{\phi_j}(x - x_1) \dots {\phi_j}(x - x_n) {\phi_j}(x - y_1) \dots {\phi_j}(x - y_n)|}{|x_1 - y_1|^{\beta^2} \dots |x_n - y_n|^{\beta^2}} \, dx_1 \dots dx_n \, dy_1 \dots dy_n \, dx
  \end{align*}
  is finite. As for $j\geq 1$, the functions $\phi_j$ are built from $\phi_1$, we consider separately $j=0$ and $j\geq 1$. The summand for $j=0$ is clearly finite (by compact support and the fact that $\beta^2<d$). For $j\geq 1 $, pick a ball $B$  centered at the origin such that $K \subset B$. For the rest of the sum the change of variables $x_k \mapsto 2^{-j} x_k$, $y_k \mapsto 2^{-j} y_k$ and $x \mapsto 2^{-j} x$ yields the upper bound
  \begin{align*}
    \sum_{j=1}^\infty 2^{2nsj + n j \beta^2 - j d} \int_{\reals^d} \left(\int_{(2^j B) \times (2^j B)} \frac{|\phi_1(x - x_1) {\phi}_1(x - y_1)|}{|x_1 - y_1|^{\beta^2}} \, dx_1 \, dy_1\right)^n \, dx,
  \end{align*}
  where we have used the fact that ${\phi}_j(x) = 2^{dj} {\phi}_1(2^j x)$.
  Comparing with our statement, we see that it is enough to check that 
  \[\int_{\reals^d} 2^{-jd} \left(\int_{(2^j B) \times (2^j B)} \frac{|{\phi}_1(x - x_1) {\phi}_1(x - y_1)|}{|x_1 - y_1|^{\beta^2}} \, dx_1 \, dy_1\right)^n \, dx\]
  is uniformly bounded in $j$. Notice that for $x \in 2^{j+1}B$ we have 
  \begin{align*}
    & \int_{(2^j B) \times (2^j B)} \frac{|{\phi}_1(x - x_1) {\phi}_1(x - y_1)|}{|x_1 - y_1|^{\beta^2}} \, dx_1 \, dy_1 \\
    \lesssim&  \int_{\reals^d} \frac{1}{(1 + |x - x_1|^{2d})(1 + |x - y_1|^{2d})|x_1-y_1|^{\beta^2}} \, dx_1 \, dy_1 \le c',
  \end{align*}
  as the integral is constant in $x$.
  Moreover, for $x \notin 2^{j+1}B$ we have
  \begin{align*}
    & \int_{x \notin 2^{j+1} B} 2^{-jd} \left( \int_{(2^j B) \times (2^j B)} \frac{|{\phi}_1(x - x_1) {\phi}_1(x - y_1)|}{|x_1 - y_1|^{\beta^2}}  \right)^n \, dx \\
    \le & \int_{x \notin 2^{j+1} B} 2^{-jd} \left( \int_{(2^j B) \times (2^j B)} \frac{1}{(1 + |x - x_1|^{2d})(1 + |x - y_1|^{2d})|x_1-y_1|^{\beta^2}} \, dx_1 \, dy_1 \right)^n \, dx \\
    \le & \int_{x \notin 2 B} 2^{(2jd - j \beta^2)n} \left(\int_{B \times B} \frac{1}{(1 + 2^{2dj } |x - x_1|^{2d })(1 + 2^{{2dj }} |x - y_1|^{2d }) |x_1 - y_1|^{\beta^2}} \, dx_1 \, dy_1\right)^n \, dx ,     \\        
    \lesssim & \int_{x \notin 2 B} \frac{2^{n(-2jd - j \beta^2)}}{|x|^{4dn}} \left(\int_{B \times B} \frac{1}{|x_1 - y_1|^{\beta^2}} \, dx_1 \, dy_1\right)^n \, dx
  \end{align*}
    which goes to $0$ as $j \to \infty$. This concludes the proof of $\psi\mu\in B_{2n,2n}^{s}(\R^{d})$ almost surely, and thus by our discussion at the beginning of the proof, this implies that for $s<-\frac{\beta^{2}}{2}$, $\mu\in B_{p,q,loc}^{s}(U)$ almost surely.
    
  We then turn to the converse direction. In this case one deduces from \eqref{eq:betrie},\eqref{eq:bebe} and \eqref{eq:beem} that it is enough to verify for any fixed $s < \frac{\beta^2}{2}$  that almost surely $\psi\mu \notin B^{-s}_{1,1}$. From \eqref{eq:blowup_event} we know that if we let $f_k(x) = \psi(x) e^{-i \beta X_{1/k}(x)}$ with $\psi$ as in the proof of Theorem~\ref{thm:l2_not_measure}, then for any $\delta > 0$ there exists a deterministic constant $C$ and a stochastic sequence $n_k \to \infty$ such that $|\mu(f_{n_k})| \ge Cn_k^{\frac{\beta^2}{2} - \delta}$ with probability one. By the duality of $B^{-s}_{1,1}$ and $B^s_{\infty,\infty}$ we thus have
  \[\|\mu\|_{B^{-s}_{1,1}} \ge \frac{Cn_k^{\frac{\beta^2}{2} - \delta}}{\|f_{n_k}\|_{B^s_{\infty,\infty}}},\]
  and hence it is enough to show that for all fixed $\delta > 0$  the inequality $\|f_n\|_{B^s_{\infty,\infty}} \le n^{s + 2\delta}$ holds almost surely for large enough $n$.
  We will prove this bound first in the case when $s < 1$. The norm $\|f_{k}\|_{B^s_{\infty,\infty}}$ is equivalent to the H\"older norm of $f_{k}$, and since $t \mapsto e^{-i\beta t}$ is Lipschitz, it is enough to consider the $C^s$-norm of $X_{1/k}$.
  In order to bound this, we note first that for a fixed $\delta\in (0,(1-s)/2)$ 
  \[ \|X_{1/n}\|_{C^s} \sim c  \|I^{\delta} X_{1/n}\|_{C^{s + \delta}} \sim \|(I^{\delta} X)_{1/n}\|_{C^{s + \delta}},\]
  where $I^{\delta}$ is the  standard lift operator \eqref{eq:lift} (see the definition in Subsection \ref{sec:spaces})  $I^{\delta}\colon C^{s} \to C^{s + \delta}$, and $c > 0$ is a constant. By Lemma \ref{le:tarkempi} we have $I^{\delta} X \in C^{\delta/2}$ almost surely, and thus by Fernique's theorem
  \[\E \exp\big({a \|I^{\delta} X\|^2_{C^{\delta/2}}}\big) < \infty\]
  for some $a > 0$. Moreover, we may compute directly from the definition of a convolution that
  \begin{align}
    |(I^\delta X)_{1/n}(x) - (I^\delta X)_{1/n}(y)| & \le  \|I^\delta X\|_{\infty}\int |\eta_{1/n}(x-u) - \eta_{1/n}(y-u)| \, du \label{eq:Idelta_bound}\\
    & \le b\|I^\delta X\|_{\infty} \min(1, n |x-y|)  \nonumber
  \end{align}
  for some constant $b > 0$.
  Thus
  \begin{align*}
    \|(I^\delta X)_{1/n}\|_{C^{s + \delta}} & \le b\sup_{|x-y| \le 1} |x-y|^{-s-\delta} \min(1, n |x-y|) \|I^\delta X\|_{\infty} + \|(I^\delta X)_{1/n}\|_{\infty}\\
    & \lesssim (n^{s + \delta} + 1) \|I^\delta X\|_{C^{\delta/2}}.
  \end{align*}
  By the Fernique bound we have
  \begin{align*}
    \P (\|(I^\delta X)_{1/n}\|_{C^{s + \delta}} > n^{s + 2\delta}) \le \P((n^{s + \delta} + 1) \|I^\delta X\|_{C^{\delta/2}} \ge n^{s + 2\delta}) \le e^{-b' n^{\delta}}
  \end{align*}
  for some constant $b' > 0$.
  Finally,  by Borel--Cantelli  $\|f_{n}\|_{C^s} \le n^{s + 2\delta}$ for all large enough $n\geq n(\omega)$. This is precisely what we set out to prove, so we are done in the $s<1$ case.

 In the case of $s \ge 1$, we may actually choose $s > 1$ and we need to get an estimate for the H\"older norm of the derivatives of $X_{1/n}$. This is obtained by applying estimates like \eqref{eq:Idelta_bound} by replacing the test function $\eta$ by its derivatives. We leave the details for the reader.

    (ii) \; The proof is identical to that in case (i) as one invokes Proposition \ref{prop:electrostatic_GFF}. 

  (iii)\;  The claims for the Triebel--Lizorkin spaces follow easily from those for the Besov spaces by employing the embeddings \eqref{eq:betrie}.
\end{proof}
Combining Theorem \ref{thm:l2_not_measure} and Theorem \ref{th:besov} yields Theorem \ref{th:regularity}, so this concludes our study of regularity properties of imaginary chaos and we turn to what we refer to as universality properties.

\subsection{Universality properties}\label{subsec:universality} The goal of this section is to study the following question: For which periodic functions $H$ can we make sense of $H(X)$ (through a suitable regularization and renormalization procedure) when $X$ is a log-correlated field? To give an intuitive answer to this question, let us assume that $H$ is a $2\pi/\beta$-periodic\footnote{This is simply a notationally convenient way to write the arbitrary period of the function as it will work well with the notation we have used previously.}  function and let us expand $H(X_n)$ as a Fourier series $H(X_n(x))=\sum_{k\in \Z}H_ke^{ik\beta X_n(x)}$. Now if $H_0\neq 0$, we would expect from Proposition \ref{prop:mu_convergence} that $H(X_n(x))\to H_0$ as $n\to \infty$. If on the other hand $H_0=0$, and $\beta$ is small enough, then one would expect that multiplying by $e^{\frac{\beta^2}{2}\E [X_n(x)^2]}$ and letting $n\to\infty$ would pick out the $k=\pm 1$-terms and yield $H_1 e^{i\beta X(x)}+H_{-1}e^{-i\beta X(x)}$. If $H_{\pm 1}=0$ and $\beta$ is small enough, one would expect convergence to $H_2 e^{2i\beta X(x)}+H_{-2}e^{-2i\beta X(x)}$ and so on. To make this argument rigorous, one needs to control the contribution of the higher Fourier modes. For simplicity we shall assume from now on that $H$ is real, even, $H_0=0$, and $H_1\neq 0$, though these assumptions can be relaxed, see Remark \ref{re:generalH} below. 

Before proceeding, let us address a technicality that might concern a careful reader. If $H$ is not very regular, say just measurable instead of continuous, one might worry whether or not $\int H(X_n(x))\varphi(x)dx$ is a well defined random variable. That is, if $\widetilde{H}=H$ Lebesgue almost everywhere, do we have $\int H(X_n(x))\varphi(x)dx=\int \widetilde{H}(X_n(x))\varphi(x)dx$ almost surely?  To see that this is the case,  note that if  $X_n$ is  a centered Gaussian field with continuous realisations on the bounded domain $U\subset\R^d$, pointwise non-degenerate (i.e.
$\E X_n(x)^2>0$ for each $x\in U$), and   $H:\R\to\R$ is a locally  bounded function, then for any bounded compactly supported measurable function $\varphi$, the evaluation
$$
Y:=\int_UH(X_n(x))\varphi(x)dx
$$
is well-defined as a random variable. Indeed, we may choose Borel measurable representatives for the functions $H$ and $\varphi$, and it follows that $(x,\omega)\mapsto H(X_n(x,\omega))\varphi(x)$ is jointly measurable. Moreover, given another Borel measurable representative $\widetilde H$,  one has  a.s. $H(X_n(x,\omega))=\widetilde H(X_n(x,\omega))$ for almost every $x\in U$  by Fubini's theorem and the fact that Gaussians have continuous density on $\R^d.$ Hence moving to $\widetilde H$ does not change the value of $Y$, and we do not need to assume much regularity from $H$ to pose a meaningful question.

In what follows we assume again that $(X_n)$ are standard convolution approximations of our log-correlated field $X$ on the domain $U\subset\R^d$. To be more precise, we write $X_n:=X*\eta_{c_n}$ for some sequence $c_n\to 0$  as in Lemma \ref{le:standard},  and we recall that the covariance $C_n(x,y):=C_{X_n}(x,y)$  satisfies for any compact subset $K\subset U$, that there exists a $M=M(K)$ such that
\begin{equation}\label{eq:a}
\Big |C_n(x,y)-\log \big(\frac{1}{\max (c_n, |x-y|)}\big)\Big|\leq M\quad\textrm{for all}\quad x,y\in K,
\end{equation}
as $n\to\infty$.

The following lemma is instrumental in controlling the contribution of higher order Fourier modes. We are able to obtain a result for a slightly larger class of functions $H$ when specializing to two dimensions and assuming some further regularity from $g$, and for this reason, we also prove a slightly stronger version of our control of higher Fourier modes in the case of $d=2$.

\begin{lemma}\label{le:a} Let $X$ be a log-correlated field satisfying assumptions \eqref{eq:cov} and \eqref{eq:assumptions} and let $(X_n)$ be a convolution approximation of it as described above. Assume that $\beta\in (0,\sqrt{d})$  and $\varphi\in L^\infty (U)$ has compact support.
Denote for $k\in \Z$
$$
Y_k:= \int_U \varphi(x)e^{\frac{1}{2}\beta^2C_n(x,x)}e^{ik\beta X_n(x)}dx.
$$
\noindent {\bf (i)}\quad
For all   integers $k$ with  $|k|\geq 2$  it holds that
\begin{equation}\label{eq:claim1}
 \E | Y_k|^2 \lesssim c_n^{\alpha}\|\varphi\|^2_{L^\infty},
\end{equation}
  where $c_n$ is as in \eqref{eq:a},  $\alpha =\min (3\beta^2, d-\beta^2),$ and in the special case $\beta=\frac{1}{2}\sqrt{d}$ the factor on the right hand side must be replaced by $c_n^{3\beta^2}\log \frac{1}{c_n}.$ The bounds are uniform in $k$.

\smallskip

\noindent{\bf (ii)}\quad Assume that $d=2$, $g\in C^2(U\times U)$ and assume that the bump function $\eta$ used to define the convolution approximations $X_n$  is additionally  non-negative,  radially decreasing and symmetric, so that  $\eta(0)>0$. Moreover, assume that the term $g$ in the covariance satisfies $g(x,x)=g(y,y)$  for all $x,y\in U$. Then for all integers $\ell,k$ with $|\ell|,|k|\geq 4/\beta$ and for $n$ large enough it holds that
\begin{equation}\label{eq:claim2}
| \E Y_k \overline {Y_\ell}| \lesssim \frac{(e^{2M}c_n)^{-\beta^2+2+\frac{\beta^2}{8}(\ell-k)^2}}
{(|\ell| \vee |k|)^{2}}\|\varphi\|^2_{L^\infty}.
\end{equation}
\end{lemma}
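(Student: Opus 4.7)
The starting point is the Gaussian moment formula
\begin{equation*}
\E|Y_k|^2 = \int_{U^2} \varphi(x)\overline{\varphi(y)}\, \exp\!\Big(\tfrac{\beta^2}{2}(1-k^2)(C_n(x,x)+C_n(y,y)) + k^2\beta^2 C_n(x,y)\Big)\, dx\, dy.
\end{equation*}
Inserting the pointwise covariance estimate \eqref{eq:a} (and using that $\varphi$ is compactly supported in $U$, so one may take $K=\operatorname{supp}\varphi$) bounds the integrand by
\begin{equation*}
\|\varphi\|_\infty^2\, c_n^{\beta^2(k^2-1)}\, \max(c_n,|x-y|)^{-k^2\beta^2}
\end{equation*}
up to a multiplicative constant depending only on $K$. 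I would then split the integration at $|x-y|=c_n$. On $\{|x-y|\le c_n\}$ the integrand is $\lesssim c_n^{-\beta^2}$ on a set of Lebesgue measure $\lesssim c_n^d$, contributing $c_n^{d-\beta^2}$. On $\{c_n<|x-y|\lesssim 1\}$ the integrand is $\lesssim c_n^{\beta^2(k^2-1)}|x-y|^{-k^2\beta^2}$, and integrating in spherical coordinates gives three regimes: $c_n^{\beta^2(k^2-1)}$ when $k^2\beta^2<d$, $c_n^{d-\beta^2}$ when $k^2\beta^2>d$, and the same with an extra $\log(1/c_n)$ when $k^2\beta^2=d$. The worst case over $|k|\ge 2$ is attained at $|k|=2$, producing exactly $\alpha=\min(3\beta^2,d-\beta^2)$, with the logarithmic defect when $4\beta^2=d$. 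Uniformity in $k$ follows because for large $|k|$ the constant coming from the spherical integral is of order $(k^2\beta^2-d)^{-1}$, hence bounded.

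\textbf{Plan for part (ii).} Here I would use the additional hypotheses to write a sharper splitting of the covariance. With $L_n:=\log(1/c_n)$, the conditions $g\in C^2$, $g(x,x)\equiv g_0$, and $\eta$ radially symmetric give
\begin{equation*}
C_n(x,x) = L_n + M_0 + O(c_n^2), \qquad C_n(x,y) = L_n + M_0 + \gamma_n(x-y) + O(c_n^2)
\end{equation*}
uniformly on $\operatorname{supp}\varphi$, where $M_0=g_0+K_\eta$ depends only on $\eta$ and $g_0$, and where $\gamma_n(z) := \int\!\!\int \eta(a)\eta(b)\log|z/c_n-(a-b)|^{-1}\, da\, db$ is a continuous function that is nonpositive, satisfies $\gamma_n(z)\le -C|z|^2/c_n^2$ for $|z|\le c_n$ (using the radial decrease and $\eta(0)>0$), and behaves like $-\log(|z|/c_n)$ for $|z|\gtrsim c_n$. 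Substituting into the Gaussian formula for $\E Y_k\overline{Y_\ell}$ and collecting the $L_n$-terms produces the exact factorization
\begin{equation*}
|\E Y_k\overline{Y_\ell}|\ \lesssim\ c_n^{\beta^2((k-\ell)^2-2)/2}\ \|\varphi\|_\infty^2\ \int_{U^2} e^{k\ell\beta^2\gamma_n(x-y)}\, dx\, dy,
\end{equation*}
modulo a bounded constant that absorbs the $e^{2M}$ factor in the target bound.

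\textbf{Bounding the remaining integral.} The case $k\ell>0$ is favorable: $\gamma_n\le 0$, and the quadratic bound near the origin yields an effective integration window of width $\sim c_n/(\sqrt{k\ell}\,\beta)$, while the logarithmic decay for $|z|>c_n$ contributes comparably (using $|k|,|\ell|\ge 4/\beta$ so that $k\ell\beta^2\ge 16>d=2$). This gives $J(k,\ell):=\int e^{k\ell\beta^2\gamma_n}\lesssim c_n^2/(k\ell)$. Combined with the prefactor, one obtains a bound $\lesssim c_n^{2-\beta^2+\beta^2(k-\ell)^2/2}/(k\ell)$ which, after trivially weakening the $c_n$-exponent from $(k-\ell)^2/2$ to $(k-\ell)^2/8$ and replacing $(k\ell)^{-1}$ by $(|k|\vee|\ell|)^{-2}$ (easy since $|k|,|\ell|\ge 4/\beta$ are comparable enough once $c_n R^2<1$), yields the claim.

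\textbf{Main obstacle.} The case $k\ell<0$ is where the analysis is delicate: now $e^{k\ell\beta^2\gamma_n(z)}$ is \emph{increasing} in $|z|$, behaves like $(|z|/c_n)^{|k\ell|\beta^2}$ for $|z|\gtrsim c_n$, and is only bounded on $U$ by the large factor $(R/c_n)^{|k\ell|\beta^2}$. The key algebraic observation that rescues matters is $(k-\ell)^2 = k^2+\ell^2+2|k\ell|\ge 2|k\ell|$ when $k\ell<0$, so the large factor $c_n^{-|k\ell|\beta^2}$ from the integral is more than compensated by the $c_n^{\beta^2(k-\ell)^2/2}$ in the prefactor; the slack precisely allows one to weaken the final exponent to $\beta^2(k-\ell)^2/8$ and still dominate the losses $R^{|k\ell|\beta^2}$ and $(|k|\vee|\ell|)^2$, provided $c_n$ is small enough that $c_n R^2<1$. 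This is the reason the stated bound takes the weaker form it does, and is why the threshold $|k|,|\ell|\ge 4/\beta$ appears.
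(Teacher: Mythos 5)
Your plan for part (i) follows the paper's proof essentially verbatim: compute $\E|Y_k|^2$ as a Gaussian integral, insert the covariance estimates from Definition~\ref{def:standard}/Lemma~\ref{le:standard}, split the integration at $|x-y|\sim c_n$, and run the case analysis on $k^2\beta^2$ versus $d$, with the worst case at $|k|=2$. One small caution: your remark that the constant from the spherical integral ``is of order $(k^2\beta^2-d)^{-1}$, hence bounded'' is not quite right as stated -- that factor blows up as $k^2\beta^2\to d^+$ -- but the numerator vanishes simultaneously so the bound degenerates to the logarithmic one rather than blowing up; you need to note this.

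For part (ii), the high-level strategy is the same as the paper's (quadratic decay of the centered covariance near the diagonal, logarithmic decay far from it, followed by a splitting of the integral), but your stated decomposition is not correct. Two concrete issues. First, your $\gamma_n(z)=\int\!\!\int\eta(a)\eta(b)\log|z/c_n-(a-b)|^{-1}\,da\,db$ does not vanish at $z=0$, so it is certainly not nonpositive; what you want is $\gamma_n(z)-\gamma_n(0)$ (equivalently the paper's $V_n(x)-V_n(y_0)$). Second, and more importantly, writing $C_n(x,y)=L_n+M_0+\gamma_n(x-y)+O(c_n^2)$ \emph{uniformly} on $\operatorname{supp}\varphi$ is false: the $g$-contribution to $C_n(x,y)$ equals $g(x,y)+O(c_n^2)$, and $g(x,y)-g_0$ is only $O(|x-y|^2)$ (this uses the vanishing of $\nabla_x g(x,y)|_{x=y}$, which in turn relies on $C_n(x,x)$ being constant and the covariance inequality, exactly as the paper argues), not $O(c_n^2)$. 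For $|x-y|$ of order $1$ this term is $O(1)$, not $O(c_n^2)$. The paper handles this by \emph{not} attempting such a uniform decomposition: it proves the inequality $C_n(x,y)\le C_n(x,x)-\delta(|x-y|/c_n)^2$ only on the region $|x-y|\le Ac_n$ (where the $g$-error really is $O(c_n^2)$), and separately uses the crude logarithmic bound $C_n(x,y)\le\log(1/|x-y|)+M$ on $|x-y|>e^{2M}c_n$. If you repair the decomposition along these lines and track the $M$-dependent constants as the paper does (this is where the algebraic identity $\ell k M+\tfrac{\ell^2+k^2}{2}M-2\ell k M=\tfrac{M}{2}(\ell-k)^2$ enters), your calculation of $J(k,\ell)\lesssim c_n^2/(k\ell)$ and the subsequent trade of an exponent factor to convert $(k\ell)^{-1}$ into $(|k|\vee|\ell|)^{-2}$ go through. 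Finally, a small point on emphasis: you describe the case $k\ell<0$ as ``the main obstacle,'' whereas in fact it is the easier case -- the term $\ell k\beta^2 C_n(x,y)$ is then negative on the bulk of $K\times K$, and the large prefactor $c_n^{\frac{\beta^2}{2}((k-\ell)^2-2)}$ does the rest of the work with lots of room to spare (which is why the $\frac{\beta^2}{8}$ weakening comfortably absorbs everything). Your algebraic observation that $(k-\ell)^2\ge 4|k\ell|$ when $k\ell<0$ is correct and is the right slack to exploit.
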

\begin{proof}
(i) \quad We may assume that $\|\varphi\|_{L^\infty}=1$ and denote $K:=\textrm{supp}(\varphi)\subset U$, so that $K$ is compact. A direct computation yields the upper bound
$$
\E|Y_n|^2\leq I_n:=\|\varphi\|_{L^\infty}^2\int_{K\times K}\exp\Big( \beta^2k^2C_n(x,y)-\frac{1}{2}\beta^2(k^2-1)(C_n(x,x)+C_n(y,y)\Big)dxdy.
$$
In the range $k^2\beta^2 <d$  the term  $\exp(\beta^2k^2C_n(x,y))$ is uniformly integrable in $n$, As $|C_n(x,x)-\log (1/c_n)|\lesssim 1$ for all $x$ we infer that $I_n\lesssim c_n^{(k^2-1)\beta^2}.
$ In the case $k^2\beta^2=d$ we obtain a similar bound where one just adds the extra factor 
\begin{eqnarray*}
  \int_{U\times U}e^{dC_n(x,y)}dxdy \sim \int_{|x-y|\leq c_n}c_n^{-d}+\int_{|x-y|\geq c_n}|x-y|^{-d}\sim \log(1/ c_n).
\end{eqnarray*}
In the generic situation $k^2\beta^2>d$ we observe first that due to the covariance inequality
  \begin{equation}\label{eq:cov_inequality}
    C_n(x,y)\leq \frac{1}{2}(C_n(x,x)+C_n(y,y)),
  \end{equation}
  the integrand in $I_n$ is upper bounded by $\exp (\beta^2 C_n(x,y)).$ We use this estimate in the part of the product domain where $|x-y|\leq e^{2M}c_n$ and note that for the remaining values $|x-y|> e^{2M}c_n$, where $M$ is  from \eqref{eq:a},  we have 
\begin{eqnarray*}
 &&\beta^2k^2C_n(x,y)-\frac{1}{2}\beta^2(k^2-1)(C_n(x,x)+C_n(y,y))\\
&\leq& \beta^2k^2(\log(|x-y|^{-1}) + M - \beta^2(k^2-1)(\log (1/c_n)-M)\\
&\leq& \beta^2k^2\log\big(\big|(x-y)e^{-2M}\big|^{-1}\big)- \beta^2(k^2-1)\log (1/c_n).
\end{eqnarray*}
Thus
\begin{eqnarray*}
I_n&\lesssim& \int_{|x-y|\leq e^{2M}c_n}|x-y|^{-\beta^2}dxdy+ 
c_n^{\beta^2(-1+k^2)}\int_{|x-y|>e^{2M}c_n}|(x-y)e^{-2M}|^{-k^2\beta^2}dxdy\\
&\lesssim& c_n^{-\beta^2+d},
\end{eqnarray*}
where in the latter integral one  performs a change of variables $(x,y)=(e^{2M}x',e^{2M}y')$.
The claim follows by combining our estimates for different values of $k$.

\smallskip

(ii)\quad We use the same notation as in the proof of part (i). Consider first the case where $\ell$ and $k$ have the same sign, so that we may assume $k,\ell >0.$ We claim first that given any constant $A>0$, for points $x,y\in K$  it holds with a constant $\delta= \delta (K,A,g)>0$ and large enough $n\geq n_0(K,A,g)$ that
\begin{equation}\label{eq:apu1a}
C_n(x,y)\leq  C_n(x,x)-\delta \big(|x-y|/c_n)^2\quad \textrm{if}\quad |x-y|\leq Ac_n.
\end{equation}
  This auxiliary result will be used later on in the proof.
In order to verify \eqref{eq:apu1a}, we fix $y_0\in K$  and note that 
\begin{eqnarray*}
C_n(x,y_0)&=& \big(\widetilde \eta_{c_n}*\log(|\cdot|^{-1})\big)(x-y_0)+ \big((\eta_{c_n}\otimes \eta_{c_n})*g)(x,y_0)\\
&=:& V_n(x)+ W_n(x)
\end{eqnarray*}
  where $\widetilde \eta :=\eta*\eta.$ Since $C_n(x,x)$ is independent of $x$, it follows from the covariance inequality \eqref{eq:cov_inequality} that $V_n(x) + W_n(x)$ has a maximum at $x = y_0$ and we have $\nabla (V_n+W_n)(y_0)=0$.
By symmetry considerations $\nabla V_n(y_0)=0$, whence also $\nabla W_n(y_0)=0.$ Since $D^2 W_n$ is  bounded in any compact subdomain  of $U$, uniformly in $n$, we may easily infer  the uniform bound
$$
W_n(x)-W_n(y_0)\leq C|x-y_0|^2,
$$
valid uniformly for   $(x,y_0)\in K\times K$, and $C=C(K).$
  On the other hand, the function $V_0:= \big(\widetilde \eta_1* \log(|\cdot|^{-1})\big)(x-y_0)$, defined for all $x\in\R^2$,   obtains its unique maximum at the point $y_0$ by (an integral version of) the Hardy--Littlewood rearrangement inequality, and as the  logarithm yields the fundamental solution of the Laplacian in the plane, we have $\Delta V_0(y_0)=-2\pi \widetilde \eta_1(0)<0.$ As $V_0$ is radial  with respect to $y_0$ it follows easily that for any given $A\geq 1$  there is  $\delta =\delta (A) >0$ such that $V_0(x)-V_0(y_0)\leq -2\delta |x-y_0|^2$ 
for $|x-y_0|\leq A$. Then the scaling properties of the logarithm yield that
$$
V_n(x)-V_n(y_0)\leq -2\delta (|x-y_0|/c_n)^2\quad \textrm{for}\quad |x-y_0|\leq Ac_n.
$$
By combining this with our previous estimate for $W_n(x)-W_n(y_0)$ the inequality \eqref{eq:apu1a}
follows for large enough $n.$

We now move to actually estimating $\E|Y_k\overline{Y_\ell}|$. We recall that $K\subset U$ is the topological support of $\varphi$ and compute 
\begin{align*}
\left| \E Y_k \overline {Y_\ell}\right| &= \left|\int_{K\times K}\varphi(x)\overline{\varphi(y)}\exp\bigg[
 \ell k\beta^2C_n(x,y)- \frac{\beta^2}{2}\big( (\ell^2-1)C_n(x,x)+(k^2-1) C_n(y,y)\big)
 \bigg]dxdy\right|\\
 &\lesssim  c_n^{-\beta^2}\int_{K\times K}\exp\bigg[
 \ell k\beta^2C_n(x,y)- \frac{\beta^2}{2}\big( \ell^2C_n(x,x)+k^2 C_n(y,y)\big)
 \bigg]dxdy\\
 &= c_n^{-\beta^2}\int_{\{|x-y|\leq e^{2M}c_n\}\cap K\times K}e^{
 \ell k\beta^2C_n(x,y)- \frac{\beta^2}{2}\big( \ell^2C_n(x,x)+k^2 C_n(y,y)\big)
 }dxdy \\
 &\qquad + c_n^{-\beta^2}\int_{\{|x-y|> e^{2M}c_n\}\cap K\times K}e^{
 \ell k\beta^2C_n(x,y)- \frac{\beta^2}{2}\big( \ell^2C_n(x,x)+k^2 C_n(y,y)\big)
 }dxdy \\
 &=: I^1_n+I^2_n.
\end{align*}

In the set $\{|x-y|> e^{2M}c_n\}\cap K\times K$ we may estimate 
\begin{eqnarray*}
 &&\ell k\beta^2C_n(x,y)- \frac{\beta^2}{2}\big( \ell^2C_n(x,x)+k^2 C_n(y,y)\big)\\
 &\leq& \ell k\beta^2(\log(|x-y|^{-1})+M)- \frac{\beta^2}{2}( \ell^2+k^2 )(\log (1/c_n)-M)\\
 &\leq& \ell k\beta^2\big(\log\big (|e^{2M}|x-y|^{-1}|\big)-M\big)- \frac{\beta^2}{2}( \ell^2+k^2 )(\log (1/c_n)-M)\\
 &\leq& \ell k\beta^2\log\big (|e^{2M}|x-y|^{-1}|\big)- \frac{\beta^2}{2}( \ell^2+k^2 )\log (1/c_n) +\frac{M\beta^2}{2}(\ell-k)^2.
\end{eqnarray*}
We denote $M':= e^{M}$ and perform the change of variables   $u=(x-y)/(M')^2$,  $v=(x+y)/(M')^2$. After integration first with respect to the variable $v$ it follows that
\begin{eqnarray*}
 I^2_n&\lesssim&c_n^{-\beta^2+\frac{\beta^2}{2}(\ell^2+k^2)} M'^{\beta^2(\ell-k)^2/2} 
\int_{|u|\geq c_n}|u|^{-k\ell\beta^2}du \\
&\lesssim&\frac{ (c_nM')^{-\beta^2+2+\frac{\beta^2}{2}(\ell-k)^2}}{k\ell\beta^2 - 2}\lesssim
\frac{ (c_nM')^{-\beta^2+2+\frac{\beta^2}{2}(\ell-k)^2}}{\ell k}\lesssim \frac{ (c_nM')^{-\beta^2+2+\frac{\beta^2}{4}(\ell-k)^2}}{(\ell \vee k)^2},
\end{eqnarray*}
where in the second last inequality we used the fact that $\frac{1}{4}k\ell \beta^2 \geq 2$, which follows from our assumption that $|k|,|\ell|\geq 4/\beta$. In turn,  the last inequality follows by noting that we may assume $\ell >k$, and by considering separately the cases $\ell\geq 2k$ and $2k>\ell$. Naturally, we need to assume that $n$ is large enough so that, say,  $c_n^{-1} >2M'.$

Next, for $I^1_n$ we have $|x-y|\leq e^{2M}c_n$. Using \eqref{eq:apu1a} yields that
\begin{eqnarray*}
 &&\ell k\beta^2C_n(x,y)- \frac{\beta^2}{2}\big( \ell^2C_n(x,x)+k^2 C_n(y,y)\big)\\
  &\leq& \ell k\beta^2(C_n(x,x)- \delta(|x-y|/c_n)^2) - \frac{\beta^2}{2}\big( \ell^2 + k^2 \big)C_n(x,x)\\
 &\leq& (\ell -k)^2\frac{1}{2}\beta^2(\log c_n+M)-\big((\delta \ell k)^{1/2}\beta |x-y|/c_n\big )^2.
\end{eqnarray*}
We thus obtain
\begin{eqnarray*}
  I^1_n&\lesssim&c_n^{-\beta^2} (c_nM')^{\frac{\beta^2}{2}(\ell -k)^2}\int_{K} \int_{\reals^2} e^{-((\delta \ell k)^{1/2} \beta |x-y|/c_n)^2} \,dx\,dy \lesssim
\frac{(c_nM')^{-\beta^2+2+\frac{\beta^2}{2}(\ell -k)^2}}{\ell k},
\end{eqnarray*}
and this is transformed to the desired form as before.

Finally, the case where $k$ and $\ell$ have different sign is much easier since then the term $\ell k\beta^2C_n(x,y)$ has negative sign and works to our favour.
\end{proof}

We are now in a position to prove our universality result.

\begin{theorem}\label{th:kosini} {\bf (i)}\quad
  Let $(X_n)_{n \ge 1}$ be a convolution approximation of a log-correlated field $X$ as in Lemma \ref{le:a} and let $0<\beta<\sqrt{d}$. Assume that $H:\R\to\R$ is a $2\pi/\beta$-periodic even function with absolutely convergent Fourier series and mean zero. Then there is a constant $a$ such that for every test function $\varphi\in C^\infty_c(U)$ we have
$$
  \int_U \varphi (x)e^{\frac{1}{2}\beta^2C_n(x,x)}H(X_n(x))dx\to \langle a\textrm{``}\cos (\beta X)\textrm{''}, \varphi\rangle ,
$$
in probability as $n\to\infty.$ 

\smallskip

 \noindent {\bf (ii)}\quad If $d=2$ and $X_n,X$ satisfy the condition of part $\mathrm{(ii)}$ of the previous lemma, we have the same conclusion as in part $\mathrm{(i)}$ of this theorem, but assuming only that $H$ is a locally integrable  $2\pi/\beta$-periodic even function with mean zero.
\end{theorem}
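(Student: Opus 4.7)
The plan is to Fourier-expand $H$ and reduce the problem to controlling each Fourier mode separately, using Lemma~\ref{le:a} to show that all modes other than $k=\pm 1$ give a vanishing contribution. Writing $H(t)=\sum_{k\neq 0} H_k e^{ik\beta t}$ with $H_k=H_{-k}\in\R$ (by realness and evenness), one has
\[
\int_U \varphi(x) e^{\frac{1}{2}\beta^2 C_n(x,x)} H(X_n(x))\,dx = \sum_{k\neq 0} H_k Y_k^{(n)},
\]
with $Y_k^{(n)}$ as in Lemma~\ref{le:a}. The $k=\pm 1$ terms combine to
\[
H_1\bigl(Y_1^{(n)}+Y_{-1}^{(n)}\bigr) = 2H_1\int_U \varphi(x) e^{\frac{1}{2}\beta^2 C_n(x,x)}\cos(\beta X_n(x))\,dx,
\]
which by Definition~\ref{def:kosini} converges in probability to $2H_1\langle \textrm{``}\cos(\beta X)\textrm{''},\varphi\rangle$. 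The theorem therefore holds with $a=2H_1$ provided the tail $\sum_{|k|\geq 2}H_k Y_k^{(n)}$ is shown to vanish in $L^2(\P)$.

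For part~(i), the bound $\E|Y_k^{(n)}|^2\lesssim c_n^{\alpha}$ from Lemma~\ref{le:a}(i), uniform in $|k|\geq 2$ with $\alpha=\min(3\beta^2,d-\beta^2)>0$ (with a harmless logarithm at $\beta=\tfrac{1}{2}\sqrt{d}$), combined with Cauchy--Schwarz and the absolute convergence $\sum |H_k|<\infty$, gives
\[
\E\Bigl|\sum_{|k|\geq 2} H_k Y_k^{(n)}\Bigr|^2 \le \Bigl(\sum_{|k|\geq 2}|H_k|\,\bigl(\E|Y_k^{(n)}|^2\bigr)^{1/2}\Bigr)^2 \lesssim c_n^{\alpha}\Bigl(\sum_{k}|H_k|\Bigr)^2 \to 0.
\]
This yields $L^2$-convergence, hence convergence in probability. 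The termwise interchange of the sum and the integral is valid because $H$ is then continuous and its Fourier series converges uniformly.

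For part~(ii), the Fourier coefficients of a locally integrable $H$ are merely bounded, $|H_k|\le\tfrac{\beta}{2\pi}\|H\|_{L^1(0,2\pi/\beta)}$, and the naive Cauchy--Schwarz estimate fails. I would split the higher modes into finitely many low modes $2\le |k|<N:=\lceil 4/\beta\rceil$, handled by Lemma~\ref{le:a}(i) as in part~(i), and high modes $|k|\ge N$, for which Lemma~\ref{le:a}(ii) yields
\[
\E\Bigl|\sum_{|k|\ge N} H_k Y_k^{(n)}\Bigr|^2 \lesssim \|H\|_{L^1}^2\, c_n^{2-\beta^2}\sum_{|k|,|\ell|\ge N}\frac{c_n^{\beta^2(\ell-k)^2/8}}{(|k|\vee|\ell|)^2}.
\]
The key technical step is to verify this double sum is bounded uniformly in $n$: reindexing $m:=\ell-k$, the Gaussian-type factor $c_n^{\beta^2 m^2/8}$ suppresses off-diagonal contributions, and the elementary estimate $\sum_{k}\max(|k|,|k+m|)^{-2}\lesssim(|m|\vee 1)^{-1}$ controls the $k$-sum, yielding an $O(1)$ total. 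The remaining prefactor $c_n^{2-\beta^2}\to 0$ (which requires $d=2$ and $\beta<\sqrt{2}$) then closes the estimate.

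The mildly subtle point---and in my view the principal obstacle---is justifying the termwise identity $\int_U \varphi\, e^{\frac{1}{2}\beta^2 C_n} H(X_n)\,dx = \sum_k H_k Y_k^{(n)}$ when $H$ is only in $L^1$. I would handle this by approximating $H$ by its Fej\'er partial sums $H^{(M)}$, for which the identity is trivially finite; the uniform-in-$n$ second-moment bound derived above is itself uniform in $M$ and allows one to pass to the limit $M\to\infty$ (using that $H^{(M)}(X_n)\to H(X_n)$ in $L^1(\P)$, since $X_n(x)$ has bounded Gaussian density on the circle $\R/(2\pi\beta^{-1})\Z$). What makes the $L^1$ case go through is exactly the sharp off-diagonal decay $c_n^{\beta^2(\ell-k)^2/8}$ of Lemma~\ref{le:a}(ii), whose proof in turn requires the additional regularity $g\in C^2$, the constancy $g(x,x)=g(y,y)$, and the radial-monotonicity assumption on the mollifier~$\eta$.
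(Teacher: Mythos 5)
Your proposal is correct and follows essentially the same route as the paper: Fourier-expand $H$, observe that the $k=\pm1$ modes produce $a\,``\cos(\beta X)\textrm{''}$ with $a$ equal to the first cosine Fourier coefficient, and show the tail $\sum_{|k|\ge 2}$ vanishes in $L^2(\mathbb P)$ via Lemma~\ref{le:a}(i) plus Cauchy--Schwarz for part (i), and via Lemma~\ref{le:a}(ii) after reducing to trigonometric polynomials through Fej\'er means for part (ii). Your extra details (the reindexing $m=\ell-k$ and the Gaussian off-diagonal suppression making the double sum $O(1)$ uniformly in $n$, and the justification of the termwise identity via the bounded Gaussian density of $X_n(x)$) merely flesh out steps the paper states more tersely, and your cutoff $N=\lceil 4/\beta\rceil$ is actually the one mandated by Lemma~\ref{le:a}(ii).
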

\begin{proof} (i)\quad Let $H(x)=\sum_{k=1}^\infty \widehat H_k\cos(\beta kx)$. By Theorem \ref{th:existuniq} it is enough to check that for a test function $\varphi$ the quantity
$$
R:= \sum_{|k|\geq 2} \widehat H_k\int_U e^{\frac{1}{2}\beta^2C_n(x,x)}\varphi(x)\big( e^{i k\beta X_n(x)}+ e^{-i k\beta X_n(x)})dx
$$
converges to zero in probability. Since $\sum_{|k|\geq 2} |\widehat H_k|<\infty$ by assumption, this follows from  Lemma
\ref{le:a}(i) combined with two basic Cauchy--Schwarz estimates as one then finds that  $\E R^2\to 0$ as $n\to\infty.$

\smallskip

(ii)\quad We aim to show that again $\E R'^2\to 0$ as $n\to\infty,$ where we now define
$$
R':= \sum_{|k|\geq k_0}\widehat H_k\int_U e^{\frac{1}{2}\beta^2C_n(x,x)}\varphi(x)\big( e^{i k\beta X_n(x)}+ e^{-i k\beta X_n(x)})dx,
$$
  where $k_0> 2\sqrt{d}/\beta$. The finite number of terms with $2\leq |k|<k_0$ can be handled as in case (i). Since e.g. Fej\'{e}r partial sums of the Fourier series converge to $H$ almost everywhere pointwise, Fatou's lemma allows us to assume that $H$ is a trigonometric polynomial and it is enough to prove a uniform bound for  $\E R^2$ over all trigonometric polynomials $H$ such that the modulus  of all of their Fourier coefficients is bounded by 1. However, by Lemma
\ref{le:a}(ii) we obtain in this situation
\begin{eqnarray*}
  \E R'^2 &\leq& \sum_{|k|,|\ell|\geq k_0} |\widehat H(k)\overline{\widehat H(\ell)} \E Y_k \overline {Y_\ell}| \lesssim c_n^{-\beta^2+2} \sum_{|k|,|\ell |\geq k_0}\frac{(c_n e^{2M})^{\frac{\beta^2}{4}(\ell-k)^2}}{(|\ell| \vee |k|)^{2}}\\
&\lesssim& c_n^{-\beta^2+2}\to 0\quad \textrm{as}\quad n\to\infty. 
\end{eqnarray*}
\end{proof}

\begin{remark}\label{re:generalH} The second part of the result applies to e.g.  $*$-scale invariant log-correlated fields since they typically have translation invariant covariance structure. The same proof of course yields that if $H$ is any complex valued $2\pi/\beta$-periodic function with zero mean and absolutely convergent Fourier series, the limit is a linear combination of the imaginary chaoses $\textrm{``}e^{\pm i\beta X}\textrm{''}$.  
\end{remark}

This concludes our study of universality and now we discuss the behavior of $\mu$ near $\beta_c$.

\subsection{Approach to the critical point}\label{subsec:critical} As we have mentioned before and as follows from results in \cite{LRV}, $e^{\frac{\beta^2}{2}\E [X_n(x)^2]+i\beta X_n(x)}$ does not converge for $\beta\geq \sqrt{d}$, at least if one assumes a bit more of $g$ and the approximation $X_n$. Nevertheless, if one multiplies this quantity by a suitable deterministic one, then one can prove convergence to white noise. In this section, we study how this fact that $\beta_c:=\sqrt{d}$ is a special point can be seen from the limiting objects $\mu$. In what follows, we find it convenient to write $\mu_\beta$ to indicate the dependence on $\beta$ and hope this notation causes no confusion. The main result of this section is the following which describes how $\mu_\beta$ blows up as $\beta$ increases to $\sqrt{d}$. The theorem complements in a natural manner some results in \cite{LRV}, and the methods used in the proof are somewhat similar to the ones already employed in that paper.

\begin{theorem}\label{thm:crit} Let  $X$ be a log correlated field on the bounded subdomain $U\subset\R^d$ satisfying the standard assumptions \eqref{eq:assumptions} as before. Fix any test function $f\in C_c^\infty (U).$
  As $\beta \nearrow\sqrt{d}$, we have 
  $$
\sqrt{\frac{d - \beta^2}{|S^{d-1}|}} \mu_\beta(f) \to \int_U f(x)e^{\frac{\beta^2}{2} g(x,x)} W(dx)
  $$
   in law, where $W$ is the standard complex white noise on $U$,\footnote{Our notation here is slightly formal; $Z_h:=\int h(x)W(dx)$ denotes a centered complex Gaussian random variable satisfying $\E Z_h^2=0$ and $\E|Z_h|^2=\int_U |h(x)|^2dx$.} and $|S^{d-1}|$ denotes the ``area'' of the unit sphere of $\R^{d}$.
\end{theorem}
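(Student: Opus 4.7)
The strategy is the method of moments. By Corollary~\ref{co:detemination}, for every $\beta\in(0,\sqrt d)$ the joint moments $\E[\mu_\beta(f)^k\overline{\mu_\beta(f)}^l]$ exist and grow slowly enough to determine the law of $\mu_\beta(f)$, and the complex Gaussian target on the right-hand side is likewise moment-determined. Writing $\nu_\beta:=\sqrt{(d-\beta^2)/|S^{d-1}|}\,\mu_\beta(f)$, it therefore suffices to show that, for every $k,l\ge 0$,
\[
\E[\nu_\beta^k\,\overline{\nu_\beta}^l]\;\longrightarrow\;\mathbf{1}_{k=l}\cdot k!\,\Big(\int_U|f(x)|^2 e^{d\,g(x,x)}\,dx\Big)^k\quad\text{as}\quad\beta\nearrow\sqrt d,
\]
together with a uniform $L^2$-bound providing tightness, which will be a byproduct of the $N=1$ computation below.

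The essential computational input is the Coulomb-gas formula of Corollary~\ref{cor:integrability} (extended to mixed moments by the same argument), which gives
\[
\E[\mu_\beta(f)^k\overline{\mu_\beta(f)}^l]=\int_{U^{k+l}}\prod_i f(x_i)\prod_j\overline{f(y_j)}\,\frac{\prod_{i<i'}|x_i-x_{i'}|^{\beta^2}\prod_{j<j'}|y_j-y_{j'}|^{\beta^2}}{\prod_{i,j}|x_i-y_j|^{\beta^2}}\,G_\beta(x,y)\,dx\,dy,
\]
where $G_\beta(x,y):=\exp\bigl[\beta^2\bigl(\sum_{i,j}g(x_i,y_j)-\sum_{i<i'}g(x_i,x_{i'})-\sum_{j<j'}g(y_j,y_{j'})\bigr)\bigr]$ is continuous and bounded on $\mathrm{supp}(f)^{k+l}$. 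The case $N=1$ is instructive: substituting $y=x+h$ and using dominated convergence yields $\E|\mu_\beta(f)|^2\sim\tfrac{|S^{d-1}|}{d-\beta^2}\int_U|f|^2 e^{d\,g(x,x)}\,dx$, which handles the second moment and provides the uniform $L^2$-bound needed for tightness.

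For the general case $k=l=N$, the plan is to identify $N!$ dominant contributions indexed by pairings $\sigma\in S_N$: configurations in which each $y_{\sigma(i)}$ is close to $x_i$. For each fixed $\sigma$, perform the change of variables $y_{\sigma(i)}=x_i+h_i$ with $|h_i|$ small. A key cancellation makes the argument work: after substitution the numerator $\prod_{i<i'}|x_i-x_{i'}|^{\beta^2}\prod_{j<j'}|y_j-y_{j'}|^{\beta^2}\approx\bigl(\prod_{i<i'}|x_i-x_{i'}|^{\beta^2}\bigr)^{2}$ cancels precisely against the off-diagonal denominator factors $\prod_{i\neq\sigma^{-1}(j)}|x_i-y_j|^{\beta^2}\approx\bigl(\prod_{i<i'}|x_i-x_{i'}|^{\beta^2}\bigr)^{2}$, leaving only the diagonal singular factors $\prod_i|h_i|^{-\beta^2}$; each of these contributes $\int_{|h|<\delta}|h|^{-\beta^2}\,dh=\tfrac{|S^{d-1}|}{d-\beta^2}\delta^{d-\beta^2}$. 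A short symmetry computation also shows that $G_\beta\to\prod_i e^{\beta^2 g(x_i,x_i)}$, and combined with $f(x_i)\overline{f(y_{\sigma(i)})}\to|f(x_i)|^2$, summation over $\sigma$ produces the stated $N!$-factor. For the mixed case $k\neq l$, the same analysis shows that at most $\min(k,l)$ pairs can form, giving growth of order $(d-\beta^2)^{-\min(k,l)}$, which is beaten by the normalization $(d-\beta^2)^{(k+l)/2}$.

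The main technical obstacle is to rigorously separate the dominant pair contributions from the remainder. I would introduce a mesoscopic scale $\delta_\beta\to 0$ with $\delta_\beta^{d-\beta^2}\to 1$, partition the domain according to which (if any) $x_i$ lies within distance $\delta_\beta$ of each $y_j$, and apply dominated convergence on the ``fully paired'' pieces using the fact that on such pieces the effective integrand is uniformly bounded after extracting the singular factors. The ``partially paired'' and ``unpaired'' pieces are controlled using the Onsager-type inequalities of Proposition~\ref{prop:electrostatic_inequality} and Theorem~\ref{th:onsager_general}, which provide pointwise bounds by nearest-neighbour quantities integrating to strictly smaller orders in $1/(d-\beta^2)$. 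A slightly delicate subcase arises when two or more $y$'s attempt to pair with the same $x$, but the numerator factor $\prod_{j<j'}|y_j-y_{j'}|^{\beta^2}$ then produces the compensating smallness.
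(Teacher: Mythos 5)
Your overall plan matches the paper's proof: method of moments justified by Corollary~\ref{co:detemination}, explicit computation of $\E|\mu_\beta(f)|^2$ via concentrating the mass near the diagonal, decay of mixed moments with $k\neq l$, and decomposition of $\E|\mu_\beta(f)|^{2N}$ according to which permutation $\sigma$ gives the near-diagonal pairing $y_{\sigma(i)}\approx x_i$, with the $N!$ in the Gaussian moment coming from summing over $\sigma$. The paper introduces exactly the sets
\[
A_\sigma=\{|x_i-y_{\sigma_i}|<(d-\beta^2)^{1/(2d)}\ \forall i\}\cap\{|x_i-x_j|>(d-\beta^2)^{1/(3d)}\ \forall i<j\},
\]
which is the two-scale partition you describe, and the cancellation $|x_j-x_k|^{\beta^2}/|x_j-y_k|^{\beta^2}\to 1$ on $A_e$ is proved exactly as you sketch.

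Where your proposal deviates is in how you propose to control the remainder (and the mixed $k\neq l$ moments). You invoke the Onsager-type inequalities (Proposition~\ref{prop:electrostatic_inequality}, Theorem~\ref{th:onsager_general}), but this is not the right tool here, for two reasons. First, Theorem~\ref{thm:crit} assumes only the standing hypotheses \eqref{eq:assumptions}, whereas Proposition~\ref{prop:electrostatic_inequality}(i) needs $g\in C^2$ and Theorem~\ref{th:onsager_general} needs $g\in H^{d+\varepsilon}_{loc}$; you would silently be strengthening the hypotheses of the theorem. Second, and more importantly, the Onsager bounds give a pointwise majorant $\prod_j(\min_{k\neq j}|z_j-z_k|)^{-\beta^2/2}$ that does \emph{not} factor over disjoint groups of variables, so it does not allow the Fubini-style ``integrate out the $y$'s, then bound the $x$-volume'' argument you need on the partially-paired and unpaired regions. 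The paper instead uses Lemma~\ref{lemma:moment_inequality}, a Gale--Shapley matching bound which majorizes the Coulomb-gas integrand by a \emph{finite sum over injective matchings} of products $\prod_j|x_{f(j)}-y_j|^{-\beta^2}$. That factorized form is precisely what makes the remainder estimates one-line Fubini computations: on the set where some two $x$'s are within $(d-\beta^2)^{1/(3d)}$ you integrate out the $y$'s to get $\lesssim(d-\beta^2)^{-a}$ and then pick up a shrinking $x$-volume $\lesssim(d-\beta^2)^{1/3}$, and on the set where $x_a$ is far from all $y$'s one $y$-integral loses its full $(d-\beta^2)^{-1}$ factor. The same Lemma~\ref{lemma:moment_inequality} handles the mixed moments $a\neq b$, where it immediately yields $|\E\mu_\beta(f)^a\overline{\mu_\beta(f)}^b|\lesssim(\E|\mu_\beta(f)|^2)^{\min(a,b)}$ (recorded as Lemma~\ref{lemma:moments}). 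You should replace your appeal to Onsager with Lemma~\ref{lemma:moment_inequality}; once you do, your sketch fills in to the paper's proof.
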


\begin{proof}
  As we are dealing with Gaussian random variables, it is enough to show that the moments converge and we start by computing the second absolute one; we will implicitly be using constantly the results from Section \ref{subsec:moments} which allow us to write all the moments as suitable integrals. We have
  \begin{align*}
    \frac{d - \beta^2}{|S^{d-1}|} \E |\mu_\beta(f)|^2 & = \frac{d - \beta^2}{|S^{d-1}|} \int_{|x-y| < (d - \beta^2)^{\frac{1}{2d}}} f(x)\overline{f(y)} \frac{e^{\beta^2 g(x,y)}}{|x-y|^{\beta^2}} \, dx \, dy \\
    & \quad + \frac{d - \beta^2}{|S^{d-1}|} \int_{|x-y| > (d - \beta^2)^{\frac{1}{2d}}} f(x)\overline{f(y)} \frac{e^{\beta^2 g(x,y)}}{|x-y|^{\beta^2}} \, dx \, dy.
  \end{align*}
  The trivial estimate $\frac{1}{|x-y|^{\beta^2}} \le \frac{1}{(d - \beta^2)^{\frac{\beta^2}{2d}}}$ shows that the second term goes to $0$ as $\beta \nearrow\sqrt{d}$.
  This and uniform continuity of our test function $f$ and the function $g$ on the support of $f$  easily  gives us
  \begin{align*}
    \lim_{\beta \nearrow \sqrt{d}} \frac{d - \beta^2}{|S^{d-1}|} \E |\mu_\beta(f)|^2 & = \lim_{\beta \nearrow \sqrt{d}} \frac{d - \beta^2}{|S^{d-1}|} \int_{|x-y| < (d - \beta^2)^{\frac{1}{2d}}} \frac{|f(x)|^2 e^{\beta^2 g(x,x)}}{|x-y|^{\beta^2}} \, dx \, dy \\
    & = \lim_{\beta \nearrow \sqrt{d}} \frac{d - \beta^2}{|S^{d-1}|} \int_U |f(x)|^2 e^{\beta^2 g(x,x)}\int_{y \in B(x,(d-\beta^2)^{\frac{1}{2d}})} |x-y|^{-\beta^2} \, dy \, dx \\
    & = \lim_{\beta \nearrow\sqrt{d}} \frac{d - \beta^2}{|S^{d-1}|} \int_U |f(x)|^2 e^{\beta^2 g(x,x)} |S^{d-1}| \int_0^{(d - \beta^2)^{\frac{1}{2d}}} r^{d-1-\beta^2} \, dr \, dx \\
    & = \lim_{\beta \nearrow\sqrt{d}} (d - \beta^2)^{\frac{d - \beta^2}{2d}} \int_U |f(x)|^2 e^{\beta^2 g(x,x)} \, dx \\
    & = \int_U |f(x)|^2 e^{\beta^2 g(x,x)} \, dx.
  \end{align*}
  Next note that for mixed moments  we have by Lemma~\ref{lemma:moments} and the above computation that 
  \begin{align*}
    \Big(\frac{d - \beta^2}{|S^{d-1}|}\Big)^{\frac{a+b}{2}} \left|\E \mu_\beta(f)^a \overline{\mu_\beta(f)}^b \right| & \le C_{a,b} \Big(\frac{d - \beta^2}{|S^{d-1}|}\Big)^{\frac{a+b}{2}} (\E |\mu_\beta(f)|^2)^{\min(a,b)} \\
    & \lesssim (d - \beta^2)^{\frac{a+b}{2} - \min(a,b)},
  \end{align*}
  where the right hand side tends to $0$ as $\beta \nearrow \sqrt{d}$. Thus it remains to check that the moments $\Big(\frac{d - \beta^2}{|S^{d-1}|}\Big)^a \E |\mu_\beta(f)|^{2a}$ behave correctly. We have
  \[\E |\mu_\beta(f)|^{2a} = \int_{U^{2a}} \Big(\prod_{j=1}^a dx_j dy_j f(x_j) \overline{f(y_j)}\Big) \frac{\prod_{1 \le j < k \le a} |x_j - x_k|^{\beta^2} |y_j - y_k|^{\beta^2}e^{-\beta^2 g(x_j, x_k) - \beta^2 g(y_j, y_k)}}{\prod_{1 \le j, k \le a} |x_j - y_k|^{\beta^2} e^{-\beta^2 g(x_j, y_k)}}.\]
  We may split the integration domain into the $a!$ disjoint sets $A_\sigma$, $\sigma \in S_a$, and the  complement of their union, where
  \begin{align*}
    A_\sigma & = \{|x_i - y_{\sigma_i}| < (d - \beta^2)^{\frac{1}{2d}} \text{ for all } 1 \le i \le a\} \\
             & \quad \cap \{|x_i - x_j| > (d - \beta^2)^{\frac{1}{3d}} \text{ for all } 1 \le i < j \le a\}.
  \end{align*}
  Consider the integral over $A_e$, where $e$ is the identity permutation. In $A_e$ we have for $j < k$ that
  \[\frac{|x_j - x_k|}{|x_j - y_k|} \ge \frac{|x_j - x_k|}{|x_j - x_k| + |x_k - y_k|} \ge \frac{1}{1 + \frac{(d - \beta^2)^{\frac{1}{2d}}}{(d - \beta^2)^{\frac{1}{3d}}}}\]
  and
  \[\frac{|x_j - x_k|}{|x_j - y_k|} \le \frac{|x_j - x_k|}{|x_j - x_k| - |x_k - y_k|} \le \frac{1}{1 - \frac{(d - \beta^2)^{\frac{1}{2d}}}{(d - \beta^2)^{\frac{1}{3d}}}}\]
  from which we deduce that in $A_e$ 
  \[\frac{|x_j - x_k|}{|x_j - y_k|} \to 1\]
  as $\beta \to \sqrt{d}$. Similar reasoning shows that
  \[\frac{|y_j - y_k|}{|x_k - y_j|} \to 1.\]
  Hence again by uniform continuity of $g$ and $f$
  \begin{align*}
    & \lim_{\beta \nearrow \sqrt{d}} \Big(\frac{d - \beta^2}{|S^{d-1}|}\Big)^{a} \int_{A_e} \Big(\prod_{j=1}^a dx_j dy_j f(x_j) \overline{f(y_j)}\Big) \frac{\prod_{1 \le j < k \le a} |x_j - x_k|^{\beta^2} |y_j - y_k|^{\beta^2} e^{-\beta^2 g(x_j, x_k) - \beta^2 g(y_j, y_k)}}{\prod_{1 \le j, k \le a} |x_j - y_k|^{\beta^2} e^{-\beta^2 g(x_j, y_k)}} \\
    & = \lim_{\beta  \nearrow \sqrt{d}} \Big(\frac{d - \beta^2}{|S^{d-1}|}\Big)^{a} \int_{A_e} \frac{\prod_{j=1}^a dx_j dy_j |f(x_j)|^2 e^{\beta^2 g(x_j, x_j)}}{\prod_{1 \le j \le a} |x_j - y_j|^{\beta^2}} \\
    & = \lim_{\beta  \nearrow \sqrt{d}} \Big(\frac{d - \beta^2}{|S^{d-1}|}\Big)^{a} \int_{|x_i - x_j| > (d - \beta^2)^{\frac{1}{3d}}} \Big(\prod_{j=1}^a dx_j |f(x_j)|^2 e^{\beta^2 g(x_j, x_j)}\Big) \prod_{j=1}^a \int_{|y_j - x_j| < (d - \beta^2)^{\frac{1}{2d}}} \frac{dy_j}{|x_j - y_j|^{\beta^2}} \\
    & = \lim_{\beta  \nearrow \sqrt{d}} \Big(\frac{d - \beta^2}{|S^{d-1}|}\Big)^{a} \int_{|x_i - x_j| > (d - \beta^2)^{\frac{1}{3d}}} \Big(\prod_{j=1}^a dx_j |f(x_j)|^2 e^{\beta^2 g(x_j, x_j)}\Big) |S^{d-1}|^a \Big(\int_0^{(d - \beta^2)^{\frac{1}{2d}}} r^{d-1-\beta^2} \, dr\Big)^a \\
    & = \left(\int_U |f(x)|^2 e^{\beta^2 g(x, x)} \, dx \right)^a.
  \end{align*}
  
By relabelling $y_i$, we see that the result does not depend on the permutation chosen, so we get the same outcome $a!$ times.  Thus the moments converge to Gaussian ones as soon as we  check that the contribution from the complement of the sets $A_\sigma$ goes to $0$. The complement is covered by the sets
  \[B_1 = \{|x_j - x_k| \le (d - \beta^2)^{\frac{1}{3d}} \text{ for some } 1 \le j < k \le a\}\]
  and
  \[B_{2,k} = \{|x_k - y_j| > (d - \beta^2)^{\frac{1}{2d}} \text{ for all } j \neq k\}.\]

  We have
 \begin{align*}
& \lim_{\beta \to \sqrt{d}} \Big(\frac{d - \beta^2}{|S^{d-1}|}\Big)^{a} \int_{B_1} \Big(\prod_{j=1}^a dx_j dy_j f(x_j) \overline{f(y_j)}\Big) \frac{\prod_{1 \le j < k \le a} |x_j - x_k|^{\beta^2} |y_j - y_k|^{\beta^2} e^{-\beta^2 g(x_j, x_k) - \beta^2 g(y_j, y_k)}}{\prod_{1 \le j, k \le a} |x_j - y_k|^{\beta^2} e^{-\beta^2 g(x_j, y_k)}} \\
  &= 0
  \end{align*}
  because we may use Lemma~\ref{lemma:moment_inequality} and Fubini's theorem to integrate out the variables $y_k$, leaving a term of size $\lesssim (d - \beta^2)^{-a}$ that cancels the factor in front. The remaining integral over the variables $x_k$ is over a domain whose measure goes to $0$. Finally, again using Lemma~\ref{lemma:moment_inequality} we have
  \begin{align*}
    & \Big(\frac{d - \beta^2}{|S^{d-1}|}\Big)^{a} \int_{B_{2,a}} \Big(\prod_{j=1}^a dx_j dy_j f(x_j) \overline{f(y_j)}\Big) \frac{\prod_{1 \le j < k \le a} |x_j - x_k|^{\beta^2} |y_j - y_k|^{\beta^2} e^{-\beta^2 g(x_j, x_k) - \beta^2 g(y_j, y_k)}}{\prod_{1 \le j, k \le a} |x_j - y_k|^{\beta^2} e^{-\beta^2 g(x_j, y_k)}} \\
    & \lesssim \|f\|_\infty^{2a} \sum_{\sigma \in S_a} \Big(\frac{d - \beta^2}{|S^{d-1}|}\Big)^{a} \int_{B_{2,a}} \Big(\prod_{j=1}^a dx_j dy_j \Big) \frac{1}{\prod_{1 \le j \le a} |x_j - y_{\sigma_j}|^{\beta^2}} \\
    & \lesssim \sum_{\sigma \in S_a} \Big(\frac{d - \beta^2}{|S^{d-1}|}\Big)^{a} (d - \beta^2)^{-\frac{\beta^2}{2d}} \int_{B_{2,a}} \Big(\prod_{j=1}^a dx_j dy_j \Big) \frac{1}{\prod_{1 \le j \le a-1} |x_j - y_{\sigma_j}|^{\beta^2}} \\
    & \lesssim (d - \beta^2)^{1 - \frac{\beta^2}{2d}},
  \end{align*}
  which goes to $0$. A similar calculation holds for $B_{2,k}$, $1 \le k \le a-1$.
\end{proof}

This concludes the portion of this article dealing with basic properties of imaginary chaos. We now turn to discussing the Ising model.

\section{The Ising model and multiplicative chaos: the scaling limit of the critical and near critical planar XOR-Ising spin field}\label{sec:ising}

The goal of this section is to  prove Theorem \ref{th:ising} and Theorem \ref{th:perturb}. We begin by first recalling the definition of the Ising model (with $+$ boundary conditions) on a finite part of the square lattice as well as recent results concerning the scaling limit of correlation functions of the spin field for the critical Ising model on a finite part of the square lattice. We then define the XOR-Ising model on the square lattice and using the results concerning the correlation functions (along with some rough estimates for the behavior of the correlation functions on the diagonals), we prove Theorem \ref{th:ising}, namely that in zero magnetic field, the scaling limit of the critical XOR-Ising spin field is the real part of an imaginary multiplicative chaos distribution. After this, we prove that if we add a magnetic field to the XOR-Ising model, then the scaling limit of the spin field can be seen as the cosine of the sine-Gordon field, which is Theorem~\ref{th:perturb}.

\subsection{The Ising model and spin correlation functions for the critical planar Ising model} \label{sec:Isingdef}

Let $U\subset \C$ be a simply connected bounded planar domain, and for $\delta>0$, let $F_\delta$ be the set of faces of the lattice graph $\delta \integers^2$ that are contained in $U$. To avoid overlap, let us say that the faces are half-open, i.e. of the form $\delta([n,n+1)\times[m,m+1))$ for some $m,n \in \integers$. Following \cite{CHI}, we will define our Ising model on the faces $F_\delta$. We also define the set of boundary faces $\partial F_\delta$ as the set of those faces in $\delta \integers^2$ which are adjacent to a face in $F_\delta$ but not in $F_\delta$ themselves.

We call a function $\sigma\colon F_\delta \cup \partial F_\delta \to \lbrace -1,1\rbrace$, $a\mapsto \sigma_a$ a spin configuration on $F_\delta \cup \partial F_\delta$ and we define the Ising model on $F_\delta$ with $+$ boundary conditions, inverse temperature $\beta$, and zero magnetic field to be a probability measure on the set of spin configurations on $F_\delta \cup \partial F_\delta$ such that the law of the spin configuration is

$$
\P_\delta(\sigma)=\P_{\delta,\beta,U}^+(\sigma)=\frac{1}{Z_\beta}e^{\beta \sum_{a,b\in F_\delta \cup \partial F_\delta, a\sim b}\sigma_a\sigma_b}\mathbf{1}\lbrace \sigma_{|\partial F_\delta}=1\rbrace,
$$

\noindent where by $a\sim b$ we mean that $a,b\in F_\delta \cup \partial F_\delta$ are neighboring faces, and $Z_\beta$ is a normalizing constant. We count each pair $a,b$ of nearest neighbor faces only once. We will want to talk about the spin at an arbitrary point $x \in U$, so we define a function $\sigma_\delta(x) = \sigma_f$ if $x \in f \in F_\delta$, and $\sigma_\delta(x) = 1$ otherwise. 

As discussed in the introduction, a fundamental fact about the planar Ising model with zero magnetic field is that the model has a phase transition. From now on, we will focus on the critical model, namely when $\beta=\beta_c=\frac{\log(1+\sqrt{2})}{2}$ -- see \cite[Section 7.12]{Baxter}. We will also write from now on $\P_\delta=\P_{\delta,\beta_c,U}^+$ for the law of the critical Ising model (on the faces $F_\delta \cup \partial F_\delta$ with the $+$ boundary conditions as indicated above) as well as the law of the induced spin field $\sigma_\delta \colon U \to \{-1,1\}$.

We next turn to the analysis of the correlation functions of $\sigma_\delta$, which as we discussed in Section \ref{subsec:isingresults} have a non-trivial scaling limit and are connected to conformal field theory. The precise statement concerning the scaling limit is a recent result of Chelkak, Hongler, and Izyurov (see \cite[Theorem 1.2]{CHI} and the discussion leading to it):

\begin{theorem}[Chelkak, Hongler, and Izyurov]\label{th:CHI}
  Let $x_1,...,x_n\in U$ be distinct and the spin field $\sigma_\delta$ be distributed according to $\P_\delta$ {\rm (}as defined above{\rm )}. Then for $\mathcal{C}=2^{5/48}e^{\frac{3}{2}\zeta'(-1)}$,

\begin{align*}
\lim_{\delta\to 0^+}\delta^{-\frac{n}{8}}\E \left[\prod_{j=1}^n \sigma_\delta(x_j)\right]&=\mathcal{C}^n \prod_{j=1}^n \left(\frac{|\varphi'(x_j)|}{2 \mathrm{Im}\varphi(x_j)}\right)^{1/8}\\
&\qquad \times\left(2^{-n/2}\sum_{\mu\in\lbrace -1,1\rbrace^n}\prod_{1\leq k<m\leq n}\left|\frac{\varphi(x_k)-\varphi(x_m)}{\varphi(x_k)-\overline{\varphi(x_m)}}\right|^{\frac{\mu_k\mu_m}{2}}\right)^{1/2},
\end{align*}

\noindent where $\varphi:U\to \mathbf{H}=\lbrace x+iy\in \C:y>0\rbrace$ is any conformal bijection and for any $\varepsilon>0$, the convergence is uniform in $\lbrace x_1,...,x_n\in \Omega: \min_{i\neq j}|x_i-x_j|>\varepsilon, \min_{i}d(x_i,\partial U)>\varepsilon\rbrace$.
\end{theorem}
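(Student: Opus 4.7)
The plan is to follow the strategy of Smirnov and collaborators based on discrete complex analysis and s-holomorphic observables. First I would introduce, for each configuration $x_1,\ldots,x_n$ of marked points, a family of discrete spinor observables $F_\delta^{[j]}(z;x_1,\ldots,x_n)$ on (a double cover of) $F_\delta$ whose evaluations at the $x_j$'s encode, up to the partition function, the $n$-point spin correlation $\E[\prod_j \sigma_\delta(x_j)]$. The key feature, going back to Smirnov, is that these observables are \emph{s-holomorphic} in the bulk, satisfy a Riemann-type boundary condition on $\partial F_\delta$ compatible with the $+$ boundary conditions, and exhibit a prescribed branching of order $-1/2$ at each marked point. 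The correlation itself is then recovered from the leading coefficients of the local expansion of $F_\delta$ at each $x_j$.

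Next I would establish convergence of the normalized observables to their continuum counterparts. Using the precompactness theory for discrete s-holomorphic functions together with identification of the limiting boundary-value problem, one shows that $\delta^{-1/2} F_\delta$ converges, uniformly on compact subsets away from the marked points, to the unique holomorphic spinor $F$ on $U$ that is holomorphic on $U\setminus\{x_1,\ldots,x_n\}$, has branching of order $-1/2$ at each $x_j$, and satisfies the Riemann-type boundary condition $F(z)^2 \in \R \cdot \tau(z)^{-2}$ on $\partial U$ (with $\tau$ the unit tangent). Pulling back to the upper half-plane through $\varphi$, this Riemann--Hilbert problem can be solved explicitly in closed form, and the sum over the $2^n$ choices of branches of the square roots produces precisely the sum $\sum_{\mu\in\{\pm 1\}^n}$ of products of conformal cross-ratios $\big|\tfrac{\varphi(x_k)-\varphi(x_m)}{\varphi(x_k)-\overline{\varphi(x_m)}}\big|^{\mu_k\mu_m/2}$ that appears in the statement.

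To pass from the convergence of observables to convergence of the correlation I would invoke a discrete Hadamard-type variational formula: the logarithmic derivative of $\E[\prod_j \sigma_\delta(x_j)]$ with respect to the position $x_j$ equals a subleading coefficient of the local expansion of $F_\delta$ at $x_j$. Taking the scaling limit yields a first-order system of PDEs for the normalized correlation, whose integration produces the conformal covariance factor $\prod_j (|\varphi'(x_j)|/(2\,\mathrm{Im}\,\varphi(x_j)))^{1/8}$; the exponent $1/8$ is twice the spin conformal weight $1/16$, reflecting the two chiral copies arising from the free fermion structure. The overall normalization $\delta^{-n/8}$ is then forced by the lattice-scale order of the branching singularities of $F_\delta$ near each $x_j$.

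The main obstacle, to my mind, is pinning down the universal constant $\mathcal C = 2^{5/48}e^{3\zeta'(-1)/2}$ and guaranteeing the uniformity of convergence on $\{\min_{i\neq j}|x_i-x_j|>\varepsilon,\ \min_i d(x_i,\partial U)>\varepsilon\}$. For the constant one needs a sharp comparison between the discrete observable and its continuum counterpart at the lattice scale, which ultimately reduces to exact asymptotics of fermionic determinants on $\delta\integers^2$ and to the classical Onsager--Yang spontaneous magnetization formula (the $\zeta'(-1)$ factor arising via Szeg\H{o}-type asymptotics, equivalently through the Glaisher--Kinkelin constant). Uniformity, in turn, follows once the convergence of observables is upgraded to be uniform in the positions of the marked points, via a priori regularity estimates for s-holomorphic functions and careful control of the discrete Green's function away from the diagonal.
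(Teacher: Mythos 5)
This theorem is not proved in the paper at all: it is imported as a black box from Chelkak--Hongler--Izyurov, with the entire treatment amounting to the sentence ``see \cite[Theorem 1.2]{CHI} and the discussion leading to it,'' followed by a remark translating CHI's normalization (they use a rotated lattice with mesh $\sqrt{2}\delta$) and correcting what the authors believe is a sign error in the exponent of $e^{\frac{3}{2}\zeta'(-1)}$ in CHI's statement of the constant $\mathcal{C}$. So there is no in-paper argument to compare yours against.

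That said, your sketch is a recognizable and essentially faithful high-level summary of the strategy actually used in \cite{CHI}: branching s-holomorphic spinor observables whose monodromy around the $x_j$ encodes the spin insertion, convergence of the rescaled discrete observables (via s-holomorphicity, precompactness, and identification of the limiting Riemann boundary value problem), a discrete Hadamard-type variational formula relating $\partial_{x_j}\log\E\prod\sigma_\delta(x_j)$ to a subleading expansion coefficient of the observable, integration of the resulting PDE system to obtain the conformal covariance prefactor, and pinning down $\mathcal C$ by matching against the known full-plane two-point asymptotics. Two caveats. First, as a proof this is a roadmap, not an argument: the technical heart of CHI --- uniform-in-position convergence of discrete s-holomorphic spinors including near the boundary, a priori Harnack/regularity estimates, careful treatment of the branching singularities at lattice scale, and the integration of the Hadamard system with the correct constant of integration --- is exactly what is hard, and none of it is supplied. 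Second, you should be explicit about the normalization issues that the paper itself flags: the constant $\mathcal C=2^{5/48}e^{\frac{3}{2}\zeta'(-1)}$ stated here differs from CHI both by the $\delta\mapsto\delta/\sqrt{2}$ lattice rescaling and by the corrected sign in the $\zeta'(-1)$ exponent; getting $\mathcal C$ right ultimately requires the exact diagonal two-point product formula for the full-plane Ising model (expressible via Barnes $G$-functions, hence $\zeta'(-1)$), not merely the qualitative Szeg\H{o}/Onsager--Yang input you cite. One minor notational slip: the Riemann-type boundary condition for the spinor observable should read $F(z)^2\in\R\cdot\tau(z)^{-1}$ (equivalently $F(z)\parallel\tau(z)^{-1/2}$), not $\R\cdot\tau(z)^{-2}$.
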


\begin{remark}{\rm
  We note that in \cite{CHI}, the authors consider actually the square lattice rotated by $\pi/4$ and with diagonal mesh $2\delta$ in which case the lattice spacing is $\sqrt{2}\delta$ instead of $\delta$ as in our case. Rotating the lattice plays a role only in the value of the constant $\mathcal{C}$. Our version follows by replacing their $\delta$ with $\delta/\sqrt{2}$.
  We also note that in \cite{CHI} there appears to be a sign error in the exponent of $e^{\frac{3}{2}\zeta'(-1)}$. We offer here a brief suggestion on how the interested reader might convince themselves of this fact. First of all, as pointed out in \cite[Remark 1.4]{CHI}, one can recover the (continuum) whole plane spin-correlation functions from the finite volume ones through a suitable limiting process. In particular, the scaling limit of the whole plane two-point function equals $\mathcal C^2|x-y|^{-1/4}$ (see \cite[(1.6)]{CHI}). On the other hand, it is known that on the whole plane $\integers^2$-lattice the \emph{diagonal} two point function has an explicit product representation -- see e.g. \cite[(XI.4.18)]{MCW}. This product can be written in terms of Barnes $G$ functions, and using their known asymptotics, one can recover the correct value of $\mathcal C$. We thank Antti Suominen for pointing this sign error out to us. \hfill$\blacksquare$
}
\end{remark}

\subsection{The critical XOR-Ising model and its magnetic perturbation}\label{sec:XOR} Following Wilson \cite{Wilson}, see also \cite{BdT}, we consider now the so called XOR-Ising model, which is again a probability measure on spin configurations, but now the spin configurations are given by a pointwise product of two independent Ising spin configurations. We focus on the critical case again and we thus make the following definitions: let $\sigma_\delta,\widetilde{\sigma}_\delta$ be independent and distributed according to $\P_\delta$ and define for $x\in U$, $\mathcal{S}_\delta(x)=\sigma_\delta(x)\widetilde{\sigma}_\delta(x)$. Also write for $a\in F_\delta$, $\mathcal{S}_a=\sigma_a\widetilde{\sigma}_a$. Let us write $\mathcal{P}_\delta$ for the law of $\mathcal{S}$ (both the spin configuration and spin field, and as for the normal Ising model, we don't care what space of functions $\mathcal{S}$ lives on). Perhaps slightly artificially, but as discussed in Section \ref{subsec:isingresults}, motivated by wanting to study scaling limits of near critical models of statistical mechanics, we also add a coupling to a (non-uniform) magnetic field to this law: for a function $\psi\in C_c^\infty(U)$, define
\begin{align*}
\mathcal{P}_{\psi,\delta}(\mathcal{S})&=\frac{1}{\mathcal{Z}_{\psi,\delta}}e^{\delta^{2-\frac{1}{4}}\sum_{a\in F_\delta \cup \partial F_\delta}(\delta^{-2}\int_a \psi(x)dx)\mathcal{S}_a}\mathcal{P}_\delta(\mathcal{S})\\
&=\frac{1}{\mathcal{Z}_{\psi,\delta}}e^{\delta^{-1/4}\int_{U} \psi(x)\mathcal{S}_\delta(x)dx}\mathcal{P}_\delta(\mathcal{S}),
\end{align*}
where $\mathcal Z_{\psi,\delta}$ is a normalizing constant. The reason to view this as a coupling to a magnetic field is that typically in spin models of statistical mechanics, the part of the energy of a spin configuration $(\sigma_a)$ coming from an interaction with a magnetic field $(h_a)$ is given by $-\sum_a h_a\sigma_a$, and in the Gibbs measure of the model in a non-zero magnetic field is obtained by biasing the zero-magnetic field Gibbs measure with a quantity $ \frac{1}{Z_{\beta,h}}e^{\beta \sum_a h_a\sigma_a}$, where $Z_{\beta,h}$ is a normalizing constant. In this picture, our model corresponds roughly to choosing $h_a=\delta^{2-\frac{1}{4}}\psi(a)$ (where $\psi(a)$ means the value at the center of the face, which is close to $\delta^{-2}\int_a \psi(x)dx$ due to the smoothness of $\psi$). Since $h_a\to 0$ as $\delta\to 0$, one sometimes calls this type of model near-critical in that it is close to the critical case of $h=0$.

\subsection{Convergence to multiplicative chaos} \label{sec:Isingconv}
 The goal of this subsection is to prove Theorem~\ref{th:ising}. The main point in
the proof is to obtain a $\delta$-independent integrable upper bound for the
$n$-point correlation function $\E \sigma_\delta(x_1) \dots
\sigma_\delta(x_n)$, which makes it possible to use the dominated convergence
theorem and Theorem~\ref{th:CHI} to find asymptotics of moments of $\delta^{-1/4}\int_U \mathcal S_\delta(x)f(x)dx$ and then using the method of moments, justified by Theorem \ref{th:moments}, conclude the convergence. Such an upper bound is obtained
by proving a variant of Onsager's inequality for the Ising model, after which
integrability is obtained again from Lemma~\ref{lemma:combinatorial_argument}.

The precise statement about the moments of $\mathcal S_\delta$ is the following.

\begin{lemma}\label{lemma:ising_moments}
For each $f\in C_c^\infty(U)$ and integer $k\geq 0$

\begin{align}\label{eq:ising_moments}
\lim_{\delta\to 0}&\E \left(\delta^{-1/4}\int_U f(x) \mathcal{S}_\delta(x)dx \right)^k\\
&=\left(\frac{\mathcal{C}^2}{\sqrt{2}}\right)^k\int_{U^k}\prod_{j=1}^k \left[f(x_j) \left(\frac{|\varphi'(x_j)|}{2 \mathrm{Im}\, \varphi(x_j)}\right)^{1/4}\right]\sum_{\mu\in\lbrace -1,1\rbrace^n}\prod_{i<j}\left|\frac{\varphi(x_i)-\varphi(x_j)}{\varphi(x_i)-\overline{\varphi(x_j)}}\right|^{\frac{\mu_i\mu_j}{2}}\prod_{j=1}^k dx_j. \nonumber
\end{align}

\noindent and for each $\lambda>0$

  \begin{equation}\label{eq:ising_laplace}
\sup_{\delta>0}\E e^{\lambda\left|\delta^{-1/4}\int_U \mathcal{S}_\delta(x)dx\right|}<\infty.
  \end{equation}
\end{lemma}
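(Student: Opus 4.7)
The strategy is the standard method-of-moments argument combined with a pointwise-plus-majorant convergence. Since $\mathcal{S}_\delta = \sigma_\delta\widetilde\sigma_\delta$ with $\sigma_\delta,\widetilde\sigma_\delta$ i.i.d.\ under $\mathcal{P}_\delta$, Fubini rewrites the $k$-th moment as
\begin{equation*}
\int_{U^k}\prod_{j=1}^k f(x_j)\,\delta^{-k/4}\Bigl(\E\prod_{j=1}^k \sigma_\delta(x_j)\Bigr)^2\,dx_1\cdots dx_k,
\end{equation*}
and squaring the asymptotics of Theorem~\ref{th:CHI} yields pointwise convergence of the integrand away from the diagonal to the integrand on the right-hand side of \eqref{eq:ising_moments}, with the constants matching via $\mathcal C^{2k}\cdot 2^{-k/2}=(\mathcal C^2/\sqrt 2)^k$. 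The remaining task is thus to produce a $\delta$-uniform integrable majorant so that dominated convergence applies.

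The majorant I would use comes from an Ising Onsager-type inequality of the form
\begin{equation*}
\Bigl|\E\prod_{j=1}^n\sigma_\delta(x_j)\Bigr|\le C_K^n\,\delta^{n/8}\prod_{j=1}^n\Bigl(\tfrac12\min_{i\neq j}|x_j-x_i|\vee\delta\Bigr)^{-1/8},
\end{equation*}
valid for $x_1,\dots,x_n$ in a fixed compact $K\Subset U$, uniformly in $\delta>0$. This plays for the Ising model the role that Proposition~\ref{prop:electrostatic_inequality} plays for Gaussian log-correlated fields, the lattice spacing $\delta$ being the analog of the cutoff $c_n$ in Definition~\ref{def:standard}. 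Once this bound is at hand, squaring it and combining with Lemma~\ref{lemma:combinatorial_argument} applied with $d=2$ and $\beta^2=1/2$ furnishes an integrable majorant of the shape $\|f\|_\infty^k C_K^{2k}\prod_j(\min_{i\neq j}|x_j-x_i|/2)^{-1/4}$ on $(\supp f)^k$ (uniformly in $\delta$), so dominated convergence closes \eqref{eq:ising_moments}.

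For the Laplace bound \eqref{eq:ising_laplace}, the very same Onsager/combinatorial estimate applied with $k$ an arbitrary positive integer yields the uniform moment bound $\E|X_\delta|^k\le C^k k^{k/8}$ for $X_\delta:=\delta^{-1/4}\int_U\mathcal{S}_\delta$. Expanding $e^{\lambda|X_\delta|}$ as a power series and invoking monotone convergence bounds $\E e^{\lambda|X_\delta|}$ by $\sum_k \lambda^k C^k k^{k/8}/k!$, which converges for every $\lambda>0$ by Stirling since $k^{k/8}/k!$ decays superexponentially.

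The main obstacle will be the Ising Onsager inequality itself. For a Gaussian field it follows almost for free from positive semidefiniteness of the covariance (as in the proof of Proposition~\ref{prop:electrostatic_inequality}), whereas for the Ising model one has to descend into the Pfaffian/fermionic structure of the planar critical model and combine it with a monotonicity step (FKG or Griffiths inequalities, or a comparison with the full-plane correlators) to reduce the estimate to the well-understood bulk decay on $K\Subset U$. This is the step flagged in the introduction as ``rough estimates following arguments in \cite{FM}''; once it is in place, the rest of the argument is essentially formal.
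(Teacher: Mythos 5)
Your proposal is correct and follows essentially the same route as the paper: the key ingredient is an Onsager-type bound $\delta^{-n/8}\bigl|\E\prod_j\sigma_\delta(x_j)\bigr|\le C_K^n\prod_j\bigl(\min_{i\ne j}|x_i-x_j|\bigr)^{-1/8}$ on compacts $K\Subset U$, obtained (as you flag) from the arguments in \cite{FM} via FK--Ising connection probabilities and their RSW decay — not really Pfaffian structure — after which dominated convergence against the CHI asymptotics and Lemma~\ref{lemma:combinatorial_argument} with $d=2$, $\beta^2=1/2$ give \eqref{eq:ising_moments}, and the same uniform moment growth yields \eqref{eq:ising_laplace}. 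The only cosmetic difference is that you build the lattice cutoff into the Onsager bound as $\min\vee\delta$, whereas the paper first proves the bound for spins on distinct faces and then handles coincident faces by the reduction $\sigma_a^2=1$; both manoeuvres address the same issue and are equivalent in strength.
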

Our proof will be based on the following lemma.

\begin{lemma}\label{lemma:spinbound}
  Let $a_1,\dots,a_k \in F_\delta$ be distinct faces lying inside a fixed compact set $K \subset U$ and identify each face with its center. Then for some constant $C > 0$ we have
  \[\delta^{-k/8} \E \sigma_{a_1} \dots \sigma_{a_k} \le C^k \prod_{i=1}^k \big( \min_{j \neq i} |a_i - a_j|\big)^{-1/8}.\]
  The constant $C$ is independent of the points $a_i$, $k$ and $\delta$, but it may depend on $K$.
\end{lemma}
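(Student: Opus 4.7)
The plan is to combine the explicit scaling limit of Theorem~\ref{th:CHI} with the Onsager-type inequality for the zero-boundary GFF on $U$ (Proposition~\ref{prop:electrostatic_GFF}) to bound the continuum limit, and then to lift this bound uniformly in $\delta$ to the discrete correlations.

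The crucial identification is that $-\log |(\varphi(x_i)-\varphi(x_j))/(\varphi(x_i)-\overline{\varphi(x_j)})|$ equals precisely the Dirichlet Green's function $G_U(x_i,x_j)$, i.e.\ the covariance of the zero-boundary GFF on $U$ (cf.\ Example~\ref{ex:fields}). Each of the $2^k$ terms in the sum over $\mu\in\{-1,1\}^k$ appearing in Theorem~\ref{th:CHI} can then be rewritten as $\exp\bigl(-\tfrac12\sum_{i<j}\mu_i\mu_j G_U(x_i,x_j)\bigr)$, and Proposition~\ref{prop:electrostatic_GFF} applied with charges $q_i=\mu_i$ gives, uniformly in $\mu$,
\[
\exp\Bigl(-\tfrac{1}{2}\sum_{i<j}\mu_i\mu_j\, G_U(x_i,x_j)\Bigr) \;\le\; e^{Ck/2}\prod_{i=1}^k\bigl(\min_{j\neq i}|x_i-x_j|\bigr)^{-1/4}.
\]
Summing the $2^k$ sign patterns, taking the square root demanded by Theorem~\ref{th:CHI}, and absorbing the conformal factors $\bigl(|\varphi'(x_i)|/(2\,\mathrm{Im}\,\varphi(x_i))\bigr)^{1/8}$ (uniformly bounded on the compact $K$) shows that the continuum limit itself is majorized by $C_1^k\prod_i(\min_{j\neq i}|x_i-x_j|)^{-1/8}$.

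Next I would upgrade this pointwise continuum bound to a bound uniform in $\delta$. For configurations whose minimum separation $\rho:=\min_{i\neq j}|a_i-a_j|$ stays above any fixed $\varepsilon>0$, the uniform convergence in Theorem~\ref{th:CHI} gives the inequality for all sufficiently small $\delta$, and the finitely many remaining $\delta$ are absorbed by enlarging $C$. For configurations where $\rho$ is arbitrary (but still $\gtrsim\delta$), I would apply a dyadic scale decomposition: zoom in by $1/\rho$ so the rescaled points have unit minimum separation on a lattice of mesh $\delta/\rho$, and exploit the monotonicity of $+$-boundary Ising correlations under domain inclusion to compare with a reference whole-plane or half-plane configuration. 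In the degenerate regime $\rho\asymp\delta$, the trivial bound $|\sigma_{a_i}|=1$ is already consistent with the right-hand side, since then $\rho^{-k/8}\gtrsim\delta^{-k/8}$.

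The main obstacle lies in carrying out the last step uniformly in $k$: Theorem~\ref{th:CHI} is a pointwise convergence statement whose rate can depend on $k$ and on the separation $\rho$, so a naive dyadic/rescaling argument may not be quantitative enough. The cleanest remedy is to bypass the continuum limit altogether and prove a purely discrete Onsager-type inequality for the Ising $k$-point correlation, using the uniform two-point estimate $\E\sigma_a\sigma_b\le C\delta^{1/4}|a-b|^{-1/4}$ as the elementary building block and mimicking the circle-averaging argument behind Proposition~\ref{prop:electrostatic_GFF} at the lattice level (with Ising analogues of harmonic averages on annuli surrounding each $a_i$). This parallels the way the GFF Onsager inequality is used in Section~\ref{subsec:moments} and directly yields the claimed bound.
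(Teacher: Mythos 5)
Your plan correctly identifies the key structural facts on the continuum side — that $-\log\lvert(\varphi(x_i)-\varphi(x_j))/(\varphi(x_i)-\overline{\varphi(x_j)})\rvert = G_U(x_i,x_j)$, so that each $\mu$-term in Theorem~\ref{th:CHI} is $\exp\bigl(-\tfrac12\sum_{i<j}\mu_i\mu_j G_U(x_i,x_j)\bigr)$ and can be controlled by the GFF Onsager bound of Proposition~\ref{prop:electrostatic_GFF}. That step is sound and does give a bound of the claimed form for the \emph{limiting} correlation function. But the lemma asserts a discrete inequality uniform in $\delta$, and your mechanism for transferring the continuum bound to the lattice does not close. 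Theorem~\ref{th:CHI} gives uniform convergence only over configurations with separation $\ge\varepsilon$ and for fixed $k$, with no control of the rate (or of the "sufficiently small $\delta$" threshold) in either $k$ or $\varepsilon$; so the ``enlarge $C$ to absorb the finitely many remaining $\delta$'' move fails uniformly. The dyadic/rescaling idea is also problematic: the lattice $\delta\Z^2$ does not rescale to a lattice of the same type under the conformal/scaling maps one would want, $+$-boundary monotonicity gives inequalities in only one direction, and even if you arranged a one-point rescaling you would still need a quantitative, $k$-independent discrete estimate at all intermediate scales, which is precisely what is missing. Your own fallback — a purely discrete Onsager inequality built from the two-point bound via ``Ising analogues of harmonic averages on annuli'' — is the right instinct (the argument must be discrete from the start) but is not fleshed out, and the proposed building block does not suffice: by Griffiths' inequalities, $k$-point Ising correlations are \emph{larger} than products of two-point functions, so you cannot obtain the needed upper bound by multiplying pairwise estimates.

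The paper instead uses a short, genuinely discrete argument that sidesteps the GFF entirely. Following the proof of Proposition~3.10 in~\cite{FM}, one passes to the FK--Ising (random-cluster) representation and shows
\[
\E\,\sigma_{a_1}\cdots\sigma_{a_k}\;\le\;\prod_{i}\phi^{+}_{B_i}\bigl(a_i\leftrightarrow\partial B_i\bigr),
\]
where $B_i$ is a box around $a_i$ of side comparable to the distance from $a_i$ to its nearest other point (or to $\partial U$); this factorization is the correct discrete replacement for your proposed ``Onsager at the lattice level.'' Each one-arm probability is then bounded by $C\,\ell_i^{-1/8}$ via the FK one-arm exponent (\cite[Lemma~3.9]{FM}), and the lemma follows after converting lattice distances to Euclidean ones. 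So the missing idea in your proposal is the FK representation together with the one-arm bound; without it, neither your continuum-to-discrete transfer nor your sketched discrete analogue produces a proof.
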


\begin{proof}
  This inequality essentially appears in the proof of Proposition~3.10 in \cite{FM}, where the authors show (\cite[last line on p. 20]{FM}) that
  \[\E \sigma_{a_1} \dots \sigma_{a_k} \le \prod_{i=1}^p \phi_{B_i}^+ (a_i \leftrightarrow \partial B_i).\]
  Here $B_i = a_i + [-\ell_i/4, \ell_i/4]^2$ are disjoint boxes with $\ell_i = \min_{j\ge 0, j \neq i} d(a_i,a_j)$ being the $\delta \integers^2$-distance (we have added the factor $\delta$ compared to \cite{FM} because we are working on the scaled lattice) from $a_i$ to its closest neighbour or to the boundary $\partial U$ which is denoted by $a_0$.
  The quantity $\phi_{B_i}^+(a_i \leftrightarrow \partial B_i)$ denotes the 
  probability that $a_i$ is connected to the boundary of $B_i$ in the FK--Ising model (see e.g. \cite[Section 3.1 and Section 3.2]{FM} and references therein), and this probability is
  less than $C \ell_i^{-1/8}$ by \cite[Lemma~3.9]{FM}. Our claim then follows from the elementary inequality $d(a,b) \le \sqrt{2}|a - b|/\delta$
  and the fact that by compactness $d(K,\partial U)$ is bounded from below for small enough $\delta$.
\end{proof}

This allows us to give the proof of Lemma \ref{lemma:ising_moments}.

\begin{proof}[Proof of Lemma~\ref{lemma:ising_moments}]
Let $K \subset U$ be a fixed compact set and let $x_1,\dots,x_k\in K$. We claim that for some $C>0$ independent of $x_i$, $k$, and $\delta$,

\begin{equation}\label{eq:fieldbound}
\delta^{-k/8}\E \sigma_\delta(x_1)\cdots \sigma_\delta(x_k)\leq C^k\prod_{i=1}^k\left(\min_{j\neq i}|x_i-x_j|\right)^{-1/8}
\end{equation}
which can be seen as a variant of Onsager's inequality for the Ising model.

  Let us write $a_i^{(x)}$ for the face $x$ lies in (with the convention that we count in it the southern and western boundary without corners as well as the south-western corner). If we first assume that all of the $a_i^{(x)}$ are distinct ($|a_i^{(x)}-a_j^{(x)}|\geq \delta$) and note that $|x_i-x_j|\leq |x_i-a_i^{(x)}|+|a_i^{(x)}-a_j^{(x)}|+|x_j-a_j^{(x)}|\leq 2\delta +|a_i^{(x)}-a_j^{(x)}|\leq 3|a_i^{(x)}-a_j^{(x)}|$, then \eqref{eq:fieldbound} follows immediately from Lemma~\ref{lemma:spinbound}.

Consider then the case where not all of the $a_i^{(x)}$ are distinct. After using $\sigma_a^2=1$ to reduce the number of spins from the correlation function and possibly relabelling the spins, let us assume that we have 

$$
\delta^{-k/8}\E \sigma_\delta(x_1)\cdots \sigma_\delta(x_k)=\delta^{-k/8}\E \sigma_{\delta}(x_1)\cdots \sigma_\delta(x_l)
$$

\noindent with $l<k$ and $(a_i^{(x)})_{i=1}^l$ distinct. From the case where all faces were distinct, we find 

\begin{align*}
\delta^{-k/8}\E \sigma_\delta(x_1)\cdots \sigma_\delta(x_k)&\leq \delta^{-(k-l)/8}C^l\prod_{j=1}^l \left(\min_{1\leq i\leq l,i\neq j}|x_i-x_j|\right)^{-1/8}\\
&\leq \delta^{-(k-l)/8}C^l\prod_{j=1}^l \left(\min_{1\leq i\leq k,i\neq j}|x_i-x_j|\right)^{-1/8}
\end{align*}

\noindent where the second step comes from the fact that we minimize over a larger set. Now for the remaining points $x_{l+1},...,x_k$, for each of them, there is another $x_i$ such that both points belong to the same face, implying that for $j>l$, 

$$
\min_{1\leq i\leq k,i\neq j}|x_i-x_j|\leq \sqrt{2}\delta
$$

\noindent so that 

$$
\delta^{-(k-l)/8}\leq 2^{\frac{k-l}{16}}\prod_{j=l+1}^k\left(\min_{1\leq i\leq k,i\neq j}|x_i-x_j|\right)^{-1/8},
$$

\noindent which concludes the proof of \eqref{eq:fieldbound}.

We may now compute
\begin{align*}
  \lim_{\delta \to 0} \E \Big( \delta^{-1/4} \int_U f(x) \mathcal{S}_\delta(x) \, dx \Big)^k & = \delta^{-k/4} \int_{U^k} f(x_1) \dots f(x_k) \E \mathcal{S}_\delta(x_1) \dots \mathcal{S}_\delta(x_k) \, dx \\
  & = \delta^{-k/4} \int_{U^k} f(x_1) \dots f(x_k) (\E \sigma_\delta(x_1) \dots \sigma_\delta(x_k))^2 \, dx.
\end{align*}
Using \eqref{eq:fieldbound}, we see that the absolute value of the integrand is at most
\[C^{2k} \|f\|_\infty^k \prod_{i=1}^k \Big(\min_{j \neq i} |x_i - x_j|\Big)^{-1/4}.\]
  By Lemma~\ref{lemma:combinatorial_argument} this is integrable, so we may apply the dominated convergence theorem and Theorem~\ref{th:CHI} to get
  \begin{align*}
    & \lim_{\delta \to 0} \E \Big( \delta^{-1/4} \int_U f(x) \mathcal{S}_\delta(x) \, dx \Big)^k \\
    & = \int_{U^k} f(x_1) \dots f(x_k) \mathcal{C}^{2k} \prod_{j=1}^k \Big( \frac{|\varphi'(x_j)|}{2 \Im \varphi(x_j)} \Big)^{1/4} 2^{-k/2} \sum_{\mu \in \{-1,1\}^n} \prod_{1 \le i < j \le k} \left| \frac{\varphi(x_i) - \varphi(x_j)}{\varphi(x_i) - \overline{\varphi(x_j)}}\right|^{\frac{\mu_i \mu_j}{2}} \, dx,
  \end{align*}
  which proves \eqref{eq:ising_moments}.
  Moreover, the uniform bound obtained from Lemma~\ref{lemma:combinatorial_argument} also implies \eqref{eq:ising_laplace}.
\end{proof}

Having Lemma \ref{lemma:ising_moments} in our hand, we can now turn to the proof of convergence to chaos.

\begin{proof}[Proof of Theorem~\ref{th:ising}]
  By (the proof of) Theorem \ref{th:moments}, the moments of
  \[\int_U \mathcal{C}^2\left(\frac{2|\varphi'(x)|}{\Im \varphi(x)}\right)^{1/4}\cos(2^{-1/2}X(x))f(x)\,dx\]
  are precisely the right side of \eqref{eq:ising_moments}.
Thus  by Lemma~\ref{lemma:ising_moments} the moments of the XOR-Ising field converge to those of the real part of the imaginary chaos and 
 by Theorem \ref{th:moments}, the moments of the imaginary chaos grow slowly enough so that they determine its distribution
  and the convergence of moments implies convergence in law -- see Corollary~\ref{co:detemination}.
\end{proof}

\subsection{The sine-Gordon model}\label{sec:sg}

Let us now introduce the sine-Gordon type model appearing in the statement of Theorem~\ref{th:perturb}. In the theoretical physics literature, a definition of the sine-Gordon model could be representing the correlation functions of the sine-Gordon field as a functional integral, which might be written as

$$
\langle X(x_1)\cdots X(x_k)\rangle_{\mathrm{sG}(\lambda,\beta)}=\frac{1}{Z(\lambda,\beta)}\int X(x_1)\cdots X(x_k)e^{\lambda \int_{\R^2} \cos \beta X(x)dx-\int_{\R^d}\nabla X(x)\cdot \nabla X(x)dx}\mathcal{D}X.
$$

\noindent Above $\mathcal{D}X=\prod_{x\in \R^2}dX(x)$ is formally the (non-existent) infinite dimensional Lebesgue measure and the integral is over $\R^{\R^2}$. This is of course ill-defined, but the way one mathematically makes sense of this is through understanding the combination $e^{-\int_{\R^2}\nabla X(x)\cdot \nabla X(x)dx}\mathcal{D}X$ as the probability distribution of the (whole plane) Gaussian free field.  Then one could try to view this as biasing the law of the Gaussian free field with something again related to imaginary multiplicative chaos. For our purposes, it is more convenient to work in a finite domain with zero boundary conditions on the free field (this also avoids the problem with the zero mode or the fact that the whole plane free field is well defined only up to a random additive constant). Also instead of having just the quantity $\lambda\int \cos \beta X(x)dx$, our purposes require generalizing slightly and replacing the constant $\lambda$ by a weight in the integral. We thus make the following definition.

\begin{definition}\label{def:sG}
  Let $U\subset \R^2$ be a bounded simply connected domain, let $X$ be the zero boundary Gaussian free field in $U$ -- see Example \ref{ex:fields} -- with law $\P_{\mathrm{GFF}}$ on (say) $H^{-\varepsilon}(\reals^2)$\footnote{Recall that the field $X$ is actually supported in $U$ -- see Proposition~\ref{prop:karhunen_loeve}. As is often done, one could also consider $X$ as a random element of $H^{-\varepsilon}(U)$.}. For $\psi\in C_c^\infty(U)$, $\beta\in(0,\sqrt{2})$, the sine-Gordon$(\psi,\beta)$ model in domain $U$ with zero boundary condition is a probability distribution on $H^{-\varepsilon}(\reals^2)$ of the form 

$$
\P_{\mathrm{sG}(\psi,\beta)}(dX)=\frac{1}{Z(\psi,\beta)}e^{\int_U \psi(x)\cos\beta X(x)dx}\P_{\mathrm{GFF}}(dX),
$$
where again the integral in the exponential is formal notation for testing the random generalized function $\cos(\beta X)$ against the test function $\psi$.  \hfill $\blacksquare$
\end{definition}

\begin{remark}
For the above definition to make sense, $\cos\beta X$ has to be measurable w.r.t. $X$ and we need $\E e^{\int_U \psi(x)\cos\beta X(x) \, dx}$ to be finite. The first property follows simply from our convergence in probability in Theorem \ref{th:existuniq}, while the second one follows from Theorem \ref{th:moments}. \hfill $\blacksquare$
\end{remark}

This definition allows us to construct the cosine of the sine-Gordon field, namely the proposed limiting object from Theorem \ref{th:perturb}. We do not really need to construct it as a random generalized function, we simply need to know that for each test function, there exists a random variable that can be viewed as the cosine of the sine-Gordon field tested against this test function. 

\begin{definition}\label{def:sgchaos}
Let $U\subset \R^2$ be a bounded simply connected domain. For each $\beta,\gamma\in(0,\sqrt{2})$ and $f,\psi\in C_c^\infty(U)$, let us write 
$$
\int_U f(x)\cos \left(\gamma X_{\mathrm{sG}(\psi,\beta)}(x)\right)dx
$$
for the random variable whose law is characterized by the condition that for each bounded continuous $F:\R\to \R$,
\begin{align*}
\E&\left[F\left(\int_U f(x)\cos \left(\gamma X_{\mathrm{sG}(\psi,\beta)}(x)\right)dx\right)\right]\\
&\qquad =\frac{1}{Z(\psi,\beta)}\E_{\mathrm{GFF}}\left[F\left(\int_U f(x)\cos\left(\gamma X(x)\right)dx\right) e^{\int_U \psi(x)\cos(\beta X(x))dx}\right],
\end{align*}
where $\int_U f(x)\cos(\gamma X(x))dx$ and $\int_U \psi(x)\cos(\beta X(x))dx$ denote the action of the real parts of imaginary chaos distributions built from the GFF on $U$ with zero boundary conditions provided by Theorem \ref{th:existuniq}. \hfill $\blacksquare$
\end{definition}
To see that this is a valid definition, first note from Theorem \ref{th:existuniq} that we can simultaneously construct both of the random variables $\int_U f(x)\cos(\gamma X(x))dx$ and $\int_U \psi(x)\cos(\beta X(x))dx$ on the same probability space. Moreover, as $F$ is bounded, we have from Theorem \ref{th:moments} that the expectation on the right hand side of the equation in the definition is finite. Thus, by the standard argument of interpreting this as a positive linear functional of $F$, the Riesz--Markov--Kakutani representation theorem provides the existence of the desired probability distribution. We note that one could also construct the same object starting from regularizations of the free field.

We are now in a position to move on to the proof of Theorem \ref{th:perturb}.

\subsection{Convergence of the magnetically perturbed critical XOR-Ising to cosine of the sine-Gordon field} \label{sec:pertconv} Proving that the spin field of the magnetically perturbed XOR-Ising model converges to the cosine of the sine-Gordon field, or Theorem \ref{th:perturb}, now follows rather easily from Theorem \ref{th:existuniq}.

\begin{proof}[Proof of Theorem \ref{th:perturb}]
What we wish to show is that for each bounded continuous $F:\R\to \R$, 
\begin{align*}
  \lim_{\delta\to 0}&\E_{\psi,\delta}\left[F\left(\delta^{-1/4}\int_U f(x)\mathcal{S}_\delta(x)dx\right)\right]\\
&=\E \left[F\left(\mathcal C^2\int_U \left(\frac{2|\varphi'(x)|}{\mathrm{Im}\, \varphi(x)}\right)^{1/4} f(x)\cos \left(2^{-1/2}X_{\mathrm{sG}(\widetilde \psi,1/\sqrt{2})}(x)\right)dx\right)\right],
\end{align*}
where on the left hand side we have the spin field of the magnetically perturbed XOR-Ising model, with law $\mathcal P_{\psi,\delta}$ and expectation $\E_{\psi,\delta}$, and on the right hand side we have the random variable defined in Definition \ref{def:sgchaos}. 

Recall that we wrote $\mathcal P_\delta$ for the law of the spin field of the zero-magnetic field XOR-Ising model and let us write $\E_\delta$ for the corresponding expectation. By the definition of $\mathcal{P}_{\psi,\delta}$ we thus have
  \[\E_{\psi,\delta} F\Big(\delta^{-1/4} \int_U \mathcal{S}_\delta(x) f(x) \, dx \Big) = \frac{1}{\mathcal Z_{\psi,\delta}} \E_\delta \left[F\Big(\delta^{-1/4} \int_U \mathcal{S}_\delta(x) f(x) \, dx \Big) e^{\delta^{-1/4} \int_U \psi(x) \mathcal{S}_\delta(x) \, dx}\right] \]
  By Theorem~\ref{th:ising} we know that under $\mathcal{P}_\delta$, $\delta^{-1/4} \mathcal{S}_\delta$ tested against an arbitrary test function converges in law to $\cos(\frac{1}{\sqrt{2}} X)$ (where $X$ is the free field) tested against $\mathcal C^2(2\frac{|\varphi'(x)|}{\mathrm{Im}\, \varphi(x)})^{1/4}$ times that same test function, so by linearity and the Cram\'er-Wold theorem, the random variables $A = \delta^{-1/4} \int_U \mathcal{S}_\delta(x) f(x) \, dx$ and $B = \delta^{-1/4} \int_U \mathcal{S}_\delta(x) \psi(x) \, dx$ converge jointly in law (to the corresponding random variables expressed in terms of the free field. From the continuity of the map $(x,y) \mapsto F(x) e^{y}$ it follows that also $F(A) e^{B}$ converges in law by the continuous mapping theorem \cite[Lemma~4.27]{K}  to the random variable 
$$
F\left(\mathcal C^2\int_U\left(\frac{2|\varphi'(x)|}{\mathrm{Im}\, \varphi(x)} \right)^{1/4}f(x)\cos(2^{-1/2}X(x))dx\right)e^{\int_U \widetilde{\psi}(x)\cos(2^{-1/2}X(x))dx}.
$$  
Moreover, by the (exponential) uniform integrability provided by boundedness of exponential moments proven in Lemma \ref{lemma:ising_moments}, $\mathcal Z_{\psi,\delta}$ converges to $Z(\psi,1/\sqrt{2})$ as $\delta \to 0$. These remarks combined  with another application of the boundedness of exponential moments from Lemma \ref{lemma:ising_moments} shows that also the expectation of $F(A)e^B$ converges to the correct quantity as $\delta\to 0$ and we deduce that
\begin{align*}
\lim_{\delta\to 0}&\E_{\psi,\delta}\left[F\left(\delta^{-1/4}\int_U f(x)\mathcal{S}_\delta(x)dx\right)\right]\\
&=\E \left[F\left({\mathcal C^2}\int_U \left(\frac{2|\varphi'(x)|}{\mathrm{Im}\, \varphi(x)}\right)^{1/4} f(x)\cos \left(2^{-1/2}X_{\mathrm{sG}(\widetilde \psi,1/\sqrt{2})}(x)\right)dx\right)\right],
\end{align*}
as was desired.
\end{proof}
This concludes our study of the Ising model.

\section{Random unitary matrices and imaginary multiplicative chaos -- a cautionary tale}\label{sec:rmt}

We begin this section with a review of the connection between random unitary matrices, log-correlated fields, and real multiplicative chaos. Based on this connection, it is natural to expect that imaginary multiplicative chaos also appears naturally in random matrix theory, and we indeed formulate a result of this flavor in the setting of random unitary matrices. As the proof is so similar to the case of real multiplicative chaos, and its essential ingredients are well documented in the literature, we omit the details and simply offer the reader the relevant references for reproducing the proof. With this example of random matrices, we illustrate both that imaginary chaos appears naturally in various models of probability and mathematical physics along with some of the subtleties one can expect to encounter when constructing multiplicative chaos from such models.

In the last two decades, the connection between random matrix theory and log-correlated fields has been observed in various random matrix models -- see e.g. \cite{HKO,RidVir,FKS}. More precisely, as the size of the matrix tends to infinity, the real part of the logarithm of the characteristic polynomial of a random matrix drawn from various distributions of unitary, Hermitian, or normal matrices, is known to converge to a variant of the Gaussian free field after suitable recentering. As first utilized in \cite{FHK,FK} on a heuristic level, one would then naturally expect that powers of the (absolute value of the) characteristic polynomial of such a random matrix should be related to the exponential of the Gaussian free field -- multiplicative chaos. This type of results have since been proven for some models of random matrix theory -- see e.g. \cite{BWW,LOS,W} -- though focusing on cases where the limiting object is a real multiplicative chaos measure, such as the case of real powers of the absolute value of the characteristic polynomial.

In this article, we will consider large random unitary matrices drawn from the Haar measure on the unitary group $\mathrm U(N)$, or in other words, we consider the so-called Circular Unitary Ensemble. Let us write $U_N$ for such a random $N\times N$ unitary matrix\footnote{Being one of the classical compact groups, it is a classical fact that there exists a unique probability measure $\P_N$ on $\mathrm{U}(N)$ such that for any Borel set ${B}\subset \mathrm{U}(N)$ and any fixed $U\in \mathrm{U}(N)$, $\P_N(U{B})=\P_N({B}U)=\P_N({B})$ -- this probability measure is the one we take for the distribution of the random matrix $U_N$. We write simply $\E$ for integration with respect to $\P_N$.} and consider the following two fields defined on the unit circle
$$
X_N(\theta)=\log |\det(I-e^{-i\theta}U_N)| \quad \text{and} \quad Y_N(\theta)=\lim_{r\to 1^-}\mathrm{Im}\mathrm{Tr}\log(I-re^{-i\theta}U_N), \quad \theta\in[0,2\pi],
$$
where $I$ denotes the $N\times N$ identity matrix, and in the definition of $Y_N$, what we mean by $\mathrm{Tr}\log (I-re^{-i\theta}U_N)$ is $\sum_{j=1}^N \log (1-re^{i(\theta_j-\theta)})$, where $(e^{i\theta_j})_{j=1}^N$ are the eigenvalues of $U_N$, and the branch of the logarithm is the principal one -- namely it is given by $\log(1-z)=-\sum_{k=1}^\infty \frac{1}{k}z^k$ for $|z|<1$. Note that in this case, the limit defining $Y_N$ exists almost surely e.g. in $L^2([0,2\pi],d\theta)$. Thus the fields can be interpreted as the real and imaginary parts of the logarithm of the characteristic polynomial of $U_N$ evaluated on the unit circle.

It was proven in \cite{HKO} that as $N\to\infty$, $X_N$ and $Y_N$ converge in law to $2^{-1/2}$ times the 2d Gaussian free field restricted to the unit circle, namely a centered log-correlated Gaussian field $X$ with covariance $\E X(\theta)X(\theta')=-\log |e^{i\theta}-e^{i\theta'}|$ -- for details about this field, see Example \ref{ex:fields}. Moreover, this convergence was in the Sobolev space $H^{-\varepsilon}$ for arbitrary $\varepsilon>0$ -- this is essentially as nicely as a sequence of random generalized functions could converge.  It was then proven in \cite{W} that for $-\frac{1}{2}<\alpha<\sqrt{2}$ and $-\sqrt{2}<\beta<\sqrt{2}$, $\frac{e^{\alpha X_N(\theta)}}{\E e^{\alpha X_N(\theta)}}d\theta$ and $\frac{e^{\beta Y_N(\theta)}}{\E e^{\beta Y_N(\theta)}}d\theta$ converge in law to the multiplicative chaos measures formally written as $e^{\frac{\alpha}{\sqrt{2}} X(\theta)}d\theta$ and $e^{\frac{\beta}{\sqrt{2}} X(\theta)}d\theta$. In this article, we consider an analogue of this result for imaginary $\alpha$ and $\beta$. More precisely, the result is the following:

\begin{proposition}\label{th:rmtgmc}
Let $X(\theta)$ be the log-correlated Gaussian field on $[0,2\pi]$ with covariance $\E X(\theta)X(\theta')=-\log |e^{i\theta}-e^{i\theta'}|$ $($see Example \ref{ex:fields} for details$)$, and $e^{i\beta X(\theta)}$ the associated imaginary multiplicative chaos distribution provided by Theorem \ref{th:existuniq}. Then for any smooth and $2\pi$-periodic $f:\R\to \C$
$$
\int_0^{2\pi}\frac{e^{i\beta X_N(\theta)}}{\E e^{i\beta X_N(\theta)}}f(\theta)d\theta\stackrel{d}{\to}\int_0^{2\pi}e^{i\frac{\beta}{\sqrt{2}}X(\theta)}f(\theta)d\theta
$$
\noindent as $N\to\infty$, for $\beta\in(-\sqrt{2},\sqrt{2})$. Moreover, as $N\to\infty$,
$$
\int_0^{2\pi}\frac{e^{i\beta Y_N(\theta)}}{\E e^{i\beta Y_N(\theta)}}f(\theta)d\theta\stackrel{d}{\to}\int_0^{2\pi}e^{i\frac{\beta}{\sqrt{2}}X(\theta)}f(\theta)d\theta
$$
\noindent for $\beta\in(-1,1)$. In both statements, the integrals on the right hand side are formal notation meaning that the distribution $e^{i\frac{\beta}{\sqrt{2}} X(\theta)}$ is tested against $f$.
\end{proposition}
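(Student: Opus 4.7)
The overall strategy is the method of moments, justified by Theorem~\ref{th:moments}: since the limiting random variable $\int_0^{2\pi} e^{i(\beta/\sqrt{2})X(\theta)} f(\theta)\,d\theta$ has moments $\E \mu(f)^k\overline{\mu(f)}^l$ which grow slowly enough to determine its law (the log-correlated field $X$ on the circle has covariance satisfying our standard assumptions by Example~\ref{ex:fields}, so Theorem~\ref{th:moments} applies with $d=1$), it suffices to prove that for every pair of non-negative integers $k,l$ the joint moments $\E \mu_N(f)^k\overline{\mu_N(f)}^l$ converge to the corresponding Gaussian moments, where $\mu_N(f) := \int_0^{2\pi} \frac{e^{i\beta X_N(\theta)}}{\E e^{i\beta X_N(\theta)}}f(\theta)\,d\theta$ and analogously for $Y_N$.

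First I would expand each such moment as a $(k+l)$-fold integral over $[0,2\pi]^{k+l}$ of the test-function product times the ratio
\begin{equation*}
R_N(\theta_1,\dots,\theta_k;\theta'_1,\dots,\theta'_l):=\frac{\E\prod_{i=1}^k e^{i\beta X_N(\theta_i)}\prod_{j=1}^l e^{-i\beta X_N(\theta'_j)}}{\prod_{i=1}^k\E e^{i\beta X_N(\theta_i)}\prod_{j=1}^l\E e^{-i\beta X_N(\theta'_j)}}.
\end{equation*}
Using the identity $e^{i\beta X_N(\theta)}=|\det(I-e^{-i\theta}U_N)|^{i\beta}$ (and for $Y_N$, $e^{i\beta Y_N(\theta)}=(\det(I-e^{-i\theta}U_N)/\overline{\det(I-e^{-i\theta}U_N)})^{\beta/2}$), the numerator of $R_N$ is expressible as a Haar average of a product of powers of characteristic polynomials at distinct points, and so by the Heine--Szeg\H{o} identity equals a Toeplitz determinant of size $N$ whose symbol carries Fisher--Hartwig-type singularities at $e^{i\theta_i}$ and $e^{i\theta'_j}$; the denominator is the analogous product of one-point Toeplitz determinants.

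Next I would apply the now-classical large-$N$ asymptotics for Toeplitz determinants with Fisher--Hartwig singularities (as developed by Deift--Its--Krasovsky and refined by Claeys, Fahs and others) to compute the $N\to\infty$ pointwise limit of $R_N$. The leading-order Selberg--Keating--Snaith-type factors in numerator and denominator cancel, and what remains is exactly the Gaussian correlation factor
\begin{equation*}
\prod_{1\le i<i'\le k}\!\!|e^{i\theta_i}-e^{i\theta_{i'}}|^{-\beta^2/2}\!\!\prod_{1\le j<j'\le l}\!\!|e^{i\theta'_j}-e^{i\theta'_{j'}}|^{-\beta^2/2}\!\!\prod_{i,j}\!|e^{i\theta_i}-e^{i\theta'_j}|^{\beta^2/2},
\end{equation*}
which is precisely the integrand one obtains from the analogue of \eqref{eq:formal_moments} for the imaginary chaos built from $(\beta/\sqrt{2})X$ with covariance $-\log|e^{i\theta}-e^{i\theta'}|$. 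For $Y_N$ the Fisher--Hartwig exponents are purely ``jump'' rather than ``merging'' type, producing the same limit but with a critical threshold shifted to $\beta^2<1$.

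To pass to the limit under the integral I would need a uniform-in-$N$ integrable majorant for $R_N$. Here I would invoke the uniform version of the Fisher--Hartwig estimates to bound $|R_N|$ by a constant (depending on $k,l$ but not $N$) times the limiting correlation factor above. Integrability of this majorant against $f\otimes\bar f$ over $[0,2\pi]^{k+l}$ then reduces, via exactly the same combinatorial argument as in Lemma~\ref{lemma:combinatorial_argument} with $d=1$, to the conditions $\beta^2<2$ for the $X_N$-case and $\beta^2<1$ for the $Y_N$-case, matching the stated ranges. The main obstacle is precisely the production of this uniform bound: away from the diagonals where several $\theta_i$ or $\theta'_j$ coalesce the ratio is benign, but the Fisher--Hartwig asymptotics degenerate when singularities approach each other, and one needs a version of the estimates with explicit, tracked dependence on the configuration of points---this is the random-matrix analogue of the Onsager inequality and is the technical heart of the argument. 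Once such a bound is available, dominated convergence yields moment convergence and Theorem~\ref{th:moments} upgrades this to convergence in law.
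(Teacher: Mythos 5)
Your proposal takes the method-of-moments route: expand each mixed moment $\E\mu_N(f)^k\overline{\mu_N(f)}^l$ as a $(k+l)$-fold integral of a Toeplitz-determinant ratio $R_N$, apply Fisher--Hartwig asymptotics to get the pointwise limit, and pass to the limit under the integral via a uniform-in-$N$ majorant. The paper actually takes a \emph{different} and technically lighter route, following the ``$L^2$-phase'' argument of Webb \cite{W}: one proves $L^2$-convergence of $\mu_N(f)$ directly, which reduces, after a Cauchy--Schwarz trick, to controlling only quantities of the form $\E e^{i\beta X_N(\theta)\pm i\beta X_N(\theta')}$. This requires nothing beyond the uniform asymptotics for \emph{two-point} Toeplitz determinants with merging Fisher--Hartwig singularities from Claeys--Krasovsky \cite[Theorem~1.11]{CK}, together with the fixed-singularity multi-point asymptotics of \cite{DIK2}. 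Convergence in $L^2$ then implies convergence in law without any appeal to moment determinacy.

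The genuine gap in your proposal is exactly where you flag the ``technical heart.'' For each moment of order $m=k+l\ge 3$, your dominated-convergence step requires an integrable majorant for $R_N$ that is uniform over all configurations of the $m$ singularities \emph{and} uniform in $N$. This is an $m$-point analogue of the Onsager inequality at the level of Toeplitz determinants, i.e. uniform Fisher--Hartwig asymptotics for arbitrarily many merging singularities. At the time of writing, such a result is not available in the literature: Claeys--Krasovsky \cite{CK} treats exactly two merging singularities, and \cite{DIK1,DIK2} give asymptotics only for a fixed configuration of well-separated singularities. So the key estimate your argument rests on is not something you can cite; you would have to prove it, and it is considerably harder than the two-point estimate the paper needs. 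This is precisely why the paper chooses the $L^2$ route: the second-moment method confines the merging-singularity analysis to pairs, where the needed uniform control exists. Your framework (Toeplitz determinants, Fisher--Hartwig symbols, the distinction between the $\alpha$-type singularities of $X_N$ and the jump-type singularities of $Y_N$ explaining the $\beta^2<2$ versus $\beta^2<1$ thresholds, and the use of Lemma~\ref{lemma:combinatorial_argument} for integrability of the limiting kernel) is all sound; what is missing is the unproved uniform $m$-point bound, and without it the argument does not close.
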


As mentioned above, the proof of this theorem is essentially repeating the proof from the real case in the $L^2$-phase -- see e.g. \cite{BWW,LOS,W} for this type of arguments. The main issue is to control moments of the form e.g. $\E e^{i\beta X_N(\theta)-i\beta X_N(\theta')}$. This is done through a connection to certain Toeplitz determinants and their known asymptotics. Again this is essentially identical to the real case -- see e.g. \cite{W} for the argument and \cite[Theorem 1.11]{CK}\footnote{We mention here that while in \cite{W} the proof requires nearly the full extent  of the results of \cite{CK}, one can actually simplify the proof slightly through an easy Cauchy-Schwarz estimate and one can manage with just making use of \cite[Theorem 1.11]{CK} and \cite[Theorem 1.1 and Remark 1.4]{DIK2} also in the real case.} and \cite[Theorem 1.1 and Remark 1.4]{DIK2} for the asymptotics of the relevant Toeplitz determinant.

Being nearly identical to the real case, we think that the reader would not find the proof very illuminating. Indeed, instead of the proof, what we hope readers will find interesting here is the discrepancy between the parameter values for which convergence is obtained for the two fields. We maintain that this is not a technical issue simply requiring better estimates, but truly that for $Y_N$ one does not have convergence for larger values of $|\beta|$ despite the fact that $Y_N$ converges to a log-correlated field essentially as nicely as one might hope and that the corresponding multiplicative chaos exists. We think these remarks should be viewed as a warning that one ought to take some care when hoping to prove that something converges to multiplicative chaos, and hence we will next elaborate on it slightly.

\subsection{Pitfalls in proving convergence to multiplicative chaos, from the point of view of random matrices}
Let us first discuss the case of real multiplicative chaos briefly. As mentioned above, it was proven in \cite{W} that for $-\frac{1}{2}<\alpha<\sqrt{2}$, $\frac{e^{\alpha X_N(\theta)}}{\E e^{\alpha X_N(\theta)}}d\theta$ converges in law to $e^{\frac{\alpha}{\sqrt{2}}X(\theta)}d\theta$. A natural question that one might then have is what happens for $-\sqrt{2}<\alpha\leq -\frac{1}{{2}}$. After all, the multiplicative chaos measure is perfectly well defined here and for the field $Y_N$, one has convergence. This issue is at least partly resolved by recalling the definition of $X_N$: for $\alpha<0$, one has $e^{\alpha X_N(\theta)}=|\det(I-e^{-i\theta}U_N)|^{-|\alpha|}$. If $\alpha<-1$, as a function of $\theta$, this will have non-integrable singularities at each eigenangle, and even after a deterministic normalization, $e^{\alpha X_N(\theta)}d\theta$ can not converge to $e^{\frac{\alpha}{\sqrt{2}}X(\theta)}d\theta$ or any other finite measure -- note that for such $\alpha$, even the normalization constant $\E e^{\alpha X_N(\theta)}$ is infinite. Thus we have an example of a sequence of fields which approximate a log-correlated field essentially as nicely as one could hope for -- that is we have convergence in any Sobolev space of negative regularity index -- yet do not give rise to a real multiplicative chaos measure in the full regime where one would naively expect. This example demonstrates that the field may take extremely large values, but in a very small set, while  maintaining  convergence on the level of log-correlated fields but not on the level of real multiplicative chaos measures -- and this happens already in a portion of the $L^2$-phase.

Let us turn to imaginary multiplicative chaos, and try to qualitatively understand the discrepancy in Proposition \ref{th:rmtgmc}, which we expect to arise due to  a mechanism of a different nature compared to the real case. Noting now that for the field $X_N$, we have convergence in the whole $L^2$-regime, this suggests that any possible lack of convergence for $Y_N$ is not due to the size of the field. To explain the lack of convergence, let us point out that a simple exercise in trigonometry shows that for $\theta\neq \theta_j$ for all $j$
\[
Y_N(\theta)=\sum_{j=1}^N \frac{\theta_j-\theta}{2}-\frac{N\pi}{2}+\pi\sum_{j=1}^N \mathbf{1}\lbrace \theta_j<\theta\rbrace,
\]
so apart from the first sum (which is a rather simple function of $\theta$) and a factor of $\pi$, the field is essentially integer valued. If we only cared about this integer valued ``eigenangle counting function" $\widetilde Y_N(\theta):=\pi\sum_{j=1}^N \mathbf{1}\lbrace \theta_j<\theta\rbrace$, then $e^{i \beta \widetilde Y_N(\theta)}$ would be $2$-periodic in $\beta\in (-\sqrt{2},\sqrt{2})$. This periodic structure is of course somehow present also in $e^{i\beta Y_N(\theta)}$, though not in its exact form. As the limiting multiplicative chaos distribution $e^{i\frac{\beta}{\sqrt{2}}X(\theta)}$ does not have any periodicity properties when viewed as a function of $\beta$, this suggests that indeed it is not reasonable to expect convergence for $\beta\notin (-1,1)$, since outside of this regime, periodicity will kick in. Also, at $\beta=\pm 1$ something special obviously happens as the exponential $\exp(i\beta \widetilde Y_N(\theta))$ takes only values $\pm 1$. This is also seen in the Toeplitz determinants corresponding to the moments -- see e.g. \cite[Theorem 1.1 and Theorem 1.13]{DIK1} (our $\beta=\pm 1$ corresponds precisely to their $|||\beta|||=1$ e.g. for the second moment $\E e^{i\beta Y_N(\theta)-i\beta Y_N(\theta')}$, though note the slight difference in notation: our $\beta$ corresponds to $\beta/2$ in \cite{DIK1}.).

Such ``essentially integer valued" approximations to log-correlated fields are common in various models of probability and mathematical physics -- indeed any height functions or interfaces of discrete models, like  dimer models or random partitions of integers, are  inherently of this type, so when attempting to study these fields through the associated (real or imaginary) multiplicative chaos,  the phenomenon described above  is good to keep in mind.

\appendix

\section{Auxiliary results}\label{app:moments}

In this appendix we record some basic facts needed to control moments of imaginary chaos near the critical point. The first one is something that gives a rough estimate required for controlling mixed moments.

\begin{lemma}\label{lemma:moment_inequality} Let $U\subset \reals^d$ be bounded and $0<\beta<\sqrt{d}$. Then for any indices $a \ge b$ and
$x_1,\dots,x_a,y_1,\dots,y_b \in U$, $a \ge b$, we have the inequality
  \begin{align*}
    & \frac{\prod_{1 \le j < k \le a} |x_j - x_k|^{\beta^2} \prod_{1 \le j < k \le b} |y_j - y_k|^{\beta^2}}{\prod_{1 \le j \le a} \prod_{1 \le k \le b} |x_j - y_k|^{\beta^2}}
    \; \le \; \sum_{\substack{f \colon \{1,\dots,b\} \to \{1,\dots,a\},\\  \textrm{ injective}}}
     \frac{C}{\prod_{1 \le j \le b} |x_{f(j)} - y_j|^{\beta^2}} \label{eq:partial_fraction} \nonumber 
  \end{align*}
  for some constant $C$ depending only $U$, $a$, and $b$ -- not $\beta$.
\end{lemma}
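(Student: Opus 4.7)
My plan is to proceed by induction on $b$, with $a \ge b$ throughout. The base case $b = 0$ is immediate: the right hand side reduces to the single empty injection whose empty product equals $1$, so the bound becomes $\prod_{1 \le j < k \le a} |x_j - x_k|^{\beta^2} \le C$, and this holds with $C = (\max(1,\mathrm{diam}(U)))^{d \binom{a}{2}}$ uniformly in $\beta \in (0,\sqrt{d})$ since $\beta^2 < d$.

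For the inductive step I would single out the globally closest $x$--$y$ pair: let $(j_0, k_0) \in \{1,\dots,a\} \times \{1,\dots,b\}$ minimize $|x_j - y_k|$ jointly in $j$ and $k$. Combining the triangle inequality with this minimality yields the two crucial estimates
\[
|x_j - y_{k_0}| \ge \tfrac{1}{2} |x_j - x_{j_0}| \quad (j \ne j_0), \qquad |y_{k_0} - y_k| \le 2 |x_{j_0} - y_k| \quad (k \ne k_0).
\]
I would substitute the first into the factor $\prod_{j} |x_j - y_{k_0}|^{\beta^2}$ of the denominator and the second into $\prod_{k \ne k_0} |y_{k_0} - y_k|^{\beta^2}$ in the numerator, at the cost of a constant prefactor bounded by $2^{d(a+b)}$. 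After these substitutions, the newly appearing $\prod_{k \ne k_0} |x_{j_0} - y_k|^{\beta^2}$ in the numerator cancels exactly the $j = j_0$, $k \ne k_0$ part of the denominator $\prod_{j,k} |x_j - y_k|^{\beta^2}$, while the $\prod_{j \ne j_0} |x_j - x_{j_0}|^{\beta^2}$ in the denominator cancels the $(j_0, \cdot)$-row of the numerator factor $\prod_{j < k \le a} |x_j - x_k|^{\beta^2}$. What remains is precisely the left hand side of the statement applied to the reduced configuration $\{x_j : j \ne j_0\}$, $\{y_k : k \ne k_0\}$, times the outside factor $|x_{j_0} - y_{k_0}|^{-\beta^2}$.

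I would then apply the inductive hypothesis to the reduced $(a-1, b-1)$-configuration, producing a sum over injections $g \colon \{1,\dots,b\} \setminus \{k_0\} \to \{1,\dots,a\} \setminus \{j_0\}$. Each such $g$ extends uniquely to an injection $f \colon \{1,\dots,b\} \to \{1,\dots,a\}$ with $f(k_0) = j_0$, and the outside factor $|x_{j_0} - y_{k_0}|^{-\beta^2}$ supplies exactly the missing $k = k_0$ term of $\prod_k |x_{f(k)} - y_k|^{-\beta^2}$. Dropping the constraint $f(k_0) = j_0$ only enlarges the sum, so the resulting bound is controlled by the full sum over injective $f$, closing the induction with $C_{a,b} = 2^{d(a+b)} C_{a-1, b-1}$. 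Since the bound $\beta^2 < d$ is used uniformly, the final constant depends only on $U$, $a$, and $b$, as required.

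The main obstacle I anticipate is that the naive ``nearest $x$ for each $y$'' assignment is typically not injective, so a one-shot global argument does not work. The trick that makes the induction go through is to eliminate only a single globally-closest pair at each step, which guarantees that any extension of the injection from the reduced configuration remains an injection on the full index set. The other mildly delicate point is the bookkeeping of cancellations after the two triangle-inequality substitutions, but once the correct pair $(j_0, k_0)$ has been fixed this is a mechanical verification.
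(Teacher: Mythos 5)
Your proof is correct and takes essentially the same approach as the paper: the paper constructs the greedy Gale--Shapley matching (iteratively removing the closest remaining $x$--$y$ pair) and then performs the telescoping cancellation in one pass, while you encode the same greedy removal as an induction on $b$, peeling off exactly one closest pair per step. The triangle-inequality estimates and the resulting sum over injections are the same in both, so the two formulations differ only in whether the greedy matching is built up front or one element at a time.
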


\begin{proof}
The result can be obtained by using a Gale--Shapley matching (see e.g. the appendix in \cite{LRV} -- we provide a proof here for the reader's convenience).
  For given $x_1,\dots,x_a$ and $y_1,\dots,y_b$ we may form a matching $f \colon \{1,\dots,b\} \to \{1,\dots,a\}$ via the following algorithm:
  Among the remaining pairs $(x_j,y_k)$ choose one with minimal distance $|x_j - y_k|$, set $f(k) = j$, remove the points $x_j$ and $y_k$ from the set of remaining points and repeat.
By permutation invariance of the original expression we may assume that the points matched by the algorithm are   $(y_1,x_1), \dots, (y_b,x_b)$, and they are matched in this order. We may then write
  \begin{align*}
    & \frac{\prod\limits_{1 \le j < k \le a} |x_j - x_k|^{\beta^2} \prod\limits_{1 \le j < k \le b} |y_j - y_k|^{\beta^2}}{\prod\limits_{\substack{1 \le j \le a \\ 1 \le k \le b}} |x_j - y_k|^{\beta^2}} \\
    & = \frac{\prod\limits_{{b+1 \leq j < k \le a  }} |x_j - x_k|^{\beta^2}}{\prod\limits_{1 \le j \le b} |x_{j} - y_j|^{\beta^2}} \cdot \frac{\prod\limits_{\substack{1 \le j < k \le a \\  j \leq b}} |x_j - x_k|^{\beta^2} \prod\limits_{1 \le j < k \le b} |y_j - y_k|^{\beta^2}}{\prod\limits_{\substack{1 \le j \le a \\ 1 \le k \le b \\ j \neq k}} |x_j - y_k|^{\beta^2}}.
  \end{align*}
We next write the second factor as
  \begin{align*}
    & = \prod_{\ell = 1}^{b}\left( \prod_{\ell < k \le b} \frac{|y_\ell - y_k|^{\beta^2}}{|x_k-y_{\ell} |^{\beta^2}} \prod_{\ell < j \le a} \frac{|x_\ell - x_{j}|^{\beta^2}}{| x_j-y_\ell |^{\beta^2}}\right)
  \end{align*}
  and using the inequalities
  \[\frac{|y_\ell - y_k|}{|x_k- y_{\ell} |} \le \frac{|y_\ell - x_{k}| + |x_{k } - y_k|}{|x_k -y_\ell |
  } \le 2,\]
where we use that $y_k$ was matched  before $y_\ell$, and
  \[\frac{|x_\ell - x_{j}|}{|x_j-y_\ell|} \le \frac{|x_\ell - y_\ell| + |y_\ell - x_j|}{|x_j-y_\ell|} \le 2\]
  implied in turn by the fact that $x_\ell$ was matched before $x_j$, we see that
  \[\frac{\prod\limits_{1 \le j < k \le a} |x_j - x_k|^{\beta^2} \prod\limits_{1 \le j < k \le b} |y_j - y_k|^{\beta^2}}{\prod\limits_{\substack{1 \le j \le a \\ 1 \le k \le b}} |x_j - y_k|^{\beta^2}} \le 2^{\beta^2(a-1)b} \frac{\prod\limits_{b+1 \le j < k \le a } |x_j - x_k|^{\beta^2}}{\prod\limits_{1 \le j \le b} |x_{j} - y_j|^{\beta^2}}\]
  under the assumption that the points were matched according to $f$. Summing over the possible matchings and bounding $\beta^{2}$ by $d$ in the prefactor yields the result. \end{proof}

The following lemma is used for studying the behavior of imaginary multiplicative chaos near the critical point.

\begin{lemma}\label{lemma:moments}
Let $\mu$ be the random generalized function from Theorem \ref{th:existuniq}. For any test function $\varphi \in C^\infty_c(U)$ we have
  \[\left|\E \mu(\varphi)^a \overline{\mu(\varphi)}^b \right|\le C (\E |\mu(\varphi)|^2)^{\min(a,b)}\]
  for all integers $a, b \ge 0$ and some constant $C$ possibly depending on $\varphi$, $g$ from \eqref{eq:cov}, $a$, and $b$, but not on $\beta$.
\end{lemma}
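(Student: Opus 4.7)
The plan is to start from the exact integral formula for the mixed moment $\E\mu(\varphi)^a\overline{\mu(\varphi)}^b$. By following essentially the same approximation argument that gave Corollary~\ref{cor:integrability}, one gets
\[
\E\mu(\varphi)^a\overline{\mu(\varphi)}^b \;=\; \int_{U^{a+b}} \prod_{i=1}^a\varphi(x_i)\prod_{j=1}^b\overline{\varphi(y_j)}\cdot \frac{\prod_{i\le a,j\le b} e^{\beta^2 C_X(x_i,y_j)}}{\prod_{i<j\le a} e^{\beta^2 C_X(x_i,x_j)}\prod_{i<j\le b}e^{\beta^2 C_X(y_i,y_j)}}\,dx\,dy.
\]
Assume without loss of generality $a\ge b$. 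Taking absolute values under the integral and using $e^{\beta^2 C_X(u,v)}=e^{\beta^2 g(u,v)}|u-v|^{-\beta^2}$ together with the fact that $g$ is bounded on $\operatorname{supp}\varphi\times\operatorname{supp}\varphi$ and $\beta^2<d$, all factors of $e^{\beta^2 g}$ combine into a prefactor $C_{g,a,b,\varphi}$ independent of $\beta$.

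The key analytic step is then to apply the Gale--Shapley matching bound in Lemma~\ref{lemma:moment_inequality}, which dominates the remaining kernel by
\[
\sum_{f\colon\{1,\ldots,b\}\to\{1,\ldots,a\},\ \text{injective}} \;\prod_{j=1}^b \frac{C}{|x_{f(j)}-y_j|^{\beta^2}}.
\]
For each injective $f$, the $a-b$ unmatched $x$-variables are integrated against $|\varphi|$ and contribute $\|\varphi\|_{L^1}^{a-b}$, while the $b$ matched pairs decouple into $b$ identical two-point integrals. Summing over the $a!/(a-b)!$ matchings gives
\[
|\E\mu(\varphi)^a\overline{\mu(\varphi)}^b| \;\le\; C'_{a,b,g,\varphi}\Bigl(\int_{U\times U}\frac{|\varphi(u)||\varphi(y)|}{|u-y|^{\beta^2}}\,du\,dy\Bigr)^{\!b}.
\]
Since the bounded factor $e^{\beta^2 g}$ is also bounded below on $\operatorname{supp}\varphi\times\operatorname{supp}\varphi$ uniformly in $\beta\in(0,\sqrt d)$, the last integral is in turn bounded by a $\beta$-independent multiple of $\E|\mu(|\varphi|)|^2 = \int\int|\varphi(u)||\varphi(y)|e^{\beta^2 C_X(u,y)}\,du\,dy$, where $|\varphi|$ is treated as an $L^\infty_c$ test function (for which the $L^2$ theory developed in Proposition~\ref{prop:mu_convergence} still applies).

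The main obstacle is the last comparison between $\E|\mu(|\varphi|)|^2$ and $\E|\mu(\varphi)|^2$, which is not a pointwise inequality: the two can differ substantially when $\varphi$ changes sign. The plan here is to argue by continuity on the interval $\beta\in(0,\sqrt d)$: on any compact sub-interval both quantities are positive and continuous in $\beta$, so their ratio is bounded. Near $\beta\nearrow\sqrt d$, the computations underlying Theorem~\ref{thm:crit} show that both $\E|\mu_\beta(\varphi)|^2$ and $\E|\mu_\beta(|\varphi|)|^2$ diverge at the common rate $|S^{d-1}|/(d-\beta^2)$ with finite limiting ratio, so the bound persists. Near $\beta\to 0^+$, a direct first-order expansion of $\mu_\beta(\varphi)\approx\int\varphi+i\beta\int\varphi(x)X(x)\,dx+O(\beta^2)$ shows that when $\int\varphi\neq 0$ both sides converge to non-zero limits whose ratio is finite, while when $\int\varphi=0$ the numerator $|\E\mu_\beta(\varphi)^a\overline{\mu_\beta(\varphi)}^b|$ vanishes like $\beta^{a+b}$ and $(\E|\mu_\beta(\varphi)|^2)^{\min(a,b)}$ vanishes like $\beta^{2\min(a,b)}$, so the desired inequality holds with room to spare. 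Patching these three regimes together produces a single $\beta$-independent constant $C=C_{a,b,g,\varphi}$; the hardest part of the write-up is making this patching completely rigorous while not losing the $\varphi$-independence of the continuity estimates used in the middle regime.
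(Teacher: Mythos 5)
Your first two paragraphs mirror the paper's own proof exactly: the exact integral formula for the mixed moment, absorbing the $e^{\beta^2 g}$ factors into a $\beta$-independent constant by replacing $\beta^2$ with $d$, applying Lemma~\ref{lemma:moment_inequality} (the Gale--Shapley bound), and integrating out the unmatched $x$-variables. Up to this point there is no difference in approach.

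The interesting content is your third paragraph, and the obstacle you flag there is genuine. After integrating, the bound one gets involves $\int\!\!\int|\varphi(u)||\varphi(v)|\,|u-v|^{-\beta^2}\,du\,dv$ raised to the power $\min(a,b)$, and for sign-changing $\varphi$ this is \emph{not} uniformly comparable over $\beta\in(0,\sqrt d)$ to $\E|\mu(\varphi)|^2=\int\!\!\int\varphi(u)\overline{\varphi(v)}\,e^{\beta^2 C_X(u,v)}\,du\,dv$: when $\int\varphi=0$ the first quantity tends to $\|\varphi\|_{L^1}^2>0$ as $\beta\to0^+$, while the second vanishes like $\beta^2$. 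The paper's ``it then readily follows\dots'' silently elides exactly this step; read literally the paper's argument only proves the inequality with $\int\!\!\int|\varphi||\varphi|\,e^{\beta^2 C_X}$ on the right-hand side in place of $\E|\mu(\varphi)|^2$.

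Your three-regime patching does close the gap. Near $\beta\nearrow\sqrt d$ the computations in Theorem~\ref{thm:crit} give $\E|\mu_\beta(\varphi)|^2\sim|S^{d-1}|(d-\beta^2)^{-1}\int_U|\varphi(x)|^2 e^{\beta^2 g(x,x)}\,dx$, a sign-insensitive divergent quantity, so the ratio is bounded there. Near $\beta\to0^+$ with $\int\varphi=0$ your rate count with $a\ge b$ is right: the numerator is $O(\beta^{a+b})$, the denominator is $\sim\beta^{2b}$, and $a+b\ge2b$. For the compact middle region, you can tighten ``both quantities are positive and continuous'' by observing that $\E|\mu_\beta(\varphi)|^2=\sum_{k\ge0}\frac{\beta^{2k}}{k!}\int\!\!\int\varphi(u)\overline{\varphi(v)}\,C_X(u,v)^k\,du\,dv$ is a power series in $\beta^2$ with non-negative coefficients (each Hadamard power $C_X(u,v)^k$ is a positive semi-definite kernel, exactly as used in the proof of Lemma~\ref{le:integrability}), so it is either identically zero --- in which case $\mu(\varphi)\equiv0$ a.s.\ and the inequality is trivial --- or strictly positive on all of $(0,\sqrt d)$, which makes the continuity argument unassailable. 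Finally, for context: the paper's only application of this lemma, inside the proof of Theorem~\ref{thm:crit}, lives in the regime $\beta\nearrow\sqrt d$ where the $|\varphi|$-version already suffices, so while the gap you found in the written proof is real, it does not propagate to any downstream result.
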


\begin{proof}
By the proof of Theorem \ref{th:moments} and a direct computation
  \[\left|\E \mu(\varphi)^a \overline{\mu(\varphi)}^b \right|\lesssim C_{a,b} \int_{U^{a \times b}} \frac{\prod_{1 \le j < k \le a} |x_j - x_k|^{\beta^2} \prod_{1 \le j < k \le b} |y_j - y_k|^{\beta^2}}{\prod_{1 \le j \le a} \prod_{1 \le k \le b} |x_j - y_k|^{\beta^2}} \, dx_1 \dots dx_a dy_1 \dots dy_b.\]
 Here $C_{a,b}$ depends on $\varphi$ and $g$, and initially also on $\beta$, since the natural estimate one uses involves terms like $e^{\beta^{2}\|g\|_{L^{\infty}(\mathrm{supp}(\varphi)\times \mathrm{supp}(\varphi))}}$, but we can always bound this from above by replacing $\beta^{2}$ with $d$, so we get a bound independent of $\beta$.
  We may assume that $a \ge b$, the other case is handle in the same way.
  It then readily follows by applying Lemma~\ref{lemma:moment_inequality} and integrating that
  \[\left|\E \mu(\varphi)^a \overline{\mu(\varphi)}^b\right| \le C (\E |\mu(\varphi)|^2)^b\]
  for some constant $C$ independent of $\beta$.
\end{proof}

Finally we conclude with a proof of Lemma \ref{lemma:combinatorial_argument}.

\begin{proof}[Proof of Lemma \ref{lemma:combinatorial_argument}]
For fixed $x_1,\dots,x_N \in B(0,1)$, let $F \colon \{1,\dots,N\} \to \{1,\dots,N\}$ be the nearest neighbour function mapping $i \mapsto j$, where $j$ is the index of the closest point $x_j$ to the point $x_i$. By removing a set of measure $0$ from $B(0,1)^N$, we may assume that $F$ is uniquely defined. The integral then becomes
  \[\sum_{F} \int_{U_F} e^{\frac{\beta^2}{2}\sum_{j=1}^{N} \log \frac{1}{\frac{1}{2} |x_j - x_{F(j)}|}} \, dx_1 \dots dx_{N},\]
  where $U_F \subset B(0,1)^N$ is the set of those point configurations $(x_1,\dots,x_{N}) \in B(0,1)^N$ whose nearest neighbour function equals $F$.
  Each nearest neighbour function $F$ can be uniquely represented by a directed graph with vertices $\{1,\dots,N\}$ and an arrow from $i$ to $F(i)$.
  This graph is of the following form: It consists of $k \le \lfloor N/2 \rfloor$ components, and each component consists of a $2$-cycle (the two mutually closest points in the component, by the triangle inequality there can be no longer cycles) with two trees connected to the two vertices in the cycle.
  Without loss of generality we may assume that $(x_1, x_2), \dots, (x_{2k-1}, x_{2k})$ are the vertices forming the cycles.
  Perform now the change of variables $u_j = \frac{1}{2}(x_j - x_{F(j)})$ for $j=2k+1,\dots,N$, $u_1 = \frac{1}{2}(x_1 - x_2)$, $u_2 = \frac{1}{2}x_2$, \dots, $u_{2k-1} = \frac{1}{2}(x_{2k-1} - x_{2k})$ and $u_{2k} = \frac{1}{2} x_{2k}$. Then we get the integral
  \begin{align*}
    \int_{\tilde{U}_F} \frac{2^{N}}{|u_1|^{\beta^2} |u_3|^{\beta^2} \dots |u_{2k-1}|^{\beta^2} |u_{2k+1}|^{\beta^2/2} \dots |u_{N}|^{\beta^2/2}} \, du_1 \dots du_{N}
  \end{align*}
  for some new integration domain $\tilde{U}_F$.
  We have $|u_j| \le 1$ for all $j$ and moreover the balls $B_j = \{y \in \reals^d : |y - x_j| \le |u_j|\}$, $j=1,3,\dots,2k-1,2k+1,2k+2,\dots,N$ are disjoint (since $|u_j|$ is half the distance from $x_j$ to its nearest neighbour). Each such ball is contained in $B(0,2)$, and thus by comparing volumes we get the inequality
  \[|u_1|^d + |u_3|^d + \dots + |u_{2k-1}|^d + |u_{2k+1}|^d + \dots + |u_N|^d \le 2^d.\]
  In particular the new integration domain $\tilde{U}_F$ is contained in
  \[\{|u_1|^d + |u_3|^d + \dots + |u_{2k-1}|^d + |u_{2k+1}|^d + \dots + |u_N|^d \le 2^d, |u_2|, \dots, |u_{2k}| \le 1\}.\]
  Hence we get the upper bound
  \begin{align*}
    &\int_{\tilde{U}_F} \frac{2^N}{|u_1|^{\beta^2} |u_3|^{\beta^2} \dots |u_{2k-1}|^{\beta^2} |u_{2k+1}|^{\beta^2/2} \dots |u_N|^{\beta^2/2}} \, du_1 \dots du_N \\
    & \le c^{N} \int_{(\partial B(0,1))^{N-k}} \int_{r_1^d + \dots + r_{N - k}^d \le 2^d} r_1^{-\beta^2 + d - 1} \dots r_k^{-\beta^2 + d - 1} r_{k+1}^{-\frac{\beta^2}{2} + d - 1} \dots r_{N-k}^{-\frac{\beta^2}{2} + d - 1} \, d r_1 \dots d r_{N-k} \\
    & \le c^{N} \int_{t_1 + \dots + t_{N-k} \le 1} t_1^{-\frac{\beta^2}{d}} \dots t_k^{-\frac{\beta^2}{d}} t_{k+1}^{-\frac{\beta^2}{2d}} \dots t_{N-k}^{-\frac{\beta^2}{2d}} \, dt_1 \dots dt_{N-k} \\
    & \le c^{N} \frac{\Gamma(1 - \frac{\beta^2}{d})^k \Gamma(1 - \frac{\beta^2}{2d})^{N-2k}}{\Gamma(k(1 - \frac{\beta^2}{d}) + (N-2k)(1 - \frac{\beta^2}{2d}))} \int_0^1 t^{N-k - k \frac{\beta^2}{d} - (N-2k) \frac{\beta^2}{2d} - 1} \, dt \\
    & \le \frac{c^{N}}{\Gamma(k(1 - \frac{\beta^2}{d}) + (N - 2k)(1 - \frac{\beta^2}{2d}) + 1)}, 
  \end{align*}
  where $c$ is some constant that may get bigger on each line of the above and following computations, and which is allowed to depend on $\beta^2$ and $d$ but not on $N$ or $k$. Above we used Dirichlet's integral formula, see e.g. \cite[Section 12.5]{WW}. Thus we have
  \begin{equation}\label{eq:dirichlet_bound}
    \int_{U_F} e^{\frac{\beta^2}{2}\sum_{j=1}^{N} \log \frac{1}{\frac{1}{2} |x_j - x_{F(j)}|}} \, dx_1 \dots dx_{N} \le \frac{c^N}{\Gamma(N(1 - \frac{\beta^2}{2d}) - k + 1)},
  \end{equation}
  where the right hand side only depends on $F$ via the number of components in the directed graph associated with $F$.

  Next we bound the number of nearest neighbour functions whose graphs have $k$ components. As already mentioned above, each component consists of a $2$-cycle augmented with two trees, or a simpler way to think of them might be as unordered pairs of rooted trees whose roots form the cycle.
  It is worth noting that the map from the nearest neighbour functions to their associated graphs is not a surjection since geometrical reasons limit the number of incoming edges each vertex may have.
  However, since we are only concerned with an upper bound, we will ignore this fact and simply count all possible labeled graphs with $N$ vertices and $k$ components of the above prescribed type, with labels corresponding to the variables $x_1,\dots,x_N$.
  This is a fairly straightforward task to which standard counting methods using generating functions apply. Here we have written the argument using combinatorial species, see for example \cite{BLL} for an introduction to the subject. For an argument formulated in more elementary terms, we refer to \cite{GP}. Let $E_k$ be the species of (unordered) sets of $k$ elements and let $T$ be the species of rooted trees. The species of a single component in the graph is then $E_2 \circ T$ (an unordered pair of rooted trees, whose roots correspond to the cycle). A set of $k$ of these gives us then the required species $G_k$ of nearest neighbour graphs with $k$ components, $G_k = E_k \circ (E_2 \circ T)$.
  The labeled generating function of $E_k$ is given by $E_k(x) = \frac{x^k}{k!}$ and hence
  \[G_k(x) = \frac{(T(x)^2/2)^k}{k!} = \frac{T(x)^{2k}}{2^k k!}.\]
  The species $T$ itself satisfies the equation $T = X \cdot (E \circ T)$, where $E$ is the species of sets (a rooted tree consists of a root and a set of subtrees).
  Since $E(x) = e^x$, the labeled generating function of $T$ satisfies the equation $T(x) = x e^{T(x)}$. In particular, if we let $f(x) = x e^{-x}$, then $f$ is the compositional inverse of $T$, and we may use the Lagrange inversion formula to compute for $N \ge 2k$ that
  \[[x^N] T(x)^{2k} = \frac{2k}{N} [x^{-2k}] f(x)^{-N} = \frac{2k}{N} [x^{-2k}] \frac{e^{N x}}{x^N} = \frac{2k}{N} [x^{-2k}] \sum_{j=0}^\infty \frac{N^j x^{j-N}}{j!} = \frac{2k N^{N - 2k - 1}}{(N - 2k)!},\]
  where $[x^k] g(x)$ is the coefficient of $x^k$ in some power series $g$. Hence the number of nearest neighbour graphs with $N$ vertices and $k$ components (ignoring the geometrical restrictions) is
  \begin{equation}\label{eq:graph_bound}
    \frac{N! 2k N^{N - 2k - 1}}{2^k k! (N - 2k)!} \le c^N \frac{N!}{k!} \le c^N (N - k)!,
  \end{equation}
  where the first inequality follows by Stirling's approximation and the second follows from the fact that ${N \choose k} \le 2^N$.

  The proof is easily finished by combining \eqref{eq:dirichlet_bound} and \eqref{eq:graph_bound} with another application of Stirling: 
    \begin{align*}
    & \int_{B(0,1)^N} \exp\Big(\frac{\beta^2}{2} \sum_{j=1}^{N} \log \frac{1}{\frac{1}{2} \min_{k \neq j} |x_j - x_k|}\Big) \, dx_1 \dots dx_{N} \\
    & = \sum_{F} \int_{U_F} e^{\frac{\beta^2}{2}\sum_{j=1}^{N} \log \frac{1}{\frac{1}{2} |x_j - x_{F(j)}|}} \, dx_1 \dots dx_{N} \\
    & \le c^N \sum_{k=1}^{\lfloor N/2\rfloor} \frac{(N - k)!}{\Gamma(N(1 - \frac{\beta^2}{2d}) - k + 1)} \le c^N \sum_{k=1}^{\lfloor N/2\rfloor} N^{N\frac{\beta^2}{2d}} \le c^N N^{N\frac{\beta^2}{2d}},
  \end{align*}
where again the value of $c$ may not be same in each of the places it appears.
\end{proof}

\end{document}